\DeclareRobustCommand\shape{
 \lower5pt\hbox{
 \hskip-7pt
  \tikzset{circ/.style={circle, draw, fill=black, scale=.15}}
  \begin{tikzpicture}[semithick,scale=.3]
  \node (l1) at (0,.5) [circ]{};
  \node (l3) at (0.5,0.3) [circ]{};
  \draw[-] (l1) to node [auto] {} (l3);
    \end{tikzpicture}
  \hskip-8pt}
}
\newcommand{\bern}{\mathsf{Bern}}
\newcommand{\unif}{\mathsf{Unif}}
\newtheorem{theorem}{Theorem}[section]
\newenvironment{mylem}[1]
  {\innercustomlem}
  {\endinnercustomlem}
\newenvironment{mythm}[1]
  {\innercustomthm}
  {\endinnercustomthm}
\newtheorem{lemma}{Lemma}[section]
\newtheorem{corollary}[lemma]{Corollary}
\newtheorem{fact}{Fact}
\theoremstyle{definition}
\newtheorem{remark}{Remark}[]
\newtheorem{definition}{Definition}
\newcommand{\var}{\mathsf{Var}}
\DeclareMathOperator*{\E}{{\rm I}\kern-0.18em{\rm E}}
\renewcommand{\Pr}{\,{\rm I}\kern-0.18em{\rm P}}
\newcommand{\1}{\mathds{1}}
\newcommand{\ind}[1]{\1\{#1\}}
\newcommand{\poi}{\mathrm{Poisson}}
\newcommand{\bino}{\mathsf{Binomial}}
\newcommand{\multino}{\mathsf{Multinomial}}
\newcommand{\TV}{{d_\mathsf{TV}}}
\newcommand{\RES}{\mathsf{Res}}
\newcommand{\law}{\mathcal{L}}
\newcommand{\rgt}{\mathsf{RGT}}
\newcommand{\rig}{\mathsf{RIG}}
\newcommand{\G}{\mathsf{G}}
\newcommand{\disteq}{\stackrel{d}{=}}
\newcommand{\defeq}{\triangleq}
\newcommand{\rg}{\mathsf{P_0}}
\newcommand{\prg}{\mathsf{P_1}}
\newcommand{\prgp}{\mathsf{P'_1}}
\newcommand{\rigp}{\mathsf{RIG_P}}
\newcommand{\rigpf}{\mathsf{RIG_{P4}}}
\newcommand{\spacedot}{\,\cdot\,}
\newcommand{\Af}{\cA_{ \triangle}}
\newcommand{\Ab}{\cA }
\newcommand{\Xp}{X^+}
\newcommand{\ine}{{T(e)}}
\newcommand{\nine}{{\sim T(e)}}
\newcommand{\Xpe}{\Xp_\ine }
\newcommand{\Xpet}{\tilde X^+_\ine }
\newcommand{\Xpne}{\Xp_{\nine}}
\newcommand{\Xm}{X^-}
\newcommand{\xm}{x^-}
\newcommand{\Yp}{Y^+}
\newcommand{\e}{\mathsf{e}}
\newcommand{\B}{\Big}
\renewcommand{\b}{\big}
\newcommand{\cg}[1]{{\color{green}\texttt{[#1]}}}
\newcommand{\dotspace}{\,\cdot\,}
\newcommand{\mug}[1]{\mu_{#1}}
\newcommand{\IM}[2]{\Inf^M_{#1\rightarrow #2}}
\newcommand{\I}[2]{\Inf_{#1\rightarrow #2}}
\newcommand{\Inf}{\mathbf{I}}
\newcommand{\ER}{{Erd\H{o}s-R\'enyi}}
\newcommand{\aux}{X^{\mathsf{aux}}}
\newcommand{\enew}{E^{\mathsf{new}}}
\newcommand{\RGTi}[1]{\rgt_{#1}}
\newcommand{\wed}[1][]{W_{#1}}
\newcommand{\bwed}[1][]{\bar{W}_{#1}}
\newcommand{\Twed}{\bar{T}}
\newcommand{\cX}{\mathcal{X}}
\newcommand{\cA}{\mathcal{A}}
\newcommand{\cL}{\mathcal{L}}
\newcommand{\cG}{\mathcal{G}}
\newcommand{\cY}{\mathcal{Y}}
\newcommand{\cP}{\mathcal{P}}
\newcommand{\quadand}{\quad\text{and}\quad}
\newcommand{\gps}{\mathsf{GPS}}
\newcommand{\easy}{\mathsf{easy}}
\newcommand{\hard}{\mathsf{hard}}
\newcommand{\impo}{\mathsf{impossible}}
\newcommand{\pd}{\mathsf{PD}}
\newcommand{\gl}{\mathsf{D}_{gl}}
\newcommand{\cov}{\mathsf{Cov}}
\newcommand{\pprime}{p'_*}
\newcommand{\Ar}{\mathcal{A}_r}
\newcommand{\maxI}{I_m}
\newcommand{\pval}{{1/(n\log n)}}
\title{Algorithmic Decorrelation and Planted Clique in Dependent Random Graphs: The Case of Extra Triangles}
\author{Guy Bresler\thanks{Supported in part by NSF Career award CCF-1940205.}
\and Chenghao Guo\thanks{Supported in part by NSF TRIPODS grant DMS-2022448, NSF Career award CCF-1940205, CCF-2131115 and the MIT-IBM Watson AI Lab.} \and Yury Polyanskiy\thanks{Supported in part by the NSF grant CCF-2131115 and the MIT-IBM Watson AI Lab.}}
\date{Massachusetts Institute of Technology}
\begin{document}

\maketitle

\pagenumbering{gobble}

\begin{abstract}
We aim to understand the extent to which the noise distribution in a planted signal-plus-noise problem impacts its computational complexity. To that end, we consider the planted clique and planted dense subgraph problems, but in a different ambient graph. Instead of {{\ER} } $G(n,p)$, which has independent edges, we take the ambient graph to be the \emph{random graph with triangles} (RGT) obtained by adding triangles to $G(n,p)$. 
We show that the RGT can be efficiently mapped to the corresponding $G(n,p)$, and moreover, that the planted clique (or dense subgraph) is approximately preserved under this mapping. 
This constitutes the first average-case reduction transforming dependent noise to independent noise. Together with the easier direction of mapping the ambient graph from {\ER } to RGT, our results yield a strong equivalence between models.
In order to prove our results, we develop a new general framework for reasoning about the validity of average-case reductions based on \emph{low sensitivity to perturbations}. 
\end{abstract}

\newpage
\setcounter{tocdepth}{2}
\tableofcontents

\clearpage
\pagenumbering{arabic}

\section{Introduction}

Most modern statistical inference problems exhibit a striking phenomenon: the best efficient algorithm requires substantially more data, or lower noise level, than the information-theoretic limit achieved by inefficient algorithms. Such problems are said to exhibit a \emph{statistical-computational gap}. 
In this paper we are interested in \emph{average-case} planted problems, meaning that their inputs are sampled according to some probability distribution with planted structure.
In order to develop methodology for rigorously reasoning about the computational complexity of these average-case problems, a vibrant line of research at the interface of statistics, probability, and computational complexity has emerged. 
There are two main approaches to substantiating computational limits: hardness against specific classes of algorithms and average-case reductions.

The first approach attempts to determine the best possible performance for restricted classes of algorithms, such as low-degree polynomials~\cite{hopkinsThesis,wein2022average}, sum-of-squares (SoS) relaxations~\cite{barak2016nearly}, \cite{hopkins2017power}, statistical query algorithms~\cite{feldman2013statistical,feldman2015complexity,diakonikolas2017statistical}, first-order methods~\cite{celentano2020estimation}, or classes of circuits~\cite{rossman2008constant,rossman2014monotone}. Beyond hardness against specific classes of algorithms, the overlap gap method introduced in \cite{gamarnik2014limits} and refined in \cite{huang2022tight} has revealed how structural properties of a solution landscape can rule out multiple classes of algorithms. The power of distinct classes of algorithms for solving families of statistical problems have been related: low-degree polynomials vs. statistical query \cite{brennan2020statistical}, low-degree polynomials vs. approximate message passing \cite{montanari2022equivalence}, and SoS vs. spectral applied to matrices of low-degree polynomials \cite{hopkins2017power}.
This broad approach continues to see intense activity and yields insight into the limits of current algorithms. However, it has the drawbacks that (1) each given class of algorithms is known to be suboptimal in certain settings \cite{koehler2022reconstruction,wein2019kikuchi,zadik2022lattice}, which reduces confidence in lower bounds against any specific class, and (2) one must prove fresh lower bounds for each problem of interest and for each new class of algorithms that emerges.

The second approach entails devising
average-case reductions that map one statistical problem to another using a polynomial-time algorithm, whereby hardness of the first problem is transferred to the second. The reduction approach is a foundational tool across complexity theory and cryptography for elucidating relationships between problems and constructing complexity hierarchies. Moreover, it provides insights applicable to all algorithms. The main drawback of this approach is that in the average-case setting, one must precisely map not just problem instances, but rather entire probability distributions over inputs. This is notoriously challenging due to a lack of techniques, as emphasized by Goldreich \cite{goldreich2011notes}, Barak \cite{Barak2017}, and Bogdanov and Trevisan \cite{bogdanov2006average}.

A common starting hardness assumption in the literature on average-case reductions is the Planted Clique (PC) Conjecture. This conjecture posits that no polynomial-time algorithm can detect a planted clique of size $k$ in an \ER\ random graph with edge density $p$, when $p$ is constant and $k=o(\sqrt{n})$. Many problems have been shown to be computationally hard based on this assumption including sparse principle component analysis \cite{berthet2013complexity,wang2016statistical,gao2017sparse,brennan2018reducibility,brennan2019optimal}, submatrix detection or biclustering \cite{ma2015computational,brennan2018reducibility,brennan2019universality}, planted dense subgraph  \cite{hajek2015computational,brennan2018reducibility}. A variety reduction techniques were introduced in \cite{brennan2018reducibility} and  \cite{brennan2020reducibility}, resulting in a web of reductions from PC to problems including robust sparse mean estimation, tensor PCA, general planted dense subgraph, dense stochastic block model, mixtures of sparse linear regressions, robust sparse mean estimation, and many others. 
Some works show reductions from hypergraph planted clique or hypergraph planted dense subgraph to other problems, including tensor clustering \cite{luo2022tensor} and SVD for random 3-tensors \cite{zhang2017tensor}.
Another line of research has also explored reductions to statistical problems based on cryptographic assumptions \cite{gupte22,song2021cryptographic,chen2022hardness,daniely2021local}.

Despite these advances, the current literature on average-case reductions for statistical problems exhibits a notable limitation: the absence of two-way equivalences. Most reductions rely on a single hardness assumption, such as the Planted Clique Conjecture or Learning Parity with Noise (LPN), and demonstrate a statistical-computational gap for other problems. 
Going beyond this, one might dream of partitioning statistical problems into equivalence classes, just as has been done so fruitfully in worst-case complexity. 
We mention here, only briefly, that the notion of equivalence is itself nontrivial because each statistical ``problem" is itself an entire \emph{parameterized family} of problems. 
We rigorously define what we mean by strong computational equivalence in Section~\ref{sec:equiv}.

The primary obstacle to proving equivalence between statistical problems, just as for one-way reductions, is that average-case reductions are challenging: we must transform one noise distribution into another while preserving the planted signal. To date, all reductions have converted models with independent noise to models with either independent or dependent noise. The challenge in establishing equivalence between planted problems is therefore in developing techniques for removing dependence in the noise of high-dimensional distributions.
In this paper, we introduce a general framework for the validity of reductions based on low sensitivity to perturbations and apply it to show the first non-trivial computational equivalence between two statistical problems. 

A second (related) motivation for the present paper is towards developing an understanding of how general are the phenomena observed in planted statistical problems. 
For planted matrices or tensors with independent entries, a reduction was devised from planted clique or planted dense subgraph to a broad class of entry distributions by \cite{brennan2019universality}, thereby showing that the observed phenomena are \emph{universal} for this class. Thus, we understand how to change the distribution of the entries, but as of yet there are no general techniques for connecting problems with different \emph{dependence} between the entries\footnote{The exception is sparse PCA, which has a very specific dependence structure.}.

\subsection{The Problems We Consider}
\paragraph{General Hypothesis Testing Setup} 
In a simple-versus-simple hypothesis testing problem there is a pair of distributions $ P_0$ and $P_1$ on space $\Omega$, and one observes $X$ generated according to one of the distributions. The task is to decide between the two hypotheses
$$
H_0: X\sim P_0\quad \text{versus} \quad H_1: X\sim P_1\,.
$$
We consider noisy signal detection problems indexed by problem size parameter $n$, where the space is $\Omega_n$, $P_{0,n}$ represents the pure noise distribution, and $ P_{1,n}$ is the distribution with planted signal. We often keep the dependence on $n$ implicit. An algorithm $\Phi:\Omega \to \{0,1\}$ \emph{solves} the hypothesis testing problem if the sum of type I and type II errors asymptotically vanish, i.e.,
$$
\Pr_{X\sim P_0}(\Phi(X) = 1) + \Pr_{X\sim P_1}(\Phi(X) = 0)  \to 0\quad\text{as}\quad n\to \infty\,.
$$


\paragraph{Planted Dense Subgraph Problem}
In this paper, we consider a slight generalization of Planted Clique known as the \emph{Planted Dense Subgraph (PDS)} problem. 
Here, the 
pure noise distribution is the simple \ER\ random graph, $G(n,p)$. The alternative hypothesis, denoted by $G(n,p,k,q)$, is generated by starting with $G(n,p)$, choosing a uniformly random subset $S$ of size $k$ from $[n]$, erasing all edges in $S$, and independently resampling the edges in $S$ to be included with probability $q$. 
The PDS problem is, given a graph $G$, to distinguish between the two hypotheses $$H_0: G\sim G(n,p)\quadand H_1: G\sim G(n,p,k,q)\,.$$

It is believed that the PDS problem with constant $p$ and $q$ has the same $k=o(\sqrt{n})$ computational threshold as the planted clique problem~\cite{barak2016nearly}, \cite{hopkinsThesis}. 

\paragraph{Random Graph with Triangles}
Next, instead of the ambient graph being \ER, we consider a simple graph distribution with dependent edges. 

\begin{definition}[Random Graph with Triangles]\label{def:rgt}
The random graph with triangles distribution $\rgt(n,p,p')$ is the law of a graph generated as follows. Let $G\sim G(n,p)$ be an {\ER} random graph and for every triple of nodes $\{i,j,k\}\in {[n]\choose 3}$ with probability $p'$ independently add the triangle consisting of the three edges $(i,j), (j,k)$, and $(i,k)$ to $E(G)$.
\end{definition}

\begin{remark}
The expected edge density of $G(n,p,p')$ is $p+(1-p)\left(1-(1-p')^{n-2}\right)$. It is always assumed that $p'=O(1/n)$ so that the edge density remains bounded away from 1. 
\end{remark}
\begin{remark}
In certain parameter regimes, the RGT model is close in total variation distance to the Random Intersection Graph, a model inspired by real-world networks. This will be explained in more depth in Section~\ref{sec:results}.
\end{remark}


\paragraph{Planted Dense Subgraph in Random Graph with Triangles}
We plant a signal in the random graph with triangles (\emph{planted RGT}) to obtain $\rgt(n,p,p',k,q)$, the law of a graph generated as follows. Start with $\rgt(n,p,p')$, choose a random subset $S$ of $k$ vertices, erase all edges within $S$ and then independently include each edge within $S$ with probability $q$. 
The Planted RGT problem is, given a graph $G$, to distinguish between the two hypotheses $$H_0: G\sim \rgt(n,p,p')\quadand H_1: G\sim \rgt(n,p,p',k,q)\,.$$

\begin{remark}
    There are multiple options for the definition of Planted RGT. We will discuss a substantial generalization in the next section. One could also, for example, plant a smaller RGT with different edge or triangle densities within a larger RGT. We leave this for future work. 
\end{remark}

\subsection{Our results}\label{sec:results}

Our main result is a strong computational equivalence between the planted dense subgraph (PDS) and planted random graph with triangles (Planted RGT) problems. To achieve this, we must design transformations that connect these problems in both directions. Here, and throughout the paper, for two distributions $\mu$ and $\nu$ we use $\mu\approx_\epsilon\nu$ as shorthand for $\TV(\mu,\nu)\leq \epsilon$.


    

\begin{theorem}\label{thm:main}
     For any $k= o(n^{1/4}\log^{-17/4}n), p'\le 1/(n\log n)$ and $0<p,q<1$, there are efficient algorithms $\Af$ and $\cA$ that satisfy the following transition properties. 
     \begin{enumerate}
         \item $\Af$ maps both $H_0$ and $H_1$ hypotheses of PDS to planted RGT with parameter map $f$:  
         \[\Af(G(n,p))\approx_{o_n(1)} \rgt(n,p,p')\,,
    \quadand
    \Af(G(n,p,k,q))\approx_{o_n(1)} \rgt(n,p,p',k,f(q))\,,\]
    \item    $
    \cA$ maps both $H_0$ and $H_1$ hypotheses of planted RGT to PDS with parameter map $g$: \[\cA(\rgt(n,p,p')) \approx_{o_n(1)} G(n,p)\,,
        \quadand\cA(\rgt(n,p,p',k,q)) \approx_{o_n(1)} G(n,p,k,g(q))\,.\]
     \end{enumerate}
Moreover, the parameter mappings $f$ and $g$ satisfy $f\circ g(q) = q+o_n(1)$ and $g\circ f (q) = q+o_n(1)$. 
\end{theorem}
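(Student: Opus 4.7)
The theorem packages two one-way reductions together, so my plan is to prove each direction independently and then observe that the composition identities drop out of the closed forms of $f$ and $g$.

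For $\Af$, the natural procedure is to execute the triangle-planting step of Definition~\ref{def:rgt} directly on the input: for each triple $\{i,j,k\}\in\binom{[n]}{3}$, independently with probability $p'$ insert the three edges of the triangle on $\{i,j,k\}$. Applied to $G(n,p)$, this is a perfect coupling to $\rgt(n,p,p')$, giving the $H_0$ statement with zero TV. For $H_1$, the same procedure on $G(n,p,k,q)$ may introduce new edges into the planted set $S$, but since $\binom{k}{3}p'=\tilde O(k^3/n)=\tilde O(n^{-1/4})=o(1)$, with probability $1-o(1)$ no triangle fully inside $S$ is added. Conditioning on this event, each edge $\{i,j\}\subset S$ is present iff either the original $\bern(q)$ sample was $1$ or some triangle with third vertex in $[n]\setminus S$ placed it; by the independence of triangle placements these events are mutually independent across $\binom{S}{2}$. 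Thus the edges in $S$ become i.i.d.\ $\bern(f(q))$ with $f(q)=1-(1-q)(1-p')^{n-k}$, matching $\rgt(n,p,p',k,f(q))$ to within $o_n(1)$ in TV.

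For $\cA$ the task is to \emph{remove} the extra triangles without disturbing the planted signal. The approach I would take is a thinning procedure: for each triangle in the input graph, probabilistically decide whether to interpret it as arising from the triangle-planting mechanism, and if so delete one of its three edges, calibrated so that every edge outside $S$ has the correct marginal. Setting the thinning parameters so that the edge marginal under $\cA(\rgt(n,p,p'))$ becomes $p$ yields the inverse relation $g(q)=1-(1-q)/(1-p')^{n-k}$. The hard part will be the joint distribution: the triangles of $\rgt$ overlap heavily, so the deletion decisions on different edges are not independent and marginal calculations alone are insufficient. To bridge this I would invoke the low-sensitivity framework highlighted in the introduction: first establish an unplanted TV bound $\cA(\rgt(n,p,p'))\approx_{o_n(1)} G(n,p)$ by a direct coupling based on triangle statistics, and then show that the output distribution of $\cA$ is stable in TV under the $O(k^2)$-edge perturbation that replaces the within-$S$ subgraph of $\rgt(n,p,p')$ by an i.i.d.\ $\bern(q)$ subgraph. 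This stability transfers the null TV bound to the planted setting, giving $\cA(\rgt(n,p,p',k,q))\approx_{o_n(1)} G(n,p,k,g(q))$.

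The identities $f\circ g(q)=q$ and $g\circ f(q)=q$ are in fact \emph{exact} for the closed-form $f$ and $g$ above (the factor $(1-p')^{n-k}$ cancels in both compositions), so the claimed $o_n(1)$ error comes entirely from the two TV bounds. I expect the low-sensitivity step for $\cA$ to be by far the main obstacle: it is precisely where the paper's headline contribution, decorrelation under an average-case reduction, must actually be carried out, and naive marginal reasoning fails because the ambient graph's dependence structure is distributed across overlapping triangles.
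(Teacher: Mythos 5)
Your plan matches the paper's outline at the very top level — forward map adds triangles, reverse map removes them, and low sensitivity underwrites the planted case — but the two load-bearing steps are either wrong or asserted without the argument that makes them work.

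The reverse map $\cA$ is not a thinning. The paper's key idea is that $\Ab$ is the \emph{exact Bayesian time-reversal} of the triangle-adding kernel: sample a full set of triangles $X\sim\mu_G$ from the Gibbs posterior $\mu_G(x)\propto\bigl(\tfrac{p'}{1-p'}\bigr)^{|x|}p^{-\e(x)}\,\ind{E(x)\subseteq G}$, keep $G$ outside $E(X)$, and resample the edges in $E(X)$ i.i.d.\ $\bern(p)$. This gives $\Ab(\rgt(n,p,p'))=G(n,p)$ \emph{exactly} (Lemma~\ref{lem:reverse}), which is what makes the rest tractable. Your proposal — look at each triangle in the observed graph, decide locally whether it was ``planted,'' and if so delete one of its edges — cannot achieve this. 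First, $\rgt$ adds all three edges of a chosen triple and leaves pre-existing edges in place, so deleting one edge does not undo the operation; the correct inverse is to \emph{resample} the edges of the chosen triples back to $\bern(p)$. Second, and more fundamentally, the posterior over which triples were added is a strongly dependent Gibbs measure (triangles compete to ``explain'' overlapping edges), so per-triangle decisions with calibrated marginals cannot reproduce the joint law. You correctly identify that a stability-under-perturbation step is needed, but without the posterior kernel you have no null identity to stabilize, and the perturbation is not a one-shot ``$O(k^2)$-edge swap'' — the paper's Theorem~\ref{thm:general} works edge by edge, needing per-edge conditions C1--C3 and $K(\epsilon+\delta)$ accounting with $K=\binom{k}{2}$.

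The forward map for $H_1$ also has a gap. You show the within-$S$ edge marginals become i.i.d.\ $\bern(f(q))$, but you do not address whether these edges are (approximately) independent of the rest of the graph. They are not, deterministically: a triangle $(i,j,w)$ with $i,j\in S$, $w\notin S$ simultaneously affects the within-$S$ edge $(i,j)$ and the outside edges $(i,w),(j,w)$. The target $\rgt(n,p,p',k,f(q))$ has within-$S$ edges freshly resampled and hence independent of everything else, so proving the two laws are $o(1)$-close in TV requires a decorrelation argument. This is exactly what the paper's perturbation framework plus the auxiliary triangle construction $\aux$ (Lemma~\ref{lem:forward-influence-on-other-edges}) delivers, and it is not the ``easy'' part of the forward reduction as your writeup implies. (Minor: your $f(q)=1-(1-q)(1-p')^{n-k}$ differs from the paper's $f(q)=1-(1-q)(1-p')^{n-2}$ by a harmless $O(kp')=o(1)$ term, and your $g$ does not match the paper's $g(q)=q\cdot p_e$ with $p_e=\E_{G\sim\rgt}\Pr(e\in\Ab(G+e))$, though both are $q+o(1)$ so the composition claim survives.)
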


Part 1 entails transforming from PDS to planted RGT, i.e., an independent noise model to a dependent noise model. In the case of $G(n,p)$ to $\rgt(n,p,p')$, the mapping itself is straightforward based on the definition of RGT: one simply adds each triangle with probability $p'$. However, the fact that the planted distributions are mapped correctly via the very same procedure does still require a nontrivial argument.

In Part 2, the other direction, we show how to map from RGT to {\ER }. Here the mapping even in the unplanted pure-noise case is not obvious, and we describe it in Section~\ref{sec:reverse}. It is obtained by viewing the triangle addition procedure as a Markov transition kernel on graphs, and implementing its time-reversal. 

To constitute a valid reduction from Planted RGT to PDS, the same mapping must also work when a dense subgraph has been planted. 
We face a delicate balance between two competing objectives: (1) maintaining the algorithm's behavior outside of the dense subgraph, ensuring that its performance remains approximately invariant to the placement of the dense subgraph, and (2) removing the dependence between triangles, which inherently necessitates dependence in the algorithm's behavior. 

Validity of both reductions is proved via application of a general framework that we develop, stated as Theorem~\ref{thm:general} in Section~\ref{sec:general}. 
Applying our general theorem presents a number of technical challenges in the form of bounding total variation distances between perturbed distributions. Methods for proving the proximity of two high-dimensional distributions in total variation distance are notably limited, and even more so when neither of them is a product distribution. To this end, we introduce several technical innovations that will be discussed in detail in the next section.

We conjecture that the statement of Theorem~\ref{thm:main} holds for $k$ up to $\tilde O(\sqrt{n})$, while our result only applies for $\tilde O(n^{1/4})$. The range of $p'$ is optimal up to log factors: if $p' \geq 2.1n^{-1}\ln n$, then with high probability the RGT is the complete graph and planted RGT is information-theoretically impossible, so the planted RGT cannot be equivalent to PDS in this regime. We leave open the problem of obtaining a similar result for the full range of $k, p,q$, and $p'$.

\paragraph{General Planted Signal}

Our results not only demonstrate that the detection of planted dense subgraphs problem is equivalent in \ER\  graphs and RGT, but also reveal that a wide range of planted signal detection problems are equivalent in both models. Let us define the general planted signal (GPS) model. 
\begin{definition}[General Planted Signal (GPS)] For edge sequence 
$\vec e=(e_1,\dots, e_K)$
and 
$\vec p = (p_1,\cdots,p_K)$
let 
$\gps_{\vec e, \vec p}$ take as input a graph, and for each $1\le i\le K$, edge $e_i$ is resampled to be included in the graph with probability $p_i$. 
For a pure-noise distribution $\rg$, let $\gps(\rg)$ denote the distribution of $\gps(G)$ where $G\sim \rg$.
\end{definition}

Planted dense subgraph is a special case of GPS with $e_1,\cdots, e_{\binom{k}{2}}$ being the edges inside the dense subgraph, and $p_i=q$ is the planted edge density. The GPS problem in random graph distribution $\rg$ is the hypothesis testing problem between $\rg$ and $\gps(\rg)$.

Our bi-directional reduction result holds also in this general setting.
\begin{theorem}
    Let $\gps_n=\gps_{\vec{e},\vec{p}}$ be a sequence of general planted signal problems.
    For any $K=o(\sqrt{n}\log^{-17/2}n), p'\le 1/(n\log n)$ and $p_1,\dots, p_K$ each bounded away from 0 and 1 by a constant, there are distributions $\gps_n^g = \gps_{\vec{e},g(\vec{p})}$ and $\gps_n^f = \gps_{\vec{e},f(\vec{p})}$ (where $f(\vec{p})$ stands for $f:\mathbb{R}\rightarrow 
    \mathbb{R}$ applied to all elements of the vector) and efficient algorithms $\cA$, $\cA'$ 
    satisfying the transition properties.
    \begin{enumerate}
        \item     $\cA(G(n,p))\approx_{o_n(1)} \rgt(n,p,p')
    $ and
$\cA(\gps_n(G(n,p)))\approx_{o_n(1)} \gps^f_n( \rgt(n,p,p') )$, and
\item     $\cA'(\rgt(n,p,p')) \approx_{o_n(1)} G(n,p)$ and
    $\cA'(\gps_n(\rgt(n,p,p'))) \approx_{o_n(1)} \gps^g_n(G(n,p))$.
    \end{enumerate}
Moreover, the parameter mappings $f$ and $g$ satisfy $f\circ g(q) = q+o_n(1)$ and $g\circ f (q) = q+o_n(1)$. 
\end{theorem}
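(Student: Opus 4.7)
The plan is to observe that the proof of Theorem~\ref{thm:main} never really exploits the clique structure of the planted vertex set $S$: what it uses is only that the planted signal perturbs $\binom{k}{2}=\tilde O(\sqrt n)$ edges. Consequently I would reuse verbatim the algorithms $\Af$ and $\Ar$ constructed for Theorem~\ref{thm:main}, together with their scalar per-edge parameter maps $f,g:[0,1]\to[0,1]$, extended coordinatewise to vectors. The near-inverse identities $f\circ g(q)=q+o_n(1)$ and $g\circ f(q)=q+o_n(1)$ are then inherited verbatim.

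Second, I would verify the four required TV-distance bounds by invoking the general framework (Theorem~\ref{thm:general}) with the perturbed coordinates taken to be exactly the edges of $\vec e$. In the forward direction one compares $\Af(\gps_n(G(n,p)))$ with $\gps_n^f(\rgt(n,p,p'))$: outside $\vec e$ both laws agree with $\rgt(n,p,p')$ up to $o_n(1)$ in total variation (this is just $\Af(G(n,p))\approx \rgt(n,p,p')$ restricted to $E\setminus \vec e$), and on each individual edge $e_i\in\vec e$ the effective marginal probability after triangle addition equals $f(p_i)$ by the identical per-edge calculation used in the PDS case. What must be controlled is the residual correlation among edges of $\vec e$ coming from triangles whose two or three edges land on $\vec e$; these two contributions are bounded by $\tilde O(K\cdot n(p')^2)$ and $\tilde O(K^2(p')^2)$ respectively, both $o_n(1)$ under the hypotheses $K=\tilde O(\sqrt n)$ and $p'=\tilde O(1/n)$. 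The reverse direction is symmetric using the time-reversed kernel that defines $\Ar$.

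The main obstacle is essentially bookkeeping: one must verify that every combinatorial estimate used in the proof of Theorem~\ref{thm:main} depends only on the number $K$ of edges touched by the planted signal and on local degree counts, rather than on any specific structure of the underlying vertex set. The one structural asymmetry between PDS and GPS is that in PDS two planted edges sharing a vertex automatically complete a \emph{planted} triangle (the third edge is also in the clique), whereas in GPS this need not hold. But this feature only increases the dependence among planted edges in PDS relative to GPS, so its absence in the general setting can only \emph{tighten} the sensitivity bounds. Consequently, the same line of argument, with $K$ in place of $\binom{k}{2}$ throughout, yields the GPS statement with no further modification.
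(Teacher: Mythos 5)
Your proposal is correct and matches the paper's (implicit) argument: Theorem~\ref{thm:general} is already stated for a general $\gps_{\vec e,\vec p}$, the conditions \ref{cond-gen1}--\ref{cond-gen3} are per-edge statements that never reference the planted set, and the only places $K$ enters are the factor $K(\epsilon+\delta)$ in the conclusion and the requirement that $\cG$ remain high-probability under the $K$ intermediate distributions, both of which go through with $K=\tilde O(\sqrt n)$ exactly as they did for $\binom{k}{2}$ in the PDS case. The middle paragraph's separate accounting of ``residual correlation'' from triangles hitting $\vec e$ twice or thrice is unnecessary --- the single-edge lemma resamples $e_1,\dots,e_K$ one at a time, so such joint correlations are already absorbed into the $O(K(\epsilon+\delta))$ bound and need not (and are not, in the paper) bounded by a separate $\tilde O(Kn(p')^2)+\tilde O(K^2(p')^2)$ estimate; likewise the heuristic that PDS ``only increases dependence'' is moot, since the framework's conditions do not see the planted set.
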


\begin{remark}
   GPS includes Subgraph Stochastic Block Model (SSBM), which has a rank-1 signal (i.e., a small SBM) on $k$ vertices planted inside an {\ER } graph. Its complete phase diagram was determined by \cite{brennan2018reducibility}, with computationally hard region obtained via reduction from the planted clique conjecture. Our result shows that for $k= o(n^{1/4}\log^{-17/4}n)$, the computational complexity of SSBM is the same in ambient graph being \ER\ or RGT. 
\end{remark}

\paragraph{Approximation of Random Intersection Graph}
Aside from RGT being intrinsically interesting as a natural random graph model with low-order dependence, it turns out that the RGT non-trivially approximates the random intersection graph. 
\begin{definition}[Random Intersection Graph]
A sample from $\rig(n,d,\delta)$ is defined by sampling $n$ sets $S_1,\cdots,S_n\subset [d]$ where each $S_i$ includes each element of $[d]$ independently with probability $\delta$. Vertices $i$ and $j$ are connected in $G$ if and only if $S_i$ and $S_j$ have non-empty intersection.
\end{definition}


\vspace{-3mm}
\begin{restatable}{theorem}{rigrgt}\label{thm:rig-rgt}
For an RIG with constant edge density, i.e., $d=\Theta(1/\delta^2)$, if $d\gg n^{2.5}$, then $$\TV(\rig(n,d,\delta),\rgt(n,1-e^{-d\delta^2 +(n-2)d\delta^3(1-\delta)^{n-3}},1-e^{-d\delta^3(1-\delta)^{n-3}}))=o(1).$$
\end{restatable}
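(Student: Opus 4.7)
The plan is to view the $\rig$ as a superposition of independent random cliques --- one per element of $[d]$ --- and couple it with the edge-plus-triangle superposition structure of $\rgt$. For each subset $V \subseteq [n]$ let $N_V = |\{\ell \in [d] : V_\ell = V\}|$; the $\rig$ graph equals $\bigcup_{V : N_V \ge 1} K_V$, hence is fully determined by the indicators $Y_V = \mathbf{1}[N_V \ge 1]$ for $|V| \ge 2$. Each $Y_V$ is marginally $\mathrm{Bern}(1 - (1-q_V)^d) \approx \mathrm{Bern}(1 - e^{-d q_V})$ with $q_V = \delta^{|V|}(1-\delta)^{n-|V|}$. The first step is to approximate the joint law of $(Y_V)_{|V|\ge 2}$ by the product of its marginals: a Chen--Stein type argument using the mutual exclusion of the events ``$V_\ell = V_1$'' and ``$V_\ell = V_2$'' for a single $\ell$ gives a TV bound that is $o(1)$ whenever the total $q$-mass of size-$\geq 2$ subsets is small, which holds under $d \gg n^{2.5}$ and $n\delta = o(1)$.

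Once independence is established, decompose by cardinality. Size-$2$ subsets contribute iid Bernoulli edges with probability $1-e^{-d\delta^2(1-\delta)^{n-2}}$ and size-$3$ subsets contribute iid Bernoulli triangle placements with probability $1-e^{-d\delta^3(1-\delta)^{n-3}}$; together these form exactly an $\rgt$ instance. The theorem's parameters are chosen so that the marginal edge and triangle densities agree: direct algebra gives
\[
1-(1-p)(1-p')^{n-2}=1-e^{-d\delta^2}\quadand\quad 1-p'=e^{-d\delta^3(1-\delta)^{n-3}},
\]
so that the effective edge density of $\rgt(n,p,p')$ matches the $\rig$ edge density $1-e^{-d\delta^2}$ and the triangle-placement rate matches the contribution of the size-$3$ layer.

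The main obstacle is controlling the residual contribution of size-$s$ elements for $s \ge 4$. In the regime $d \gg n^{2.5}$, the expected number of size-$s$ events is of order $n^{2.5-s/4}$ (up to polylog factors), which is polynomial in $n$ for small $s \ge 4$, so these events cannot be union-bounded away. The plan for handling them is to absorb each such $s$-clique into the edge-plus-triangle layers via a hybrid argument, using that a size-$s$ clique is a union of $\binom{s}{3}$ triangles, and the total number of extra triangles introduced by $s \ge 4$ events is of strictly lower order than the $\Theta(n^{3}\delta)$ triangles produced by the size-$3$ layer; thus a small perturbation of the triangle-placement rate $p'$ should accommodate them. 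Formalizing this absorption --- in particular, showing that the joint distribution of the extra triangles and edges introduced by large cliques is close in TV to a mild tweak of iid triangle placements inside the already-constant-density base graph --- will require a careful second-moment or $\chi^2$ computation and is the technically most delicate step of the proof.
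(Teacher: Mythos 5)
Your high-level decomposition (size-$2$ subsets give edges, size-$3$ subsets give triangles, size-$\geq 4$ subsets must be shown negligible) and your algebra for matching the edge and triangle densities are both correct, and they line up with the paper's argument. Your Poissonization step — approximating the joint law of the indicators $Y_V$ by a product measure via Chen--Stein with mutual exclusion — is a valid alternative to the paper's route, which instead invokes a result of \cite{brennan2020phase} (Lemma~\ref{lem:rig-rigp}) to Poissonize the multinomial of clique-size counts and then uses Poisson splitting to get per-subset independence. Either Poissonization should work in this regime, so that part of your plan is sound even if not identical to the paper's.

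The gap is in your treatment of the size-$\geq 4$ cliques, which you yourself flag as ``the technically most delicate step.'' Your proposed plan — show that the joint law of the extra edges and triangles contributed by the large cliques is close in TV to ``a mild tweak of iid triangle placements,'' via a second-moment or $\chi^2$ computation — is aimed at the wrong target. A uniformly placed $4$-clique is not distributionally close to four independent triangle placements: a $4$-clique forces a specific correlated pattern of $6$ edges (in particular all $\binom{4}{2}$ pairs present simultaneously) that no small perturbation of the iid triangle rate $p'$ reproduces. What the paper actually shows (proof of Lemma~\ref{lem:er-rigpf}) is something simpler and different: adding a \emph{single} clique of any fixed size $\geq 3$ at a uniformly random location to a constant-density \ER\ graph changes the distribution by at most $O(n^{-3/2})$ in total variation — this is Corollary~\ref{cor:TV-add-clique}, whose proof is the signed-triangle-count argument borrowed from \cite{brennan2020phase}. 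Then one just sums: the expected number of size-$\geq 4$ cliques is $O(dn^4\delta^4)=O(n^4/d)=o(n^{3/2})$ when $d\gg n^{5/2}$, so the cumulative TV contribution is $o(1)$. That is, the large cliques are not ``absorbed into the triangle layer''; they are shown to be statistically \emph{invisible} against the dense ambient graph, with no matching to triangles required. Your proposal does not identify the per-clique TV bound (Corollary~\ref{cor:TV-add-clique} or an equivalent), which is the key technical tool here, and without it the ``absorption'' step as you describe it would not go through.
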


It was shown in \cite{brennan2019phase} that the threshold for distinguishing RIG from \ER\ occurs at $d\sim n^3$. Thus, the RGT is close in total variation in the range $n^{2.5}\ll d\ll n^3$, whereas in this range the RIG and \ER\ have total variation distance close to 1.

\section{General Result and Application to RGT}
\label{sec:general}
We start by introducing a general theorem that shows how a mapping between unplanted distributions yields a mapping also in the planted case if the mapping satisfies a certain \emph{perturbation invariance}. We then describe our specific mappings between {\ER } and RGT. After that, in the following section, we give the main ideas for showing that our mappings satisfy the conditions of our general perturbation theorem, which constitutes the bulk of the paper's technical innovation. 

\subsection{Reductions and Sensitivity to Perturbation}

We introduce a novel and general framework 
for validity of a reduction from one general graph model to another. The general result will be used to show that planted dense subgraph is computationally equivalent in \ER\  and in $\rgt$. In this section, the graph models and transformations are abstract and specific constructions will be discussed in Section~\ref{sec:reverse}.

Let $\rg$ and $\rg'$ be two distributions of random graphs on vertex set $[n]$ and consider the general planted signal $\gps_{\Vec{e},\Vec{p}}$ on both random graphs.  

Suppose that  $\cA$ is a randomized algorithm (equivalently, a Markov kernel) satisfying $\cA(\rg)=\rg'$.  
The following theorem gives a general sufficient condition for $\cA$ to approximately correctly map the \emph{planted distribution} $\prg:=\gps_{\Vec{e},\Vec{p}}(\rg)$ to the planted distribution $\prgp:=\gps_{\Vec{e},\Vec{q}}(\rg')$.
For each $0\le i\le K$, let $\prg_i$ (or $\prgp_i$) be the planted version of $\rg$ (or $\rg'$), defined by starting with $\rg$ (or $\rg'$) and resampling edges $e_1,e_2,\cdots, e_i\in \binom{[n]}{2}$, respectively, with probability $p_1,p_2\cdots, p_i$ (or $q_1,q_2\cdots, q_i$). 

\begin{theorem}\label{thm:general}
    Let $\cG$ be a set of graphs on $n$ vertices such that for any $0\le i\le K$,
    $\Pr_{G\sim \prg_i}(G\in \cG)\ge 1-\delta$.
Let $\cA$ be a randomized mapping between graphs on $n$ vertices satisfying $\cA(\rg)=\rg'$.
    Suppose that for each $e\in \binom{[n]}{2}$, there exists $p_-^e,p_+^e\in [0,1]$ such that for every $G\in \cG$:
\begin{enumerate}[label=\textbf{\textup{C\arabic*}}]
    \item \label{cond-gen1} Presence of edge $e$ in the input to $\cA$ has little influence on the other edges in the output of $\cA$: 
    $$
    \cA (G-e)|_{\sim e}\approx_{\epsilon} \cA (G+e)|_{\sim e}\,.
    $$
    \item \label{cond-gen2} In the output graph, edge $e$ is approximately independent of the rest of the edges: 
    \[
    \cA(G-e)\approx_{\epsilon} \cA(G-e)|_{\sim e} \times \cA(G-e)|_{e}
    \quadand
    \cA(G+e)\approx_{\epsilon} \cA(G+e)|_{\sim e} \times \cA(G+e)|_{e}\,.
    \]
    \item \label{cond-gen3} The marginal probability on edge $e$ is approximately constant as a function of the input graph: 
    \[
    \b|\Pr(e\in \cA(G-e))- p^e_-\b|\le \epsilon
\quadand
    \b|\Pr(e\in \cA(G+e))- p^e_+\b|\le \epsilon\,. 
    \]
\end{enumerate}
Then we have 
    \[\TV\b(\cA(\gps_{\Vec{e},\Vec{p}}(\rg)),\gps_{\Vec{e},\Vec{q}}(\rg')\b)=O(K(\epsilon+\delta))\,,\]
    where $q_i = p_ip_+^{e_i} + (1-p_i)p_-^{e_i}$ for each $i\in [K]$.
\end{theorem}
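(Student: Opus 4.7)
The plan is to induct on $i = 0, 1, \ldots, K$, comparing $\nu_i := \cA(\prg_i)$ against the target $\mu_i := \gps_{(e_1,\ldots,e_i),(q_1,\ldots,q_i)}(\rg')$, aiming for $\TV(\nu_i, \mu_i) = O(i(\epsilon+\delta))$. The base case $i=0$ holds by the hypothesis $\cA(\rg) = \rg'$, and the final case $i = K$ is exactly the claim. The inductive step will follow from a single-edge ``commutation'' lemma applied to $e = e_{i+1}$, combined with convexity of $\TV$ and data processing.

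The heart of the argument is the following lemma: for any $G \in \cG$, let $A(G)$ denote the output of first resampling edge $e = e_{i+1}$ in $G$ with probability $p_{i+1}$ and then applying $\cA$, and let $B(G)$ denote the output of first applying $\cA$ to $G$ and then resampling edge $e$ with probability $q_{i+1} = p_{i+1} p_+^e + (1-p_{i+1}) p_-^e$. I claim $\TV(A(G), B(G)) = O(\epsilon)$. To see this, write $A(G) = p_{i+1}\cA(G+e) + (1-p_{i+1})\cA(G-e)$ as a mixture. Apply \ref{cond-gen2} to each summand to replace $\cA(G\pm e)$ by the product of its $\sim e$ and $e$ marginals; by convexity of $\TV$ the net error is $O(\epsilon)$. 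Apply \ref{cond-gen3} to replace each edge marginal by $\bern(p_\pm^e)$ ($O(\epsilon)$ more), and \ref{cond-gen1} to replace $\cA(G-e)|_{\sim e}$ by $\cA(G+e)|_{\sim e}$ ($O(\epsilon)$ more). The resulting mixture factors as $\cA(G+e)|_{\sim e} \otimes \bern(q_{i+1})$ by direct computation on the Bernoulli mixture. On the other hand, $B(G)$ equals $\cA(G\pm e)|_{\sim e} \otimes \bern(q_{i+1})$ depending on whether $e \in G$, which in either case is within $O(\epsilon)$ of the same product by another application of \ref{cond-gen1}.

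Given the lemma, write $\nu_{i+1} = \E_{G\sim \prg_i} A(G)$ and set $\tilde{\nu}_{i+1} := \E_{G \sim \prg_i} B(G)$; note that $\tilde{\nu}_{i+1}$ is exactly the distribution obtained by resampling edge $e_{i+1}$ with probability $q_{i+1}$ in $\nu_i$. Convexity of $\TV$ combined with the lemma (applied on $\cG$) and the hypothesis $\Pr_{G \sim \prg_i}(G \notin \cG) \le \delta$ yields $\TV(\nu_{i+1}, \tilde{\nu}_{i+1}) = O(\epsilon + \delta)$. Since the same resampling channel applied to $\mu_i$ produces $\mu_{i+1}$ by definition, the data processing inequality gives $\TV(\tilde{\nu}_{i+1}, \mu_{i+1}) \le \TV(\nu_i, \mu_i)$. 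Triangle inequality closes the induction, and telescoping over $i = 0, \ldots, K-1$ delivers the advertised $O(K(\epsilon + \delta))$ bound. The main obstacle is the commutation lemma itself, since it is the only place all three conditions must be simultaneously invoked; a subtlety to handle carefully is that \ref{cond-gen2} is a closeness statement about a joint distribution, so one must replace marginals term-by-term inside the Bernoulli mixture rather than trying to factorize before mixing, and one must be vigilant that the reference $\sim e$ marginal on both sides of $A(G)\approx B(G)$ is consistently chosen.
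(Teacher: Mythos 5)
Your proposal is correct and follows essentially the same route as the paper: a single-edge commutation lemma (your $\TV(A(G),B(G)) = O(\epsilon)$ for $G\in\cG$) followed by telescoping via triangle inequality and data processing. The only cosmetic difference is that the paper proves the single-edge step by introducing an intermediate operator $\cA^{e_i}$ and computing its expectation, while you compare both sides directly to the common anchor $\cA(G+e)|_{\sim e}\otimes\bern(q_{i+1})$; the ingredients invoked (C1, C2, C3, convexity of TV, DPI) and the resulting bound are identical.
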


The main technical challenge of this paper is to prove that our proposed mapping $\cA$ from RGT to \ER\ satisfies conditions \ref{cond-gen1} and \ref{cond-gen2}. The mapping is given in Section~\ref{sec:reverse}.

\begin{remark}\label{rem:equivalent-general-theorem}
It turns out
that the conditions in the theorem are equivalent (up to constant factors) to the following more compact conditions:
    \[\TV\b(\cA(G-e), \cA(G-e)|_{\sim e}\times \bern(p_-^e)\b)\le \epsilon\quadand\TV\b(\cA(G+e), \cA(G-e)|_{\sim e}\times \bern(p_+^e)\b)\le \epsilon\,.\]
    We state Theorem~\ref{thm:general} in the form above because it corresponds to how we apply it.
\end{remark}

\subsubsection{Proof of Theorem~\ref{thm:general} via Single Edge Lemma}
The idea is to resample edges of the input one at a time and bound the effect of each step on the output of $\cA$. 
Let $\RES_{e}^p$ be the operation of resampling edge $e$ to be present with probability $p$. 

\begin{lemma}[Single Edge Lemma]\label{lem:key}
Under the conditions of Theorem~\ref{thm:general},
for any $1\le i\le K$,
\[
\TV\b(\cA\circ\RES_{e_i}^{p_i}(\prg_{i-1}),\RES_{e_i}^{q_i}\circ \cA(\prg_{i-1})\b)=O(\epsilon+\delta)\,.
\]
\end{lemma}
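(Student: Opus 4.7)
The plan is to bound the total variation distance pointwise for each realization of $G \sim \prg_{i-1}$ and then average using the joint convexity of total variation under mixtures. First, convexity yields
\[
\TV\b(\cA\circ\RES_{e_i}^{p_i}(\prg_{i-1}),\,\RES_{e_i}^{q_i}\circ \cA(\prg_{i-1})\b) \;\le\; \mathbb{E}_{G\sim\prg_{i-1}}\TV\b(\cA\circ\RES_{e_i}^{p_i}(G),\,\RES_{e_i}^{q_i}\circ \cA(G)\b)\,.
\]
The hypothesis of Theorem~\ref{thm:general} gives $\Pr_{G\sim\prg_{i-1}}(G \notin \cG)\le \delta$, so graphs outside $\cG$ contribute at most $\delta$ to this expectation via the trivial bound $\TV \le 1$. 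It thus suffices to establish a bound of $O(\epsilon)$ uniformly over $G \in \cG$.

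Now fix $G \in \cG$, write $e = e_i$, and let $G_+ = G + e$ and $G_- = G - e$. Because $\RES_e^{p_i}$ ignores the prior state of edge $e$, the left distribution decomposes as a mixture
\[
\cA\circ\RES_e^{p_i}(G) \;=\; p_i\,\cA(G_+) + (1-p_i)\,\cA(G_-)\,.
\]
I will apply the compact equivalent form of C1--C3 recorded in Remark~\ref{rem:equivalent-general-theorem}, with the common background distribution $\mu := \cA(G_-)|_{\sim e}$, giving
\[
\cA(G_-) \approx_\epsilon \mu \times \bern(p_-^e) \quadand \cA(G_+) \approx_\epsilon \mu \times \bern(p_+^e)\,.
\]
Combining these two approximations with weights $p_i$ and $1-p_i$ by convexity of TV, and using the identity $p_i\bern(p_+^e) + (1-p_i)\bern(p_-^e) = \bern(q_i)$ for the defining choice $q_i = p_ip_+^e + (1-p_i)p_-^e$, yields
\[
\cA\circ\RES_e^{p_i}(G) \;\approx_\epsilon\; \mu \times \bern(q_i)\,.
\]

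For the right side, $\RES_e^{q_i}\circ \cA(G) = \cA(G)|_{\sim e} \times \bern(q_i)$, so it remains to show $\cA(G)|_{\sim e} \approx_\epsilon \mu$. The graph $G$ equals either $G_+$ or $G_-$: when $G = G_-$ this holds exactly by definition of $\mu$, and when $G = G_+$ it follows from the second compact bound by marginalizing over edge $e$. Hence $\RES_e^{q_i}\circ \cA(G) \approx_\epsilon \mu \times \bern(q_i)$, and the triangle inequality with the LHS estimate gives a per-input bound of $2\epsilon$. Averaging over $G \sim \prg_{i-1}$ and adding the $\delta$ contribution from $\cG^c$ completes the argument.

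The main obstacle is not this argument itself, which is short and essentially a combinatorial bookkeeping exercise once C1--C3 are in hand, but rather verifying those conditions for the concrete RGT-to-{\ER} mapping $\cA$ constructed in Section~\ref{sec:reverse}: producing sharp enough sensitivity estimates for the time-reversal procedure under a single edge flip is where the technical work of the paper is concentrated.
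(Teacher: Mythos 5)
Your proof is correct and takes essentially the same route as the paper's: fix $G\in\cG$, pass through the decoupled intermediate $\cA(G-e_i)|_{\sim e_i}\times\bern(q_i)$, and average over $\prg_{i-1}$, losing $\delta$ on the bad set $\cG^c$. The paper packages the same idea via the auxiliary kernel $\cA^{e_i}$ and interleaves the averaging with the per-graph bounds; your version works per-graph first and invokes Remark~\ref{rem:equivalent-general-theorem}'s compact form directly, which is slightly cleaner bookkeeping but not a substantively different argument.
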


Thus, the operations of applying $\cA$ and resampling edge $e_i$ \emph{approximately commute}, with $p_i$ being replaced by $q_i$.

Note that $\prg_i=\RES_{e_i}^{p_i}\prg_{i-1}$, so Lemma~\ref{lem:key} equivalently states that  
\[
\TV\b(\cA(\prg_i ),\RES_{e_i}^{q_i}\circ \cA(\prg_{i-1})\b)=O(\epsilon+\delta)
\,.
\]
Applying the triangle inequality $K$ times and using the last display for each term yields
\[
\TV\b(\RES_{e_1}^{q_1}\circ\cdots\circ \RES_{e_K}^{q_K}\circ \cA(\rg), \cA(\prg_K)\b) = O(K(\epsilon+\delta))\,.
\]
Note that $\cA(\rg) = \rg'$, and by definition $\RES_{e_1}^{q_1}\cdots \RES_{e_K}^{q_K} \rg' = \prg_K'$, so Theorem~\ref{thm:general} is proved. It remains to prove Lemma~\ref{lem:key}. 

\subsubsection{Proof of Single Edge Lemma~\ref{lem:key}}
\begin{proof}[]
Suppose $G_{i-1}\sim \prg_{i-1}$ and $G_i=\RES_{e_i}^{p_i} G_{i-1}$.
Let $\cA^{e_i}(G)$ be the following transformation that decouples edge $e_i$ with rest of the edges,
\begin{enumerate}
    \item Sample a graph from $\cA(G-e)$.
    \item If $e\notin G$, resample edge $e$ to be present with probability $p_-^e$. Otherwise, resample edge $e$ to be present with probability $p_+^e$.
\end{enumerate}
In other words,  $\cA^{e_i}(G)=\RES_{e_i}^{p_-^{e_i}\ind{e\notin G}+p_+^{e_i}\ind{e\in G}}\circ \cA(G-e)$. 
From \ref{cond-gen2}, $\cA(G)\approx_{\epsilon} \cA(G)|_{\sim e_i}]\times \cA(G)|_{e_i}$. From $\ref{cond-gen3}$, if $e\in G$, $\cA(G)|_{e_i}\approx_{\epsilon} \bern(p_+^{e_i})$, if $e\notin G$, $\cA(G)|_{e_i}\approx_{\epsilon} \bern(p_-^{e_i})$.
Therefore, $\TV(\cA^{e_i}(G),\cA(G))\le 2\epsilon$.
If $G_{i-1}\in \cG$, we have 
\[
\RES_{e_i}^{q_i}\circ \cA(G_{i-1}) \approx_{2\epsilon} \RES_{e_i}^{q_i}\circ \RES_{e_i}^{p_-^{e_i}\ind{e_i\notin G_{i-1}}+p_+^{e_i}\ind{e_i\in G_{i-1}}}\circ \cA (G_{i-1}-e) = \RES_{e_i}^{q_i}\circ\cA (G_{i-1}-e)\,.
\]
Because $G_{i-1}\in \cG$ with probability at least $1-\delta$, $\RES_{e_i}^{q_i}\circ \cA(\prg_{i-1})\approx_{2\epsilon+\delta} \RES_{e_i}^{q_i}\circ\cA (\prg_{i-1}-e)$.

Similarly, fixing $G_i\in \cG$, we have $\cA(G_i) \approx_{\epsilon} \cA^{e_i}(G_i)$.
Therefore, 
\[
\cA(\prg_i) \approx_{2\epsilon+\delta} \cA^{e_i}(\prg_i)
\,.
\]
Use $\prg_{i} = \RES_{e_i}^{p_i}(\prg_{i-1}) = \E_{G_{i-1}}[ \RES_{e_i}^{p_i}G_{i-1}]$ and the definition of $\cA^{e_i}$, the left-hand side is equal to $\cA\circ \RES_{e_i}^{p_i} (\prg_{i-1}) $ and the right-hand side is equal to 
\begin{align*}
&
\E_{G_{i-1}}\B[ \RES_{e_i}^{p_-^{e_i}\ind{e\notin \RES_{e_i}^{p_i} (G_{i-1})}+p_+^{e_i}\ind{e\in \RES_{e_i}^{p_i} (G_{i-1})}}\circ \cA (G_{i}-e) \B]\\
&=\ \E_{G_{i-1}}\B[ \RES_{e_i}^{p_ip_+^{e_i}+(1-p_i)p_-^{e_i}}\circ \cA (G_{i-1}-e) \B]\\
&=\ \E_{G_{i-1}}\B[ \RES_{e_i}^{q_i}\circ\cA (G_{i-1}-e)\B]\\
&=\ \RES_{e_i}^{q_i}\circ\cA (\prg_{,i-1}-e)
\,.
\end{align*}
In other words, we have
\[\cA\circ \RES_{e_i}^{p_i} (\prg_{,i-1}) \approx_{4\epsilon+2\delta} \RES_{e_i}^{q_i}\circ\cA (\prg_{,i-1}-e) \,.\qedhere\]
\end{proof}

\subsection{Mapping between {\ER } and RGT}
\label{sec:reverse}

To use Theorem~\ref{thm:general} to relate ordinary PDS to the version in $\rgt$, we first need to specify transformations between the ambient graph models. Our mapping from $\G(n,p)$ to $\rgt(n,p,p')$ simply applies the definition of $\rgt$.


\begin{definition}[Forward Transition]\label{def:forward}
Given any graph $G$, let $\Af(G)$ be the graph obtained from $G$ by the following process: independently for each set of three vertices, add the three edges between them with probability $p'$. This defines a Markov transition kernel on the space of graphs.
\end{definition}

By the definition of random graph with triangles, $\Af$ transfers $\G(n,p)$ to $\rgt(n,p,p')$.
The reverse transition is more complicated. We will first describe the distribution of the set of  triangles that were added to $G_0$, conditioned on observing $G = \Af(G_0)$. 
Let $X\in \{0,1\}^{[n]\choose 3}$ be an indicator of a set of triangles.  We use $|X|$ to denote the size of this set, $E(X)$ to denote the set of edges included in at least one of the triangles, and $\e(X)= |E(X)|$ to denote the total number of edges.

\begin{definition}[Triangle Distribution]\label{def:triangle_dist}
For a given graph $G$, the \emph{triangle distribution} $\mu_G$ is a distribution over subsets of triangles $x\in \{0,1\}^{\binom{[n]}{3}}$. The probability mass function $\mu_G$ is given by
\begin{equation*}
    \mu_G(x)=
    \frac{1}{Z_G}\b(\frac{p'}{1-p'}\b)^{|x|}p^{-\e(x)}
    \,,
\end{equation*}
if $E(x)\subseteq G$ and $ \mu_G(x)=0$ if $E(x)\not\subseteq G$. Here
 $Z_G=\sum_{x:E(x)\subset G}\b(\frac{p'}{1-p'}\b)^{|x|}p^{-e(x)}$ normalizes $\mu_G$. 

It will be convenient to let $Y\in \{0,1\}^{[n]\choose 2}$ be the indicator vector of the edge set $E(X)$, 
\begin{equation*}
Y_e=\ind{e\in E(X)}\quad \text{for each } e\in {[n]\choose 2}\,.
\end{equation*}
To distinguish between different graphs, we use $\cL_G(Y)$ to denote the law of $Y$ for a given graph $G$.
\end{definition}

\begin{definition}[Reverse Transition]\label{def:reverse}
Given any graph $G$, let $G'=\Ab(G)$ be the graph obtained from $G$ by the following process:
\begin{enumerate}
    \item Sample a set of triangles $X\sim \mu_G$
    \item Let $G'$ be equal to $G$ on the set $E(X)^c$ 
    \item For each $e\in E(X)$, include $e$ in $G'$ independently with probability $p$.
\end{enumerate}
\end{definition}
\begin{remark}
The reverse transition is only analyzed for $p'=1/(n\log n)$, not for any $p'\le 1/(n\log n)$. Nonetheless, we can still construct a reverse map that works for any $p'\le 1/(n\log n)$ by first adding triangles to increase $p'$ to $1/(n\log n)$, and then applying $\Ab$. This is formalized in Corollary~\ref{cor:true-reverse}.
\end{remark}


\begin{lemma}\label{lem:reverse}
The reverse transition $\Ab$ maps $\rgt(n,p,p')$ to the {\ER} $\G(n,p)$ distribution.

\end{lemma}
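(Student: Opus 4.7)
The plan is to exhibit $\Ab$ as the Bayes time-reversal of $\Af$ with respect to the joint law induced by running $\Af$ on $G(n,p)$. Concretely, let $G_0\sim G(n,p)$ and let $X\in\{0,1\}^{\binom{[n]}{3}}$ be independent, with i.i.d.\ Bernoulli$(p')$ coordinates indexed by unordered triples, so that $G_1\defeq \Af(G_0)=G_0\cup E(X)$ is distributed as $\rgt(n,p,p')$. I claim that the conditional distribution of $G_0$ given $G_1$ coincides exactly with $\Ab(G_1)$; since the marginal law of $G_0$ is $G(n,p)$, averaging over $G_1$ then yields $\Ab(\rgt(n,p,p'))=G(n,p)$.

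The first step is to write out the joint mass function
\[
\Pr(G_0=g_0,\,X=x)=p^{|g_0|}(1-p)^{\binom{n}{2}-|g_0|}(p')^{|x|}(1-p')^{\binom{n}{3}-|x|},
\]
and compute $\Pr(G_1=g,\,X=x)$ by summing over all $g_0$ compatible with $G_1=g$. On this event one necessarily has $E(x)\subseteq g$ and $g_0\cap E(x)^c=g\cap E(x)^c$, while $g_0\cap E(x)$ is a free subset of $E(x)$; the sum over these free edges contributes a factor of $1$, leaving
\[
\Pr(G_1=g,\,X=x)=p^{|g|-\e(x)}(1-p)^{\binom{n}{2}-|g|}(p')^{|x|}(1-p')^{\binom{n}{3}-|x|}\ind{E(x)\subseteq g}.
\]

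Dividing by $\Pr(G_1=g)$ and retaining only the $x$-dependent factors gives
\[
\Pr(X=x\mid G_1=g)\ \propto\ \Bigl(\tfrac{p'}{1-p'}\Bigr)^{|x|}p^{-\e(x)}\ind{E(x)\subseteq g},
\]
which is exactly $\mu_g(x)$ from Definition~\ref{def:triangle_dist}. Moreover the same joint computation shows that conditional on $(G_1,X)=(g,x)$ the restriction of $G_0$ to $E(x)^c$ is forced to equal $g$, while on $E(x)$ the edges of $G_0$ are independent Bernoulli$(p)$ (the prior survives unchanged because each edge in $E(x)$ is present in $G_1$ regardless of $G_0$). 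Reading off this two-stage description of $G_0$ given $G_1$ produces exactly the three-step procedure of Definition~\ref{def:reverse}, so $\Ab(G_1)$ has the same law as $G_0\mid G_1$, and the lemma follows.

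The only genuinely computational step is the algebraic cancellation that identifies $\Pr(X=x\mid G_1=g)$ with $\mu_g(x)$; I anticipate no obstacle there, since the unusual-looking weights $(p'/(1-p'))^{|x|}p^{-\e(x)}$ in Definition~\ref{def:triangle_dist} were visibly engineered so that $\mu_G$ is precisely the posterior of the i.i.d.\ triangle indicators given the observation $G_1=G$. Everything else is bookkeeping around which edges are forced and which remain free after the conditioning.
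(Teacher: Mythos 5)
Your proof is correct and uses essentially the same idea as the paper's: both identify $\Ab$ as the Bayes time-reversal of $\Af$ under the joint law induced by running $\Af$ on $G(n,p)$. The only difference is presentational --- the paper verifies the kernel identity $Q(G'\mid G)=P(G'\mid G)$ by an algebraic cancellation after summing over $X$, whereas you work directly with the augmented joint $(G_0,X,G_1)$, read off $\cL(X\mid G_1)=\mu_{G_1}$ and $\cL(G_0\mid G_1,X)$ separately, and thereby recover the two-stage sampler of Definition~\ref{def:reverse}; this makes the time-reversal structure a bit more transparent but proves the same statement by the same mechanism.
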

\begin{proof}
Recall the definition of $\rgt(n,p,p')$: we start with $\G(n,p)$ and add triangles. Let $T$ denote the set of triangles that are selected, and $G$ denote the random graph after adding triangles. Let $|G|$ be the number of edges in graph $G$.

Let $P(G|G')$ denote the distribution on graphs $G$ obtained by applying the forward transition on $G'$, i.e., the distribution of $\Af(G')$.
We will show that the reverse transition is exactly the chain given by the posterior distribution $P(G'|G)$. To start,
\begin{align*}
P(G'|G)\propto P(G|G')P(G')
&=\sum_{X} \ind{G=G'+E(X)} p'^{|X|}(1-p')^{\binom{n}{3}-|X|}p^{|E(G')|}(1-p)^{\binom{n}{2}-|E(G')|}\\
&\propto\sum_{X}  \ind{G=G'+E(X)} \B(\frac{p'}{1-p'}\B)^{|X|}\B(\frac{p}{1-p}\B)^{|E(G')|}\,.
\end{align*}

Now, let $Q(G'|G)$ be the distribution given by applying the reverse transition with input $G$, i.e., the distribution of $\Ab(G)$:
\begin{equation*}
    Q(G'|G)=\sum_X  \ind{G=G'+E(X)} \mu_{G}(X)p^{\e(X)-|E(G)|+|E(G')|}(1-p)^{|E(G)|-|E(G')|}\,,
\end{equation*}
where $\mu_{G}(X)\propto  \b(\frac{p'}{1-p'}\b)^{|X|}p^{-\e(X)}$. Dropping the factor depending only on $|G|$ shows that $Q(G'|G)=P(G'|G)$ and proves the lemma.
\end{proof}

For the purpose of reasoning about polynomial-time algorithms, it is crucial that our reduction $\cA$ can be implemented in polynomial time. Fortunately, producing a sample $X\sim \mu_G$ can be done efficiently via the Glauber dynamics Markov chain. 
\begin{restatable}{lemma}{glaubermix}\label{lem:glauber-triangle-dist}
For any fixed graph $G$ over $n$ vertices, $p'\ll 1/n$ and constant $p$, the Glauber dynamics on $\mu_G$ mixes in $O(n^3\log n)$ time. 
\end{restatable}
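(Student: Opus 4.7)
The plan is to apply the path coupling method of Bubley--Dyer to the Glauber dynamics on $\mu_G$, which at each step picks a uniformly random triangle $t \in \binom{[n]}{3}$ and resamples $x_t$ from its conditional distribution given $x_{\sim t}$. The state space has $N=\binom{n}{3}$ sites, and we work with the Hamming distance on $\{0,1\}^{\binom{[n]}{3}}$. It suffices, by the path coupling theorem, to exhibit a coupling on pairs of configurations differing in a single coordinate $t^*$ whose expected distance contracts by a factor $1-\Omega(1/n^3)$, since the diameter is at most $N=O(n^3)$ and so mixing follows in $O(n^3 \log n)$ steps.

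First I would compute the conditional probabilities explicitly. Given $x_{\sim t}$, if $E(\{t\})\not\subseteq G$ then $x_t=0$ deterministically; otherwise the odds of $x_t=1$ are $(p'/(1-p')) p^{-k}$, where $k=|E(\{t\})\setminus E(x_{\sim t})|\in\{0,1,2,3\}$ counts the edges of $t$ not already covered by other selected triangles. Because $p$ is a constant and $p'\ll 1/n$, each resulting conditional probability $q_k$ is $O(p')$, and in particular $|q_k-q_{k'}|=O(p')$ for $k,k'\in\{0,1,2,3\}$. The key geometric fact is that two distinct triangles share at most one edge (sharing two edges would force three common vertices), so flipping $x_{t^*}$ changes $k_t$ by at most $1$, and only when $t$ shares an edge with $t^*$.

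Now I would define the coupling: pick the same random triangle $t$ in both chains and use the optimal (maximal) coupling of the two conditional Bernoullis for $x_t$ and $y_t$. Three cases:
\begin{itemize}[leftmargin=*]
\item If $t=t^*$ (probability $1/N$), the two conditionals coincide (they depend only on $x_{\sim t^*}=y_{\sim t^*}$), so the coupled update yields $x_t=y_t$ and the distance drops to $0$.
\item If $t\neq t^*$ and $t$ shares no edge with $t^*$ (probability at most $1$), then $k_t$ is identical in $x$ and $y$, the conditional distributions agree, and the distance stays at $1$.
\item If $t\neq t^*$ and $t$ shares an edge with $t^*$ (at most $3(n-3)$ such triangles), the optimal coupling produces disagreement with probability $|q_{k_x}-q_{k_y}|=O(p')$.
\end{itemize}
Combining, the expected distance after one step is at most $1 - \tfrac{1}{N} + \tfrac{3(n-3)}{N}\cdot O(p') = 1 - \tfrac{1}{N}\bigl(1 - O(np')\bigr) = 1-\Omega(1/n^3)$, using $p'\ll 1/n$. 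Invoking the path coupling theorem then yields a mixing time of $O(n^3 \log N)=O(n^3\log n)$, as claimed.

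The main obstacle is the competition in the rate: the contraction coming from picking $t=t^*$ is only $1/N=\Theta(1/n^3)$, while the expansion from each neighboring triangle is of order $p'$ per neighbor and there are $\Theta(n)$ neighbors. The assumption $p'\ll 1/n$ is exactly what makes the contraction dominate; any relaxation of this bound would break path coupling in this form and likely require a different technique (e.g.\ comparison with a block dynamics on edges of $G$, or spectral independence). Beyond that, the proof is essentially a routine application of path coupling after the elementary observations about $q_k$ and the edge-sharing structure of triangles.
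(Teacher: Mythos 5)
Your proof is correct and takes essentially the same route as the paper's: path coupling on the Hamming metric with contraction $\Omega(1/N)$, exploiting that flipping a single triangle affects the conditional law only at its $O(n)$ edge-sharing neighbors, each by $O(p')\ll 1/n$. The paper reaches the same conclusion by instantiating its general path-coupling result (Theorem~\ref{thm:small_marginal_concentration}) for $q$-marginally-small distributions on bounded-degree graphical models; you carry out the identical coupling argument directly on $\mu_G$, with the minor cosmetic difference that you use the optimal coupling on neighbouring sites where the paper couples arbitrarily and invokes a union bound, both yielding disagreement probability $O(p')$ per neighbour.
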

The Glauber dynamics is defined in Section~\ref{sec:glauber} and the lemma is proved in Section~\ref{sec:conc-mix}.

\subsection{Applying Theorem~\ref{thm:general} to Triangle Removal Algorithm}\label{sec:apply-thm-to-reverse}
Having Theorem~\ref{thm:general} and the transformations in hand, we are ready to prove Theorem~\ref{thm:main}. In this section we focus on the reverse transition, $\Ab$, as it contains a wider range of technical ideas. The proof for the forward transition $\Af$, provided in Section~\ref{sec:forward}, is significantly easier and follows the same high-level outline.

To begin, for the unplanted case Lemma~\ref{lem:reverse} states that
$\Ab(\rgt(n,p,p')) = G(n,p)$, so it remains to prove that $\Ab(\rgt(n,p,p',k,q)) \approx_{o_n(1)} G(n,p,k,g(q))$. We focus here on the case $p'=\Tilde{\Theta}(1/n)$. 

\begin{restatable}[Triangle Removal in Planted Case]{theorem}{PDSreverse}\label{thm:PDSreverse}
    For $k=o(n^{1/4}\log^{-17/4}n)$, $p'=1/(n\log n)$ and $0<p<q<1$ being constant,  
    \[\TV\b(\Ab(\rgt(n,p,p',S,q)),G(n,p,S,q\cdot p_e)\b)=o_n(1)\,,\]
    where $p_e=\E_{G\sim \rgt(n,p,p')}\Pr_{\Ab}(e\in \Ab(G+e))$ for an arbitrary edge $e$.
\end{restatable}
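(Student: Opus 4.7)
The plan is to apply the general framework of Theorem~\ref{thm:general} with $\rg = \rgt(n,p,p')$, $\rg' = G(n,p)$, $\cA = \Ab$ (so $\cA(\rg) = \rg'$ by Lemma~\ref{lem:reverse}), and with the GPS signal consisting of the $K = \binom{k}{2} = \tilde O(\sqrt n)$ edges inside the planted set $S$, each resampled with probability $p_i = q$. A key structural observation will simplify matters at the start: since $\mu_G$ is supported on triangle sets $X$ with $E(X)\subseteq G$, the reverse transition $\Ab$ never \emph{adds} a new edge, so for any $G\in\cG$ and any $e\notin G$ we have $e\notin \Ab(G-e)$ deterministically, giving $p_-^e = 0$. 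Consequently the parameter map from Theorem~\ref{thm:general} becomes $q_i = q\cdot p_+^{e_i}$, and once we verify that $p_+^{e_i}$ is approximately the constant $p_e$ (for every edge in $S$), the conclusion $\Ab(\rgt(n,p,p',S,q)) \approx G(n,p,S,q\cdot p_e)$ follows up to $O(K(\epsilon+\delta))$ total variation.

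The next step is to define the good set $\cG$ and verify concentration under every intermediate planted distribution $\prg_i$. Good graphs are those for which the triangle distribution $\mu_G$ behaves well locally at every edge $e$: roughly, the number of triangles through $e$ present in a sample $X\sim\mu_G$ is $\tilde O(1)$ with high probability; the Glauber dynamics (Lemma~\ref{lem:glauber-triangle-dist}) mixes on $\mu_G$; and the marginal probabilities of including each triangle are close to the product form induced by $p'/(1-p')$ up to a bounded local correction. Since resampling at most $K=\tilde O(\sqrt n)$ edges in $S$ changes the graph by a negligible amount relative to its global triangle structure, these events remain likely under every $\prg_i$. Quantitatively we need $\delta = \tilde o(1/\sqrt n)$, which forces concentration bounds of sub-polynomial failure probability, established via standard Chernoff estimates and the sparse-regime assumption $p' = \tilde\Theta(1/n)$.

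The heart of the proof is verifying conditions \ref{cond-gen1} and \ref{cond-gen2} with $\epsilon = \tilde o(1/\sqrt n)$. For \ref{cond-gen1}, I would couple $X^- \sim \mu_{G-e}$ and $X^+\sim \mu_{G+e}$ by noting that the two measures differ only in whether triangles through $e$ are allowed. Partitioning $X = (X^{\text{thru }e}, X^{\not\ni e})$, the conditional law of $X^{\not\ni e}$ given the ``outside'' configuration depends on $e$ only through a small multiplicative re-weighting caused by the normalizing constant $Z_G$, and for $G\in\cG$ this re-weighting is $1\pm\epsilon$. Pushing through to $\Ab(G\pm e)|_{\sim e}$ (which further marginalizes over the $\bern(p)$ resamplings of $E(X)$) preserves this bound. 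For \ref{cond-gen2}, the output edge $e$ depends on the randomness only via (a) whether $e$ belongs to $E(X)$, which is determined by the few triangles through $e$, and (b) the $\bern(p)$ coin for $e$ if $e\in E(X)$; conditioning on the outside configuration changes (a) only through the same normalizer $Z_G$, so a nearly-product decomposition $\Ab(G\pm e)\approx \Ab(G\pm e)|_{\sim e}\times \Ab(G\pm e)|_e$ holds. Condition \ref{cond-gen3} follows by defining $p_+^e$ as the conditional expectation of $\Pr(e\in \Ab(G+e))$ over $G\in\cG$ and using the same local-stability analysis to show the fluctuations are $\tilde o(1/\sqrt n)$; by symmetry the value is the same for every edge $e$, and it is $p_e+o(1)$ because $\rgt(n,p,p')$ concentrates on $\cG$.

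The main obstacle will be obtaining the sharp $\tilde o(1/\sqrt n)$ total variation bounds in \ref{cond-gen1} and \ref{cond-gen2}, because these require precise control of the triangle distribution $\mu_G$ at a single edge rather than aggregate concentration. The key technical tool will be a careful analysis of how $Z_{G+e}/Z_{G-e}$ depends on the local neighborhood of $e$, combined with the sparsity of triangles through a typical edge when $p' = \tilde\Theta(1/n)$, so that the perturbation induced by toggling $e$ is localized and small in $L^1$. Once all three conditions are verified with these parameters, Theorem~\ref{thm:general} yields the claimed bound $O(K(\epsilon+\delta)) = o_n(1)$, completing the proof.
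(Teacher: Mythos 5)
Your high-level framing is correct and matches the paper: the proof invokes Theorem~\ref{thm:general} with $\rg=\rgt(n,p,p')$, $\rg'=G(n,p)$, $\cA=\Ab$, and $K=\binom{k}{2}=\tilde O(\sqrt n)$ planted edges; you correctly observe that $\Ab$ never creates edges so $p_-^e=0$, that the parameter map degenerates to $q\mapsto q\cdot p_+^e$, and that the conditions must therefore be verified with $\epsilon+\delta=\tilde o(1/\sqrt n)$. Conditions \ref{cond-gen3} for $G-e$ and \ref{cond-gen2} for $G-e$ are indeed trivial as you say.

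However, your plan for \ref{cond-gen1} and the $G+e$ half of \ref{cond-gen2} has a genuine gap. You propose to compare the laws of the non-incident triangles $\Xp_{\sim e}$ under $\mu_{G-e}$ and $\mu_{G+e}$, attribute the discrepancy to a $1\pm\epsilon$ re-weighting through the normalizing constant, and then transfer the bound to $\Ab(G\pm e)|_{\sim e}$ by data processing. This fails on two counts. First, $Z_{G+e}/Z_{G-e}=1/\mu_{G+e}(Y_e=0)$, which by Corollary~\ref{cor:muge-marginal} is $\Theta(1)$-bounded away from $1$ when $p'=\tilde\Theta(1/n)$, not $1\pm\epsilon$; the normalizer is a red herring here, since it cancels in the mixture decomposition. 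Second, and more fundamentally, $\Ab(G+e)|_{\sim e}$ is not a function of $\Xp_{\sim e}$ alone: there are $\Theta(n)$ triangles through $e$, each present with probability $\Theta(p')$, so $\Xp_e\neq\vec 0$ with non-negligible probability, and each such triangle contributes up to two edges off $e$ into $E(\Xp)$, which $\Ab$ then resamples. Data processing applied to $\Xp_{\sim e}$ simply does not see these edges. This is exactly the obstacle the paper highlights after displaying decomposition~\eqref{eq:invariance-decomposition}: the naive approach of showing $\law(X_e\mid X_{\sim e})\approx \law(\Xp_e\mid\Xp_{\sim e})$ and invoking DPI is explicitly noted to be false. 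The paper's resolution is two-pronged. One term of the decomposition, $\TV(X_{\sim e},\Xp_{\sim e})$, is bounded by Lemma~\ref{lem:influence_edge_triangles} via the $\chi^2$-for-mixtures-of-Gibbs identity (Lemma~\ref{lem:chi-square-gibbs}) combined with the influence bounds of Lemma~\ref{lem:edge-influence-adjacent}. The residual term is handled by constructing an auxiliary set of triangles $\aux$ that avoids $e$ yet has the same edge projection on $\sim e$ as the triangles through $e$, and then showing $X\vee\aux\approx X$ in total variation via concentration of the likelihood ratio. Without an analogue of $\aux$ (and the associated likelihood-concentration machinery in Section~\ref{sec:likeconc}), your argument cannot control the edges contributed by $\Xp_e$ and cannot reach the required $\tilde o(1/\sqrt n)$ bound on $\TV(\Ab(G+e)|_{\sim e},\Ab(G-e)|_{\sim e})$.
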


Here $\rgt(n,p,p',S,q)$ stands for the random graph generated by planting a dense subgraph $G(k,q)$ at vertex set $S$ in $\rgt(n,p,p')$. Similarly, $G(n,p,S,q\cdot p_e)$ stands for the random graph generated by planting a dense subgraph $G(k,q\cdot p_e)$ at vertex set $S$ in $G(n,p)$.

To apply our general Theorem~\ref{thm:general}, we need to specify two items: the intermediate planted RGT models and the class of graphs $\cG$. We will then check conditions \ref{cond-gen1}, \ref{cond-gen2}, and \ref{cond-gen3}.

\subsubsection{Intermediate Planted Models}\label{sec:ideas-inter-planted-model}
Define $\RGTi{i}(n,p,p'S,q)$ to be a random graph generated by starting with $\rgt(n,p)$ and independently resampling each edge $e_1,e_2,\cdots, e_i$ to be included with probability $q$.

\subsubsection{Defining Class of Graphs $\cG$} \label{sec:defining-class-of-graphs}
It is not hard to devise examples of graphs $G$ for which each of the conditions \ref{cond-gen1}, \ref{cond-gen2}, \ref{cond-gen3} are violated. 
We define the class of graphs $\cG$ to avoid these bad examples.


The class of graphs $\cG$ is chosen to be $\cG_1\cap \cG_2$, where these are as follows: 
\begin{enumerate}
    \item[$\mathbf{\cG_1}$]  is the set of graphs that are $p^2/3$-uniformly 2-star dense, 
    where a graph is \emph{$c$-uniformly 2-star dense} if
for any pair of nodes $i,j$, there are at least $c(n-2)$ nodes $k$ such that both $(i,k)$ and $(j,k)$ are in the graph;
    and
    \item [$\mathbf{\cG_2}$]is the set of graphs $G$ that satisfy for every $e\in {[n]\choose 2}$ that
\[\b|\mug{G+e}(Y_e=1)-p_e^+\b| = 
C_p n^{-5/2}p'^{-2}\sqrt{\log n}\,,
\quad \text{where}\quad 
p_e^+ = \E_{G'\sim \rgt(n,p,p')}[\mug{G'+e}(Y_e=1)]\,,
\]
and $C_p$ is a fixed constant depending on $p$.
\end{enumerate} 


The following lemma states that $\cG$ is a high probability set for each $\rgt_i$ as required by Theorem~\ref{thm:general}.
It is proved in Section~\ref{sec:graph-class}.

\begin{restatable}{lemma}{classG}\label{lem:classG}
    For any $\RGTi{i}$, $\Pr_{G\sim\RGTi{i}}(G\in \cG)= 1-o(1/n)$.
\end{restatable}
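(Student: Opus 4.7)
The plan is to bound $\Pr[G\notin\cG_1]$ and $\Pr[G\notin\cG_2]$ separately, each by $o(1/n)$, and then union-bound.

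For $\cG_1$ I would use Chernoff. Write $G\sim\RGTi{i}$ as the result of first drawing an \ER\ graph $G_0\sim G(n,p)$, then adding triangles to obtain $G'\sim\rgt(n,p,p')$, and finally resampling up to $\binom{k}{2}$ edges inside $S$ with probability $q$. In $G_0$ the number of common neighbors of any fixed pair is $\bino(n-2,p^{2})$, so Chernoff together with a union bound over the $\binom{n}{2}$ pairs gives every such count at least $(p^{2}/2)(n-2)$ simultaneously with probability $1-e^{-\Omega(n)}$. Triangle addition only increases these counts, and the resampling perturbs any one of them by at most $|S|=k=\tilde O(n^{1/4})=o(n)$, so every pair still has at least $(p^{2}/3)(n-2)$ common neighbors with probability $1-o(1/n)$.

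The main work is $\cG_2$. Fix an edge $e$. It suffices to show
\[\Pr_{G\sim\RGTi{i}}\bigl[\,|\mug{G+e}(Y_e=1)-p_e^+|>C_p n^{-5/2}p'^{-2}\sqrt{\log n}\,\bigr]=o(1/n^{3})\]
uniformly in $i$ and then union-bound over the $\binom{n}{2}$ edges. My plan is a bounded-differences argument on the independent coordinates generating $G\sim\RGTi{i}$: the $\binom{n}{2}$ \ER\ edge indicators, the $\binom{n}{3}$ triangle indicators of the forward RGT construction, and the $\binom{k}{2}$ Bernoulli bits of the final resampling. A preliminary step is to replace the centering $p_e^+=\E_{\rgt(n,p,p')}[\mug{G+e}(Y_e=1)]$ by the $\RGTi{i}$-expectation; the two laws differ only on the edges inside $S$, and because only the $O(k)$ of these sharing an endpoint with $e$ can directly alter $\mug{G+e}(Y_e=1)$ through a common triangle, the single-coordinate sensitivity bound below shows this expectation shift is $O(kp')=O(n^{-3/4})$, well below the target.

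The technical heart is a single-coordinate Lipschitz bound: flipping any one of the independent coordinates changes $G$ by at most three edges, and I claim $\mug{G+e}(Y_e=1)$ then moves by $O(p')$. Heuristically, $\mug{G+e}(Y_e=0)\approx\exp(-m\lambda/p^{3})$ with $\lambda=p'/(1-p')$ and $m$ the number of common neighbors of $e$'s endpoints, so the derivative in $m$ is $\lambda p^{-3}e^{-m\lambda/p^{3}}=O(p')$. Rigorously the change splits into (i) a direct perturbation of the un-normalized Gibbs weight $\lambda^{|x|}p^{-\e(x)}$, carried by the at most one triangle containing both $e$ and the flipped edge, with weight $O(\lambda p^{-3})=O(p')$; and (ii) a shift in the partition function $Z_{G+e}$, for which I would appeal to the $O(n^{3}\log n)$ Glauber mixing from Lemma~\ref{lem:glauber-triangle-dist} to couple $\mug{G+e}$ with the post-flip Gibbs measure and absorb the normalization change. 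With this sensitivity, McDiarmid applied to the $\Theta(n)$ coordinates with non-trivial effect on $m$ (the \ER\ bits incident to $e$'s endpoints and the triangle bits containing $e$) yields a sub-Gaussian tail with standard deviation $\Theta(n^{-5/2}p'^{-2})$ for $p'=\tilde\Theta(1/n)$, and taking $C_p$ large makes the $\sqrt{\log n}$-deviation fail with probability $\exp(-\Omega(\log n))=o(1/n^{3})$.

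The main obstacle will be part (ii) of the sensitivity argument: a naive bound on the partition-function shift gives only $O(1)$ sensitivity of $\mug{\cdot}$ to a single edge flip, and obtaining the refined $O(p')$ demands a genuine use of the Gibbs structure together with rapid mixing. A secondary subtlety is confirming that triangle-bit coordinates far from $e$ do not contaminate the McDiarmid sum of squared sensitivities—this will likely require first conditioning on a high-probability event for the set of added triangles. Everything else—the centering adjustment, the McDiarmid application, and the two union bounds—is routine.
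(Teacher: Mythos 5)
Your overall architecture matches the paper's. For $\cG_1$ you reproduce Lemma~\ref{lem:probability-of-good} (Chernoff on common-neighbor counts in $G(n,p)$, monotonicity under triangle addition, and the observation that resampling $O(k^2)=o(n)$ edges cannot disturb uniform 2-star density); that part is correct. For $\cG_2$ you correctly identify that bounded differences over the independent coordinates generating $\RGTi{i}$ is the right tool, and you correctly isolate the crux---a single-coordinate sensitivity bound for $G\mapsto\mug{G+e}(Y_e=1)$---but you leave that crux as an acknowledged obstacle. This is a genuine gap: it is precisely the content of Lemmas~\ref{lem:edge_influence} and~\ref{lem:edge-influence-adjacent}, and the route the paper takes is \emph{not} rapid mixing. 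Instead, the paper observes that $\mug{G+e+e'}(Y_e=1)$ is a convex combination of $\mug{G+e+e'}(Y_e=1\,|\,Y_{e'}=0)=\mug{G+e}(Y_e=1)$ and $\mug{G+e+e'}(Y_e=1\,|\,Y_{e'}=1)$, so the one-edge sensitivity is bounded by the marginal influence $\IM{e'}{e}$, and then bounds that influence via the branching-process recursion of Theorem~\ref{thm:small_marginal_low_influence} for marginally-small distributions. This delivers sensitivity $O(1/(n^3p'^2))=\tilde O(1/n)$ for $e'$ incident to $e$ and $O(1/(n^3p'))=\tilde O(1/n^2)$ for $e'$ not incident---for \emph{every} coordinate, not only your $\Theta(n)$ incident ones.

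That distinction is not cosmetic. In the variance computation of Lemma~\ref{lem:concentration_pe}, the $\Theta(n^2)$ triangle coordinates sharing just one node with $e$ contribute to $\sigma^2$ at the \emph{same order} as the $\Theta(n)$ incident-edge coordinates, so you cannot restrict McDiarmid to the incident coordinates, nor condition the rest away as a ``secondary subtlety''---you need the $\tilde O(1/n^2)$ bound on the far coordinates and must sum their contribution. The same imprecision appears in your centering step: the resampled edges in $S$ not incident to $e$ \emph{do} move $\mug{G+e}(Y_e=1)$, just at the weaker rate $\tilde O(1/n^2)$, so the correct accounting is $\tilde O(k/n)+\tilde O(k^2/n^2)$ (still below threshold, but not because those edges ``cannot directly alter'' the marginal). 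Finally, on your proposed appeal to Lemma~\ref{lem:glauber-triangle-dist}: rapid Glauber mixing controls equilibration of the chain on a \emph{fixed} graph, not the response of the stationary marginal to flipping an edge of the graph. The mixing result and the influence bound are both downstream consequences of the same marginally-small hypothesis, but neither gives you the other for free; to close the gap you would need to actually prove something with the content of Lemma~\ref{lem:edge-influence-adjacent}.
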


\subsubsection{Checking the Conditions of Theorem~\ref{thm:general}}

We now state the main lemma needed to verify the conditions in Theorem~\ref{thm:general}. The lemma shows insensitivity of $\Ab$ to perturbing the input by a single edge, and contains the bulk of our technical contributions. The key ideas will be presented in Section~\ref{sec:perturb-ideas} with the full proof deferred to Section~\ref{sec:reverse-preserves}.

\begin{restatable}[Perturbation Insensitivity]{lemma}{insensitivity}\label{lem:edge_on_other_edge}
If $G\in \cG_1$,
then
$$
\cL_{G-e}(Y_{\sim e}) = 
\cL_{G+e}(Y_{\sim e}|Y_e=0)\approx_{O(\log^{17/2}n/\sqrt{n})} \cL_{G+e}(Y_{\sim e}|Y_e=1)\,,
$$
and from this it follows that
\[\cL_{G+e}(Y)\approx_{O(\log^{17/2}n/\sqrt{n})} \cL_{G+e}(Y_{\sim e})\times \cL_{G+e}(Y_e)\,.\]
\end{restatable}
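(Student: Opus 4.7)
The first equality $\cL_{G-e}(Y_{\sim e})=\cL_{G+e}(Y_{\sim e}\mid Y_e=0)$ is direct from the form of $\mug{G}$: conditioning $\mug{G+e}$ on $\{X:e\notin E(X)\}$ restricts the support to triangle subsets $X$ with $E(X)\subseteq(G+e)\setminus\{e\}=G-e$, and the weight $(p'/(1-p'))^{|X|}p^{-\e(X)}$ is unchanged, so the conditional law of $X$ is exactly $\mug{G-e}$ and the induced laws of $Y_{\sim e}$ coincide.

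For the approximate equality I would decompose $X = X_e\sqcup X_{\neq e}$, where $X_e\subseteq\{(i,j,k):k\in T_e\}$ with $T_e=\{k:(i,k),(j,k)\in G\}$, and $|T_e|\ge(p^2/3)(n-2)$ by the assumption $G\in\cG_1$. Using $|E(X_e)\cup E(X_{\neq e})|=|E(X_{\neq e})|+|E(X_e)\setminus E(X_{\neq e})|$, the weights factor as $\mug{G+e}(X_e,X_{\neq e})\propto\mug{G-e}(X_{\neq e})\cdot\lambda^{|X_e|}p^{-|E(X_e)\setminus E(X_{\neq e})|}$ with $\lambda=p'/(1-p')$. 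Marginalizing over $X_e\ne\emptyset$ gives the explicit density ratio
\[\frac{d\cL_{G+e}(X_{\neq e}\mid Y_e=1)}{d\mug{G-e}(X_{\neq e})}=\frac{\phi(X_{\neq e})}{\E_{\mug{G-e}}[\phi]},\qquad \phi(x)=p^{-1}\B(\prod_{k\in T_e}\b(1+\lambda p^{a_k(x)+b_k(x)-2}\b)-1\B),\]
where $a_k(x)=\ind{(i,k)\in E(x)}$ and $b_k(x)=\ind{(j,k)\in E(x)}$.

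The technical heart is to show $\phi$ concentrates under $\mug{G-e}$ with relative standard deviation $\tilde O(1/\sqrt n)$. Writing $\log\prod_k(1+\lambda p^{a_k+b_k-2})=\sum_{k\in T_e}\log(1+\lambda p^{a_k+b_k-2})$, each summand has magnitude $O(\lambda p^{-2})=\tilde O(1/n)$, so under approximate independence of $(a_k,b_k)_{k\in T_e}$ the variance would sum to $\tilde O(|T_e|/n^2)=\tilde O(1/n)$. Cauchy--Schwarz then yields $\TV(\cL_{G+e}(X_{\neq e}\mid Y_e=1),\mug{G-e})\le\tfrac12\,\mathrm{std}(\phi)/\E[\phi]=\tilde O(1/\sqrt n)$. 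Establishing this variance bound under the non-product Gibbs measure $\mug{G-e}$ is the main obstacle; I would approach it either (i) via a Poincar\'e inequality derived from the polynomial-time mixing of the Glauber dynamics in Lemma~\ref{lem:glauber-triangle-dist}, or (ii) by a coupling / cluster-expansion argument comparing $\mug{G-e}$ to the product measure on triangles (where the $(a_k,b_k)$ are exactly independent) and controlling the small discrepancy.

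Finally, to lift the $X_{\neq e}$ bound to $Y_{\sim e}$, observe that $Y_{\sim e}(X_{\neq e}\cup X_e)$ differs from $Y_{\sim e}(X_{\neq e})$ only on the leg edges $(i,k),(j,k)$ with $(i,j,k)\in X_e$. At each leg edge the $X_e$-induced flip occurs with conditional probability $O(\lambda)=O(1/n)$, and this is compensated to leading order by the $\phi$-reweighting of $X_{\neq e}$ (which favors smaller $a_k+b_k$), leaving a net $O(1/n)$ per-coordinate shift; combining these shifts over $\Theta(n)$ leg coordinates via a $\chi^2$-divergence estimate gives $\TV(\cL_{G+e}(Y_{\sim e}\mid Y_e=0),\cL_{G+e}(Y_{\sim e}\mid Y_e=1))=\tilde O(1/\sqrt n)$. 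The asserted product factorization $\cL_{G+e}(Y)\approx\cL_{G+e}(Y_{\sim e})\times\cL_{G+e}(Y_e)$ then follows from the standard identity $\TV(\cL(Y),\cL(Y_{\sim e})\times\cL(Y_e))=\E_{y_e}\TV(\cL(Y_{\sim e}\mid Y_e=y_e),\cL(Y_{\sim e}))$ and the triangle inequality.
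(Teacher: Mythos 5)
Your opening step, that $\cL_{G-e}(Y_{\sim e})=\cL_{G+e}(Y_{\sim e}\mid Y_e=0)$, is correct, and so is the density-ratio formula: the marginal of $X_{\sim e}$ under $\mu_{G+e}(\cdot\mid Y_e=1)$ is indeed $\mu_{G-e}$ reweighted by $\phi(X_{\sim e})=p^{-1}\big(\prod_{k\in T_e}(1+\lambda p^{a_k+b_k-2})-1\big)$. The final product-factorization step via $\TV(\cL(Y),\cL(Y_{\sim e})\times\cL(Y_e))=\E_{y_e}\TV(\cL(Y_{\sim e}\mid Y_e=y_e),\cL(Y_{\sim e}))$ is also fine. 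But the two central steps are not established, and the second one is where your approach diverges from what actually works.

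For the concentration of $\phi$: naively applying a Lipschitz concentration bound over the full $\Theta(n^3)$-dimensional space of triangles gives a fluctuation of order $\tilde O(\sqrt{n^3p'}\cdot L)$, which is far too weak. One must first observe that $\phi$ depends only on the $O(n^2)$ triangles incident to $u$ or $v$, condition on the remaining coordinates, and then apply concentration in the reduced dimension with Lipschitz constant $O(p')$. This conditioning step is essential and you do not mention it; without it the Poincar\'e/path-coupling route gives nothing. (The paper instead bounds $\chi^2(\Xp_{\sim e}\|X_{\sim e})$ via a second-moment formula for mixtures of Gibbs measures combined with influence bounds, rather than working with the density ratio directly; both are plausible once set up correctly, but yours is incomplete.)

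The more serious gap is the lift from $X_{\sim e}$ to $Y_{\sim e}$. You correctly identify that $X^+_e$ contributes new leg edges and that this must approximately cancel against the $\phi$-reweighting of $X_{\sim e}$, but the claim that the net per-coordinate shift is $O(1/n)$ is stated without a calculation, and even if it held at the marginal level, passing from per-coordinate marginal shifts to a TV bound on the whole vector $Y_{\sim e}$ requires a $\chi^2$ (or KL) estimate on a high-dimensional, \emph{dependent} distribution, which is precisely the kind of computation the paper goes to great lengths to avoid. The paper's resolution — and the genuinely new idea here — is to construct an auxiliary triangle variable $\aux$ supported on triangles not containing $e$ so that $E(X^+_{\sim e}\vee\aux)-e=E(X^+_{\sim e}\vee X^+_e)-e$ exactly, then argue by the data-processing inequality that $\TV(Y_{\sim e},Y^+_{\sim e})\le\TV(X_{\sim e},X^+_{\sim e}\vee\aux)$, and finally bound the latter by adding one triangle at a time and showing each single-triangle addition has likelihood ratio concentrating to $1\pm\tilde O(1/\sqrt n)$. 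Your proposal has no analogue of this projection device, and the $\chi^2$ estimate you gesture at in its place does not obviously exist.
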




%

We first check that condition \ref{cond-gen1} follows from the lemma. 
Since $\cL_{G+e} (Y_{\sim e})$ is a mixture of the laws $\cL_{G+e}(Y_{\sim e}|Y_e=0)$ and $\cL_{G+e}(Y_{\sim e}|Y_e=1)$, \cg{change every $\Tilde O$ or only change it in thm statements}Lemma~\ref{lem:edge_on_other_edge} implies $\cL_{G+e} (Y_{\sim e})\approx_{\tilde O(1/\sqrt{n})} \cL_{G-e} (Y_{\sim e})$. Note that $\Ab$ simply resamples edges in $Y$, so by the data processing inequality for TV,
\[\Ab(G+e)|_{\sim e} \approx_{\tilde{O}(1/\sqrt{n})} \Ab(G-e)|_{\sim e}\,.\]

Condition \ref{cond-gen2} for $G-e$ is trivial: $\cA(G-e)$ has no edge in position $e$, so for any $G$,
\[\cA(G-e)=\cA(G-e)|_{\sim e}\times \bern(0)\,.
\]
For $G+e$, 
the second display of Lemma~\ref{lem:edge_on_other_edge} implies, again by data processing inequality, that
\[
\Ab(G+e)
\approx_{\tilde{O}(1/\sqrt{n})}
\Ab(G+e)|_{\sim e}\times \Ab(G+e)|_e\,.
\]

Lastly, condition \ref{cond-gen3} is immediate for $G-e$, since $\Ab(G-e)|_e = \bern(0)$ for all $G$.
For $G+e$,
it follows from the definition of $\cG_2$: 
For any $G\in \cG_2$, we have by conditioning on the value of $Y_e$ that
\begin{align*}
\Ab(G+e)|_e 
&\sim \mug{G+e}(Y_e=1)\cdot \bern(p) + [1-\mug{G+e}(Y_e=1)] \cdot \bern(1)\\
&= \bern\b(1-(1-p)\mug{G+e}(Y_e=1)\b) 
\\&\approx_{\tilde{O}(1/\sqrt{n})} \bern\B(1-(1-p)\cdot \E_{G'\sim \rgt(n,p,p')}[\mug{G'+e}(Y_e=1)]\B)\,.\end{align*}
The last step used the fact that $\TV(\bern(a), \bern(b)) = |a-b|$.
Let $p_e = \E_{G'\sim \rgt(n,p,p')}\Pr(e\in \Ab(G'+e)) =1-(1-p)\cdot \E_{G'\sim \rgt(n,p,p')}[\mug{G'+e}(Y_e=1)]$.
Combining the last two displayed equations,
\[\Ab(G+e)\approx_{\tilde{O}(1/\sqrt{n})}\Ab(G+e)|_{\sim e}\times \bern(p_e)\,.\]
Therefore, the conditions of Theorem~\ref{thm:general} hold with $q' = q\cdot p_e$, proving Theorem~\ref{thm:PDSreverse}.

\subsection{Showing Perturbation Insensitivity: Main Ideas Behind Lemma~\ref{lem:edge_on_other_edge}}\label{sec:perturb-ideas}

Let us examine more closely the generation of variables in  Lemma~\ref{lem:edge_on_other_edge}.
We fix $G$ and let $X\sim \mu_{G+e} (\spacedot| Y_e=0) = \mu_{G-e}$ and $\Xp\sim \mu_{G+ e}(\,\cdot\, |Y_e=1)$, where recall that $Y$ and $\Yp$ are the corresponding edge indicator vectors as defined in Section~\ref{sec:reverse}.
 Without loss of generality, assume $e\notin G$, so $G-e=G$. 
Our objective is to show that $Y_{\sim e}$ and $\Yp_{\sim e}$ are close in total variation.

Let $X_{\ine}$, $\Xp_{\ine}$, and $X_{\nine}$, $\Xp_{\nine}$ be the triangle indicator vectors restricted to the set of triangles that contain and do not contain $e$, respectively.
The proof of Lemma~\ref{lem:edge_on_other_edge} is divided into two conceptual parts: (1) insensitivity to perturbing $Y_e$ of triangles $X_{\nine}$ that do not include $e$, and (2) conditioning on $X_{\nine}$, addressing the difference of edges brought about by triangles $X_{\ine}$ that include $e$. 
 We decompose $\TV(Y_{\sim e},\Yp_{\sim e})$ into two terms (using Lemma~\ref{lem:TVdecomp} in Section~\ref{sec:TV}):
\begin{equation}\label{eq:invariance-decomposition}
    \TV(Y_{\sim e},\Yp_{\sim e})
    \le \TV(X_{\nine},\Xp_{\nine}) + \TV\b(Y_{\sim e}, \E_{X'\sim X_{\nine}}
    \law(\Yp_{\sim e}|\Xp_{\nine}=X')
    \b)
    \,.
\end{equation}
We next describe how to bound each of the two terms on the right-hand side. 

\subsubsection{Edge $e$ Has Low Influence on Non-Containing Triangles}\label{sec:non-incident-triangles}

We first bound $\TV(X_{\nine},\Xp_{\nine})$.
Only in Section~\ref{sec:non-incident-triangles} we will 
pretend that $\Xp\sim \mu_{G+e}$ rather than $\mu_{G+e}(\spacedot|Y_e=1)$, and this turns out to be valid as the two distributions have roughly the same variation distance to $X_{\nine}$ because $\mu_{G+e}$ is a constant-weight mixture of $\mu_{G+e}(\spacedot|Y_e=1)$ and $\mu_{G-e}$. 

Note that because $e\notin G$, $X_{\ine}$ is always 0, so $X_{\nine}$ has the same distribution as $X$, defined by $\mu_{G}$. As for $\Xp_{\nine}$, letting $T_e$ be the set of triangles containing $e$,
$$
    \law(\Xp_{\nine}) = \sum_{x\in \{0,1\}^{T_e}} \Pr(\Xp_{\ine}=x)\law(\Xp_{\nine}|\Xp_{\ine}=x)
    $$
is a mixture of Gibbs distributions indexed by the value of $\Xp_{\ine}$. 
By Pinsker's inequality we have $\TV(X_{\nine},\Xp_{\nine})^2 \leq \chi^2(X_{\nine}\| \Xp_{\nine})$, and we bound the latter quantity via Ingster's 2nd moment method \cite{ingster2003nonparametric}. We emphasize that both $\law(X_{\nine})$ and $\law(\Xp_{\nine})$ are complicated dependent distributions, while all prior works to the best of our knowledge have always shown bounds between mixtures of \emph{product distributions} (see, for instance, \cite{ingster2003nonparametric,berthet2013optimal,hajek2015computational,brennan2019phase}). We show that despite this dependence it is still possible to derive a tractable bound.

\begin{restatable}[$\chi^2$-divergence for mixture of Gibbs measures]{lemma}{mixturegibbs}\label{lem:chi-square-gibbs}
Let $P$ be a distribution defined by 
\[P(X)=f(X)/Z.\]
Let $U$ be a discrete random variable and $Q$ be a mixture of Gibbs distributions defined by 
\[Q(X)=\E_U [f_U(X)/Z_U]\,.\]
Letting $\rho_U(X)=f_U(X)/f(X)$, we have that
\[\chi^2(Q\|P)+1 = \E_{U,U'}\frac{\E_{X\sim P} \rho_U(X)\rho_{U'}(X)}{\E_{X\sim P} \rho_U(X)\E_{X\sim P} \rho_{U'}(X)}\,,\]
where $U'$ is an independent and identically distributed copy of $U$.
\end{restatable}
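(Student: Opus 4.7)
The identity is a direct unwinding of definitions, so I would prove it in a few short algebraic steps rather than via any structural argument. The only genuinely useful observation is that the normalizing constant $Z_U$ can itself be written as an average of $\rho_U$ against $P$.

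\textbf{Step 1: rewrite $Z_U$ in terms of $P$ and $\rho_U$.} By definition of $\rho_U$ and $P$,
\[
Z_U \;=\; \sum_X f_U(X) \;=\; \sum_X \rho_U(X)\,f(X) \;=\; Z\cdot \E_{X\sim P}[\rho_U(X)].
\]
Hence $Z/Z_U = 1/\E_{X\sim P}[\rho_U(X)]$, which already gives the denominator appearing on the right-hand side.

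\textbf{Step 2: express the likelihood ratio $Q/P$.} Using the definitions of $Q$, $P$, and $\rho_U$, and pulling out $Z$,
\[
\frac{Q(X)}{P(X)} \;=\; \frac{\E_U[f_U(X)/Z_U]}{f(X)/Z} \;=\; \E_U\!\left[\frac{Z}{Z_U}\cdot \rho_U(X)\right] \;=\; \E_U\!\left[\frac{\rho_U(X)}{\E_{X\sim P}[\rho_U(X)]}\right].
\]

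\textbf{Step 3: square via an i.i.d.\ copy and take $\E_{X\sim P}$.} Recall that $\chi^2(Q\|P)+1 = \E_{X\sim P}[(Q(X)/P(X))^2]$. Introducing an independent copy $U'$ of $U$, the square of Step~2 equals
\[
\left(\frac{Q(X)}{P(X)}\right)^{\!2} \;=\; \E_{U,U'}\!\left[\frac{\rho_U(X)\rho_{U'}(X)}{\E_{X\sim P}[\rho_U(X)]\,\E_{X\sim P}[\rho_{U'}(X)]}\right].
\]
Taking $\E_{X\sim P}$ and swapping the (finite or well-defined) expectations over $(U,U')$ and $X$ yields the claimed identity.

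\textbf{Where the work is.} There is no real obstacle: no convergence or measurability issue arises because $U$ is assumed discrete and the sums defining $Z$ and $Z_U$ are the same ones defining the densities, so Fubini is immediate. The only step that is not pure bookkeeping is Step~1, where one has to notice that the ratio of partition functions $Z_U/Z$ is itself a $P$-expectation of $\rho_U$; this is what allows the denominator in the lemma to be written in the clean form $\E_{X\sim P}[\rho_U(X)]\,\E_{X\sim P}[\rho_{U'}(X)]$ rather than as an opaque ratio of partition functions. Once this observation is in hand, the rest is a one-line calculation.
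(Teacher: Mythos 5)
Your proposal is correct and takes essentially the same approach as the paper's proof: both expand $\chi^2(Q\|P)+1 = \E_{X\sim P}[(Q/P)^2]$, introduce an i.i.d.\ copy $U'$ to linearize the square, swap expectations by Fubini, and use the observation that $Z_U/Z = \E_{X\sim P}[\rho_U(X)]$ to obtain the denominator in the stated form. The only cosmetic difference is that you rewrite $Z_U$ first and the paper does it last.
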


It turns out that when applying Lemma~\ref{lem:chi-square-gibbs} to $X_{\nine}$ versus $\Xp_{\nine}$, we can bound the $\chi^2$-divergence via marginal influences of the edge distribution $\law(Y)$. Here the marginal influence $\IM{A'}{e}$ characterizes how much configurations on $A'$ can affect the conditional marginal probability on edge $e$. A formal definition of marginal influence can be found in Definition~\ref{def:marginal-influence}.


\begin{restatable}{lemma}{expressioninfluence}\label{lem:expression-influence}
Suppose $G\in \cG$ and consider $\law_G(Y)$ for $Y$ as in Section~\ref{sec:reverse} and $p'=o(1/n)$. Let $\IM {A'}{e}$ be the marginal influence of $A'$ on $e$ for $\law_G(Y)$.
Letting $\Xpet$ be an independent copy of $\Xpe$, we have
    \[\chi^2(\Xp_{\nine}\|X_{\nine})+1  \le  \E_{\Xpe,\Xpet} \min\Bigg\{ \B(1+\frac{1-p}{p}\sup_{\substack{A'\subset A\cup B
\\ e\in (A\cup B)\backslash A'}} \IM{A'}{e}\B)^{|B|}(1/p)^{|A\cap B|} , (1/p)^{|A|+|B|} \Bigg\}\,,\]
    where $A = E( \Xpet)-e$ and $B = E(\Xpe)-e$.
\end{restatable}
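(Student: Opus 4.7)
The plan is to instantiate Lemma~\ref{lem:chi-square-gibbs} with base distribution $P = \mu_G$ (which equals $\law(X_{\sim e})$ since $e\notin G$ forces $X_e\equiv 0$), mixture $Q = \law(\Xp_{\sim e})$, and mixing variable $U = \Xpe$. A direct computation from the definition of $\mu_{G+e}$ gives
$$\rho_u(x) \;=\; \frac{f_u(x)}{f(x)} \;=\; p^{-(\e(x\cup u) - \e(x))} \;=\; p^{-|E(u)\setminus E(x)|},$$
which, writing $B = E(u) - e$ and using $e\notin E(x)$, equals $p^{-1-|B|+|B\cap E(x)|}$ when $u\neq\emptyset$ and $1$ when $u=\emptyset$. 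Plugging this into the lemma and observing that the constant prefactors $p^{-1-|A|}$ and $p^{-1-|B|}$ cancel between the numerator $\E_P[\rho_U\rho_{U'}]$ and the denominator $\E_P[\rho_U]\E_P[\rho_{U'}]$ yields
$$\chi^2(\Xp_{\sim e}\|X_{\sim e})+1 \;=\; \E_{\Xpe,\Xpet}\!\left[\frac{\E_P\bigl[p^{|A\cap E(X)|+|B\cap E(X)|}\bigr]}{\E_P\bigl[p^{|A\cap E(X)|}\bigr]\,\E_P\bigl[p^{|B\cap E(X)|}\bigr]}\right],$$
so it suffices to bound the inner ratio, for each fixed pair $A,B$, by the minimum of the two claimed expressions.

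The first bound $(1/p)^{|A|+|B|}$ is immediate: use $p^{|A\cap E(X)|+|B\cap E(X)|}\le 1$ in the numerator and $\E_P[p^{|S\cap E(X)|}]\ge p^{|S|}$ for $S\in\{A,B\}$ in the denominator. For the influence-based bound I would use a telescoping argument that exposes the edges of $B$ one at a time. Write the numerator as $\E_P[p^{|A\cap E(X)|}\prod_{f\in B} p^{Y_f}]$ with $Y_f = \ind{f\in E(X)}$, and peel off one factor $p^{Y_f}$ at each step, comparing the ratio before and after the removal. For $f\in B\setminus A$, the plan is to show that the multiplicative change in the ratio is at most $1+\tfrac{1-p}{p}\IM{A'}{f}$, where $A'\subseteq (A\cup B)\setminus\{f\}$ is the set of edges already incorporated at that stage; this uses the identity $\E[p^{Y_f}\mid\mathrm{past}]=1-(1-p)\Pr(Y_f=1\mid\mathrm{past})$ combined with the elementary comparison $(1-(1-p)h_1)/(1-(1-p)h_2)\le 1+\tfrac{1-p}{p}|h_1-h_2|$ for $h_1,h_2\in[0,1]$. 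For $f\in A\cap B$, the factor $p^{2Y_f}$ in the numerator is first bounded by $(1/p)\cdot p^{Y_f}$ (since $p^{Y_f}\ge p$), accounting for one extra $1/p$ per edge in $A\cap B$, after which the previous case applies. After $|B|$ steps the numerator collapses to $\E_P[p^{|A\cap E(X)|}]$, cancelling a denominator factor; multiplying the accumulated per-step factors and taking a uniform supremum over the relevant subsets $A'$ and target edges yields $(1+\tfrac{1-p}{p}\sup_{A',f}\IM{A'}{f})^{|B|}(1/p)^{|A\cap B|}$.

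The main obstacle is the per-step bound in the telescoping. Concretely, letting $h(y_{A'}) = \Pr(Y_f=1\mid Y_{A'}=y_{A'})$, the ratio of successive factors reduces to comparing $\E[1-(1-p)h(Y_{A'})]$ under two related ``tilts'' of $\law_G(Y_{A'})$, and one must argue that the fluctuations of $h$ — which are precisely what $\IM{A'}{f}$ is designed to capture via Definition~\ref{def:marginal-influence} — translate into the claimed slack $\tfrac{1-p}{p}\IM{A'}{f}$ between the two tilted expectations. Making this rigorous, in particular ensuring that the tilted measures arising at each induction step still have conditional marginals of $Y_f$ bounded by the marginal influences of the original $\law_G(Y)$, is the delicate point; it is ultimately what forces the hypothesis $G\in\cG$ (so that Lemma~\ref{lem:classG} makes the relevant marginal influences small) and dictates the precise form of the definition of $\IM{A'}{e}$.
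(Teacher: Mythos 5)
Your proposal is correct, and it takes a genuinely different route from the paper for the influence-based bound. Both proofs start identically: you instantiate Lemma~\ref{lem:chi-square-gibbs} with $P=\mu_G=\law(X_{\sim e})$, $U=\Xpe$, and $\rho_u(x)=p^{-|E(u)\setminus E(x)|}$ (which agrees with the paper's $p^{|E(u)\cap E(x)|}$ up to a $u$-dependent constant that cancels), reducing to bounding $\E_P[p^{|Y_A|+|Y_B|}]/(\E_P[p^{|Y_A|}]\E_P[p^{|Y_B|}])$ — this is exactly the paper's Corollary~\ref{cor:chi-square-mixture}. The trivial bound $(1/p)^{|A|+|B|}$ is the same. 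The difference is in the influence bound: the paper constructs a coordinate-by-coordinate coupling (Lemma~\ref{lem:coupling-general}) between $\cL_G(Y_{B\setminus A}\mid Y_A=y_A)$ and $\cL_G(Y_{B\setminus A})$ whose Hamming distance is stochastically dominated by $\bino(|B\setminus A|,I_m)+|A\cap B|$, then evaluates a binomial MGF; you instead telescope directly, writing the ratio as a product over $f_k\in B$ of $\frac{\E_{\tilde\mu_{k-1}}[p^{Y_{f_k}}]}{\E_{\mu'_{k-1}}[p^{Y_{f_k}}]}$ where $\tilde\mu_{k-1}\propto p^{|Y_A|+|Y_{B_{k-1}}|}\mu$ and $\mu'_{k-1}\propto p^{|Y_{B_{k-1}}|}\mu$. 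The ``delicate point'' you flag resolves cleanly: since both tilt weights are functions of $Y_{A\cup B_{k-1}}$, for $f_k\notin A\cup B_{k-1}$ both tilted marginals $\tilde\mu_{k-1}(Y_{f_k}=1)$ and $\mu'_{k-1}(Y_{f_k}=1)$ are weighted averages of $\{\mu(Y_{f_k}=1\mid Y_{A\cup B_{k-1}}=y)\}_y$, hence differ by at most $\IM{A\cup B_{k-1}}{f_k}$, and $\frac{1-(1-p)h_1}{1-(1-p)h_2}\le 1+\frac{1-p}{p}|h_1-h_2|$ gives the per-step factor. For $f_k\in A\cap B$ the cleanest bookkeeping is not to pre-process the numerator but simply to note the crude per-step factor $\le (1)/(p)=1/p$, giving $(1+\tfrac{1-p}{p}I_m)^{|B\setminus A|}(1/p)^{|A\cap B|}\le (1+\tfrac{1-p}{p}I_m)^{|B|}(1/p)^{|A\cap B|}$. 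Your telescoping is arguably more direct than the paper's route (the paper's final MGF step, $\E[p^{|Y_B|}\mid Y_A]\le\E[p^{|Y_B|}]\cdot M(\log(1/p))$, implicitly treats $|Y_B-Y_B'|$ as if independent of $Y_B'$ in the coupling, whereas your approach avoids that subtlety). One small correction to your last paragraph: the hypothesis $G\in\cG$ is not what makes the telescoping valid — the inequality of the lemma holds for any $G$ — it is only needed downstream (Lemma~\ref{lem:edge_influence} and Section~\ref{sec:calculating-chi-square}) to ensure the marginal influences on the right-hand side are actually small.
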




The marginal influence $\IM{A'}{e}$ can be bounded by exploiting the fact that $\mu_G$ has small marginal probabilities under arbitrary conditioning, and we prove a general bound to this effect in Section~\ref{sec:small-marginal}.  

\begin{lemma}\label{lem:edge_influence}
Suppose $G\in \cG$ and consider $\law_G(Y)$ for $Y$ as in Section~\ref{sec:reverse} and $p'=o(1/n)$. Let $\IM A{e}$ be the marginal influence of $A$ on $e$ for $\law_G(Y)$. For an edge set $A\subset E(G)$ with $|A|=O(n)$,
\[\IM A{e}=\tilde{O}(|A|/n)\,.\]
\end{lemma}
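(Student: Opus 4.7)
The plan is to reduce the set influence bound to a per-edge influence bound and then sum, following the ``small conditional marginals $\Rightarrow$ small influences'' paradigm advertised for Section~\ref{sec:small-marginal}. Concretely, I would establish (i) $\IM{\{e'\}}{e} = \tilde{O}(1/n)$ for every single edge $e'\ne e$, uniformly over conditioning, and then (ii) aggregate $|A|$ such per-edge bounds via the general machinery referenced immediately above the lemma.

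For step (i), the key structural observation is that in the Gibbs form $\mu_G(x)\propto (p'/(1-p'))^{|x|}p^{-\e(x)}\1\{E(x)\subset G\}$, the coordinates $Y_e$ and $Y_{e'}$ interact only through triangles $T$ that contain both $e$ and $e'$. If $e\cap e' = \emptyset$, no such triangle exists, so coupling of $Y_e$ and $Y_{e'}$ is only through longer-range correlations, which are controlled by the small per-triangle weight. If $e$ and $e'$ share a single vertex, there is exactly one such triangle $T$, and its Gibbs weight ratio relative to being absent is $\frac{p'}{1-p'}p^{-3} = \tilde{O}(1/n)$ because $p'=o(1/n)$. Flipping $Y_{e'}$ from $0$ to $1$ can thus shift the conditional probability that $T$ is included (and hence the marginal on $Y_e$) by at most this order. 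The $\cG_1$ property (2-star density of $G$) enters here to ensure that the local partition functions involved in the conditional computations remain of a predictable order, so that conditioning on other edge values cannot amplify the per-triangle weight ratio by more than a $\mathrm{polylog}(n)$ factor.

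For step (ii), I would invoke the general set-to-singleton influence decomposition developed in Section~\ref{sec:small-marginal} for distributions with uniformly small single-coordinate conditional marginals. In that framework, one obtains (up to polylog factors) a bound of the form
\[
\IM{A}{e} \;\le\; C\sum_{e'\in A}\IM{\{e'\}}{e}\,,
\]
either by a path-coupling / Dobrushin-type argument or by a telescoping union bound over single-edge flips. Substituting the per-edge bound $\IM{\{e'\}}{e}=\tilde{O}(1/n)$ from step (i) and summing over the at most $|A|=O(n)$ edges in $A$ yields $\IM{A}{e}=\tilde{O}(|A|/n)$, which is the claim.

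The main obstacle I anticipate is step (i): ensuring that the per-edge influence is $\tilde{O}(1/n)$ \emph{uniformly} over arbitrary conditionings. Naively, conditioning on a large set of edges being $Y$-covered could concentrate the posterior on triangle-dense configurations and inflate the relative weight of the triangle through which $e$ and $e'$ couple. Two ingredients control this: the $\cG_1$ (2-star density) property guarantees a large inventory of alternative triangles so that $Z_G$ and its conditional analogues do not degenerate, and $p'=o(1/n)$ keeps each individual triangle's contribution vanishing. The linear restriction $|A|=O(n)$ is natural because $\IM{A}{e}\in[0,1]$ must in any case hold, and the clean additive decomposition above only saturates in this regime; beyond it, higher-order correlation terms would need to be tracked.
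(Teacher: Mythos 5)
Your step (ii) relies on a decomposition that is false in general. You propose
\[
\IM{A}{e}\;\le\;C\sum_{e'\in A}\IM{\{e'\}}{e}\,,
\]
but the \emph{marginal} influence $\IM{\cdot}{\cdot}$ of Definition~\ref{def:marginal-influence} does not satisfy this subadditivity. A minimal counterexample: with $X_1,X_2$ i.i.d.\ uniform bits and $X_3=X_1\oplus X_2$, one has $\IM{\{1\}}{3}=\IM{\{2\}}{3}=0$ while $\IM{\{1,2\}}{3}=1$. The telescoping you invoke (``flip one coordinate of $A$ at a time'') only yields a sum of terms in which the \emph{other} coordinates of $A$ are \emph{pinned}; that is, it controls $\IM{A}{e}$ by sums of the pinned influence $\I$, not by sums of $\IM{\{e'\}}{e}$. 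The paper's Lemma~\ref{lem:influence_partition} establishes exactly this additivity, but only for $\I$, and it is precisely to exploit that additivity that the paper's proof (of Lemma~\ref{lem:edge-influence-adjacent}, which subsumes this statement) moves from the edge marginals to the triangle space $\mu_G$, where Theorem~\ref{thm:small_marginal_low_influence} gives uniform-over-pinnings control $\I{t}{t'}=O(p'^{d(t,t')})$.

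Concretely, the paper does not compare $Y_e$ under $y_A^a$ versus $y_A^b$ one edge at a time. Instead it identifies $T_A$ (triangles using edges of $A$) and $T_o=T_A\cap T_e$, splits off the low-probability event $X_{T_o}\ne\vec 0$ via Lemma~\ref{lem:bad_event} and Lemma~\ref{lem:margin-under-conditioning}, then couples the two conditioned triangle configurations on $T_A$ and bounds the remaining discrepancy by $\E\,\I{T_C}{T_e}$, where $T_C$ is the (random) set of triangles on which the two conditioned samples disagree. Only then is the additivity of $\I$ over triangles applied, followed by the decay bound $\I{t_c}{t_e}=O(p'^{d(t_c,t_e)})$. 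If you want to salvage your edge-space route, you would need either (a) a uniform bound on the pinned influence $\I{\{e'\}}{e}$ for $\law_G(Y)$, which the paper never establishes and which looks delicate because arbitrary edge pinnings impose nontrivial constraints in triangle space, or (b) a direct proof of the $\IM$-subadditivity specifically for $\law_G(Y)$, which you would have to justify separately since it fails for general distributions. Your step (i) is also only sketched: the ``weight-ratio'' heuristic does not by itself control the change in the conditional marginal of $Y_e$ under arbitrary conditionings $y_A$, which is exactly the content the paper handles via the triangle-space pinned influence.
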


With high probability, $|A|$ and $ |B|$ each have size $\tilde O(1)$ and $|A\cap B|=0$, so the right hand of Lemma~\ref{lem:expression-influence} is $1+\tilde O(1/n)$ with high probability. Of course, the tail distribution of $|A|$ and $|B|$ is important, and we will make the bound rigorous in Section~\ref{sec:reverse-preserves} to get that $\chi^2(X_{\nine}\|\Xp_{\nine}) = \tilde O(1/n)$.
By Pinsker's inequality, this shows $\TV(X_{\nine},\Xp_{\nine}) = \tilde O(1/\sqrt{n})$.





\subsubsection{Triangles Including $e$ and TV Between Projections of Distributions}\label{sec:projection}
Recalling the decomposition \eqref{eq:invariance-decomposition}, we now aim to bound the second term,
\begin{equation}\label{eq:adjacent-triangle}
    \TV\b(Y_{\sim e}, \E_{X'\sim X_{\nine}}
    \law(\Yp_{\sim e}|\Xp_{\nine}=X')
    \b)
    \,.
\end{equation}
This can be rewritten as the more symmetric expression \[\TV\b(\E_{X'\sim X_{\nine}}
    \law(Y_{\sim e}|X_{\nine}=X'), \E_{X'\sim X_{\nine}}
    \law(\Yp_{\sim e}|\Xp_{\nine}=X')
    \b)\,.\]
Since $Y_{\sim e}|X$ and $\Yp_{\sim e}|\Xp$ are projections of triangle variables $X$ and $\Xp$ onto the edge space, the most natural approach to establish their closeness is to show that $\law(X_{\ine}|X_{\nine})$ and $\law(\Xp_{\ine}|{\Xp_{\nine}})$ are close and appeal to data processing inequality. 
However, this is not true: 
As mentioned earlier, $X_{\ine}=\Vec{0}$ since $e\notin G$, and in contrast $\Xp_{\ine}$ is non-zero with non-negligible probability, since there are $\Theta(n)$ triangles containing to $e$ and each is selected with probability approximately $p'=\tilde \Theta(1/n)$.

Nevertheless, the fact that we are proving a statement about projection onto the edge space, $Y_{\sim e}$ and $\Yp_{\sim e}$, allows us to carry out manipulations in the triangle space before projection. 
We will design an auxiliary distribution $\aux$ over the same support as $X_{\nine}$ that when added to $\Xp_{\nine}$ results in the \emph{identical} edge projection as $\Xp_{e}$, i.e.,  
\[E(\aux\vee \Xp_{\nine})-e = E(\Xp_{\ine}\vee \Xp_{\nine})-e\,.\]
This means the edge indicator vector of $\aux\vee \Xp_{\nine}$, which we denote by $\tilde Y_{\sim e}$, is the same as $\Yp_{\sim e}$. Thus, instead of comparing $Y_{\sim e}$ and $\Yp_{\sim e}$ in \eqref{eq:adjacent-triangle}, it suffices to compare $Y_{\sim e}$ and $\tilde Y_{\sim e}$, and by data processing inequality it in turn suffices to compare
$X_{\nine}$ and $\Xp_{\nine} \vee \aux$ (see Fig.~\ref{fig:projection}):
\begin{equation}\label{e:aux}
    \TV\b(Y_{\sim e}, \Yp_{\sim e}
    \b)
    \le \TV\b(X_{\nine}, \Xp_{\nine} \vee \aux\b)
\end{equation}

We now define $\aux$. Each triangle in $\Xp_{\ine}$ adds at most two edges to $\Yp_{\sim e}$. To simulate this change without using any triangles containing $e$, we add a triangle to $\aux$ for each new edge introduced by $\Xp_{\ine}$. Crucially, this is done without adding other new edges, see Fig.~\ref{fig:aux}. Avoiding adding new edges is possible with high probability as long as the graph $G$ is sufficiently dense (here $c$-uniformly 2-star dense plays a role) and $p' = \tilde\Theta(1/n)$, which implies that the edge set of $\Xp_{\nine}$ has sufficient coverage of relevant triangles that we might potentially add to $\aux$. 
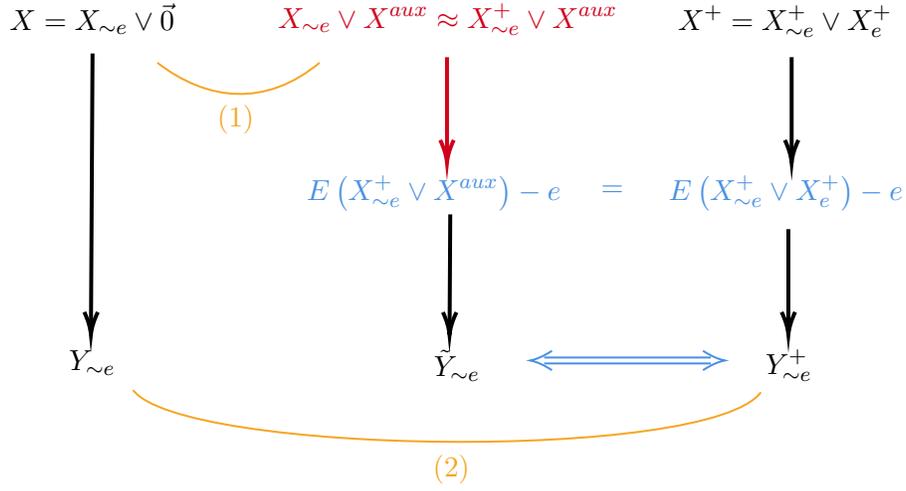
\begin{figure}
    \centering
    \tikzset{every picture/.style={line width=0.75pt}} 

\begin{tikzpicture}[x=0.75pt,y=0.75pt,yscale=-1,xscale=1]

\draw [color={rgb, 255:red, 208; green, 2; blue, 27 }  ,draw opacity=1 ][line width=1.5]    (237.71,36.87) -- (237.71,87.08) ;
\draw [shift={(237.71,90.08)}, rotate = 270] [color={rgb, 255:red, 208; green, 2; blue, 27 }  ,draw opacity=1 ][line width=1.5]    (11.37,-3.42) .. controls (7.23,-1.45) and (3.44,-0.31) .. (0,0) .. controls (3.44,0.31) and (7.23,1.45) .. (11.37,3.42)   ;
\draw [line width=1.5]    (58.95,35) -- (58.15,173.89) ;
\draw [shift={(58.13,176.89)}, rotate = 270.33] [color={rgb, 255:red, 0; green, 0; blue, 0 }  ][line width=1.5]    (11.37,-3.42) .. controls (7.23,-1.45) and (3.44,-0.31) .. (0,0) .. controls (3.44,0.31) and (7.23,1.45) .. (11.37,3.42)   ;
\draw [line width=1.5]    (411.53,36.87) -- (411.53,87.08) ;
\draw [shift={(411.53,90.08)}, rotate = 270] [color={rgb, 255:red, 0; green, 0; blue, 0 }  ][line width=1.5]    (11.37,-3.42) .. controls (7.23,-1.45) and (3.44,-0.31) .. (0,0) .. controls (3.44,0.31) and (7.23,1.45) .. (11.37,3.42)   ;
\draw [line width=1.5]    (239.36,116.21) -- (239.02,174) ;
\draw [shift={(239,177)}, rotate = 270.34] [color={rgb, 255:red, 0; green, 0; blue, 0 }  ][line width=1.5]    (11.37,-3.42) .. controls (7.23,-1.45) and (3.44,-0.31) .. (0,0) .. controls (3.44,0.31) and (7.23,1.45) .. (11.37,3.42)   ;
\draw [line width=1.5]    (409.89,123.68) -- (409.89,173.89) ;
\draw [shift={(409.89,176.89)}, rotate = 270] [color={rgb, 255:red, 0; green, 0; blue, 0 }  ][line width=1.5]    (11.37,-3.42) .. controls (7.23,-1.45) and (3.44,-0.31) .. (0,0) .. controls (3.44,0.31) and (7.23,1.45) .. (11.37,3.42)   ;
\draw [color={rgb, 255:red, 245; green, 166; blue, 35 }  ,draw opacity=1 ]   (91.66,39.67) .. controls (117.02,59.27) and (146.46,62.07) .. (173.45,39.67) ;
\draw [color={rgb, 255:red, 245; green, 166; blue, 35 }  ,draw opacity=1 ]   (79.39,204.89) .. controls (103.93,237.56) and (369.83,241.47) .. (396,206) ;
\draw [color={rgb, 255:red, 74; green, 144; blue, 226 }  ,draw opacity=1 ][line width=0.75]    (286.73,188.46) -- (370.76,188.46)(286.73,191.46) -- (370.76,191.46) ;
\draw [shift={(377.76,189.96)}, rotate = 180] [color={rgb, 255:red, 74; green, 144; blue, 226 }  ,draw opacity=1 ][line width=0.75]    (10.93,-4.9) .. controls (6.95,-2.3) and (3.31,-0.67) .. (0,0) .. controls (3.31,0.67) and (6.95,2.3) .. (10.93,4.9)   ;
\draw [shift={(279.73,189.96)}, rotate = 0] [color={rgb, 255:red, 74; green, 144; blue, 226 }  ,draw opacity=1 ][line width=0.75]    (10.93,-4.9) .. controls (6.95,-2.3) and (3.31,-0.67) .. (0,0) .. controls (3.31,0.67) and (6.95,2.3) .. (10.93,4.9)   ;

\draw (15.67,8.07) node [anchor=north west][inner sep=0.75pt]   [align=left] {$\displaystyle X=X_{\sim e} \lor \vec{0}$};
\draw (150.98,8.17) node [anchor=north west][inner sep=0.75pt]  [color={rgb, 255:red, 208; green, 2; blue, 27 }  ,opacity=1 ] [align=left] {$\displaystyle X_{\sim e} \lor X^{aux} \approx X_{\sim e}^{+} \lor X^{aux}$};
\draw (352.74,9.1) node [anchor=north west][inner sep=0.75pt]   [align=left] {$\displaystyle X^{+} =X_{\sim e}^{+} \lor X_{e}^{+}$};
\draw (165.62,95.01) node [anchor=north west][inner sep=0.75pt]  [color={rgb, 255:red, 74; green, 144; blue, 226 }  ,opacity=1 ] [align=left] {$\displaystyle E\left( X_{\sim e}^{+} \lor X^{aux}\right) -e$};
\draw (348.15,95.01) node [anchor=north west][inner sep=0.75pt]  [color={rgb, 255:red, 74; green, 144; blue, 226 }  ,opacity=1 ] [align=left] {$\displaystyle E\left( X_{\sim e}^{+} \lor X_{e}^{+}\right) -e$};
\draw (229.57,182.99) node [anchor=north west][inner sep=0.75pt]   [align=left] {$\displaystyle \tilde{Y}_{\sim e}$};
\draw (396.97,182.66) node [anchor=north west][inner sep=0.75pt]   [align=center] {$ $$\displaystyle Y_{\sim e}^{+}$};
\draw (45.66,183.39) node [anchor=north west][inner sep=0.75pt]   [align=left] {$ $$\displaystyle Y_{\sim e}$};
\draw (120.44,59.61) node [anchor=north west][inner sep=0.75pt]  [color={rgb, 255:red, 245; green, 166; blue, 35 }  ,opacity=1 ] [align=left] {$\displaystyle ( 1)$};
\draw (229.18,236.03) node [anchor=north west][inner sep=0.75pt]  [color={rgb, 255:red, 245; green, 166; blue, 35 }  ,opacity=1 ] [align=left] {$\displaystyle ( 2)$};
\draw (314,100) node [anchor=north west][inner sep=0.75pt]  [color={rgb, 255:red, 74; green, 144; blue, 226 }  ,opacity=1 ] [align=left] {$\displaystyle =$};

\end{tikzpicture}
    \caption{Relation between variables. By the data processing inequality, which implies that total variation shrinks under projection, closeness of (1) implies closeness of (2).}
    \label{fig:projection}
\end{figure}
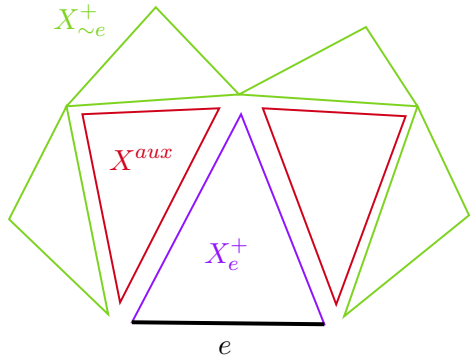
\begin{figure}
    \centering
    \tikzset{every picture/.style={line width=0.75pt}} 

\begin{tikzpicture}[x=0.75pt,y=0.75pt,yscale=-1,xscale=1]

\draw [color={rgb, 255:red, 144; green, 19; blue, 254 }  ,draw opacity=1 ]   (76,177) -- (131,72) -- (173,178) -- (97.98,177.23) -- cycle ;
\draw [color={rgb, 255:red, 208; green, 2; blue, 27 }  ,draw opacity=1 ]   (70,167) -- (51,72) -- (120,69) -- cycle ;
\draw [color={rgb, 255:red, 208; green, 2; blue, 27 }  ,draw opacity=1 ]   (179,168) -- (142,69) -- (214,73) -- cycle ;
\draw [color={rgb, 255:red, 126; green, 211; blue, 33 }  ,draw opacity=1 ]   (88,18) -- (130,62) -- (43,68) -- cycle ;
\draw [color={rgb, 255:red, 126; green, 211; blue, 33 }  ,draw opacity=1 ]   (43,68) -- (64,173) -- (14,125) -- cycle ;
\draw [color={rgb, 255:red, 126; green, 211; blue, 33 }  ,draw opacity=1 ]   (220,68) -- (246,123) -- (183,174) -- cycle ;
\draw [color={rgb, 255:red, 126; green, 211; blue, 33 }  ,draw opacity=1 ]   (194,28) -- (130,62) -- (220,68) -- cycle ;
\draw [line width=1.5]    (76,177) -- (173,178) ;

\draw (35,15) node [anchor=north west][inner sep=0.75pt]  [color={rgb, 255:red, 126; green, 211; blue, 33 }  ,opacity=1 ] [align=left] {$\displaystyle X_{\sim e}^{+}$};
\draw (111,133) node [anchor=north west][inner sep=0.75pt]  [color={rgb, 255:red, 144; green, 19; blue, 254 }  ,opacity=1 ] [align=left] {$\displaystyle X_{e}^{+}$};
\draw (63,88) node [anchor=north west][inner sep=0.75pt]  [color={rgb, 255:red, 208; green, 2; blue, 27 }  ,opacity=1 ] [align=left] {$\displaystyle X^{aux}$};
\draw (118,185) node [anchor=north west][inner sep=0.75pt]   [align=left] {$\displaystyle e$};

\end{tikzpicture}
    \caption{Design of $\aux$. For each edge in $E(\Xp_{\ine})-E(\Xp_{\nine})-e$ (purple edges), $\aux$ includes a triangle (in red) containing the edge, without adding other edges not already in $E(\Xp_{\nine})$ (in green).}
    \label{fig:aux}
\end{figure}

From the previous section, we have $X_{\nine}\approx_{\tilde O(1/\sqrt{n})} \Xp_{\nine}$, so to show \eqref{e:aux} it remains to prove that $X_{\nine}\approx X_{\nine}\vee \aux$, i.e., the addition of $\aux$ must be undetectable. We show this via concentration of the likelihood ratio between $X_{\nine}\vee \aux$ and $X_{\nine}$. In Section~\ref{sec:small-marginal}, we prove that Lipschitz functions over $\mu_G$ concentrate. 
%
The likelihood ratio under consideration is not quite Lipschitz -- it is Lipschitz only over a high probability subset -- but this turns out to suffice for it to concentrate.





\section{Strong Equivalence Between Parameterized Problems}
\label{sec:equiv}
Each problem we consider is actually an entire \emph{family} of problems. 
Our results turn out to imply a strong form of computational equivalence between families of problems. This form of equivalence is new, so we carefully introduce the relevant definitions.


\paragraph{Average-Case Reduction in Total Variation}

Let $\cP = \{( P_{0,n}, P_{1,n}),n\in \mathbb{N}\}$
be a hypothesis testing problem between 
\[H_0: X\sim P_{0,n}\quadand H_1: X\sim P_{1,n}\,,\]
and $\cP' = \{( P_{0,n}', P_{1,n}'),n\in \mathbb{N}\}$ be another hypothesis testing problem between
\[H_0: X\sim P_{0,n}'\quadand H_1: X\sim P_{1,n}'\,.\]
An \emph{average-case reduction} from $\cP$ to $\cP'$ is a randomized mapping $\cA:\Omega\rightarrow \Omega$, computable in polynomial time, such that $\TV(\cA( P_{0,n}),  P_{0,n}')=o_n(1)$ and $\TV(\cA( P_{1,n}),  P_{1,n}')=o_n(1)$.\footnote{This notion of reduction is based on \emph{Le Cam deficiency} \cite{le2012asymptotic} and has been (sometimes implicitly) used in the recent line of work on average-case reductions for statistical problems, e.g., \cite{berthet2013complexity,ma2015computational,hajek2015computational,cai2016optimal,brennan2018reducibility,brennan2019optimal,brennan2020reducibility}. Levin's \cite{levin1986average}
notion of average-case reduction is similar but tailored for dist-NP completeness results, and is more stringent in the error probability and less stringent in the distance between distributions.}

It is clear that if there exists an average case reduction $\cA$ from $\cP$ to $\cP'$, then any polynomial-time algorithm $\Phi$ that has vanishing error on $\cP'$ yields a polynomial-time algorithm $\Phi\circ \cA$ with vanishing error on $\cP$. Therefore, if $\cP$ is computationally hard (or information-theoretically impossible), then $\cP'$ is also computationally hard (or information-theoretically impossible).

\paragraph{Parameterized Problems and Reductions}
Most statistical problems have several parameters. In our case of planted RGT, for a given graph size $n$ we have parameters for the dense-subgraph size $k$, ambient edge probability $p$,  triangle probability $p'$, and edge density $q$ in the dense subgraph. 
Each tuple of parameters describes a distinct problem instance from within the larger family, and a reduction maps between a problem at a specific parameter to another problem at some parameter. 
We use $\cP(\vec{\alpha}_n) = \{( P_{0,n}(\vec{\alpha}_n),  P_{1,n}(\vec{\alpha}_n)), \vec{\alpha}_n\in S, n\in \mathbb{N} \}$ to denote a family of parameterized problems. Here  $S\subset \mathbb{R}^a$ is assumed to be compact, which can be achieved for all problems we know of by using an appropriate parameterization. For instance, for PDS with constant $p$ and $q$, one can take $\vec{\alpha}_n = (\log_n k, p,q)$. For the regime $p = c q$ and $q=n^{-b}$ for $b>0$ as studied in \cite{hajek2015computational} one can take $\vec{\alpha}_n = (\log_n k, \log_n (1/q), c)$.

\paragraph{Phase Diagram} 
In the literature on the computational complexity of statistics problems, feasibility of solving a problem is usually described in the language of phase diagrams (see, e.g., \cite{ma2015computational,hajek2015computational,brennan2018reducibility,schramm2022computational}). These diagrams divide the parameter space into regions corresponding to efficiently solvable, computationally hard, or information-theoretically impossible. 
In order to describe the relation between families of problems, we need to introduce a precise definition that captures the meaning of existing phase diagrams.

For a parameterized family of hypothesis testing problems $\cP  = \{( P_{0,n}(\vec{\alpha}_n),  P_{1,n}(\vec{\alpha}_n)), \vec{\alpha}_n\in S, n\in \mathbb{N} \}$, the \emph{Phase Diagram} is a mapping $\pd_{\cP}:S\rightarrow \{\easy,\hard,\impo,\perp\}$: 
\begin{itemize}
    \item $\pd_{\cP}(\vec{\alpha}) = \easy$ if and only if for any sequence $\vec{\alpha}_n$ converging to $\vec{\alpha}$, $\cP(\vec{\alpha}_n)$ can be solved efficiently with vanishing error as $n\to \infty$.
    \item $\pd_{\cP}(\vec{\alpha}) = \hard$ if and only if for any sequence $\vec{\alpha}_n$ converging to $\vec{\alpha}$, $\cP(\vec{\alpha}_n)$ can only be solved by super-polynomial algorithms.
    \item  $\pd_{\cP}(\vec{\alpha}) = \impo$ if and only if for any sequence $\vec{\alpha}_n$ converging to $\vec{\alpha}$, solving $\cP(\vec{\alpha}_n)$ is information-theoretically impossible.
    \item  $\pd_{\cP}(\vec{\alpha}) = \perp$, otherwise.
    
\end{itemize}

\begin{remark}\label{rem:phase-diagram-paramter}
It is important to note that the phase diagram of a problem depends on the parameterization: Some parameterizations lead to meaningless phase diagrams. 

For example, for the PDS problem with constants $p\not=q$, if the parameters are chosen to be $\vec{\alpha}_n =  (k/n,p,q)$, then the phase diagram becomes trivial. For any $0<c\le 1$, $\pd_{\cP} (c,p,q)=\easy$ as it suffices to count the number of edges and tell the difference between the two hypothesises. And $\pd_{\cP} (0,p,q)=\perp$, because we know the problem is information-theoretically impossible for constant $k$ and easy for $k=\Theta(\sqrt{n})$, and $k/n$ converges to 0 in both regimes. On the other hand, if we use the parameterization $\vec{\alpha}_n = (\log_n k,p,q)$, the conjectured non-trivial phase transition will appear when $\log_n k=1/2$. If we believe the computational threshold happens at $k=\tilde \Theta(\sqrt{n} )$, then 
\[\pd_{\cP}(c,p,q) = 
\begin{cases}
\hard &\text{ if } 0<c<1/2\\
\easy &\text{ if } 1/2<c\le 1\\
\perp &\text{ if } c=1/2\,.
\end{cases}\]
One can also study the more refined behavior near the phase transition by introducing extra parameters. For instance, in the case of PDS, one can use the parameterization $\vec{\alpha}_n = (a,b,p,q)$ where $k=n^a\log^b n$. This allows the study of more fine-grained phase transition at $k=\sqrt{n}\log^b n$.
\end{remark}

\begin{remark}
Although $\vec{\alpha}_n$ belongs to $S$ for every $n$, it is crucial that $S$ remains a compact set, independent of $n$. In the case of PDS, the size of the planted subgraph, $k$, cannot be selected directly as a parameter, as it would diverge with $n$. To ensure that the asymptotic behavior of the problem is represented by a single point on the phase diagram, $\vec{\alpha}_n$ should remain constant or converge to a constant as $n$ increases.
\end{remark}



\paragraph{Strong Equivalence Between Problems}

To describe the relation between the phase diagrams of two parameterized families of problems, $\cP(\vec{\alpha}_n)$ where $\vec{\alpha}_n\in S$ and $\cP'(\vec{\beta}_n)$ where $\vec{\beta}\in S'$, we say that $\vec{\alpha}\in S$ \emph{maps} to $\vec{\beta}\in S'$ if 
\begin{itemize}
    \item for any sequence $\vec{\alpha}_n$ converging to $\vec{\alpha}$, there exists a sequence $\vec{\beta}_n$ converging to $\vec{\beta}$ with an average case reduction from $\cP(\vec{\alpha}_n)$ to $\cP'(\vec{\beta}_n)$, and
    \item for any sequence $\vec{\beta}_n$ converging to $\vec{\beta}$, there exists a sequence $\vec{\alpha}_n$ converging to $\vec{\alpha}$ with an average case reduction from $\cP(\vec{\alpha}_n)$ to $\cP'(\vec{\beta}_n)$.
\end{itemize}
This definition yields the following correspondence of phase diagrams.

\begin{lemma}
Let $\cP(\vec{\alpha}_n)$, where $\vec{\alpha}_n\in S$, be a parameterized family of problems with phase diagram $\pd_\cP$. And $\cP'(\vec{\beta}_n)$, where $\vec{\beta}_n\in S'$, is a parameterized family of problems with phase diagram $\pd_{\cP'}$. If $\vec{\alpha}\in S$ maps to and can be mapped from $\vec{\beta}\in S'$, then \[\pd_{\cP}(\vec{\alpha}) = \pd_{\cP'}(\vec{\beta})\,.\]
\end{lemma}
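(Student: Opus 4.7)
The plan is a case analysis on $\pd_\cP(\vec{\alpha}) \in \{\easy,\hard,\impo,\perp\}$. The only mechanical ingredient is that an average-case reduction $\cA_n$ from $\cP(\vec{\alpha}_n)$ to $\cP'(\vec{\beta}_n)$ converts any algorithm $\Phi_n$ solving $\cP'(\vec{\beta}_n)$ (in the sense of vanishing sum of type I/II errors) into an algorithm $\Phi_n \circ \cA_n$ solving $\cP(\vec{\alpha}_n)$: by the data processing inequality its error is at most that of $\Phi_n$ plus $\TV(\cA_n( P_{0,n}),  P_{0,n}') + \TV(\cA_n( P_{1,n}),  P_{1,n}') = o_n(1)$, and the composition runs in polynomial time whenever $\Phi_n$ does.

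For the \emph{easy} case, I fix an arbitrary $\vec{\beta}_n \to \vec{\beta}$. The first item of ``$\vec{\beta}$ maps to $\vec{\alpha}$'' (the ``can be mapped from'' half of the hypothesis) produces $\vec{\alpha}_n \to \vec{\alpha}$ together with reductions $\cA_n: \cP'(\vec{\beta}_n) \to \cP(\vec{\alpha}_n)$. Easiness at $\vec{\alpha}$ applies to this specific sequence and supplies efficient algorithms $\Phi_n$ for $\cP(\vec{\alpha}_n)$; then $\Phi_n \circ \cA_n$ efficiently solves $\cP'(\vec{\beta}_n)$, and since $\vec{\beta}_n$ was arbitrary, $\pd_{\cP'}(\vec{\beta}) = \easy$.

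For the \emph{hard} and \emph{impossible} cases I argue by contradiction. If $\pd_{\cP'}(\vec{\beta}) \neq \hard$, then some $\vec{\beta}_n \to \vec{\beta}$ admits an efficient $\Phi_n$ for $\cP'(\vec{\beta}_n)$; the second item of ``$\vec{\alpha}$ maps to $\vec{\beta}$'' supplies $\vec{\alpha}_n \to \vec{\alpha}$ with reductions $\cA_n: \cP(\vec{\alpha}_n) \to \cP'(\vec{\beta}_n)$, and composition yields an efficient solver for $\cP(\vec{\alpha}_n)$, contradicting the ``for every sequence'' clause in the definition of $\hard$. The $\impo$ case is verbatim with ``polynomial time'' dropped throughout. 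The only delicate point is quantifier matching, which I want to verify explicitly: the easy direction needs a reduction for \emph{every} $\vec{\beta}_n \to \vec{\beta}$, hence uses the first item of the ``maps to'' definition applied in the direction $\vec{\beta} \mapsto \vec{\alpha}$, while the hard/impossible directions only need a reduction for one offending $\vec{\beta}_n$, and so use the second item of the ``maps to'' definition applied in the direction $\vec{\alpha} \mapsto \vec{\beta}$.

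Finally, the $\perp$ case is immediate by symmetry: the hypothesis ``$\vec{\alpha}$ maps to and can be mapped from $\vec{\beta}$'' is manifestly symmetric in the two problems, so if $\pd_{\cP'}(\vec{\beta}) \in \{\easy,\hard,\impo\}$, the previous three cases applied with $\cP$ and $\cP'$ swapped would force $\pd_\cP(\vec{\alpha})$ to take the same value, contradicting $\perp$. I do not anticipate any substantial obstacle; the whole proof is careful bookkeeping matching the two forms of reduction composition to the two quantifier directions that appear in the hypothesis.
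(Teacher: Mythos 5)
Your case analysis is a legitimate alternative to the paper's argument, which proceeds differently: the paper introduces a total order on hardness levels (impossible $>$ hard $>$ easy), considers the ``hardest'' and ``easiest'' sequences converging to $\vec\alpha$ and to $\vec\beta$, and uses the ``maps to'' hypothesis to show these extremes coincide; from that the phase-diagram equality follows directly, including the $\perp$ case without a separate symmetry step. Your structure is cleaner in some ways (the $\perp$ case by symmetry is a nice shortcut), but there is a genuine gap in the $\hard$ case.

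The gap: you assert that $\pd_{\cP'}(\vec{\beta}) \neq \hard$ implies some $\vec{\beta}_n \to \vec{\beta}$ admits an \emph{efficient} $\Phi_n$. This is not the correct negation of the definition. Since $\hard$ means ``every sequence can only be solved by super-polynomial algorithms'' (which in particular asserts solvability), its negation says that some sequence is \emph{either} efficiently solvable \emph{or} information-theoretically unsolvable. The unsolvable branch is entirely possible under the contradiction hypothesis (e.g.\ when $\pd_{\cP'}(\vec\beta) = \impo$, or when $\pd_{\cP'}(\vec\beta)=\perp$ with some sequences unsolvable and none polynomially solvable), and your argument does not address it: the forward-direction reduction $\cA_n \colon \cP(\vec\alpha_n)\to\cP'(\vec\beta_n)$ transports \emph{algorithms} for $\cP'$ backward but does nothing with \emph{unsolvability} of $\cP'$. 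To close the gap you must also invoke the reverse-direction hypothesis: if some $\vec\beta_n\to\vec\beta$ is unsolvable, then by ``$\vec\alpha$ can be mapped from $\vec\beta$'' there is a sequence $\vec\alpha_n\to\vec\alpha$ and a reduction from $\cP'(\vec\beta_n)$ to $\cP(\vec\alpha_n)$, so $\cP(\vec\alpha_n)$ is also unsolvable, contradicting $\pd_\cP(\vec\alpha)=\hard$. Note the $\impo$ case as you describe it (drop ``polynomial-time'' throughout) does not suffer the analogous problem, since the negation of ``every sequence is unsolvable'' is exactly ``some sequence is solvable''; the issue is specific to $\hard$ sitting between two other categories.
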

\begin{proof}
In this proof, we use the convention that information-theoretically impossible is harder than computationally hard, which is harder than efficiently solvable. If two problem are in the same category, they are considered to have same hardness.

Let $\cP(\vec{\alpha}_n^h)$ (or $\cP(\vec{\alpha}_n^e)$) be one of the hardest (or easiest) problems where $\vec{\alpha}_n$ converges to $\vec{\alpha}$. $\cP'(\vec{\beta}_n^h)$ and $\cP'(\vec{\beta}_n^e)$ are defined in the same way. 
Since $\vec{\alpha}$ maps to $\vec{\beta}$, there exists a problem $\cP'(\vec{\beta}_n)$ that can be reduced from $\cP(\vec{\alpha}_n^h)$ with $\vec{\beta}_n$ converging to $\vec{\beta}$. So $\cP'(\vec{\beta}_n^h)$ is at least as hard as $\cP'(\vec{\beta}_n)$, and therefore, at least as hard as $\cP(\vec{\alpha}_n^h)$. Similarly, we can conclude that $\cP'(\vec{\beta}_n^h)$ 
 is at least as hard as $\cP(\vec{\alpha}_n^h)$, which means they have the same hardness. 

 On the other hand, since $\vec{\alpha}$ maps to $\vec{\beta}$, there exists a problem $\cP(\vec{\alpha}_n)$ where $\vec{\alpha}_n$ converges to $\vec{\alpha}$ that $\cP(\vec{\alpha}_n)$ reduces to $\cP'(\vec{\beta}_n^e)$. So $\cP'(\vec{\beta}_n^e)$ is at least as hard as $\cP(\vec{\alpha}_n)$, and therefore, at least as hard as $\cP(\vec{\alpha}_n^e)$. We can also conclude that $\cP(\vec{\alpha}_n^e)$ and $\cP'(\vec{\beta}_n^e)$ have the same hardness. It is not hard to see from the definition of $\pd$ that $\pd_{\cP}(\vec{\alpha}) = \pd_{\cP'}(\vec{\beta})$.
\end{proof}

We say two families of problems $\cP$ and $\cP'$ are \emph{strongly equivalent} if for any point $\vec{\alpha}\in S$, there exists $\vec{\beta}\in S'$ such that $\vec{\alpha}$ maps to and can be mapped from $\vec{\beta}$, and similarly, for any point $\vec{\beta}\in S'$, there exists $\vec{\alpha}\in S$ such that $\vec{\beta}$ maps to and can be mapped from $\vec{\alpha}$. 

\begin{fact}
If two problems are strongly equivalent, then the phase diagram of either one of the problems determines the phase diagram of the other problem.
\end{fact}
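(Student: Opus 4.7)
The plan is to derive this Fact directly from the preceding Lemma together with the definition of strong equivalence; the Lemma does essentially all of the real work, so the proof is a short bookkeeping argument.

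First I would describe how the phase diagram $\pd_\cP$ of $\cP$ determines $\pd_{\cP'}$. Given any $\vec{\beta}\in S'$, the definition of strong equivalence (applied in the ``for any point $\vec{\beta}\in S'$'' direction) guarantees the existence of some $\vec{\alpha}\in S$ such that $\vec{\alpha}$ maps to and can be mapped from $\vec{\beta}$. The previous Lemma then yields $\pd_{\cP'}(\vec{\beta}) = \pd_\cP(\vec{\alpha})$, so the value of $\pd_{\cP'}$ at the arbitrary point $\vec{\beta}$ is recovered from a single value of $\pd_\cP$. The symmetric argument, using the other half of the strong equivalence definition, recovers $\pd_\cP$ from $\pd_{\cP'}$.

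The only point that merits a sentence of care is well-definedness: the $\vec{\alpha}$ associated to a given $\vec{\beta}$ by strong equivalence need not be unique. However, if both $\vec{\alpha}$ and $\vec{\alpha}'$ map to and are mapped from the same $\vec{\beta}$, then applying the Lemma to each pair gives $\pd_\cP(\vec{\alpha}) = \pd_{\cP'}(\vec{\beta}) = \pd_\cP(\vec{\alpha}')$, so the reconstruction $\vec{\beta}\mapsto \pd_\cP(\vec{\alpha})$ is consistent regardless of which witness $\vec{\alpha}$ is chosen. Since nothing beyond the Lemma and a quotation of the definition is needed, I do not anticipate any obstacle; the argument is essentially a one-liner, and the work has already been absorbed into the preceding Lemma.
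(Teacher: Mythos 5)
Your argument is correct and is precisely the intended derivation: the paper states this Fact without proof as an immediate corollary of the preceding Lemma together with the definition of strong equivalence, and your proof simply quotes both and concludes. The well-definedness remark (that any two witnesses $\vec{\alpha},\vec{\alpha}'$ for a given $\vec{\beta}$ must satisfy $\pd_\cP(\vec{\alpha})=\pd_{\cP'}(\vec{\beta})=\pd_\cP(\vec{\alpha}')$) is a nice extra, though not strictly necessary since the Lemma already pins $\pd_{\cP'}(\vec{\beta})$ to a single value once one witness is produced.
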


\begin{remark}
    The phase diagrams of the same problem under different parameterizations may not be strongly equivalent. One such example was discussed in Remark~\ref{rem:phase-diagram-paramter}.
\end{remark}

The main result of the paper, Theorem~\ref{thm:main}, can be re-stated as the following.


\begin{theorem}
Consider the PDS problem, i.e., hypothesis testing between $G(n,p)$ and $G(n,p,k,q)$, parameterized by $\vec{\alpha}_n = (\log_n k,p,q)$, restricted to the parameter region 
$\log_n k<   1/4$,
$0<p,q<1$. 
The Planted RGT problem has hypotheses $\rgt(n,p,p')$ and $\rgt(n,p,p',k,q)$ and is parameterized by $\vec{\beta} = (p,\log_n p', \log_n k,q)$ where the parameter region is $\log_n p'< -1$, $\log_n k< 1/4$, $0<p,q<1$. The PDS and Planted RGT problems are strongly equivalent in the given parameter regimes.
\end{theorem}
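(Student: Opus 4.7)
The plan is to apply Theorem~\ref{thm:main} and recast it in the formalism of strong equivalence. For each $\vec{\alpha} = (a,p,q) \in S_{\mathrm{PDS}}$, I would fix any $b \in (-\infty,-1)$ (say $b = -2$) and define the matching point $\vec{\beta} = (p, b, a, q^*) \in S_{\mathrm{RGT}}$, where $q^* := \lim_{n} f_n(q) \in (0,1)$ and $f_n$ is the forward parameter map supplied by Theorem~\ref{thm:main}. Conversely, given $\vec{\beta} = (p, b, a, q^*) \in S_{\mathrm{RGT}}$, take $\vec{\alpha} = (a, p, g(q^*))$ with $g = \lim_n g_n$. The content of the proof is then to check that $\vec{\alpha}$ and $\vec{\beta}$ mutually map in the sense of this section.

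To show ``$\vec{\alpha}$ maps to $\vec{\beta}$'' I would verify both bullets of the definition. For the first, given any $\vec{\alpha}_n \to \vec{\alpha}$, put $\vec{\beta}_n = (p_n, b_n, a_n, f_n(q_n))$ with any $b_n \to b$; since $k_n = n^{a_n} = \tilde O(n^{1/4})$ and $p'_n = n^{b_n} = \tilde O(1/n)$, Part~1 of Theorem~\ref{thm:main} supplies $\Af$ as an average-case reduction from $\cP_{\mathrm{PDS}}(\vec{\alpha}_n)$ to $\cP_{\mathrm{RGT}}(\vec{\beta}_n)$. For the second bullet, given any $\vec{\beta}_n \to \vec{\beta}$, put $\vec{\alpha}_n = (a_n, p_n, g_n(q^*_n))$; the round-trip identity $g_n \circ f_n = \mathrm{id}+o_n(1)$ combined with continuity of the parameter maps gives $\vec{\alpha}_n \to \vec{\alpha}$, and Part~1 applied to this pre-image, followed by a triangle-inequality step in total variation, produces the required reduction from $\cP_{\mathrm{PDS}}(\vec{\alpha}_n)$ to $\cP_{\mathrm{RGT}}(\vec{\beta}_n)$. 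The symmetric statement ``$\vec{\beta}$ maps to $\vec{\alpha}$'' is handled identically by substituting $\cA$ and Part~2 for $\Af$ and Part~1.

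The main obstacle, though essentially routine, is ensuring that $f_n$ and $g_n$ have well-defined continuous limits taking values in $(0,1)$, so that $\vec{\beta}$ and $\vec{\alpha}$ genuinely lie in the compact parameter regions. This can be read off from the construction in Section~\ref{sec:forward}: each planted edge appears in $\Af(G(n,p,k,q))$ iff it was originally present (probability $q$) or at least one of the $n-2$ triangles covering it was added (each independently with probability $p'_n$), hence $f_n(q) = 1 - (1-q)(1-p'_n)^{n-2}$. For $b_n \to b < -1$ we have $p'_n(n-2) \to 0$ and so $f_n \to \mathrm{id}$ uniformly on $(0,1)$; by the near-inverse property $g_n \to \mathrm{id}$ as well. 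Thus $q^* = q$ and $g(q^*) = q^*$, so the correspondence reduces to $\vec{\alpha} = (a,p,q) \leftrightarrow \vec{\beta} = (p,b,a,q)$, and the derived sequences remain strictly inside the valid regions $\log_n k < 1/4$ and $\log_n p' < -1$ by the strict inequalities defining $S_{\mathrm{PDS}}$ and $S_{\mathrm{RGT}}$ together with the free choice of $b$.

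Performing the same construction starting from $\vec{\beta} \in S_{\mathrm{RGT}}$ supplies the second half of strong equivalence. Combined with the lemma preceding this theorem — that mutually mapping parameter points share the same phase-diagram value — this yields identical phase diagrams for the PDS and Planted RGT families on the stated parameter regions.
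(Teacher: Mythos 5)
The paper itself offers no explicit proof of this theorem; it is stated as an immediate recasting of Theorem~\ref{thm:main} into the strong-equivalence language of Section~\ref{sec:equiv}. Your overall scaffolding — identify the matching point $\vec\beta=(p,b,a,q)$, then verify the four bullet conditions using $\Af$ and $\cA$ together with the parameter maps $f_n$, $g_n$ — is the right one, and your first-bullet argument (and the symmetric bullet where $\vec\beta_n$ is freely chosen for $\cA$) are fine, as is your identification of $f_n(q)=q+(1-q)\bigl(1-(1-p'_n)^{n-2}\bigr)$ and the observation that $f_n,g_n\to\mathrm{id}$ uniformly when $b<-1$.

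There is, however, a genuine gap in your handling of the second bullet of ``$\vec\alpha$ maps to $\vec\beta$'' (and, symmetrically, the fourth bullet). You set $\vec\alpha_n=(a_n,p_n,g_n(q^*_n))$, apply $\Af$ to land near $\rgt(n,p_n,p'_n,k_n,f_n(g_n(q^*_n)))$, and then invoke a ``triangle-inequality step'' to compare this to the target $\rgt(n,p_n,p'_n,k_n,q^*_n)$. But the round-trip error $|f_n(g_n(q))-q|$ is only $\Theta(np'_n)$ (there is no cancellation: $g_n(q)=qp_e$ with $p_e=1-\Theta(np')$ while $f_n$ adds $(1-q)\Theta(np')$, and these do not offset for generic $q$), and the total variation between two planted models whose planted density differs by $\delta$ scales like $\sqrt{\binom{k}{2}}\,\delta=\Theta(k\delta)$. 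With $k_n=n^{a_n}$ and $p'_n=n^{b_n}$ this gives $k_n\cdot np'_n=n^{a_n+1+b_n}$, which diverges whenever $a+b>-1$ — for instance $a=0.24$, $b=-1.01$ lies in the stated parameter region yet yields $n^{0.23}\to\infty$, so the triangle-inequality step does not close. The fix is to replace the round-trip approximation by exact inversion: $f_n$ is the affine increasing map $q\mapsto q(1-c_n)+c_n$ with $c_n=1-(1-p'_n)^{n-2}$, hence $f_n^{-1}(y)=(y-c_n)/(1-c_n)$ exists and lies in $(0,1)$ for all large $n$; set $\vec\alpha_n=(a_n,p_n,f_n^{-1}(q^*_n))$, so $f_n(f_n^{-1}(q^*_n))=q^*_n$ exactly, Theorem~\ref{thm:main} alone gives the $o_n(1)$ TV bound, and $f_n^{-1}(q^*_n)\to q^*$ so $\vec\alpha_n\to\vec\alpha$ still holds. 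The symmetric fourth bullet is repaired analogously with $g_n^{-1}(q_n)=q_n/p_e(n)$ in place of $f_n(q_n)$.
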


\section{Preliminaries}

\subsection{Probability Notation}

The probability law of a random variable $X$ is denoted by $\cL(X)$. It will often be necessary to condition on a graph $G$ for a jointly distributed $X$ and $G$, and we use the shorthand $\cL_G(X)$ in place of $\cL(X|G)$. 

Any randomized algorithm $\cA$ defines a Markov kernel where the distribution of the output for given input $x$ follows the law of $\cA(x)$.
When an algorithm $\cA$ is applied to a distribution $\mu$, $\cA(\mu)$ stands for the law of $\cA(X)$ where $X\sim \mu$. 
For random variables $X,Y$ we use $\TV(X,Y)$ in place of $\TV(\law(X), \law(Y))$.
For the ease of presentation, we use $\TV(P,Q)\le \epsilon$ and $P\approx_{\epsilon} Q$ interchangeably.

For two vectors $X$ and $Y$, $X\vee Y$ is the coordinate-wise max.

We say that an event occurs with high probability if its probability is at least $1-\frac{1}{n^c}$ for a large enough constant $c$, i.e., large enough for subsequent use (which in our case entails losing some numerical constant in $c$ and still having the probability tend to one).

\subsection{Graph Theory Notation}\label{sec:graph}
$G$ is always used to denote a graph. In general we call a pair of vertices $e=\{i,j\}\in {[n]\choose2}=K_n$ an \emph{edge}, regardless of whether $e$ is actually in the particular graph $G$ under consideration. If it is, then we say that edge $e$ is present, or included, in $G$. A graph $G$ is considered a subset of edges in $K_n$, or a set of indicators in $\{0,1\}^{K_n}$ for whether an edge is included in $G$. We also use the convention that for any $E\subset K_n$, $G|_{E}$ stands for the intersection $G\cap E$.  

Let $N_k(v)$ stands for the set all vertices in $G$ with distance at most $k$ to $v$ without $v$ itself. Two triangles in the graph are considered adjacent or having distance 1 if they share an edge. The distance between two triangles are defined by the length of a shortest path between them. Similarly, two edges in a graph are adjacent or having distance 1 if they share a node. The distance between two edges are defined by the length of a shortest path between them.

For a set of triangles $T$ (as a binary vector with 1 indicating that a triangle is present), $E(T)$ denotes the edge set of the union of the triangles in $T$. 

For a graph $G$ and a pair of vertices $u,v$, the \emph{set of wedge vertices} $\wed[G] (u,v)$ is the set of vertices that are adjacent to both $u$ and $v$ (excluding $u$ and $v$ themselves):
\begin{equation}\label{eq:def-wedge-set}
  \wed[G](u,v) = \{w\in [n]| (w,u),(w,v)\in G\}  \,.
\end{equation}



\subsection{Total Variation Distance}\label{sec:TV}

The \emph{total variation distance} between two distributions $P$ and $Q$ on the same space $\Omega$ is defined as $\TV(P,Q)=\sup_{A\subseteq \Omega} P(A) - Q(A)$.  
The total variation is convex as function of the pair of distributions $(P,Q)$. 
We state several lemmas that will be used to bound total variation distance.

\begin{lemma}[Data Processing Inequality (DPI)] 
For any two random variables $X,Y$ on space $\Omega$ and any randomized algorithm (Markov transition kernel) $R:\Omega\to \Omega'$ for arbitrary space $\Omega'$, we have
\[\TV\b(R(X),R(Y)\b)\le \TV(X,Y)\,.\]
\end{lemma}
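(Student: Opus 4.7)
The plan is to prove the data processing inequality via the standard coupling characterization of total variation, which asserts that $\TV(X, Y) = \inf \Pr(X' \neq Y')$, with the infimum taken over all couplings $(X', Y')$ of $X$ and $Y$, and that this infimum is attained by a maximal coupling.

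First I would take a maximal coupling $(X', Y')$ achieving $\Pr(X' \neq Y') = \TV(X, Y)$. Next, represent the randomized algorithm as a deterministic function $\tilde R(\cdot, U)$ of its input together with independent auxiliary randomness $U$, which is possible for any Markov kernel on the spaces appearing in this paper. Applying $\tilde R$ with the \emph{same} $U$ to both $X'$ and $Y'$ yields a coupling of $(R(X), R(Y))$ on which the event $\{X' = Y'\}$ forces $\tilde R(X', U) = \tilde R(Y', U)$. Therefore
\[
    \Pr\b(\tilde R(X', U) \neq \tilde R(Y', U)\b) \;\le\; \Pr(X' \neq Y') \;=\; \TV(X, Y),
\]
and since any coupling of $(R(X), R(Y))$ upper bounds their total variation, this gives the desired $\TV(R(X), R(Y)) \le \TV(X, Y)$.

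There is no real obstacle here, as this is a classical fact; the only minor point is to justify the representation of $R$ via auxiliary randomness, which is immediate for kernels on the discrete or Polish spaces arising in the paper (all inputs are graphs on $n$ vertices, hence finite). As an alternative route that avoids couplings, one can give a direct variational proof: for any measurable $A' \subseteq \Omega'$, set $f(x) := \Pr(R(x) \in A') \in [0, 1]$ and observe that $\Pr(R(X) \in A') - \Pr(R(Y) \in A') = \E[f(X)] - \E[f(Y)]$ is the integral of a $[0,1]$-valued function against the signed measure $P_X - P_Y$, hence is bounded above by the positive part of that signed measure, namely $\TV(X, Y)$; taking the supremum over $A'$ concludes.
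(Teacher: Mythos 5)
The paper states this lemma as a standard fact without giving a proof, so there is nothing to compare against. Both of your arguments are correct: the coupling construction (pushing a maximal coupling of $(X,Y)$ through $\tilde R(\cdot,U)$ with shared auxiliary randomness $U$) and the variational argument (testing against $f(x)=\Pr(R(x)\in A')\in[0,1]$ and using $\sup_{A'}\int f\,d(P_X-P_Y)\le \TV(X,Y)$) are each complete, and the measurability caveat you flag is indeed a non-issue here since all spaces in the paper are finite.
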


\begin{lemma}[Convexity and Mixtures]\label{lem:coupling_TV}
Let $P$ and $Q$ be two distributions over the same space $\Omega$. If there is a joint distribution $(X,X',Y)$ where the marginal distributions of $X$ and $X'$ are $P$ and $Q$, respectively, we have
\[\TV(P,Q)\le \E_{Y} \TV(P_{X|Y},Q_{X'|Y})\,.\]
\end{lemma}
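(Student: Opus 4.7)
The plan is to invoke the supremum definition of total variation, $\TV(P,Q)=\sup_{A\subseteq \Omega}|P(A)-Q(A)|$, and push the absolute value inside the expectation over $Y$ using Jensen's (triangle) inequality. Concretely, for any fixed measurable $A\subseteq \Omega$, I would first use the tower property to write
\[
P(A)-Q(A)\ =\ \Pr(X\in A)-\Pr(X'\in A)\ =\ \E_Y\b[P_{X|Y}(A)-Q_{X'|Y}(A)\b]\,,
\]
which makes sense because the marginals of $X$ and $X'$ under the joint law are $P$ and $Q$ respectively. Applying the triangle inequality then gives
\[
|P(A)-Q(A)|\ \le\ \E_Y\b[\,|P_{X|Y}(A)-Q_{X'|Y}(A)|\,\b]\ \le\ \E_Y\b[\TV(P_{X|Y},Q_{X'|Y})\b]\,,
\]
where the second inequality is immediate from the definition of total variation applied at the value of $Y$.

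Crucially, the right-hand side in the last display no longer depends on the choice of $A$, so I can take the supremum over $A$ on the left-hand side without affecting the bound, obtaining
\[
\TV(P,Q)\ =\ \sup_{A\subseteq\Omega}|P(A)-Q(A)|\ \le\ \E_Y\b[\TV(P_{X|Y},Q_{X'|Y})\b]\,,
\]
which is the claimed inequality.

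There is no real obstacle here; the only subtle point worth flagging is that one should resist the temptation to argue via the coupling characterization $\TV(P,Q)\le\Pr(X\neq X')$, because conditioning on $Y$ and bounding $\Pr(X\neq X'\mid Y)$ from below by $\TV(P_{X|Y},Q_{X'|Y})$ points in the wrong direction. The dual/variational supremum-over-events route above avoids this pitfall and does not require that the coupling $(X,X')$ be optimal --- it works for any joint law $(X,X',Y)$ with the prescribed marginals, which is exactly what the lemma asserts.
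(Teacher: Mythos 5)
Your proof is correct and takes essentially the same route as the paper: the paper notes that $\E_Y P_{X|Y}=P$, $\E_Y Q_{X'|Y}=Q$ and then invokes joint convexity of $\TV$, while you simply unpack that convexity step via the variational (supremum-over-events) characterization. Your side remark about the coupling characterization pointing the wrong way is also accurate.
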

\begin{proof}
Since $\E_YP_{X|Y}=P$ and $\E_{Y}P_{X'|Y}=Q$, the statement follows from convexity of $\TV$.
\end{proof}

\begin{lemma}[TV Decomposition]\label{lem:TVdecomp}
Consider two pairs of random variables, $(X,Y)$ and $(X',Y')$. We have
\[\TV(Y,Y')\le \TV(X,X') +  \TV\b(Y,\E_{Z\sim \law(X)} \law(Y'|X' = Z) \b)\,.\]
\end{lemma}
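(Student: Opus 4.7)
The plan is to combine the triangle inequality for total variation with the data processing inequality. Write $\nu$ for the mixed distribution appearing in the second term of the bound, namely
\[\nu \;=\; \E_{Z\sim \law(X)} \law(Y'|X'=Z)\,,\]
i.e., the marginal law of $Y'$ that one would get by first drawing $Z$ from the \emph{wrong} marginal $\law(X)$ and then sampling from the conditional kernel $K(Z,\cdot)\defeq \law(Y'\mid X'=Z)$ of the second pair. By the triangle inequality for $\TV$,
\[\TV(Y, Y') \;\le\; \TV(Y, \nu) \;+\; \TV(\nu, Y')\,,\]
and the first term is exactly the second summand in the claim, so it suffices to bound $\TV(\nu, Y') \le \TV(X, X')$.

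For this, observe that the true marginal of $Y'$ can itself be written through the same kernel, just applied to the correct marginal:
\[\law(Y') \;=\; \E_{Z\sim \law(X')} \law(Y'\mid X'=Z) \;=\; K(\law(X'))\,,\qquad \nu \;=\; K(\law(X))\,.\]
Thus $\nu$ and $\law(Y')$ are the pushforwards of $\law(X)$ and $\law(X')$ under the same Markov kernel $K$. Applying the data processing inequality to $K$ yields
\[\TV(\nu, Y') \;=\; \TV\!\b(K(\law(X)),\, K(\law(X'))\b) \;\le\; \TV(\law(X), \law(X')) \;=\; \TV(X, X')\,.\]
Combining the two displays gives the claimed inequality.

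There is no genuine obstacle here: the content of the lemma is purely the identification of the right intermediate distribution $\nu$ (obtained by mixing the conditional $\law(Y'|X')$ against the marginal $\law(X)$ rather than $\law(X')$), after which triangle inequality handles one term and DPI the other. No regularity assumption on the joint distributions is needed, and the same argument works on any measurable space, since a regular conditional distribution $\law(Y'\mid X'=\cdot)$ is a Markov kernel whenever it exists.
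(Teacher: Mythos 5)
Your proof is correct and takes essentially the same route as the paper: introduce the intermediate distribution $\nu=\E_{Z\sim\law(X)}\law(Y'\mid X'=Z)$, split by the triangle inequality, and bound $\TV(\nu,Y')$ by the data processing inequality applied to the conditional kernel $\law(Y'\mid X'=\cdot)$. The only difference is that you spell out the kernel viewpoint explicitly, which the paper leaves implicit.
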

\begin{proof}
Consider an intermediate distribution $\E_{Z\sim \law(X)} \law(Y'|X' = Z) $.
By the triangle inequality,
\begin{align*}
    \TV\b(Y,Y'\b)&\le \TV\b(Y\,,\E_{Z\sim \law(X)} \law(Y'|X' = Z)\b)\\
    &\qquad + \TV\b(\E_{Z\sim \law(X')} \law(Y'|X' = Z)\,,\E_{Z\sim \law(X)} \law(Y'|X' = Z)\b)\,.
\end{align*}
By the data processing inequality, the second term is bounded by 
$\TV(X,X')$. 
\end{proof}

\begin{lemma}\label{lem:bad_event}
Let $P$ and $Q$ be two distributions over the same space $\Omega$, and suppose that $Q$ is absolutely continuous with respect to $P$. For any event $A\subseteq \Omega$,
\[\TV(P,Q)\le 2(P(A)+Q(A))+(1-Q(A))\TV(P|_{ A^c},Q|_{ A^c})\,.\]
\end{lemma}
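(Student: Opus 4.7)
\textbf{Proof Plan for Lemma~\ref{lem:bad_event}.}
The plan is to use the variational characterization of total variation, $\TV(P,Q) = \sup_{B\subseteq\Omega} |P(B) - Q(B)|$, and to split an arbitrary test set $B$ along the bad event $A$ and its complement $A^c$. Writing $P(B) = P(B\cap A) + P(B\cap A^c)$ and $Q(B) = Q(B\cap A) + Q(B\cap A^c)$, the triangle inequality gives
\[
|P(B) - Q(B)| \;\le\; |P(B\cap A) - Q(B\cap A)| \;+\; |P(B\cap A^c) - Q(B\cap A^c)|\,.
\]
The first term is trivially at most $P(A) + Q(A)$, which handles the contribution of the bad event crudely but suffices.

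For the second term, the idea is to factor out the normalization constants of the conditional distributions. Writing $P(B\cap A^c) = P(A^c)\,P|_{A^c}(B)$ and $Q(B\cap A^c) = Q(A^c)\,Q|_{A^c}(B)$, I would add and subtract $Q(A^c)\,P|_{A^c}(B)$ to split the difference into two pieces. One piece is $|P(A^c) - Q(A^c)|\cdot P|_{A^c}(B) \le |P(A)-Q(A)| \le P(A) + Q(A)$, and the other piece is $Q(A^c)\cdot |P|_{A^c}(B) - Q|_{A^c}(B)| \le (1 - Q(A))\,\TV(P|_{A^c}, Q|_{A^c})$.

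Combining the two bounds, each $B$ satisfies
\[
|P(B) - Q(B)| \;\le\; 2\bigl(P(A) + Q(A)\bigr) + (1 - Q(A))\,\TV(P|_{A^c}, Q|_{A^c})\,,
\]
and taking the supremum over $B$ yields the claimed bound on $\TV(P,Q)$. No real obstacle arises here; the only subtlety is making sure the factorization $P(B\cap A^c) = P(A^c)\,P|_{A^c}(B)$ is legitimate (it is, since $P|_{A^c}$ is defined by $P|_{A^c}(\cdot) = P(\cdot \cap A^c)/P(A^c)$ whenever $P(A^c) > 0$; the case $P(A^c) = 0$ being trivial as then $P$ is supported on $A$ and the first bound already dominates). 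The absolute continuity hypothesis $Q \ll P$ is not actually used in the argument above, but it guarantees that $P(A^c) = 0$ implies $Q(A^c) = 0$, so no degenerate conditioning occurs.
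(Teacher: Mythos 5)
Your proof is correct, and it takes a genuinely different route from the paper's. The paper works with the likelihood-ratio characterization, writing the total variation as an integral of $\big|\tfrac{dQ}{dP}-1\big|$ against $P$ and splitting that integral over $A$ and $A^c$; the restricted Radon--Nikodym derivative $\tfrac{dQ|_{A^c}}{dP|_{A^c}}$ is then related to $\tfrac{dQ}{dP}$ by a ratio of normalizing constants. You instead use the set-supremum characterization $\TV(P,Q)=\sup_B|P(B)-Q(B)|$, split $B$ along $A$ and $A^c$, bound the $A$-contribution crudely by $P(A)+Q(A)$, and then decompose $P(B\cap A^c)-Q(B\cap A^c)$ by adding and subtracting $Q(A^c)\,P|_{A^c}(B)$. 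Both arguments land on the same constants, but yours is more elementary: it never touches densities, so it works verbatim without the hypothesis $Q\ll P$ (which you correctly point out is unused), and it avoids the normalization bookkeeping that the paper does. One small point worth noting: since $\TV$ as defined here is symmetric, $\sup_B\big(P(B)-Q(B)\big)=\sup_B|P(B)-Q(B)|$, so your use of absolute values in the sup is consistent with the paper's definition and you can safely pass to $\TV$ on both the left side and the conditional factor.
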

\begin{proof} Several applications of the triangle inequality yield
\begin{align*}
    \TV(P,Q)
    &= \E_P\B|\frac{dQ}{dP}-1\B|\\
    &= \E_P\left[ \1_A\B|\frac{dQ}{dP}-1\B|\right] +\E_P\left[\1_{ A^c}\B|\frac{dQ}{dP}-1\B|\right]\\
    &=  \E_P\left[ \1_A\B|\frac{dQ}{dP}-1\B|\right]+\E_{P}\left[\1_{A^c} \B|\frac{Q(A^c)}{P(A^c)} \cdot \frac{dQ|_{A^c}}{dP|_{A^c}}-1\B|\right]\\
   & \le   Q(A)+P(A) +\E_{P}\left[\1_{A^c} \frac{Q(A^c)}{P(A^c)}\cdot \bigg|\frac{dQ|_{A^c}}{dP|_{A^c}}-1\bigg| + \1_{A^c}\bigg|\frac{Q(A^c)}{P(A^c)}-1\bigg| \right] \\
    &\le    Q(A)+P(A)+ Q(A^c) \cdot \E_{P|_{A^c}} \bigg|\frac{dQ|_{A^c}}{dP|_{A^c}}-1\bigg| + |P(A)-Q(A)|\\
    &\le  2(P(A)+Q(A))+Q(A^c) \cdot\TV(P|_{ A^c},Q|_{ A^c})\qedhere
\end{align*}
\end{proof}

\begin{lemma}[Bounding TV via Concentration of Likelihood Ratio]\label{lem:concentration-TV}
Let $P$ and $Q$ be two distributions over the same space $\Omega$, with $Q$ absolutely continuous with respect to $P$. Let $\frac{dQ}{dP}$ be the likelihood ratio (or Radon-Nikodym derivative) between the two distributions. If $\b|\frac{dQ}{dP}(X)-1\b|\le \epsilon$ with probability at least $1-\delta$ for $X\sim P$, then 
\[\TV(P,Q)\le 2\epsilon+2\delta\,.\]
\end{lemma}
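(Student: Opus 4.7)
The plan is to express the total variation as an expectation of $|dQ/dP - 1|$ under $P$ (as done in the proof of Lemma~\ref{lem:bad_event}, consistent with the paper's convention for $\TV$), and then to split the expectation according to the concentration event from the hypothesis. Concretely, let $A = \{x \in \Omega : |dQ/dP(x) - 1| \le \epsilon\}$, so by hypothesis $P(A) \ge 1-\delta$. Then
\[
\TV(P,Q) \le \E_P\B[\B|\tfrac{dQ}{dP} - 1\B|\1_A\B] + \E_P\B[\B|\tfrac{dQ}{dP} - 1\B|\1_{A^c}\B].
\]
The first term is at most $\epsilon\cdot P(A) \le \epsilon$ by the defining inequality of $A$. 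This part is immediate.

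For the second term I would use the pointwise bound $|dQ/dP - 1| \le dQ/dP + 1$, which gives $\E_P[|dQ/dP - 1|\1_{A^c}] \le Q(A^c) + P(A^c)$. The factor $P(A^c) \le \delta$ is free from the hypothesis. The step requiring a bit of care, and the only non-routine move in the argument, is controlling $Q(A^c)$, since in principle $Q$ could put a large mass on the $P$-rare set $A^c$. The trick is to instead lower bound $Q(A)$ using the two-sided concentration on $A$: since $dQ/dP \ge 1-\epsilon$ on $A$,
\[
Q(A) = \int_A \tfrac{dQ}{dP}\, dP \ge (1-\epsilon) P(A) \ge (1-\epsilon)(1-\delta) \ge 1 - \epsilon - \delta,
\]
so $Q(A^c) \le \epsilon + \delta$.

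Combining the two bounds yields $\TV(P,Q) \le \epsilon + (\epsilon + \delta) + \delta = 2\epsilon + 2\delta$, as required. I do not expect a real obstacle; the only conceptual point is that the lower tail bound $dQ/dP \ge 1-\epsilon$ on $A$ is needed (not just the upper tail) in order to transfer the probability estimate $P(A) \ge 1-\delta$ to the analogous estimate for $Q$.
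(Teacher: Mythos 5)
Your proof is correct and takes essentially the same approach as the paper: split $\E_P|dQ/dP-1|$ over the event $A$ where the likelihood ratio concentrates, bound the $A$ part by $\epsilon$, and bound $Q(A^c)$ using the lower bound $dQ/dP\ge 1-\epsilon$ on $A$, together with $P(A^c)\le\delta$.
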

\begin{proof}
Let $A$ be the event that $\b|\frac{dQ}{dP}(X)-1\b|>\epsilon$, so $P(A)\le\delta$ from the assumption. Since $\frac{dQ}{dP}(X)\ge 1-\epsilon$ on $A^c$, $Q(A^c)\ge (1-\epsilon)P(A^c)\ge 1-\epsilon-\delta$. So $Q(A)\le \epsilon+\delta$.  We have 
\begin{align*}
     \TV(P,Q)
    = \E \B|\frac{dQ}{dP}(X)-1\B|
    &= \E \left[\1_{A}\B|\frac{dQ}{dP}(X)-1\B|\right] +\E \left[\1_ {A^c}\B|\frac{dQ}{dP}(X)-1\B|\right]\\
    &\le  P(A)+Q(A)+\epsilon\le 2\epsilon+2\delta\,.\qedhere
\end{align*}
\end{proof}

\begin{lemma}[Many Steps of Markov Kernel in Terms of One Step]\label{lem:kernel-tv}
Let $X$ be a random variable over space $\Omega$, $K$ be a Markov kernel from $\Omega$ to $\Omega$. Then for any  $m\in \mathbb{Z}^+$,
\[\TV(X,K^m(X))\le m\cdot \TV(X,K(X))\,.\]
\end{lemma}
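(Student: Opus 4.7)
The plan is to combine the triangle inequality with the data processing inequality. First, I would insert the intermediate distributions $K(X), K^2(X), \ldots, K^{m-1}(X)$ and use the triangle inequality to obtain
\[
\TV(X, K^m(X)) \le \sum_{i=0}^{m-1} \TV\bigl(K^i(X), K^{i+1}(X)\bigr)\,.
\]

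Next, I would bound each summand by $\TV(X, K(X))$. Since $K^i$ is itself a Markov kernel (the $i$-fold composition of $K$), and since $K^{i+1}(X) = K^i(K(X))$, the data processing inequality (stated earlier in the excerpt) gives
\[
\TV\bigl(K^i(X), K^i(K(X))\bigr) \le \TV(X, K(X))\,.
\]
Summing the $m$ identical bounds yields the claimed inequality.

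The proof is entirely routine: the only thing to verify is that the composition of Markov kernels is a Markov kernel, which is immediate, so the data processing inequality applies. I do not anticipate any real obstacle; this lemma is essentially a formal consequence of the two probability tools already established in Section~\ref{sec:TV}.
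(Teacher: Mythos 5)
Your proof is correct and essentially the same as the paper's: the paper phrases it as an induction on $m$, whereas you unroll that induction into an explicit telescoping sum, but the two ingredients (triangle inequality across consecutive iterates, then data processing to bound each gap by $\TV(X,K(X))$) are identical.
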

\begin{proof}
We do an induction on $m$. The case of $m=1$ is trivial. For $m>1$, by triangle inequality,
\begin{align*}
    \TV(X,K^m(X)) &\le \TV(X,K^{m-1}(X))+\TV(K^{m-1}(X),K^m(X))\\
    &\le (m-1)\TV(X,K(X)) + \TV(K^{m-1}(X),K^m(X))\,.
\end{align*}
From data processing inequality, the second term can also be bounded by \[\TV(K^{m-1}(X),K^m(X))\le \TV(X,K(X))\,.\qedhere\]
\end{proof}

\begin{lemma}\label{lem:distance-expectation}
Let $X$ be a real-valued random variable satisfying $0\le X\le M$ almost surely. If there exists $a\in \mathbb{R}$ that $|X-a|\le \sigma$ with probability at least $1-\delta$, then 
\[|X-\E X|\le 2\sigma+\delta M\]
with probability at least $1-\delta$.
\end{lemma}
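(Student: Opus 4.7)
The plan is to combine a triangle inequality $|X - \E X| \le |X - a| + |a - \E X|$ with a bound on $|a - \E X|$ derived from the given concentration of $X$ around $a$. First I would argue that without loss of generality $a \in [0, M]$: if not, replace $a$ by its clamp $a' = \max(0, \min(a, M))$. Since $X \in [0, M]$ almost surely, one has $|X - a'| \le |X - a|$ pointwise, so the concentration hypothesis $|X - a| \le \sigma$ with probability $\ge 1 - \delta$ passes to $a'$, and it suffices to prove the conclusion for $a'$.

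Next I would bound $|a - \E X|$ via Jensen's inequality, $|a - \E X| \le \E|X - a|$, and split this expectation according to the good event $E := \{|X - a| \le \sigma\}$ and its complement:
\[
\E|X - a| \;=\; \E\!\left[|X-a|\,\mathbf{1}_E\right] + \E\!\left[|X - a|\,\mathbf{1}_{E^c}\right] \;\le\; \sigma(1-\delta) + M\delta \;\le\; \sigma + M\delta,
\]
where the bound $|X - a| \le M$ on $E^c$ uses that both $X$ and (the clamped) $a$ lie in $[0,M]$, and $\Pr(E^c) \le \delta$ is the hypothesis.

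Finally, on the good event $E$, which has probability at least $1 - \delta$, I apply the triangle inequality
\[
|X - \E X| \;\le\; |X - a| + |a - \E X| \;\le\; \sigma + (\sigma + M\delta) \;=\; 2\sigma + M\delta,
\]
which is the desired conclusion. I do not anticipate any technical obstacle; the only subtlety worth flagging is the clamping step, which is needed precisely because the hypothesis does not a priori constrain $a$ to the range of $X$, and without it the trivial bound $|X-a| \le M$ on the bad event would fail.
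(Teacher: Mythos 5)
Your proof is correct, and the clamping step is both a nice observation and genuinely necessary: without replacing $a$ by $a'=\max(0,\min(a,M))$ the bound $|X-a|\le M$ on the bad event would not follow from $X\in[0,M]$ alone. One tiny slip in the write-up: the intermediate bound $\E[|X-a'|\mathbf{1}_E]\le\sigma(1-\delta)$ should read $\le\sigma\Pr(E)\le\sigma$ (since $\Pr(E)$ can be as large as $1$, not just $1-\delta$), but your very next step $\le\sigma+M\delta$ is exactly what you need, so the conclusion is unaffected.

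Your route differs from the paper's in a way worth noting. The paper proceeds by sandwiching $\E X$ directly, claiming $a-\sigma\le\E X\le a+\sigma+\delta M$, and then combines this with $|X-a|\le\sigma$ on the good event. The asserted lower bound $a-\sigma\le\E X$ is not actually valid in general: the best one can get in the relevant case $a-\sigma\ge 0$ is $\E X\ge(a-\sigma)(1-\delta)$, and a concrete example ($M=10$, $a=10$, $\sigma=0$, $\delta=1/2$, $X$ equal to $10$ or $0$ with equal probability) shows $\E X=5<a-\sigma=10$, and correspondingly $X-\E X=5>2\sigma$ on the good event. The lemma's stated conclusion $|X-\E X|\le 2\sigma+\delta M$ is still true, and your argument --- bounding $|a'-\E X|\le\E|X-a'|\le\sigma+\delta M$ symmetrically via Jensen plus a good/bad-event split, then applying the triangle inequality --- reaches it without ever needing the problematic one-sided bound. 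So your proof is not only correct but repairs a gap in the paper's own argument; the clamping is precisely the ingredient that makes the bad-event contribution $\le\delta M$ legitimate.
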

\begin{proof}
We have 
\[a-\sigma \le \E X\le (a+\sigma)(1-\delta)+\delta M\le a+\sigma+\delta M\,.\]
Combining this with $|X-a|\le \sigma$, we get 
$-2\sigma-\delta M\le X-\E X\le 2\sigma$.
\end{proof}

\subsection{Chernoff Bound and Mcdiarmid's Inequality}
\begin{lemma}[Chernoff Bound for Independent Binary Variables]\label{lem:chernoff}
Let $X_1,\cdots, X_n$ be random binary variables. Let $X$ denote their sum and $\mu=\E[X]$. We have for any $\delta\ge 0$,
\[\Pr(X\ge (1+\delta)\mu)\le e^{-\delta^2\mu/(2+\delta)}\quadand\Pr(X\le (1-\delta) \mu)\le e^{-\delta^2\mu/2}\,.\]
\end{lemma}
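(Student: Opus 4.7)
The plan is to apply the standard Chernoff method, i.e., bound the moment generating function and optimize. Writing $p_i = \E[X_i]$, so that $\mu = \sum_i p_i$, the key observation is that by independence,
\[
\E[e^{tX}] = \prod_{i=1}^n \E[e^{tX_i}] = \prod_{i=1}^n \bigl(1 + p_i(e^t-1)\bigr) \le \prod_{i=1}^n e^{p_i(e^t-1)} = e^{\mu(e^t-1)}\,,
\]
using $1+x \le e^x$. (We will need the binary variables to be independent; this is the standard hypothesis and I will assume that is the intended statement.)

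For the upper tail, I would apply Markov's inequality to $e^{tX}$ with $t > 0$ to get
\[
\Pr(X \ge (1+\delta)\mu) \le e^{-t(1+\delta)\mu}\E[e^{tX}] \le \exp\bigl(\mu(e^t - 1 - t(1+\delta))\bigr)\,,
\]
and optimize over $t$ by choosing $t = \ln(1+\delta) \ge 0$, which yields the classical Chernoff bound
\[
\Pr(X \ge (1+\delta)\mu) \le \Bigl(\tfrac{e^\delta}{(1+\delta)^{1+\delta}}\Bigr)^{\mu}\,.
\]
To convert this to the cleaner form $e^{-\delta^2\mu/(2+\delta)}$, it suffices to establish the elementary inequality $(1+\delta)\ln(1+\delta) - \delta \ge \delta^2/(2+\delta)$ for all $\delta \ge 0$. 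This can be verified by defining $h(\delta) = (1+\delta)\ln(1+\delta) - \delta - \delta^2/(2+\delta)$, checking $h(0) = 0$, and showing $h'(\delta) \ge 0$ via direct differentiation and simplification.

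For the lower tail the setup is symmetric: apply Markov's inequality to $e^{-tX}$ with $t > 0$, giving
\[
\Pr(X \le (1-\delta)\mu) \le e^{t(1-\delta)\mu} \E[e^{-tX}] \le \exp\bigl(\mu(e^{-t} - 1 + t(1-\delta))\bigr)\,,
\]
and optimize with $t = -\ln(1-\delta) \ge 0$ (valid for $\delta \in [0,1)$; the bound is trivial at $\delta = 1$ since $X \ge 0$), producing
\[
\Pr(X \le (1-\delta)\mu) \le \Bigl(\tfrac{e^{-\delta}}{(1-\delta)^{1-\delta}}\Bigr)^\mu\,.
\]
The reduction to $e^{-\delta^2\mu/2}$ then follows from the inequality $(1-\delta)\ln(1-\delta) + \delta \ge \delta^2/2$ for $\delta \in [0,1]$, again verified by a one-variable calculus argument on the derivative.

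The main (and only real) obstacle is the two scalar inequalities used to convert the raw Chernoff bounds into the clean exponential forms; these are standard but slightly tedious, and I would do them by examining derivatives rather than by any slick manipulation. Everything else is routine: Markov's inequality, independence to factor the MGF, the bound $1 + x \le e^x$, and optimization in one variable.
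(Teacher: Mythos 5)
The paper states this lemma without proof, citing it as a standard fact; there is no paper proof to compare against. Your argument is the textbook Chernoff derivation (factor the MGF by independence, bound via $1+x\le e^x$, apply Markov's inequality, optimize $t$, then sharpen with the scalar inequalities $(1+\delta)\ln(1+\delta)-\delta\ge \delta^2/(2+\delta)$ and $(1-\delta)\ln(1-\delta)+\delta\ge \delta^2/2$), and it is correct.
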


\begin{lemma}[Mcdiarmid's Inequality \cite{mcdiarmid1998concentration}]\label{lem:mcdiarmid}
Consider a real-valued martingale $X_0,X_1,\dots X_n$ adapted to filtration $\mathcal{F}$. 
If $|X_k-X_{k-1}|\le R$  and $\var(X_k|\mathcal{F}_{k-1})\le \sigma_i$ almost surely for any $k$, then 
\[\Pr(X_n- \E X_n\ge t)\le \exp\B(\frac{-t^2/2}{\sum\sigma_i^2+Rt/3}\B)\,. \]
\end{lemma}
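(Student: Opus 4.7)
The plan is to follow the standard exponential moment / Chernoff method for martingales, reducing the bound to a one-step conditional moment generating function estimate. Write $D_k = X_k - X_{k-1}$ for the martingale differences, so that $\E[D_k \mid \mathcal{F}_{k-1}] = 0$, $|D_k| \le R$ almost surely, and $\E[D_k^2 \mid \mathcal{F}_{k-1}] \le \sigma_k^2$. Then $X_n - \E X_n = X_n - X_0 = \sum_{k=1}^n D_k$, and for any $\lambda > 0$ Markov's inequality gives
\[
\Pr(X_n - \E X_n \ge t) \;\le\; e^{-\lambda t}\, \E \exp\Bigl(\lambda \sum_{k=1}^n D_k\Bigr).
\]

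Next I would prove the key one-step inequality: for a mean-zero random variable $D$ with $|D| \le R$ and $\E D^2 \le \sigma^2$, and for any $0 < \lambda < 3/R$,
\[
\E e^{\lambda D} \;\le\; \exp\!\left(\frac{\lambda^2 \sigma^2 / 2}{1 - \lambda R/3}\right).
\]
This is the classical Bennett/Bernstein bound: expand $e^{\lambda D} = 1 + \lambda D + \sum_{k \ge 2} (\lambda D)^k / k!$, take expectation (the linear term vanishes), use $|D|^{k-2} \le R^{k-2}$ inside each higher term to dominate $\E D^k \le \sigma^2 R^{k-2}$, sum the resulting geometric series $\sum_{k \ge 2} (\lambda R)^{k-2}/k! \le \tfrac12 \sum_{j \ge 0} (\lambda R / 3)^j = \tfrac12 (1 - \lambda R / 3)^{-1}$, and finally use $1 + x \le e^x$. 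Apply this conditionally on $\mathcal{F}_{k-1}$ to each $D_k$ and iterate via the tower property:
\[
\E \exp\Bigl(\lambda \sum_{k=1}^n D_k\Bigr)
\;=\; \E\Bigl[\exp\Bigl(\lambda \sum_{k=1}^{n-1} D_k\Bigr)\, \E\!\bigl[e^{\lambda D_n} \mid \mathcal{F}_{n-1}\bigr]\Bigr]
\;\le\; \exp\!\left(\frac{\lambda^2 \sum_{k=1}^n \sigma_k^2 / 2}{1 - \lambda R/3}\right).
\]

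Combining with the Markov bound gives, for all $\lambda \in (0, 3/R)$,
\[
\Pr(X_n - \E X_n \ge t) \;\le\; \exp\!\left(-\lambda t + \frac{\lambda^2 V / 2}{1 - \lambda R/3}\right), \qquad V := \sum_{k=1}^n \sigma_k^2.
\]
Finally I would optimize in $\lambda$: the choice $\lambda = t / (V + R t / 3)$ (which lies in $(0, 3/R)$) yields $-\lambda t + \lambda^2 V / (2(1 - \lambda R/3)) = -t^2 / (2(V + R t/3))$, giving exactly the stated bound.

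The proof is essentially routine; the only mildly delicate step is the Bennett-to-Bernstein conversion on the MGF, where the bound $\sum_{k \ge 2}(\lambda R)^{k-2}/k! \le \tfrac12 (1 - \lambda R/3)^{-1}$ needs to be checked term-by-term (using $k! \ge 2 \cdot 3^{k-2}$ for $k \ge 2$). Once that is in hand, the tower property iteration and the Chernoff optimization are standard.
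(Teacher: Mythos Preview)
Your proof is correct and is the standard Chernoff--Bennett argument for the martingale Bernstein inequality. The paper does not actually supply a proof of this lemma; it is stated in the preliminaries and attributed to \cite{mcdiarmid1998concentration} as a known result, so there is nothing to compare against. One small remark: in the step ``$\E D^k \le \sigma^2 R^{k-2}$'' you implicitly need $\E|D|^k$, and the cleanest way to justify the pointwise bound $e^{\lambda D}-1-\lambda D \le (\lambda D)^2\,g(\lambda R)$ is via the fact that $g(x)=(e^x-1-x)/x^2$ is increasing (this uses only $D\le R$ rather than $|D|\le R$); your route via $\sum_{k\ge 2}(\lambda D)^k/k!\le \sum_{k\ge 2}|\lambda D|^k/k!$ also works but requires the two-sided bound on $D$. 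The final optimization and the inequality $k!\ge 2\cdot 3^{k-2}$ are both fine.
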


\begin{lemma}[Alternative form of McDiarmid's Inequality]\label{lem:mcdiarmid-extension}
Consider $f:\cX^n\rightarrow \mathbb{R}$.
Let $X=(X_1,\dots,X_n)$ with $X_i\sim P_i$ sampled independently, where $P_i$ has probability mass at least $p_i$ on one element.
If there exists $\mathcal{Y}$ such that for any $x,x'\in \mathcal{Y}$, 
\[|f(x)-f(x')|\le \sum_{i:x_i\not=x_i'} c_i\]
and $\Pr(X\in \mathcal{Y}) = 1-p$, then for any $t\ge p\sum_ic_i$,
\[\Pr(f(X)-\E f(X)>t)\le p+\exp\B( \frac{-(t-p\sum_ic_i)^2/2}{\sigma^2+\max_i c_it/3} \B)\,,\]
where $\sigma^2=\sum_ip_i(1-p_i)c_i^2$.
\end{lemma}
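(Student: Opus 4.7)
My proof plan has three stages: replace $f$ by a globally Lipschitz proxy $g$ that equals $f$ on $\mathcal{Y}$, apply a Bernstein-type martingale concentration inequality (Freedman's) to $g(X)$, and convert back to $f$, absorbing an additive $p$ for the bad event and a mean shift of $p\sum_i c_i$. For the first stage, take the classical Lipschitz extension
\[
g(x) \;=\; \inf_{y\in\mathcal{Y}}\Big[\,f(y)+\sum_{i\,:\,x_i\ne y_i} c_i\,\Big].
\]
The hypothesis $|f(x)-f(y)|\le\sum_{i\,:\,x_i\ne y_i}c_i$ on $\mathcal{Y}$ forces $g\equiv f$ on $\mathcal{Y}$ (take $y=x$), and the triangle inequality $\mathbf{1}\{x'_i\ne y_i\}\le \mathbf{1}\{x_i\ne y_i\}+\mathbf{1}\{x_i\ne x'_i\}$ yields the global bounded-differences property $|g(x)-g(x')|\le\sum_{i\,:\,x_i\ne x'_i}c_i$ on all of $\cX^n$.

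For the concentration step, consider the Doob martingale $M_k=\E[g(X)\mid X_1,\ldots,X_k]$ with differences $D_k=M_k-M_{k-1}$. The global Lipschitz property gives $|D_k|\le c_k$. To obtain the sharper conditional-variance bound $\E[D_k^2\mid\mathcal{F}_{k-1}]\le p_k(1-p_k)c_k^2$, I pass to the coarser filtration generated by the Bernoulli indicators $B_k=\mathbf{1}\{X_k=y_k^*\}$, where $y_k^*$ is the atom of $P_k$ of weight at least $p_k$; the Doob difference in this filtration is a function of a single Bernoulli$(p_k)$ variable varying by at most $c_k$, whose variance is therefore bounded directly by $p_k(1-p_k)c_k^2$. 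Freedman's martingale inequality now yields
\[
\Pr\big(g(X)-\E g(X)>s\big)\;\le\;\exp\!\Big(-\tfrac{s^2/2}{\sigma^2+\max_i c_i\,s/3}\Big).
\]

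To convert back to $f$, decompose
\[
\Pr(f(X)-\E f(X)>t)\;\le\;\Pr(X\notin\mathcal{Y})\;+\;\Pr\big(g(X)-\E f(X)>t,\,X\in\mathcal{Y}\big),
\]
use $|\E f-\E g|\le p\sum_i c_i$ (after, if necessary, replacing $f$ by $g$ on $\mathcal{Y}^c$, a modification that does not affect the event $\{X\in\mathcal{Y}\}$), and apply the previous bound at $s=t-p\sum_i c_i$; the denominator shifts up to $\sigma^2+\max_i c_i\,t/3$, which is weaker but matches the stated form. \textbf{Main obstacle.} The delicate step is justifying the variance constant $p_k(1-p_k)c_k^2$, since the naive $X$-Doob filtration only gives $(1-p_k)c_k^2$; passing to the Bernoulli filtration fixes this but requires separately showing that the within-$B$ fluctuation $g(X)-\E[g(X)\mid B]$ is absorbed into the same exponential bound, which follows via a conditional concentration argument given $B$. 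The remaining bookkeeping (the bad-event correction and the mean shift) is routine.
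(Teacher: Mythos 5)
Your overall plan coincides with the paper's route, which is a one-line reference to \cite{combes2015extension}: replace $f$ by the globally Lipschitz extension $g$, apply the Bernstein/Freedman-type bound of Lemma~\ref{lem:mcdiarmid} to $g$, and convert back with an additive $p$ and a mean shift of $p\sum_i c_i$. That stage of your writeup is fine.

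The genuine gap is in the variance step, and it is exactly the step you flag as the ``main obstacle.'' Your plan is to pass to the coarser filtration generated by $B_k=\mathbf{1}\{X_k=y_k^*\}$. But the Doob martingale in that filtration converges to $\E[g(X)\mid B_1,\dots,B_n]$, not to $g(X)$, so you are left with a non-negligible residual $g(X)-\E[g(X)\mid B]$. You say this ``follows via a conditional concentration argument given $B$,'' but no such argument is given, and it is not routine: the residual deviation carries its own variance of roughly $(1-q_k)c_k^2/4$ per coordinate, which would have to be combined with the Bernoulli-martingale term by a union bound (or a more clever splitting), degrading the constant in $\sigma^2$ rather than recovering exactly $\sum p_i(1-p_i)c_i^2$. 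As written this step does not close.

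Moreover, the detour is unnecessary, and the premise driving it is wrong. In the \emph{standard} $X$-Doob filtration, conditionally on $\mathcal{F}_{k-1}$ the increment is $D_k=\phi(X_k)-\E\phi(X_k)$ where $\phi$ has oscillation at most $c_k$. You assert this only gives $\var(D_k\mid\mathcal{F}_{k-1})\le(1-p_k)c_k^2$, but a two-point extremal argument gives the full $p_k(1-p_k)c_k^2$: writing $q=\Pr(X_k=y_k^*)\ge p_k$, $v=\phi(y_k^*)$, $\nu=\E[\phi(X_k)\mid X_k\ne y_k^*]$, the law of total variance yields
$\var(\phi(X_k))=(1-q)\var(\phi(X_k)\mid X_k\ne y_k^*)+q(1-q)(v-\nu)^2$,
and bounding the conditional variance by the two-point extremal distribution on the endpoints of the range interval and optimizing gives $\var(\phi(X_k))\le q(1-q)c_k^2\le p_k(1-p_k)c_k^2$, the last inequality using $q\ge p_k\ge 1/2$ and monotonicity of $x(1-x)$ on $[1/2,1]$. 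One caveat worth stating explicitly (which the paper leaves implicit, but which your proof should too): this variance bound, and hence the lemma as stated, requires $p_i\ge 1/2$, i.e., $p_i$ should be read as a lower bound on the \emph{mode} mass; for $p_i<1/2$ the actual atom mass $q_i$ can be near $1/2$, making $q_i(1-q_i)>p_i(1-p_i)$, and $\sigma^2=\sum p_i(1-p_i)c_i^2$ is no longer a valid bound.
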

\begin{proof}
The proof is the same as \cite{combes2015extension} but instead of using the version of Mcdiarmid's inequality in \cite{combes2015extension}, we use the more general version of Mcdiarmid's inequality stated in Lemma~\ref{lem:mcdiarmid}. 
\end{proof}






\section{Forward Map Preserves Planted Signal}\label{sec:forward}

This section aims to show the first part of Theorem~\ref{thm:main} by applying Theorem~\ref{thm:general} on the forward process $\Af$ described in Defn.~\ref{def:forward}.

In this section only, we use $X\in\{0,1\}^{\binom{[n]}{3}}$ to denote the indicator vector of the set of triangles added by $\Af$ and use $Z\in\{0,1\}^{\binom{[n]}{2}}$ to denote the indicator for the set of edges in $\Af(G)$, i.e., the indicator of $G\cup E(X)$. From the definition of $\Af$, $X\sim \bern(p')^{\otimes \binom{n}{3}}$.



\begin{theorem}[$\Af$ Increases Triangle Density in Planted RGT]\label{thm:PDSforward} Let
    $p'_1<p'_2\le 1/n$, $k:=|S|=O(n^{1/4}\log^{-5/4} n)$,  and $0<p,q<1$ constant. 
If $\Af$ includes each triangle with probability $p' = \frac{p_2'-p_1'}{1-p_1'}$, then
    \[\TV\b(\Af(\rgt(n,p,p'_1,S,q)),\rgt(n,p,p'_2,S,q')\b)=o_n(1)\,,\]
    where $q' =q+(1-q)\left(1-(1-p')^{n-2}\right)$.
\end{theorem}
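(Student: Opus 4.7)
The plan is to apply Theorem~\ref{thm:general} with $\cA=\Af$, $\rg=\rgt(n,p,p_1')$, $\rg'=\rgt(n,p,p_2')$, and the $K=\binom{k}{2}$ planted edges being those in $\binom{S}{2}$, each resampled with probability $q$. The consistency $\Af(\rg)=\rg'$ is immediate because independent triangle additions compose: each triangle is present after $\Af$ with probability $1-(1-p_1')(1-p')=p_2'$ by the choice of $p'$. Conditions \ref{cond-gen1} and \ref{cond-gen3} hold with $\epsilon=0$ and explicit constants: since $\Af(G)(f) = G(f)\vee \bigvee_{T\ni f} X_T$ does not depend on $G(e)$ for $f\neq e$, we have the exact equality $\Af(G-e)|_{\sim e} = \Af(G+e)|_{\sim e}$; and because $\Af$ only adds edges, $\Af(G+e)|_e=1$ almost surely (so $p_+^e=1$), while for $e=(u,v)$ the variable $\Af(G-e)|_e=\bigvee_{w\neq u,v} X_{(u,v,w)}\sim \bern(1-(1-p')^{n-2})$ exactly (so $p_-^e = 1-(1-p')^{n-2}$). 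Together these produce $q^* = q + (1-q)(1-(1-p')^{n-2}) = q'$, matching the statement. The good set $\cG$ can be taken analogously to Section~\ref{sec:defining-class-of-graphs}, with weaker demands than in the reverse direction because the forward map is more symmetric and product-like.

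The main technical step is verifying \ref{cond-gen2} for $G-e$; the $G+e$ case is trivial since $\Af(G+e)|_e=1$ a.s.\ makes the joint a product. Writing $Z_e := \Af(G-e)|_e = \bigvee_w N_w$ with $N_w := X_{(u,v,w)}$, and $Z_{\sim e} := \Af(G-e)|_{\sim e}$, the dependence between $Z_e$ and $Z_{\sim e}$ is mediated entirely by the $n-2$ coins $N=(N_w)$: each $N_w=1$ forces both $(u,w)$ and $(v,w)$ into $Z_{\sim e}$. I plan to bound
\[
\TV\b((Z_e,Z_{\sim e}),\, Z_e\otimes Z_{\sim e}\b) \;=\; \E_{Z_{\sim e}}\b|\Pr(Z_e=1\mid Z_{\sim e})-\Pr(Z_e=1)\b|
\]
via a Bayesian analysis: conditional on $Z_{\sim e}$, $N_w=0$ unless $w$ lies in the viable set $S(Z_{\sim e}) := \{w : Z_{(u,w)} = Z_{(v,w)} = 1\}$, and for $w\in S(Z_{\sim e})$ the posterior $\Pr(N_w=1\mid Z_{\sim e})$ is of order $\alpha_w=O(p')$ by Bayes' rule applied to the two forced edges (whose marginal joint probability is $\Theta(1)$ for $G\in\cG$).

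The hard part will be showing that $\Pr(Z_e=1\mid Z_{\sim e}) = 1 - \prod_{w\in S(Z_{\sim e})}(1-\alpha_w)$ concentrates sharply enough that $\epsilon$ satisfies $K\epsilon = o(1)$, which requires $\epsilon=\tilde o(1/\sqrt{n})$ given $K=\tilde O(\sqrt{n})$. For $G\in \cG$, the cardinality $|S(Z_{\sim e})|$ concentrates around its mean $\Theta(n)$ with Gaussian-type fluctuations $\tilde O(\sqrt n)$; combined with $\alpha_w = O(p') = O(1/n)$, this translates into fluctuations of $\Pr(Z_e=1\mid Z_{\sim e})$ of order $\tilde O(1/\sqrt n)$ via McDiarmid-type bounds (cf.~Lemma~\ref{lem:mcdiarmid-extension}), yielding the required $\epsilon$ after careful bookkeeping of logarithmic factors and of the finer $\alpha_w$-dependence on $Z_{\sim e}$. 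Plugging into Theorem~\ref{thm:general} then completes the proof.
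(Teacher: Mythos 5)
Your structural setup matches the paper: you correctly invoke Theorem~\ref{thm:general} with $\cA=\Af$, identify $\cG$ as a uniformly 2-star-dense high-probability set, and observe that \ref{cond-gen1} and \ref{cond-gen3} hold exactly, with $p_+^e=1$ and $p_-^e=1-(1-p')^{n-2}$, so the only real work is \ref{cond-gen2} for $G-e$. For that step, however, you take a genuinely different route (a direct Bayesian posterior on the $e$-triangles) from the paper's (which constructs an auxiliary triangle distribution $\aux$ and compares $X'$ with $X'\vee\aux$ via concentration of a likelihood ratio in triangle space), and your route has a gap.

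The gap is the implicit conditional-independence claim behind $\Pr(Z_e=1\mid Z_{\sim e}) = 1-\prod_{w\in S(Z_{\sim e})}(1-\alpha_w)$. Given $Z_{\sim e}$, the $N_w=X_{(u,v,w)}$ are \emph{not} conditionally independent: the event $\{Z_{(u,w)}=Z_{(v,w)}=1, N_w=0\}$ imposes coverage constraints on $X_0$ (the non-$e$-containing triangle indicators), and a single $X_0$-triangle such as $(u,w_1,w_2)$ simultaneously covers $(u,w_1)$ and $(u,w_2)$, so the likelihoods $\Pr(Z_{\sim e}=z\mid N=n)$ do not factorize over the coordinates of $n$. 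Already a four-vertex toy example (all relevant edges outside $G$, all observed as $1$) gives $\Pr(N_{w_1}=N_{w_2}=1\mid z)=\Theta(p')$ while $\Pr(N_{w_1}=1\mid z)\Pr(N_{w_2}=1\mid z)=\Theta(p'^2)$, so the product formula is off by an unbounded factor; for dense $G\in\cG$ many $w$ have both $(u,w),(v,w)\in G$ and contribute cleanly, but $\Theta(n)$ do not, and the coupling among them is exactly the quantity you would need to control. Relatedly, ``Bayes on the two forced edges'' is not Bayes with the full conditioning, and the McDiarmid step is delicate because $Z_{\sim e}$ is not a product measure (you would have to lift to the independent triangle coins, at which point $\Pr(Z_e=1\mid Z_{\sim e})$ is a very nontrivial function of $X$). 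The paper's $\aux$ construction sidesteps all of this by keeping the comparison in the triangle space where the prior \emph{is} a product: one shows $G+E(X)-e\disteq G+E(X'\vee\aux)-e$ with $\aux$ chosen so that $\aux$'s triangles only contribute edges $(u,w),(v,w)$ already covered by $G$, reducing \ref{cond-gen2} to $\TV(X',X'\vee\aux)=\tilde O(n^{-1/2})$ via a likelihood-ratio concentration argument that never has to reason about conditional posteriors on $N$.
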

By choosing $p_1'=0$ we get the reduction from planted dense subgraph to planted RGT. 
\begin{corollary}
    For $p'\le 1/n$, $k:=|S|=O(n^{1/4}\log^{-5/4} n)$  and $p,q$ being constant,
    \[\TV\b(\Af(G(n,p,S,q)),\rgt(n,p,p'_2,S,q')\b)=o_n(1)\,,\]
    where $q' =q+(1-q)\left(1-(1-p')^{n-2}\right)$.
\end{corollary}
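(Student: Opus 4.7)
My plan is to apply Theorem~\ref{thm:general} with the mapping $\cA=\Af$ at rate $p'=(p_2'-p_1')/(1-p_1')$, source distribution $\rg=\rgt(n,p,p_1')$, target $\rg'=\rgt(n,p,p_2')$, and general planted signal that resamples each of the $K=\binom{k}{2}$ edges inside $S$ with probability $q$. The identity $\Af(\rg)=\rg'$ is immediate from the semigroup property of independent triangle addition: since $\Af$ inserts each triangle independently with probability $p'$, composing with the $p_1'$-triangle addition of $\rgt(n,p,p_1')$ gives independent triangle addition at rate $p_1'+(1-p_1')p'=p_2'$.

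Conditions~\ref{cond-gen1} and~\ref{cond-gen3} will hold with $\epsilon=0$: the restriction $\Af(G\pm e)|_{\sim e}$ depends only on $G|_{\sim e}$ and the independent triangle set $X$, hence is insensitive to $e$; and the marginals $\Pr(e\in \Af(G+e))=1$ and $\Pr(e\in \Af(G-e))=1-(1-p')^{n-2}$ are both constants in $G$. Choosing $p_+^e=1$ and $p_-^e=1-(1-p')^{n-2}$ yields the image parameter $q'=q+(1-q)(1-(1-p')^{n-2})$ stated in the theorem. The $G+e$ half of Condition~\ref{cond-gen2} is also automatic since $\Af(G+e)|_e$ is deterministically $1$, so $\Af(G+e)=\Af(G+e)|_{\sim e}\times \bern(1)$ exactly.

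The bulk of the work lies in the $G-e$ half of Condition~\ref{cond-gen2}: bounding $\TV\b(\Af(G-e),\Af(G-e)|_{\sim e}\times \Af(G-e)|_e\b)$ for $G$ in a suitable graph class $\cG$ by $\epsilon=\tilde o(n^{-1/2})$, so that $K\epsilon=o(1)$. I take $\cG$ to be the class of $(p^2/3)$-uniformly $2$-star dense graphs as in Section~\ref{sec:defining-class-of-graphs}; standard Chernoff and union bounds give $\Pr(G\in\cG)=1-o(1/n)$ for $G$ sampled from any of the intermediate planted distributions $\rgt_i$. For $G\in\cG$ and $e=\{i,j\}$, at least $(p^2/3)(n-2)$ \emph{wedge} vertices $k$ satisfy $\{i,k\},\{j,k\}\in G$, and for such $k$ the triangle indicator $X_{T_k}$ only affects the output edge $e$ (its two other edges are already in $G$), so $X_{T_k}$ contributes purely independent randomness to $Z_e:=\Af(G-e)|_e$ without influencing $Z_{\sim e}:=\Af(G-e)|_{\sim e}$. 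Splitting $Z_e=Z_e^W\vee Z_e^N$ into wedge and non-wedge contributions, a short conditional-TV calculation gives
\[\TV\b((Z_e,Z_{\sim e}),P_{Z_e}\times P_{Z_{\sim e}}\b)=(1-\Pr(Z_e^W=1))\cdot \TV\b((Z_e^N,Z_{\sim e}),P_{Z_e^N}\times P_{Z_{\sim e}}\b),\]
reducing the problem to bounding the correlation between $Z_{\sim e}$ and the non-wedge contribution $Z_e^N$, which is mediated by at most $(1-p^2/3)(n-2)$ triangles each of strength $p'=\tilde O(1/n)$.

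The main obstacle is to sharpen this remaining bound, since a naive coupling argument only gives $\TV=\Theta(np')=\tilde O(1)$ rather than $\tilde o(n^{-1/2})$. I expect to handle it via the $\chi^2$ identity $\chi^2(\mathrm{joint}\|\mathrm{product})=\var[\Pr(Z_e=1|Z_{\sim e})]/[\Pr(Z_e=0)\Pr(Z_e=1)]$ in the spirit of Lemma~\ref{lem:chi-square-gibbs}, exploiting that the posterior $\Pr(Z_e^N=1|Z_{\sim e})$ concentrates around its mean (because the non-wedge triangles act as weak, essentially independent perturbations under $p'\le 1/n$) to obtain variance $\tilde O(1/n)$. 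Combining with the high-probability guarantee on $\cG$ and applying Theorem~\ref{thm:general} then yields $\TV\b(\Af(\rgt(n,p,p_1',S,q)),\rgt(n,p,p_2',S,q')\b)=O(K(\epsilon+\delta))=o(1)$.
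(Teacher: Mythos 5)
Your high-level scaffolding matches the paper exactly: you apply Theorem~\ref{thm:general} to $\Af$ with triangle rate $p'=(p_2'-p_1')/(1-p_1')$, take $\cG$ to be uniformly $2$-star dense graphs, observe that \ref{cond-gen1} and \ref{cond-gen3} hold with $\epsilon=0$ and that the $G+e$ half of \ref{cond-gen2} is trivial, and reduce everything to the $G-e$ half of \ref{cond-gen2}. The corollary is then the $p_1'=0$ specialization, as in the paper. Your exact mixture identity
\[
\TV\b((Z_e,Z_{\sim e}),P_{Z_e}\times P_{Z_{\sim e}}\b)=(1-\Pr(Z_e^W=1))\cdot \TV\b((Z_e^N,Z_{\sim e}),P_{Z_e^N}\times P_{Z_{\sim e}}\b)
\]
is correct (one verifies it entrywise using the independence of $Z_e^W$ from $(Z_e^N,Z_{\sim e})$), and correctly exploits the fact that wedge triangles touch only $e$ in the output.

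Where you depart from the paper is the final step, and this is where there is a genuine gap. The paper proves the corresponding Lemma~\ref{lem:forward-influence-on-other-edges} by \emph{lifting back to the triangle space}: it builds an explicit auxiliary set $\aux$ of non-$e$-containing triangles whose edge footprint replicates that of the $e$-containing triangles (Fig.~\ref{fig:aux-forward}), uses the data-processing inequality to reduce the edge-space TV to $\TV(X',X'\vee\aux)$, and then proves the latter is $\tilde O(n^{-1/2})$ by decomposing $\aux$ into single-triangle steps and showing the likelihood ratio $dQ/dP(X')=|A(X')|/\E|A(X')|$ concentrates via McDiarmid. Your route instead stays in edge space and proposes the $\chi^2$ identity $\chi^2(\text{joint}\,\|\,\text{product})=\var[\Pr(Z_e^N=1\mid Z_{\sim e})]/[\Pr(Z_e^N=0)\Pr(Z_e^N=1)]$, asserting $\var[\Pr(Z_e^N=1\mid Z_{\sim e})]=\tilde O(1/n)$ because the non-wedge triangles act as ``weak, essentially independent perturbations.'' This is exactly the claim that needs proof, and it is not a short calculation: the posteriors $\Pr(X_{(i,j,k)}=1\mid Z_{\sim e})$ across distinct non-wedge vertices $k$ are \emph{not} conditionally independent given $Z_{\sim e}$, since a single triangle $\{i,k,k'\}$ can account for both $Z_{\{i,k\}}=1$ and $Z_{\{i,k'\}}=1$, correlating the two posteriors, and each posterior depends on far more of $Z_{\sim e}$ than the two edges $\{i,k\},\{j,k\}$ (it depends on the full local structure of which triangles could plausibly cover those edges). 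The back-of-envelope per-term variance $\approx (\log n/n)^2\cdot(1/\log^2 n)$ summed over $\sim n$ terms does give the right order $\tilde O(1/n)$, but turning that into a proof requires controlling all these cross-terms; this is precisely the difficulty that the paper's $\aux$ construction and likelihood-ratio-concentration argument were designed to sidestep. Without that control your $\epsilon$ is unproven, and with $K=\binom{k}{2}=\tilde O(\sqrt{n})$ the final bound $O(K(\epsilon+\delta))$ is sensitive to even logarithmic slack. I would either import the paper's $\aux$ machinery, or, if you want to keep the $\chi^2$ route, prove the variance bound via a second-moment/replica calculation in the spirit of Lemma~\ref{lem:chi-square-gibbs} with the correlations between non-wedge triangle posteriors explicitly accounted for.
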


\begin{remark}
The statement of the corollary is actually true all the way up to $k=O(\sqrt{n})$. This can be proved by calculating the $\chi^2$-divergence induced by adding a single triangle to an \ER\ random graph, with or without resampling edges in $S$. (A similar calculation appears in \cite{brennan2019phase}.) We instead use our general result, Theorem~\ref{thm:general}, as this gives a unified view on how to deal with correlation in planted models.
\end{remark}


\begin{remark}\label{rem:forward-q}
In Section~\ref{sec:results}, Theorem~\ref{thm:main} is stated without explicitly giving the mapping function $f(q)$. This corollary shows that $f(q)=q+(1-q)\left(1-(1-p')^{n-2}\right)$.
Because we assume $p'\ll 1/n$, we have $f(q)=q+O(np')=q+o_n(1)$. The parameter mapping $g$ for the reverse map also satisfies $g(q)=q+o_n(1)$ (see Cor.~\ref{cor:true-reverse} in Section~\ref{sec:reverse-preserves}). So as claimed in Theorem~\ref{thm:main}, $f\circ g(q) =q+o_n(1)$.
\end{remark}

\begin{proof}[Proof of Theorem~\ref{thm:PDSforward}]
We proof follows by checking the conditions of Theorem~\ref{thm:general}, which we do in the next three subsections. Here the two pure-noise models are $\rgt(n,p,p'_1)$ and $\rgt(n,p,p'_2)$, and the transformation between them is $\Af$. 
We also need to describe the partially planted version of the model, which we do similarly to Section~\ref{sec:ideas-inter-planted-model}. Let $e_1,e_2,\cdots, e_{\binom{k}{2}}$ be an enumeration of edges with both endpoints in $S$. Let $\RGTi{i}(n,p,p_1',S,q)$ be a random graph generated by starting with $\rgt(n,p,p_1')$ and resampling edges $e_1,e_2,\cdots, e_i$ to be each included with probability $q$.


\subsection{Class of Graphs $\cG$}
Define the class of graphs $\cG$ to be the set of graphs that are $p^2/2$-uniformly 2-star dense. 

\begin{lemma}\label{lem:probability-of-good}
Assume that $k=o(\sqrt{n})$. Both $G(n,p)$ and all $\rgt_i(n,p,p')$ for $1\leq i\leq {k\choose 2}$ are $p^2/2$-uniformly 2-star dense with probability at least $1-e^{-\Omega(n)}$.
\end{lemma}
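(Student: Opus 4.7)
The plan is to fix an arbitrary pair of vertices $(u,v)$ and lower-bound the size of the common-neighborhood set $W(u,v)$ by restricting attention only to witnesses $w$ that lie outside the planted set $S$. The key observation is that for any $w \in V := [n]\setminus(S\cup\{u,v\})$, neither of the edges $(u,w)$ or $(v,w)$ has both endpoints in $S$, so neither is among the resampled edges $e_1,\ldots,e_i$; thus the status of these edges in $\rgt_i(n,p,p',S,q)$ is determined entirely by the $G(n,p)$ base together with the triangle-addition step, both of which can only add edges. Consequently, in either $G(n,p)$ or $\rgt_i(n,p,p',S,q)$, $|W(u,v)\cap V|$ stochastically dominates its value in the plain $G(n,p)$ model.

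In $G(n,p)$, the indicators $\{w\in W(u,v)\}$ for $w\in V$ are mutually independent (they depend on disjoint edge pairs $(u,w),(v,w)$) and each has probability $p^2$, so $|W(u,v)\cap V|\sim\bino(|V|,p^2)$ with $|V|\geq n-k-2$. Since $k = o(\sqrt{n})=o(n)$, for $n$ large enough we have $p^2|V|\geq p^2(n-k-2) \geq \tfrac{2}{3}p^2 n \geq \tfrac{4}{3}\cdot\tfrac{p^2(n-2)}{2}$, so the target threshold $p^2(n-2)/2$ lies a constant multiplicative factor below the mean. Applying the multiplicative Chernoff bound (Lemma~\ref{lem:chernoff}) with some fixed $\delta\in(0,1)$ depending only on $p$ then yields
\[
\Pr\!\b(|W(u,v)|<p^2(n-2)/2\b)\leq \Pr\!\b(|W(u,v)\cap V|<p^2(n-2)/2\b)\leq \exp(-\Omega(n)).
\]

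To finish, I would take a union bound over the $\binom{n}{2}$ choices of $(u,v)$. Since the per-pair failure probability is $\exp(-cn)$ for a constant $c=c(p)>0$, the total failure probability is $n^2\exp(-\Omega(n))=\exp(-\Omega(n))$, giving the claimed bound simultaneously for $G(n,p)$ and every $\rgt_i(n,p,p',S,q)$ with $1\leq i\leq\binom{k}{2}$.

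There is no substantive obstacle here: the argument is a routine Chernoff-plus-union-bound. The only two things to be careful about are (i) verifying that edges incident to vertices outside $S$ are untouched by the planting resampling (which is immediate from the definition of $\RGTi{i}$, since only edges inside $S$ are resampled) and (ii) the stochastic-dominance step, which holds because both the triangle-addition operation and the conditioning above act monotonically on the edge set outside $S\times S$.
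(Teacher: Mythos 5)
Your proof is correct and takes essentially the same approach as the paper's: a Chernoff bound on wedge counts, a union bound over vertex pairs, and monotonicity of the triangle-addition step. The one real difference is in how you absorb the resampling in $\rgt_i$: the paper first establishes the stronger $2p^2/3$-density for $\rgt(n,p,p')$ and then observes that resampling can decrease any given wedge count by at most $o(n)$ (since only $O(k)$ of the resampled edges are incident to a fixed pair), whereas you restrict to witness vertices $w\notin S$ up front so that the resampled edges never touch the relevant indicators, and compensate by noting $|V|=n-k-2$ still leaves $4/3$ of the target in the mean. Both routes need the same $\tfrac43$-slack somewhere, so the arguments are morally identical; yours is marginally cleaner in that the $\rgt_i$ case requires no separate deduction step.
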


\begin{proof}
Let $i,j$ be an arbitrary pair of nodes. For any $k\not\in \{i,j\}$, the probability that both $(i,k)$ and $(j,k)$ are in the graph is $p^2$. 
Recalling the wedge vertices $
\wed[G](i,j) = \{w\in [n]| (w,i),(w,j)\in G\}
$ from Section~\ref{sec:graph}, 
 $\E|\wed[G](i,j)| = p^2(n-2)$. Since the event that both $(i,k)$ and $(j,k)$ are in the graph is  independent for each $k$, we can use
 a Chernoff bound (Lemma~\ref{lem:chernoff}) to get
 \[\Pr\b(|\wed[G](i,j)|<2p^2(n-2)/3\b)\le e^{-p^2(n-2)/18}\,.\]
By the union bound over all pairs, we get that the probability that there is exists $i,j$ with $|\wed[G](i,j)|<2p^2(n-2)/3$ is at most 
$\binom{n}{2}e^{-p^2(n-2)/18}=e^{-\Omega(n)}$.
The same holds for $\rgt(n,p,p')$ because the property of being $c$-uniformly 2-star dense is preserved under addition of edges. 
Finally, note that changing $o(n)$ edges preserves the property, so the conclusion holds for $\rgt_i(n,p,p')$. 
\end{proof}

\subsection{Checking Conditions in Theorem~\ref{thm:general}}
\ref{cond-gen1} of Theorem~\ref{thm:general} is immediate, as $\Af(G-e)=G-e+E(X)$ and $\Af(G+e)=G+e+E(X)$ are the same on $\binom{[n]}{2}-e$. Condition \ref{cond-gen3}
 is also easy to check and holds with $\epsilon=0$: $\Pr(e\in \Af(G-e))=\Pr(e\in E(X)) = 1-(1-p')^{n-2}=:p_-^e$. Since $\Af$ only adds edges, $\Pr(e\in \Af(G+e))=1=:p_+^e$.
 
Verifying condition \ref{cond-gen2} is more technical. For $G+e$, $e\in \Af(G+e)$ always holds so $\Af(G+e) = \Af(G+e)|_{\sim e}\times \bern(1).$ For $G-e$, we will show condition \ref{cond-gen2}
assuming the following lemma. Let $Z\in\{0,1\}^{\binom{[n]}{2}}$ be the indicator vector for the set of edges in $\Af(G-e)$.

\begin{lemma}\label{lem:forward-influence-on-other-edges}
Fix $0<c<1$. For a $c$-uniformly 2-star-dense graph $G$ with $e\notin G$,
\[
\TV\b(\law(Z_{\sim e}), \law(Z_{\sim e}|Z_e=0) \b) = O(n^{-1/2} \log^{5/2} n)
\,.
\]
\end{lemma}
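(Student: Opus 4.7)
The plan is to bound $\TV(P,Q)$ via the $\chi^2$-divergence and Pinsker's inequality, by showing $\chi^2(P\|Q) = \tilde O(1/n)$, where $P := \law(Z_{\sim e})$ and $Q := \law(Z_{\sim e}\mid Z_e = 0)$. Since $e\notin G$, the event $\{Z_e = 0\}$ equals $\{X_e = \vec 0\}$, where $X_e \in \{0,1\}^{[n]\setminus\{i,j\}}$ is the iid $\Bern(p')$ indicator of triangles through $e=\{i,j\}$. With $U := G|_{\sim e} \cup E(X_{\sim e})$ and $V := E(X_e) \setminus \{e\}$ (independent of $U$), one has $P = \law(U \cup V)$ and $Q = \law(U)$.

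I would apply the Ingster--Suslina second-moment identity
\[
\chi^2(P\|Q) + 1 = \E_{V,V'}\bigl[\E_Q[L_V L_{V'}]\bigr],
\]
with $V, V'$ iid copies of $V$ and $L_v := dP_v/dQ$ for $P_v := \law(U \cup v)$. A direct computation gives $L_v(z) = \ind{z \supseteq v} \cdot \Pr(U \in [z\setminus v, z])/\Pr(U = z)$ and $\E_Q[L_v] = 1$. The key observation is that if $v \subseteq G$ then $v \subseteq U$ a.s., so $L_v \equiv 1$ and $v$ contributes nothing to $\chi^2$. In particular, a triangle $\{i,j,k\}$ with $k$ a \emph{wedge} vertex of $(i,j)$ in $G$ (both $(i,k), (j,k) \in G$) is ``free''. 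By the $c$-uniform 2-star density of $G$, at most $(1-c)(n-2) = O(n)$ vertices $k$ are non-wedge, so only these contribute.

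I would then split the sum into diagonal and off-diagonal parts. For the diagonal (single non-wedge triangle $v_k$), expanding $L_{v_k}$ and using that each incident edge of $v_k$ is absent from $U$ with $\Theta((1-p')^{n-3}) = \Theta(1)$ probability gives $\chi^2(P_{v_k}\|Q) = O(1)$. With $\pi(v_k) := \Pr(V = v_k) \le p' \le 1/n$, summing over the $O(n)$ non-wedge $k$ yields $\sum_k \pi(v_k)^2 \chi^2(P_{v_k}\|Q) = O(n p'^2) = \tilde O(1/n)$, and multi-triangle configurations with $|X_e| = m \ge 2$ are doubly suppressed ($\pi(v) \lesssim p'^m$ over $\binom{n}{m}$ choices, yielding a geometric series summing to $\tilde O(1/n)$). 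For the off-diagonal ($V \ne V'$ on disjoint non-wedge triangles), $L_v$ and $L_{v'}$ depend on disjoint edges of $U$ and so are approximately uncorrelated under $Q$; a Cauchy--Schwarz-type estimate then controls the off-diagonal total by a constant multiple of the diagonal. Combining gives $\chi^2(P\|Q) = \tilde O(1/n)$, and Pinsker's inequality $\TV \le \sqrt{\chi^2/2}$ yields the claimed $\tilde O(n^{-1/2})$. The hardest part will be the off-diagonal bound: because $U$'s joint distribution is not a product (edges $(a,b)$ and $(a,c)$ share the triangle $\{a,b,c\}$), rigorously establishing the approximate decorrelation of $L_v$ and $L_{v'}$ requires an explicit expansion of $\Pr(U=z)$ in the sparse regime $p' \le 1/n$, where higher-order triangle-interaction terms are controlled by $np' = \tilde O(1)$.
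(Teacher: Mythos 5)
Your route --- applying the Ingster--Suslina second-moment identity directly to the edge distribution of $Z_{\sim e}$ --- is genuinely different from the paper's. The paper instead constructs an auxiliary triangle set $\aux$ so that $X'\vee\aux$ has the same edge projection as the unconditioned $X$ (exploiting 2-star density via Lemma~\ref{lem:regular-subgraph} to cover new edges without introducing spurious ones), reduces via data processing to $\TV(X', X'\vee\aux)$, decomposes $\aux$ into single-triangle kernel insertions, and bounds each insertion by concentration of the likelihood ratio via a modified McDiarmid inequality. Your observation that triangles through wedge vertices of $e$ are ``free'' is the same observation underlying the construction of $\aux$ --- it need only simulate the non-wedge new edges. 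A $\chi^2$/Ingster--Suslina route is close in spirit to what the paper does for the \emph{reverse} map (Lemma~\ref{lem:influence_edge_triangles}), but there the second moment is taken over the triangle measure and the cross-moment is reduced to an influence bound on $\law_G(Y)$ --- a deliberate choice to avoid manipulating the marginal $\Pr(U=z)$, which your proposal would need to handle head-on.

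Two of your key steps do not hold as written. First, $\chi^2(P_{v_k}\|Q) = O(1)$ is false in the regime the lemma must cover. For a single new edge $f\notin G$, the product-measure heuristic gives $\chi^2(\law(U\cup f)\|\law(U)) = (1-\gamma)/\gamma$ where $\gamma = \Pr(f\in U)\approx(n-3)p'$; you argued from $1-\gamma=\Theta(1)$, but the controlling quantity is $\gamma$, which can be $o(1)$ since Theorem~\ref{thm:PDSforward} invokes the lemma with $p'=(p_2'-p_1')/(1-p_1')$, which is only bounded above by $1/n$ and can be arbitrarily small. The correct single-triangle $\chi^2$ is $O(1/(np'))$ (or $O(1/(np')^2)$ when both edges are new), and the diagonal total is then $O(p')=\tilde{O}(1/n)$ --- the number survives, but only because $\pi(v_k)^2=p'^2$ compensates, not for the reason you give. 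Second, and more seriously, the off-diagonal claim fails: Cauchy--Schwarz gives $\E_Q[L_vL_{v'}]-1\le\tfrac12\bigl(\chi^2(P_v\|Q)+\chi^2(P_{v'}\|Q)\bigr)$, and summing this over $v\ne v'$ produces a bound of order $\Pr(V\ne\emptyset)\cdot\sum_v\pi(v)\chi^2(P_v\|Q)=O(np')\cdot O(1)$, which is $O(1)$ and not $O(1/n)$. What is actually needed is that $\E_Q[L_vL_{v'}]-\E_Q[L_v]\E_Q[L_{v'}]$ is small, i.e.\ a decorrelation estimate for the non-product measure $\law(U)$, and since $L_v(z)=\Pr(U\in[z\setminus v,z])/\Pr(U=z)$ is a ratio of global probabilities of that measure (not a local function of $z|_v$), establishing this decorrelation \emph{is} the content of the lemma. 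Acknowledging it as ``the hardest part'' and gesturing at an expansion in $np'$ does not discharge it; the paper's auxiliary-triangle construction and likelihood concentration exist precisely to sidestep this computation.
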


The proof is deferred until the next subsection, and we proceed with checking condition \ref{cond-gen2}.
Note that 
\[\law(Z_{\sim e}) = p_-^e \law(Z_{\sim e}|Z_e=1) + (1-p_-^e)\law(Z_{\sim e}|Z_e=0)\,.\]
Manipulating the relevant p.m.f.s, it follows that
$$
\TV\b(\law(Z_{\sim e}), \law(Z_{\sim e}|Z_e=1) \b) 
\le \frac1{p_-^e}\cdot \TV\b(\law(Z_{\sim e}), \law(Z_{\sim e}|Z_e=0) \b) = 
\tilde O\B(\frac1{p_-^e\sqrt{n}} \B)\,.$$ 
Combining with Lemma~\ref{lem:forward-influence-on-other-edges},
\begin{align*}
    \law(Z) &= p_-^e \law(Z|Z_e=1) + (1-p_-^e)\law(Z|Z_e=0)\\
    &\approx_{O(n^{-1/2} \log^{5/2} n)} p_-^e \law(Z_{\sim e})\times \bern(1) + p_-^e \law(Z_{\sim e})\times \bern(0)\\
    &= \law(Z_{\sim e})\times \bern(p_-^e)\,.
\end{align*}
This completes the proof for condition \ref{cond-gen2}.

Therefore, Theorem~\ref{thm:general} applies with $\epsilon = O(n^{-1/2} \log^{5/2} n)$ and $\delta=O(e^{-\Omega(n)})$. When $k= O(n^{1/4}\log^{-5/4} n)$, there are at most $ O(n^{1/2} \log^{-5/2} n)$ planted edges and the theorem follows from Theorem~\ref{thm:general} with $q' = qp_+^e+(1-q)p_-^e=q+(1-q)(1-(1-p')^{n-2})$.
\end{proof}

\subsection{Perturbation Insensitivity of $\Af$ and Proof of Lemma~\ref{lem:forward-influence-on-other-edges}}
It only remains to prove the lemma. 
The idea of the proof is similar to Section~\ref{sec:projection}. In the forward mapping, we do not need to worry about the influence of non-incident triangles in Section~\ref{sec:non-incident-triangles}, as all the triangles are independent. This section thus serves as a warm up for the proof of Lemma~\ref{lem:edge_on_other_edge}.

In this proof, we will apply the projection idea as in Section~\ref{sec:projection} and design an auxiliary triangle distribution $\aux$ that will allow us to compare
 $Z_{\sim e}$ and $\law(Z_{\sim e}|Z_e=0)$ indirectly, by comparing certain triangle distributions. Finally, these triangle distributions will be bounded by showing that their likelihood ratio concentrates.

\begin{proof}[Proof of Lemma~\ref{lem:forward-influence-on-other-edges}]

Let $T_e$ be the set of triangles in $G$ that contain $e$. Define $Z = \Af(G) = G + E(X)$ to be the output graph, so that
$\law(Z_{\sim e}|Z_e=0)$ is the conditional distribution of the output graph given that all triangles in $T_e$ are not selected in $X$. 
Let $X'$ be the same as $X$ on $\binom{[n]}{3}\backslash T_e$ and 0 on $T_e$.
We have 
\begin{equation}\label{eq:edge-of-Y}
   E(Z_{\sim e})\disteq G+E(X)-\{e\}\quadand E(Z_{\sim e}|Z_e=0)\disteq G+E(X')-\{e\}
\,. 
\end{equation}

\paragraph{Design of Auxiliary Triangle Distribution $\aux$}

To prove the lemma, it would be convenient if $X$ and $X'$ were close in distribution, but this is unfortunately not true as $X$ includes a triangle in $T_e$ with constant probability.
Luckily, it is the edge set $E(X)-e$ and $E(X')$ that we care about. Our approach is to construct another set of triangles $\aux$ that includes the extra edges in $E(X)-e$ that are missing in $E(X')$, \emph{by adding in triangles that could plausibly be in $X'$}.  The relation between $X,X'$ and $\aux$ is shown in Figure~\ref{fig:projection-forward}. In order to define $\aux$ we first require a lemma.
\begin{figure}
    \centering
    \tikzset{every picture/.style={line width=0.75pt}} 

\begin{tikzpicture}[x=0.75pt,y=0.75pt,yscale=-1,xscale=1]

\draw [color={rgb, 255:red, 208; green, 2; blue, 27 }  ,draw opacity=1 ][line width=1.5]    (231.71,36.87) -- (231.71,87.08) ;
\draw [shift={(231.71,90.08)}, rotate = 270] [color={rgb, 255:red, 208; green, 2; blue, 27 }  ,draw opacity=1 ][line width=1.5]    (11.37,-3.42) .. controls (7.23,-1.45) and (3.44,-0.31) .. (0,0) .. controls (3.44,0.31) and (7.23,1.45) .. (11.37,3.42)   ;
\draw [line width=1.5]    (109.95,35) -- (109.02,156) ;
\draw [shift={(109,160)}, rotate = 270.43] [color={rgb, 255:red, 0; green, 0; blue, 0 }  ][line width=1.5]    (11.37,-3.42) .. controls (7.23,-1.45) and (3.44,-0.31) .. (0,0) .. controls (3.44,0.31) and (7.23,1.45) .. (11.37,3.42)   ;
\draw [line width=1.5]    (350.53,35.87) -- (350.53,86.08) ;
\draw [shift={(350.53,89.08)}, rotate = 270] [color={rgb, 255:red, 0; green, 0; blue, 0 }  ][line width=1.5]    (11.37,-3.42) .. controls (7.23,-1.45) and (3.44,-0.31) .. (0,0) .. controls (3.44,0.31) and (7.23,1.45) .. (11.37,3.42)   ;
\draw [line width=1.5]    (353.89,121.68) -- (353.99,156) ;
\draw [shift={(354,159)}, rotate = 269.82] [color={rgb, 255:red, 0; green, 0; blue, 0 }  ][line width=1.5]    (11.37,-3.42) .. controls (7.23,-1.45) and (3.44,-0.31) .. (0,0) .. controls (3.44,0.31) and (7.23,1.45) .. (11.37,3.42)   ;
\draw [color={rgb, 255:red, 245; green, 166; blue, 35 }  ,draw opacity=1 ]   (122,38) .. controls (147.35,57.6) and (177.01,61.4) .. (204,39) ;
\draw [color={rgb, 255:red, 245; green, 166; blue, 35 }  ,draw opacity=1 ]   (108,189) .. controls (132.54,221.67) and (326.83,224.47) .. (353,189) ;

\draw (102,10.) node [anchor=north west][inner sep=0.75pt]   [align=left] {$\displaystyle X'$};
\draw (203,10) node [anchor=north west][inner sep=0.75pt]  [color={rgb, 255:red, 208; green, 2; blue, 27 }  ,opacity=1 ] [align=left] {$\displaystyle X'\lor X^{aux}$};
\draw (342.74,12) node [anchor=north west][inner sep=0.75pt]   [align=left] {$\displaystyle X$};
\draw (128.62,92.01) node [anchor=north west][inner sep=0.75pt]  [color={rgb, 255:red, 74; green, 144; blue, 226 }  ,opacity=1 ] [align=left] {$\displaystyle G+E\left( X'\lor X^{aux}\right) -e$};
\draw (302.15,94.88) node [anchor=north west][inner sep=0.75pt]  [color={rgb, 255:red, 74; green, 144; blue, 226 }  ,opacity=1 ] [align=left] {$\displaystyle G+E( X) -e$};
\draw (308.97,163.66) node [anchor=north west][inner sep=0.75pt]   [align=left] {$ $$\displaystyle Z_{\sim e} |Z_{e} =0$};
\draw (92.66,163.39) node [anchor=north west][inner sep=0.75pt]   [align=left] {$ $$\displaystyle Z_{\sim e}$};
\draw (154.44,60.61) node [anchor=north west][inner sep=0.75pt]  [color={rgb, 255:red, 245; green, 166; blue, 35 }  ,opacity=1 ] [align=left] {$\displaystyle ( 1)$};
\draw (220.18,218.03) node [anchor=north west][inner sep=0.75pt]  [color={rgb, 255:red, 245; green, 166; blue, 35 }  ,opacity=1 ] [align=left] {$\displaystyle ( 2)$};
\draw (282,99.01) node [anchor=north west][inner sep=0.75pt]  [color={rgb, 255:red, 74; green, 144; blue, 226 }  ,opacity=1 ] [align=left] {$\displaystyle =$};

\end{tikzpicture}
    \caption{Relation between $X,X', \aux$ and the corresponding edge sets used in proving Lemma~\ref{lem:forward-influence-on-other-edges}. Closeness of (1) implies closeness of (2). }
    \label{fig:projection-forward}
\end{figure}
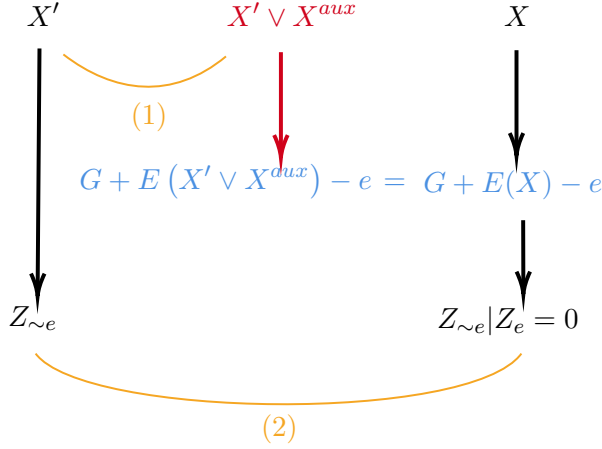

 Recall that $G$ is $c$-uniformly $2$-star-dense and $e=(u,v)\notin G$.  We can choose for each $w\notin\{u,v\}$ a subset $\bwed[G](u,w)$ of $\wed[G](u,w)$ such that the collection satisfies two conditions. 

\begin{restatable}{lemma}{regularsubgraph}\label{lem:regular-subgraph}
Let $G$ be a $c$-uniformly two-star dense graph with $n$ vertices, $u,v$ are vertices in $G$ and $(u,v)\notin G$. There exists $n_0>0$ such that for any $n>n_0$, we can choose for every $w\notin \{u,v\}$
a set $\bwed[G](u,w)\subset \wed[G](u,w)$ 
such that 
\begin{itemize}
    \item For every $ w \not\in \{u,v\}$, $|\bwed[G](u,w)|=cn/3$, and
    \item For every pair $ w_1,w_2\not\in \{u,v\}$, either $w_1\notin \bwed[G](u,w_2)$ or $w_2\notin \bwed[G](u,w_1)$.
\end{itemize}
\end{restatable}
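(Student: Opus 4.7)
The plan is to split the vertices $w \in V\setminus\{u,v\}$ into two groups depending on whether or not $w\in N_G(u)$, exploiting the fact that the antisymmetry constraint is only nontrivial on pairs with both endpoints in $N_G(u)$. Indeed, $\wed[G](u,z) = N_G(u)\cap N_G(z) \subset N_G(u)$ for every $z$, so if $w\notin N_G(u)$ then $w$ cannot appear in any $\bwed[G](u,z)$, and the antisymmetry condition is automatically satisfied for every pair involving such a $w$. Note also that $v\notin N_G(u)$ since $(u,v)\notin G$, so $v$ never enters into any $\wed$ or $\bwed$ set, and it suffices to construct $\bwed[G](u,w)$ for $w\in V\setminus\{u,v\}$.

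For $w\notin N_G(u)$, I will simply take $\bwed[G](u,w)$ to be any $cn/3$-subset of $\wed[G](u,w)$; this is possible since $|\wed[G](u,w)|\ge c(n-2)\ge cn/3$ for $n\ge 3$. For $w\in N_G(u)$, observe that $\wed[G](u,w)$ is exactly the neighborhood of $w$ in the induced subgraph $H:=G[N_G(u)]$, so $\deg_H(w)=|\wed[G](u,w)|\ge c(n-2)$. I will orient the edges of $H$ independently and uniformly at random, let $S_w$ denote the set of out-neighbors of $w$ in this orientation, and set $\bwed[G](u,w)$ to be any $cn/3$-subset of $S_w$. Antisymmetry then holds by construction: if $z\in \bwed[G](u,w)\subset S_w$ then the edge $(z,w)$ is oriented $w\to z$ and hence not $z\to w$, so $w\notin S_z\supset \bwed[G](u,z)$.

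The main obstacle, and really the only nontrivial point, is to show that with positive probability $|S_w|\ge cn/3$ simultaneously for every $w\in N_G(u)$, so that the required subset of $S_w$ can actually be extracted. For each fixed $w\in N_G(u)$, $|S_w|\sim\bino(\deg_H(w),1/2)$ has mean at least $c(n-2)/2$, which exceeds $cn/3$ by $\Omega(n)$. A Chernoff bound (Lemma~\ref{lem:chernoff}) yields $\Pr[|S_w|<cn/3]\le e^{-\Omega(n)}$, and a union bound over the at most $n$ choices of $w$ gives $\Pr[\exists w\in N_G(u):|S_w|<cn/3]\le n\cdot e^{-\Omega(n)}=o(1)$. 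Hence for all $n\ge n_0$ with $n_0$ a sufficiently large absolute constant (depending on $c$), there exists a deterministic orientation of $H$ realizing $|S_w|\ge cn/3$ for every $w\in N_G(u)$, and the construction above yields the desired collection $\{\bwed[G](u,w)\}_{w\notin\{u,v\}}$.
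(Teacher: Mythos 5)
Your proof is correct and follows essentially the same probabilistic strategy as the paper's: break ties between mutually-incident pairs by an independent fair coin (which you recast, cleanly, as a uniformly random orientation of $H=G[N_G(u)]$), then use a Chernoff plus union bound to conclude that all the random sets are simultaneously large with positive probability. The explicit split into $w\in N_G(u)$ versus $w\notin N_G(u)$ is a nice clarifying touch that makes visible what the paper's case analysis is doing implicitly, but the underlying argument is the same.
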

The proof of the lemma can be found in Appendix~\ref{sec:technical-lemmas}.
We apply the lemma with $v$ in place of $u$ to also find sets $\bwed[G](v,w)$ satisfying the conditions. 


We can now define $\aux$:
\begin{enumerate}
    \item Start with $\aux=\vec{0}$.
    \item For each $ w\notin \{u,v\}$, with probability $p'$ do the following: 
    \begin{itemize}
        \item[\tiny $\bullet$] Sample $w_1\sim \unif(\bwed[G](u,w))$ and $w_2\sim \unif(\bwed[G](v,w))$. 
        \item[\tiny $\bullet$] Set $\aux_{(u,w,w_1)} = \aux_{(v,w,w_2)}=1$.
    \end{itemize}
\end{enumerate}
\begin{figure}
    \centering
    \tikzset{every picture/.style={line width=0.75pt}} 

\begin{tikzpicture}[x=0.75pt,y=0.75pt,yscale=-1,xscale=1]

\draw [color={rgb, 255:red, 144; green, 19; blue, 254 }  ,draw opacity=1 ]   (76,177) -- (131,72) -- (173,178) -- (97.98,177.23) -- cycle ;
\draw [color={rgb, 255:red, 208; green, 2; blue, 27 }  ,draw opacity=1 ]   (70,167) -- (51,72) -- (120,69) -- cycle ;
\draw [color={rgb, 255:red, 208; green, 2; blue, 27 }  ,draw opacity=1 ]   (179,168) -- (142,69) -- (214,73) -- cycle ;
\draw [color={rgb, 255:red, 126; green, 211; blue, 33 }  ,draw opacity=1 ]   (124,62) -- (43,66) ;
\draw [color={rgb, 255:red, 126; green, 211; blue, 33 }  ,draw opacity=1 ]   (43,66) -- (64,173) -- cycle ;
\draw [color={rgb, 255:red, 126; green, 211; blue, 33 }  ,draw opacity=1 ]   (223,67) -- (140,63) ;
\draw [line width=1.5]    (76,177) -- (173,178) ;
\draw [color={rgb, 255:red, 126; green, 211; blue, 33 }  ,draw opacity=1 ]   (223,67) -- (182,176) ;

\draw (60,41) node [anchor=north west][inner sep=0.75pt]  [color={rgb, 255:red, 126; green, 211; blue, 33 }  ,opacity=1 ] [align=left] {$\displaystyle G+E( X')$};
\draw (111,133) node [anchor=north west][inner sep=0.75pt]  [color={rgb, 255:red, 144; green, 19; blue, 254 }  ,opacity=1 ] [align=left] {$\displaystyle X$};
\draw (63,88) node [anchor=north west][inner sep=0.75pt]  [color={rgb, 255:red, 208; green, 2; blue, 27 }  ,opacity=1 ] [align=left] {$\displaystyle X^{aux}$};
\draw (118,185) node [anchor=north west][inner sep=0.75pt]   [align=left] {$\displaystyle e$};

\end{tikzpicture}
    \caption{Illustration of $\aux$. It includes edges that are in $G+E(X)-\{e\}$ but not in $G+E(X')$.}
    \label{fig:aux-forward}
\end{figure}
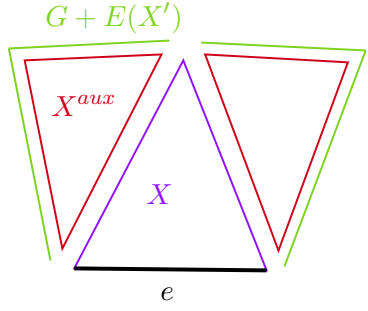
Since $w_1\in \wed[G](u,w)$ and $w_2\in \wed[G](v,w)$, the only possible new edges introduced by $\aux$ are $(u,w)$ and $(v,w)$, which has the same effect on $\binom{[n]}{2}\backslash\{e\}$ as adding triangle $(u,v,w)$ to the graph. Therefore, we have
\begin{equation}
\label{eq:x-prime-with-aux}
    E(Z_{\sim e})=G+E(X)-\{e\} \disteq G+E(X'\vee \aux)-\{e\} 
\,.
\end{equation}
An illustration of $\aux$ is given in Figure~\ref{fig:aux-forward}.
 Comparing \eqref{eq:x-prime-with-aux} with \eqref{eq:edge-of-Y}, 
we have by data processing inequality that
\begin{equation}\label{eq:Z-projection}
    \TV\b(\law(Z_{\sim e}), \law(Z_{\sim e}|Z_e=0) \b)
    \leq \TV\b(\law(X') , \law(X'\vee \aux)\b)\,,
\end{equation}
and we now turn to bounding the latter quantity by adding triangles in $\aux$ to $X'$ one at a time. 

\paragraph{Adding one triangle at a time}

Let $K$ be the following channel acting on any input $X\in \{0,1\}^{\binom{[n]}{3}}$. Sample $w\sim \unif\b([n]\backslash\{u,v\}\b)$, $w_1\sim \unif\b(\bwed[G](u,w)\b)$ and $w_2\sim \unif\b(\bwed[G](v,w)\b)$. Set $\aux_{(v,w,w_1)} = \aux_{(v,w,w_2)}=1$. It turns out that $X'\vee\aux$ is closely approximated by repeating $K$ on $X'$ a $ \bino(n-2,p')$ number of times: we show in Lemma~\ref{lem:binom-2-sample-with-replacement} in the appendix that
\begin{equation}\label{eq:aux-kernel}
   X'\vee \aux \approx_{O(p'^2n)} K^{m}(X')\,,\quad\text{ for } m \sim \bino(n-2,p')\,. 
\end{equation}
The only difference between applying $K^m$ and adding triangles in $\aux$ is that $K^m$ may sample the same triangle twice, which gives rise to the $O(p'^2n)$ error.

Let us use $Q$ and $P$ to denote the p.m.f. of $K(X')$ and $X'$, respectively. Let $A(x)$ denote the set of possible triples $w,w_1,w_2$ that could have been chosen by $K$ if $x=K(X')$ for some $X'$, i.e., 
\[A(x) = \{(w,w_1,w_2):w\in [n]\backslash \{u,v\}, w_1\in \bwed[G](u,w), w_2\in \bwed[G](v,w), x_{(u,w,w_1)} = x_{(v,w,w_2)}=1\}\,.\]
Note that
\begin{align}\label{eq:expectA}
    \E |A(X')|
    = \sum_{
    \substack{
    w\in [n]\backslash\{u,v\},\\
    w_1\in \bwed[G](u,w),\\ w_2\in \bwed[G](v,w) }} \Pr(X_{(u,w,w_1)}=X_{(v,w,w_2)}=1)
    =  p'^2(n-2)(c/3)^2n^2
    \,.
\end{align}
Using this, we can write the likelihood ratio between $Q$ and $P$ as 
\begin{align}\label{eq:LQP}
\frac{dQ}{dP}(X') = \frac{\sum\limits_{(w,w_1,w_2)\in A(X')} \frac{p'^{|X'|-2}(1-p')^{\binom{n}{3}-|X'|}}{(n-2)(c/3)^2n^2}  }{p'^{|X'|}(1-p')^{\binom{n}{3}-|X'|}} = \frac{|A(X')|}{ p'^2(n-2)(c/3)^2n^2} = \frac{|A(X')|}{\E |A(X')|} \,.
\end{align}

\paragraph{Concentration of the Likelihood Ratio}
The idea is to use Lemma~\ref{lem:concentration-TV} to bound the total variation between $P$ and $Q$ by showing that $|A(X')|$ (and hence $dQ/dP$) concentrates.

Let $\Twed(u,w) = \{(u,w,w_1):w_1\in \bwed[G](u,w)\}$ and $\Twed(v,w) = \{(u,w,w_2):w_2\in \bwed[G](v,w)\}$.
Note that 
$$|A(X')| = \sum_{w\in [n]\backslash\{u,v\}}|X'_{\Twed(u,w)}|\cdot |X'_{\Twed(v,w)}|:=
\sum_{w\in [n]\backslash\{u,v\}} H_w
\,.$$ 
By construction of $\bwed[G](u,w)$ and $\bwed[G](v,w)$ in Lemma~\ref{lem:regular-subgraph}, $\Twed(u,w)$ and $\Twed(v,w)$ are disjoint sets of triples, so 
the $H_w=|X'_{\Twed(u,w)}|\cdot |X'_{\Twed(v,w)}|$ variables are independent.
Let $H$ be the vector of all $H_w$. 

To show concentration of $|A(X')|$ one natural approach is to apply McDiarmid's inequality to $|A(X')|= f(H)$. This does not work, because $X'_{\Twed(u,w)}$ and $X'_{\Twed(v,w)}$ can have size $cn/3$, which leads to a $\Theta(n^2)$ Lipschitz constant. The key is to use the fact that these sets are \emph{typically} much smaller, since each coordinate is 1 only with probability $p'$. 
In fact, by a Chernoff bound, $X'_{\Twed(u,w)}$ and $X'_{\Twed(v,w)}$ are with high probability of size on the order of $\log n$. Each coordinate of $X'_{\Twed(u,w)}$ is independent $\bern(p')$, and there are $cn/3$ coordinates, so we have 
\[
\Pr(|X'_{\Twed(u,w)}|>10\log n)\le \exp\B(- \frac{\delta^2cnp'}{3(2+\delta)} \B)\,,
\]
where $\delta = \frac{30\log n}{cnp'}-1$. Since $\delta>2$, we have $\delta^2/(2+\delta)\ge \delta/2$ and therefore,
\[\exp\B(- \frac{\delta^2cnp'}{3(2+\delta)}\B)\le e^{-\delta cnp'/6} = e^{-5\log n+cnp'/6}\le n^{-4}\,.\]
By the union bound, $|X'_{\Twed(u,w)}|\le 10\log n$  and $|X'_{\Twed(v,w)}|\le 10\log n$ for all pairs $u,w$ and $v,w$ with probability at least $1-2/n^2$. Hence, with high probability, $H_w\le 100\log^2 n$ for all $w$.

Now we can use the modified McDiarmid's inequality (Lemma~\ref{lem:mcdiarmid-extension}). 
Let $\cY = \{h:h_w\le 100\log^2 n \text{ for all }w\}$. By the calculation above, $\Pr(H\in \cY)\ge 1-2/n^2$. On the other hand, for any $H,H'\in\cY$,  $|A(X')|= \sum_w H_w:=f(H)$ satisfies $|f(H) - f(H')|\leq \sum_{w:H_w\not=H'_w} 100\log ^2 n$. 
By Markov's inequality, $\Pr(|X'_{\Twed(u,w)}|>0)\le \E |X'_{\Twed(u,w)}| = cnp'/3$, so $\Pr(H_w\not= 0)\le (cnp'/3)^2$.
Applying Lemma~\ref{lem:mcdiarmid-extension} now yields
\[
\Pr\b(\b|\,|A(X')|-\E  | A(X')|\, \b|\ge t\b)\le 4/n^2+2 \exp\B(\frac{-(t-200(n-2)\log^2 n/n^2)^2 /2}{ 100^2(n-2)(cnp'/3)^2\log ^4 n+ 100t\log^2 n/3}\B)
\,.
\]

Choosing $t = O(n^{3/2}p'\log^{5/2} n)$ makes this probability at most $1/n$. Combining this with $\E |A(X')|$ in \eqref{eq:expectA}, we get that with probability at least $1-1/n$ the likelihood ratio $dQ/dP$ in \eqref{eq:LQP} is
\begin{equation}\label{eq:QPconc}
   \frac{dQ}{dP}(X') =\frac{|A(X')|}{\E |A(X')|} = 1+O\B(\frac{n^{3/2}p'\log^{5/2} n}{n^3p'^2}\B)
 = 1+O\B(\frac{\log^{5/2} n}{ n^{3/2}p'}\B)\,.
\end{equation}

\paragraph{Putting the Pieces Together}
By Lemma~\ref{lem:concentration-TV}, \eqref{eq:QPconc} implies that $$\TV\b(K(X'),X'\b)=\TV(Q,P) =O\B( \frac 1{n}\B)+O\B(\frac{\log^{5/2} n}{ n^{3/2}p'}\B) 
\,.$$ 
By Lemma~\ref{lem:kernel-tv} and convexity of total variation distance, $$\TV(K^m(X'),X') = O\B(\frac{\E[m]\log^{5/2} n}{n^{3/2}p'} \B)=  O(n^{-1/2} \log^{5/2} n)\,,$$ where in the last step we used that $m \sim \bino(n-2,p')$. 
This further implies by \eqref{eq:aux-kernel} that $\TV(X'\vee \aux, X')=O(n^{-1/2} \log^{5/2} n)$ and hence by \eqref{eq:Z-projection}, $\TV(\law(Z_{\sim e}), \law(Z_{\sim e}|Z_e=0))= O(n^{-1/2} \log^{5/2} n) $.
\end{proof}

\section{Properties of Triangle Distribution $\mu_G$}

Recall the triangle distribution $\mu_G$ from Defn.~\ref{def:triangle_dist}. To prepare for proving Theorem~\ref{thm:PDSreverse} in the next section, we introduce some notation associated to $\mu_G$ and show some basic properties.

In the remainder of the paper, for any events $A,B$ over $X$, we will use the notation
\[\mu_G(A):=\Pr(X\in A)\quad\text{and}\quad\mu_G(A|B) := \Pr(X\in A|B)\]
to distinguish distributions defined on different graphs.

\begin{remark}[Graphical Model of $\mu_G$]
The graphical model of $\mu_G$ is the graph over the set of all triangles in $\binom{[n]}{3}$ with two triangles being adjacent if and only if they share an edge. We assign variables also for triangles that are not in $G$ for convenience in the proofs.
\end{remark}

\begin{remark}[Conditioning Property of $\mu_G$]
For any two graphs $G$ and $G'$ such that $E(G')\supset E(G)$,  $$\mu_G=\mu_{G'}(\dotspace| Y_{E(G')-E(G)}=0)$$ and hence also $\cL_G(Y) = \cL_{G'}(Y|Y_{E(G')-E(G)}=0)$.
\end{remark}

Below are a few properties of $\mu_G$ that will be used later in the proof. 
We next define two properties of a distribution and show that $\mu_G$ satisfies these properties.

\begin{definition}\label{def:marg-small}
A distribution $P$ over $\{0,1\}^N$ is said to be \emph{$q$-marginally small (or large)} if the marginal at any coordinate $i$ of $P$ is bounded by $q$ under arbitrary conditioning $x_{\sim i}$:
    \begin{equation*}
        P(x_i=1|x_{\sim i})\le q\quad(\text{or }P(x_i=1|x_{\sim i})\ge q)\,.
    \end{equation*}
\end{definition}

\begin{lemma}[$\mu_G$ has bounded marginals]\label{lem:mug_marginally_small}
$\mu_G$ is $O(p')$-marginally small and $\Omega(p')$-marginally large.
\end{lemma}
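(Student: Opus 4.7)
The plan is a direct computation of the conditional marginal at a single triangle coordinate, exploiting the fact that the Gibbs form of $\mu_G$ makes this a one-variable ratio.

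Fix a triangle $t\in\binom{[n]}{3}$ and a configuration $x_{\sim t}$ on the other coordinates with positive probability under $\mu_G$. If $t\not\subseteq G$, then the support constraint forces $x_t=0$ and the marginal is $0$, which is trivially $\le O(p')$; so the only nontrivial case is $t\subseteq G$ (which is the only case where the ``marginally large'' bound can hold, since for $t\not\subseteq G$ the marginal is identically $0$). In this case, flipping $x_t$ from $0$ to $1$ adds the indicator factor $\bigl(\tfrac{p'}{1-p'}\bigr)$ and multiplies the edge factor by $p^{-k}$, where $k\in\{0,1,2,3\}$ is the number of edges of $t$ not already covered by triangles active in $x_{\sim t}$. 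Hence
\[
\frac{\mu_G(x_t=1\mid x_{\sim t})}{\mu_G(x_t=0\mid x_{\sim t})} \;=\; \frac{p'}{1-p'}\cdot p^{-k}\,,
\]
which gives
\[
\mu_G(x_t=1\mid x_{\sim t}) \;=\; \frac{(p'/(1-p'))\,p^{-k}}{1+(p'/(1-p'))\,p^{-k}}\,.
\]

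Since $p$ is a fixed constant in $(0,1)$ and $k\in\{0,1,2,3\}$, the factor $p^{-k}$ lies in $[1,p^{-3}]$, so it is bounded above and below by constants depending only on $p$. Assuming $p'$ is bounded away from $1$ (indeed $p'=O(1/n)\ll 1$), the denominator is $1+O(p')$, and the displayed expression is $\Theta(p')$ uniformly in the conditioning $x_{\sim t}$. This yields both conclusions of the lemma, with the constants absorbed into the $O(\cdot)$ and $\Omega(\cdot)$.

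There is no substantive obstacle: the only care needed is the observation that the dependence on $x_{\sim t}$ collapses into the single integer $k\in\{0,1,2,3\}$, so the ``arbitrary conditioning'' in Definition~\ref{def:marg-small} is controlled by a bounded number of cases.
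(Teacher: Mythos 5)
Your proof is correct and takes essentially the same approach as the paper: compute the one-variable conditional ratio directly from the Gibbs form of $\mu_G$, observe that flipping $x_t$ multiplies the weight by $\frac{p'}{1-p'}p^{-k}$ with $k\in\{0,1,2,3\}$, and conclude the marginal is $\Theta(p')$ since $p$ is a constant. You are even a bit more careful than the paper in explicitly noting that the ``marginally large'' bound only makes sense for triangles $t\subseteq G$ (the paper silently restricts to $t\in T_G$).
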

\begin{proof}
Let $t$ be a triangle in $T_G$ and let $X_{\sim t}$ be an arbitrary $\{0,1\}$ configuration on the other triangles. Let $X^{+t}$ be defined by setting $(X^{+t})_t=1$, $(X^{+t})_{\sim t}=X_{\sim t}$, and let $X^{-t}$ be defined analogously with $(X^{-t})_t = 0$. Then
\[\mu_G(X_t=1|X_{\sim t}) = \frac{\b(\frac{p'}{1-p'}\b)^{|X^{+t}|} p^{-\e(X^{+t})} }{\b(\frac{p'}{1-p'}\b)^{|X^{-t}|} p^{-\e(X^{-t})} + \b(\frac{p'}{1-p'}\b)^{|X^{+t}|} p^{-\e(X^{+t})}} \,. \]
Note that $|X^{+t}|=|X^{-t}|+1$ and $|\e(X^{+t})-\e(X^{-t})|\le 3$. It follows that
\[ p' \le  \mu_G(X_t=1|X_{\sim t})\le\frac{p'p^{-3}}{1-p'+p'p^{-3}}  \,.\]
Here we used that $p$ is a constant with respect to $n$.
\end{proof}

We restate the definition of uniformly 2-star dense graphs.
\begin{definition}[Uniformly 2-star Dense]\label{def:uniformly-2-star-dense}
A graph $G$ is $c$-uniformly 2-star dense if for any pair $\{i,j\}\in \binom{[n]}{2}$, the wedge set $\wed[G](i,j)=\{w\in [n]| (w,u),(w,v)\in G\}$ has size $|\wed[G](i,j)|\ge c(n-2)$.
\end{definition}

\begin{corollary}\label{cor:muge-marginal}
For any $c$-uniformly 2-star dense graph $G$ and for any $e\in G$, $\mug{G}(Y_e=1)=\Theta(np')$. 
\end{corollary}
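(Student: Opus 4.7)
The plan is to reduce the corollary to Lemma~\ref{lem:mug_marginally_small} by identifying the triangles whose presence makes $Y_e = 1$. Fix $e = (u,v) \in G$. Because $E(X) \subseteq G$ with $\mu_G$-probability one, $Y_e = 1$ precisely when at least one triangle in $X$ contains $e$. Such a triangle must be of the form $t_w = (u,v,w)$ with $w$ adjacent in $G$ to both $u$ and $v$, i.e.\ with $w \in W_G(u,v)$. By the $c$-uniform $2$-star density assumption (Definition~\ref{def:uniformly-2-star-dense}), the number of such candidate triangles $m := |W_G(u,v)|$ satisfies $c(n-2) \le m \le n-2$.

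For the upper bound I would simply use a union bound: $\mu_G(Y_e=1) \le \sum_{w \in W_G(u,v)} \mu_G(X_{t_w}=1)$, and by the first part of Lemma~\ref{lem:mug_marginally_small} each unconditional marginal $\mu_G(X_{t_w}=1)$ is $O(p')$. This gives $\mu_G(Y_e=1) = O(np')$. For the lower bound, enumerate the triangles containing $e$ as $t_1, \dots, t_m$ and telescope:
\[
\mu_G(Y_e = 0) \;=\; \prod_{i=1}^{m} \mu_G\bigl(X_{t_i} = 0 \,\bigm|\, X_{t_1} = \cdots = X_{t_{i-1}} = 0\bigr).
\]
The second (``$\Omega(p')$-marginally large'') part of Lemma~\ref{lem:mug_marginally_small} bounds each factor by $1 - \Omega(p')$ even under arbitrary conditioning, so
\[
\mu_G(Y_e = 0) \;\le\; (1 - \Omega(p'))^{m} \;\le\; e^{-\Omega(n p')}.
\]
Finally, invoking the standing assumption $p' = O(1/n)$ (so $np' = O(1)$), the function $1 - e^{-\Omega(np')}$ is $\Theta(np')$ on this range (Taylor expansion when $np' \to 0$, and bounded below by a positive constant when $np' = \Theta(1)$). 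Combining the two bounds yields $\mu_G(Y_e = 1) = \Theta(np')$. There is no real obstacle; the only point that requires care is ensuring the implicit constants in the lower bound remain uniform as $np'$ ranges over $(0, O(1)]$, which is handled by splitting into the two regimes above.
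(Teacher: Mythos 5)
Your proposal is correct and rests on the same two ingredients as the paper: the $c$-uniform $2$-star density guarantees $\Theta(n)$ triangles through $e$, and Lemma~\ref{lem:mug_marginally_small} controls their conditional marginals. The paper phrases the lower bound as an additive telescoping sum $\mu_G(X_{T_e}\neq\vec 0)=\sum_i \mu_G(X_{t_i}=1\mid X_{t_1}=\cdots=X_{t_{i-1}}=0)$ (which, as written, is really an inequality with an implicit $\Omega(1)$ correction factor); your multiplicative decomposition of $\mu_G(Y_e=0)$ plus the expansion of $1-e^{-\Omega(np')}$ is a slightly more careful rendering of the same argument.
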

\begin{proof}
For any edge $e\in G$, let $T_e=\{t_1,t_2,\cdots,t_k\}$ be the set of triangles in $G$ that contain $e$. Since $G$ is $c$-uniformly 2-star dense, $k=\Theta(n)$.
First,
$\mug{G}(Y_e=1)=\mu_G(X_{T_e}\not= \vec{0})$, and
\[\mu_G(X_{T_e}\not= \vec{0}) = \mu_G(X_{t_1}=1)+\mu_G(X_{t_2}=1|X_{t_1}=0)+\cdots+\mu_G(X_{t_k}=1|X_{t_1}=\cdots =X_{t_{k-1}}=0)\,.\]
From Lemma~\ref{lem:mug_marginally_small}, each term is $\Theta(p')$. So the probability is $\Theta(kp')=\Theta(np')$.
\end{proof}

As a consequence of the marginal smallness of $\mu_G$, it has concentration of measure and also the edge distribution $\law_G(Y)$ has low influence. The proofs are deferred to Sections~\ref{sec:small-marginal} and~\ref{sec:small-marginal-concentration}. 

\begin{definition}[Marginal Influence]\label{def:marginal-influence}
For a binary distribution over $\{0,1\}^\cX$, the \emph{marginal influence} of $S\subset \cX$ on $S'\subset \cX$ is defined by
$$\IM{S}{S'}=\sup_{x_S^a,x_S^b\in \{0,1\}^S} \TV(P_{x_{S'}|x_S^a}, P_{x_{S'}|x_S^b} )\,.$$
For $x,y\in \cX$, use $\IM{x}{y}$ to denote  $\IM{\{x\}}{\{y\}}$.
\end{definition}

\begin{restatable}{lemma}{lowInfLemma}\label{lem:edge-influence-adjacent}
Suppose $G$ is a $c$-uniformly 2-star dense graph for a constant $c>0$.
Consider distribution $\law_G(Y)$ as in Definition~\ref{def:triangle_dist}. For $A\subset E(G)$ with $|A|\le cn/4$ and any edge $e\notin A$, 
\[\IM A{e}=O(|A|/(n^3p'^2))=\tilde{O}(|A|/n)\,.\]
For any pair of edges $e,  e'\in E(G)$ that do not share common nodes,
 \[\IM {e'}{e}=O(1/(n^3p'))=\tilde{O}(1/n^2)\,.\]
\end{restatable}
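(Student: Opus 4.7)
The plan is to reduce via telescoping to a single-edge conditional influence, rewrite it as a covariance-over-variance quotient, and bound each factor using the $O(p')$-marginal smallness and $\Omega(p')$-marginal largeness of the triangle distribution $\mu_G$ (Lemma~\ref{lem:mug_marginally_small}).

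First I would change $y_A^a$ into $y_A^b$ one coordinate at a time, obtaining via the triangle inequality
\[
\IM{A}{e}\,\le\,\sum_{e' \in A}\,\sup_{\substack{B \subset A \setminus \{e'\}\\ y_B \in \{0,1\}^B}} \IM{e'}{e \mid Y_B = y_B}\,,
\]
which reduces the first bound to showing $\IM{e'}{e \mid Y_B} = O(1/(n^3 p'^2))$ uniformly; the second bound is the same statement with $B=\emptyset$ and a sharper $O(1/(n^3 p'))$ estimate exploiting the non-adjacency of $e,e'$. Since $Y_e$ and $Y_{e'}$ are Bernoulli, an elementary manipulation gives
\[
\IM{e'}{e \mid Y_B}\,=\,\frac{|\cov(Y_e,Y_{e'}\mid Y_B)|}{\Pr(Y_{e'}=1 \mid Y_B)\bigl(1-\Pr(Y_{e'}=1 \mid Y_B)\bigr)}\,,
\]
so the task becomes lower-bounding the denominator by $\Omega(np')$ and upper-bounding the numerator.

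Next I would prove $\Pr(Y_{e'}=1\mid Y_B) = \Theta(np')$. The upper bound is a union bound over the $|T_{e'}|=\Theta(n)$ triangles containing $e'$, combined with $\Pr(X_t=1\mid Y_B)=O(p')$ for every such triangle $t$ (inherited from marginal smallness of $\mu_G$, with a minor check for the $O(|B|)$ triangles lying in $T_B=\bigcup_{e\in B}T_e$). For the lower bound, I would enumerate $T_{e'}=\{s_1,\dots,s_m\}$ and apply marginal largeness:
\[\Pr(Y_{e'}=1 \mid Y_B)\ge \sum_j \Pr(X_{s_j}=1\mid X_{s_{<j}}=0,Y_B)\,\Pr(X_{s_{<j}}=0\mid Y_B)\ge m\cdot\Omega(p')\cdot\exp(-O(np'))=\Omega(np'),\]
where the exponential factor is $\Omega(1)$ under the standing assumption $p'=\tilde O(1/n)$.

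The hard part will be bounding the numerator $|\cov(Y_e,Y_{e'}\mid Y_B)|$. Writing $Y_e=\ind{X_{T_e}\ne 0}$, the covariance equals $\mu_G(X_{T_e\cup T_{e'}}=0\mid Y_B) - \mu_G(X_{T_e}=0\mid Y_B)\,\mu_G(X_{T_{e'}}=0\mid Y_B)$, and since conditioning $\mu_G$ on $X_{T_e}=0$ is the same as passing to $\mu_{G-e}$, this quantifies how removing edge $e$ from $G$ shifts the probability of the event $\{X_{T_{e'}}=0\}$. I would telescope over triangles $t \in T_e$, noting that each removal affects the target event only through triangles in $T_{e'}$ sharing an edge with $t$. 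In the general (possibly adjacent) case, the at-most-one triangle in $T_e\cap T_{e'}$ dominates, contributing $\Theta(p')$ and giving $|\cov|=O(1/(n^2 p'))$; in the non-adjacent case $T_e\cap T_{e'}=\emptyset$ and only $O(1)$ cross-pairs $(t,t')\in T_e\times T_{e'}$ share an edge, each contributing $\Theta(p'^2)$, yielding the sharper $|\cov|=\Theta(p'^2)$. The subtle point will be rigorously controlling the contribution from triangle pairs that share no edge at all, showing that these contribute only at strictly higher order in $p'$ thanks to the correlation decay enjoyed by $\mu_G$ under the small-marginal condition, even in the presence of the $Y_B$-conditioning.
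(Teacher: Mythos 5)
The student's proposal takes a genuinely different route from the paper's proof, and the comparison is worth making precise. The paper proceeds by (i) excising the shared triangles $T_o=T_A\cap T_e$ via Lemma~\ref{lem:bad_event}, paying $O(|A|/(n^3p'^2))$; (ii) using the coupling Lemma~\ref{lem:coupling_TV} to convert the \emph{event} conditioning $\{Y_A=y_A\}$ into a mixture of \emph{pinnings} of $X_{T_A}$; (iii) bounding the resulting triangle-influence $\I{T_C}{T_e}$ via Lemma~\ref{lem:influence_partition} and Corollary~\ref{cor:influence_muG}; and (iv) estimating $\E|T_C|$. The student instead telescopes the $A$-conditioning one edge at a time, rewrites each single-edge marginal influence via the covariance/variance identity (the Bernoulli identity $\IM{e'}{e\mid Y_B}=|\cov(Y_e,Y_{e'}\mid Y_B)|/[\Pr(Y_{e'}=1\mid Y_B)\Pr(Y_{e'}=0\mid Y_B)]$ is correct and makes the near-symmetry $\IM{e'}{e}\asymp\IM{e}{e'}$ transparent), lower-bounds the denominator via marginal largeness, and then telescopes the covariance over $T_e$ one triangle at a time. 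Both approaches ultimately rest on the same technical engine---the influence decay $\I{t}{t'}=O(p'^{d(t,t')})$ for $\mu_G$ (Corollary~\ref{cor:influence_muG})---so the covariance reformulation buys a more transparent bookkeeping of where the $\Theta(np')$ denominator and the $O(p')$ / $O(p'^2)$ numerators come from, at the cost of a more delicate conditioning structure. For the second statement (non-adjacent $e,e'$) there is no $Y_B$-conditioning and the plan is clean; if carried out, the covariance telescoping yields $|\cov|=O(p'^2)$ and hence $\IM{e'}{e}=O(p'/n)$, which is consistent with (and for $p'\ll 1/n$ actually strictly stronger than) the stated $O(1/(n^3p'))$.

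There is, however, a real gap in the argument for the first statement that is not a ``minor check.'' After you telescope over $e'\in A$, you must bound $\IM{e'}{e\mid Y_B}$ uniformly over $B\subset A\setminus\{e'\}$ and configurations $y_B$. When you then telescope the covariance over triangles $t_j\in T_e$, each step requires controlling the influence of $X_{t_j}$ on $X_{T_{e'}}$ under the conditioning $\{Y_B=y_B\}$. But the influence bound $\I{t}{t'}=O(p'^{d(t,t')})$ in Definition~\ref{def:influence} and Corollary~\ref{cor:influence_muG} is a supremum over \emph{pinnings} of triangle variables; the event $\{Y_{e''}=1\}$ for $e''\in B$ is a union of such pinnings (with mixing weights that differ between the conditioning on $X_{t_j}=1$ and $X_{t_j}=0$), so the bound does not transfer automatically. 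This is precisely the difficulty the paper resolves by first conditioning on the full pinning $X_{T_A}$ via the coupling Lemma~\ref{lem:coupling_TV} rather than on the event $Y_A$. You would need an analogous step---for instance, also decomposing $\{Y_B=y_B\}$ over $X_{T_B}$ and coupling, or pre-conditioning $X_{T_o}=\vec{0}$ exactly as the paper does and then arguing the residual event-conditioning contributes lower-order terms. Without this, the first factor $\Pr(X_{t_j}=1\mid\cdots, Y_B)$ need not be $O(p')$ for the $O(|B|)$ triangles in $T_e\cap T_B$, and the second factor $|P_j-Q_j|$ is not directly bounded by a pinning-influence. Relatedly, the covariance bound itself is only sketched; the claim that triangle pairs at distance $\ge 2$ contribute only higher order in $p'$ is exactly the content of Corollary~\ref{cor:influence_muG} and should be invoked explicitly rather than gestured at.
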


\begin{restatable}{corollary}{influencepe}\label{cor:influence_on_pe}
For any $c$-uniformly 2-star dense graph $G$ and any $e\not=e'$, 
\[\left|
\mug{G}(Y_e=1)-\mug{G+e'}(Y_e=1)\right| = \tilde{O}(1/n)\,.\]
\end{restatable}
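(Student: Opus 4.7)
}
The plan is to recast the difference $\mug{G}(Y_e=1) - \mug{G+e'}(Y_e=1)$ as a statement about the marginal influence of $Y_{e'}$ on $Y_e$ under the distribution $\law_{G+e'}(Y)$, and then invoke Lemma~\ref{lem:edge-influence-adjacent}. The first step uses the conditioning property of $\mu_G$ noted in the paper: since $G = (G+e')$ with edge $e'$ removed, we have $\mug{G} = \mug{G+e'}(\,\cdot\,|Y_{e'}=0)$ and hence
\[
\mug{G}(Y_e=1) = \mug{G+e'}(Y_e=1\,|\,Y_{e'}=0)\,.
\]

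With this identification, one writes $\mug{G+e'}(Y_e=1)$ as the mixture
\[
\mug{G+e'}(Y_e=1) = \mug{G+e'}(Y_{e'}=0)\,\mug{G+e'}(Y_e=1|Y_{e'}=0) + \mug{G+e'}(Y_{e'}=1)\,\mug{G+e'}(Y_e=1|Y_{e'}=1)\,,
\]
so that
\[
\bigl|\mug{G+e'}(Y_e=1) - \mug{G}(Y_e=1)\bigr|
= \mug{G+e'}(Y_{e'}=1)\cdot\bigl|\mug{G+e'}(Y_e=1|Y_{e'}=1) - \mug{G+e'}(Y_e=1|Y_{e'}=0)\bigr|\,.
\]
By Definition~\ref{def:marginal-influence}, the second factor is bounded by $\IM{e'}{e}$ for the distribution $\law_{G+e'}(Y)$. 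Using the trivial bound $\mug{G+e'}(Y_{e'}=1)\le 1$ (one could instead use Corollary~\ref{cor:muge-marginal} to get $\Theta(np')$, but the crude bound already suffices), it remains to control $\IM{e'}{e}$.

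For this, I would apply Lemma~\ref{lem:edge-influence-adjacent} to $G+e'$, which remains $c$-uniformly 2-star dense since adding an edge only enlarges wedge sets. Taking the singleton set $A=\{e'\}$ in the first assertion of the lemma yields $\IM{e'}{e} = \tilde O(1/n)$ in all cases (when $e$ and $e'$ share a common node, this is the bound we use; when they do not, the second assertion gives the stronger $\tilde O(1/n^2)$, which is more than enough). Combining the two bounds gives
\[
\bigl|\mug{G}(Y_e=1) - \mug{G+e'}(Y_e=1)\bigr| \le \mug{G+e'}(Y_{e'}=1)\cdot \IM{e'}{e} = \tilde O(1/n)\,,
\]
as claimed. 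There is essentially no obstacle here: the corollary is an immediate consequence of Lemma~\ref{lem:edge-influence-adjacent} once one notices the conditioning reformulation above. The substantive work has already been done in proving the influence bound itself.
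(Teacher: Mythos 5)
Your proof is correct and follows essentially the same route as the paper's: both rewrite $\mug{G}(Y_e=1)$ as $\mug{G+e'}(Y_e=1|Y_{e'}=0)$, expand $\mug{G+e'}(Y_e=1)$ as a mixture over $Y_{e'}$ to bound the difference by $\IM{e'}{e}$, and then invoke Lemma~\ref{lem:edge-influence-adjacent}. Your version is slightly more detailed in noting that Lemma~\ref{lem:edge-influence-adjacent} must be applied to the graph $G+e'$ (which is still $c$-uniformly 2-star dense) so that $e'\in E(G+e')$, and in explicitly exhibiting the prefactor $\mug{G+e'}(Y_{e'}=1)\le 1$, both of which the paper leaves implicit.
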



\begin{restatable}{corollary}{concentrationmuG}\label{cor:concentration_muG}
Let $f:\{0,1\}^{\binom{[n]}{3}}\rightarrow \mathbb{R}$ be an $L$-Lipschitz function.
Let $T$ be a subset of $\binom{[n]}{3}$ with size $\binom{n}{3}-m$ and $x_T\in \{0,1\}^{T}$ be a configuration on $T$. If $X\sim \mu_G(\ \cdot\ |X_T = x_T)$,  then
\[|f(X)-\E f(X)| = O(\sqrt{mp'L^2}\log m)\]
holds with high probability.
\end{restatable}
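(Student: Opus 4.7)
I would prove Corollary~\ref{cor:concentration_muG} by building a Doob martingale on the free coordinates of $X$ and applying the Freedman-type concentration inequality stated in Lemma~\ref{lem:mcdiarmid}. Enumerate the free triangle coordinates $t_1,\dots,t_m$ of $\binom{[n]}{3}\setminus T$ in any order, and (conditioning throughout on $X_T = x_T$) let $\mathcal{F}_k := \sigma(X_{t_1},\dots,X_{t_k})$ and $Z_k := \E[f(X)\mid \mathcal{F}_k, X_T=x_T]$, so that $Z_0 = \E f(X)$, $Z_m = f(X)$, and $(Z_k)$ is a martingale. The goal is to bound $|Z_k - Z_{k-1}|$ almost surely and $\var(Z_k\mid \mathcal{F}_{k-1})$ in expectation, and then invoke Lemma~\ref{lem:mcdiarmid}.

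\textbf{Bounds on the increments.} First, observe that $\mu_G(\,\cdot\,\mid X_T = x_T)$ has the same Gibbs form as $\mu_G$ on the residual coordinates $\binom{[n]}{3}\setminus T$, so the argument of Lemma~\ref{lem:mug_marginally_small} applies verbatim and yields $q_k := \mu_G(X_{t_k}=1\mid \mathcal{F}_{k-1},X_T=x_T) = O(p')$ uniformly in the conditioning. Second, $\mu_G$ is an FKG measure: its log-density is
\[\log \mu_G(x) \;=\; |x|\log\tfrac{p'}{1-p'} \;-\; \e(x)\log\tfrac1p \;+\; \text{const}\,,\]
where $|x|$ is modular and $-\e(x)$ is supermodular (because the edge-count $\e$ is submodular in $x$), so the Holley/FKG lattice condition holds. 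Consequently, conditional distributions $\mu_G(\,\cdot\,\mid \mathcal{F}_{k-1}, X_{t_k}=1)$ and $\mu_G(\,\cdot\,\mid \mathcal{F}_{k-1}, X_{t_k}=0)$ admit a monotone coupling $(X^+,X^-)$ with $X^+\supseteq X^-$, and by $L$-Lipschitzness of $f$,
\[\bigl|\E[f\mid \mathcal{F}_{k-1},X_{t_k}=1]-\E[f\mid \mathcal{F}_{k-1},X_{t_k}=0]\bigr| \;\le\; L\cdot \E\bigl[|X^+\triangle X^-|\bigr]\,.\]
The expected symmetric difference equals the sum of influences of $t_k$ on the other triangle variables under $\mu_G$ conditioned on $\mathcal{F}_{k-1}$; by the same small-marginal argument used in Lemma~\ref{lem:edge-influence-adjacent} (triangles have at most $O(n)$ edge-adjacent neighbors, each with influence $O(p')$, and non-adjacent influences decay faster), this sum is $\tilde O(1)$. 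Writing $R := \tilde O(L)$ for the resulting a.s.\ bound on $|Z_k-Z_{k-1}|$, we obtain
\[\var(Z_k\mid \mathcal{F}_{k-1}) \;=\; q_k(1-q_k)\,\bigl|\E[f\mid \mathcal{F}_{k-1},X_{t_k}=1]-\E[f\mid \mathcal{F}_{k-1},X_{t_k}=0]\bigr|^2 \;\le\; \tilde O(p'L^2)\,.\]

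\textbf{Applying Lemma~\ref{lem:mcdiarmid}.} Summing, $\sum_k \sigma_k^2 = \tilde O(mp'L^2)$ and $|Z_k-Z_{k-1}|\le R = \tilde O(L)$, so for any constant $c$, taking $t = C\sqrt{mp'L^2}\cdot \mathrm{polylog}(n)$ (large enough $C$) makes
\[\Pr\bigl(|Z_m-\E Z_m|\ge t\bigr) \;\le\; 2\exp\!\Bigl(-\tfrac{t^2/2}{\tilde O(mp'L^2)+\tilde O(Lt)}\Bigr) \;\le\; n^{-c}\,,\]
which is ``with high probability'', establishing $|f(X)-\E f(X)| = \tilde O(\sqrt{mp'L^2})$.

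\textbf{Main obstacle.} The nontrivial step is the almost-sure bound on $|Z_k-Z_{k-1}|$ for the dependent measure $\mu_G$; for product distributions it follows immediately from Lipschitzness, but here it requires verifying FKG for $\mu_G$ and controlling the expected size of the symmetric difference in the monotone coupling via triangle-influence bounds analogous to Lemma~\ref{lem:edge-influence-adjacent}. If one prefers to avoid FKG entirely, an alternative route is to expand the increment via the Gibbs weights directly and bound the relative change in conditional expectations using the small-marginal property under arbitrary conditioning (Lemma~\ref{lem:mug_marginally_small}) applied iteratively; this yields the same $\tilde O(L)$ increment bound up to constants and polylog factors.
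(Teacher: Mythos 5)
Your approach is genuinely different from the paper's: you build a Doob martingale over the free \emph{coordinates} $t_1,\dots,t_m$ and bound the increments via a monotone coupling, whereas the paper (Theorem~\ref{thm:small_marginal_concentration} and Lemma~\ref{lem:pathcoupling_concentration}) builds the martingale over \emph{Glauber-dynamics time steps} and gets the increment bound for free (each step changes one coordinate), with the variance bound coming from marginal smallness and the contraction from path coupling. The paper's route sidesteps entirely the question of coupling the two conditional laws $\mu_G(\,\cdot\,|\,\mathcal{F}_{k-1},X_{t_k}=1)$ and $\mu_G(\,\cdot\,|\,\mathcal{F}_{k-1},X_{t_k}=0)$.

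There is a concrete error in your FKG step. Since $0<p<1$, the factor $p^{-\e(x)}$ contributes $+\e(x)\log(1/p)$ to $\log\mu_G(x)$, not $-\e(x)\log(1/p)$ as you wrote. The coverage function $\e(x)=|E(x)|$ is submodular and $\log(1/p)>0$, so the non-modular part of $\log\mu_G$ is \emph{submodular}, not supermodular. Hence $\mu_G$ satisfies the negative lattice condition (triangles that share an edge are penalized relative to disjoint ones), not the FKG/Holley condition, and positive association fails. This is the linchpin of your increment bound, since without a monotone coupling the identity $\E|X^+\triangle X^-|=\sum_{t'}|\Pr(X^+_{t'}=1)-\Pr(X^-_{t'}=1)|$ is false and the naive coupling route (as in Lemma~\ref{lem:coupling-general}, using only the uniform $O(p')$ bound on marginal influences) yields $\E[d_H]\lesssim 1+(m-k)p'$, which is far too large when $mp'\gg 1$. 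You would need either (i) a valid monotone coupling in the decreasing direction — which is plausible given the NLC but needs an argument — together with the geometric decay $\I{t_k}{t'}=O(p'^{d(t_k,t')})$ from Corollary~\ref{cor:influence_muG} (your citation of Lemma~\ref{lem:edge-influence-adjacent} is the wrong lemma; that one is about the edge marginal $\law_G(Y)$, not $\mu_G$) to conclude $\E|X^+\triangle X^-|=1+O(np')=\tilde O(1)$; or (ii) the ``alternative route'' you gesture at, which would amount to redoing a branching-process style argument along the lines of Theorem~\ref{thm:small_marginal_low_influence} to control the Wasserstein (not just TV) distance between the two conditional laws. Neither is carried out, so as written the proof has a real gap at its central step. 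The paper's Glauber martingale avoids all of this.
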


\section{Reverse Map Preserves Planted Signal}\label{sec:reverse-preserves}


In this section we prove our main result, restated here for convenience.


\begin{mythm}{\ref{thm:PDSreverse}}[Triangle Removal in Planted Case]
    For $k=o(n^{1/4}\log^{-17/4}n)$, $p'=\pval$, and $0<p<q<1$ being constant,  
    \[\TV\b(\Ab(\rgt(n,p,p',S,q)),G(n,p,S,q\cdot p_e)\b)=o_n(1)\,,\]
    where $p_e=\E_{G\sim \rgt(n,p,p')}\Pr_{\Ab}(e\in \Ab(G+e))$ for an arbitrary edge $e$.    
\end{mythm}

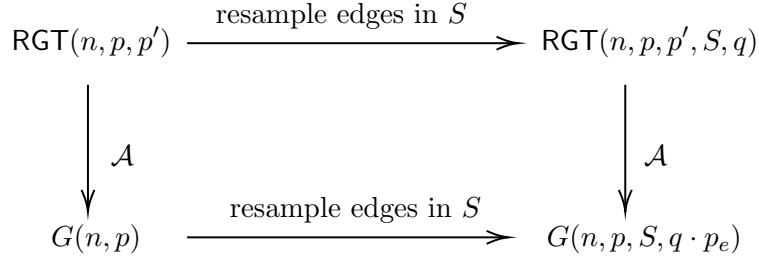
\begin{figure}
    \centering
\tikzset{every picture/.style={line width=0.75pt}} 

\begin{tikzpicture}[x=0.75pt,y=0.75pt,yscale=-1,xscale=1]

\draw    (160,81) -- (160,145) ;
\draw [shift={(160,145)}, rotate = 270] [color={rgb, 255:red, 0; green, 0; blue, 0 }  ][line width=0.75]    (10.93,-3.29) .. controls (6.95,-1.4) and (3.31,-0.3) .. (0,0) .. controls (3.31,0.3) and (6.95,1.4) .. (10.93,3.29)   ;
\draw    (431,81) -- (431,145) ;
\draw [shift={(431,145)}, rotate = 270] [color={rgb, 255:red, 0; green, 0; blue, 0 }  ][line width=0.75]    (10.93,-3.29) .. controls (6.95,-1.4) and (3.31,-0.3) .. (0,0) .. controls (3.31,0.3) and (6.95,1.4) .. (10.93,3.29)   ;
\draw    (210,60) -- (375,60) ;
\draw [shift={(377,60)}, rotate = 180] [color={rgb, 255:red, 0; green, 0; blue, 0 }  ][line width=0.75]    (10.93,-3.29) .. controls (6.95,-1.4) and (3.31,-0.3) .. (0,0) .. controls (3.31,0.3) and (6.95,1.4) .. (10.93,3.29)   ;
\draw    (210,158) -- (370,158) ;
\draw [shift={(372,158)}, rotate = 180] [color={rgb, 255:red, 0; green, 0; blue, 0 }  ][line width=0.75]    (10.93,-3.29) .. controls (6.95,-1.4) and (3.31,-0.3) .. (0,0) .. controls (3.31,0.3) and (6.95,1.4) .. (10.93,3.29)   ;

\draw (120,50) node [anchor=north west][inner sep=0.75pt]   [align=left] {$\displaystyle \rgt( n,p,p')$};
\draw (387,50) node [anchor=north west][inner sep=0.75pt]   [align=left] {$\displaystyle \rgt( n,p,p',S,q)$};
\draw (140,150) node [anchor=north west][inner sep=0.75pt]   [align=left] {$\displaystyle G(n,p)$};
\draw (390,150) node [anchor=north west][inner sep=0.75pt]   [align=left] {$\displaystyle G( n,p,S,q\cdot p_e)$};
\draw (170,110) node [anchor=north west][inner sep=0.75pt]   [align=left] {$\displaystyle \Ab$};
\draw (439,110) node [anchor=north west][inner sep=0.75pt]   [align=left] {$\displaystyle \Ab$};
\draw (223,38) node [anchor=north west][inner sep=0.75pt]   [align=left] {resample edges in $S$};
\draw (230,136) node [anchor=north west][inner sep=0.75pt]   [align=left] {resample edges in $S$};
\end{tikzpicture}\vspace{-1cm}
    \caption{Exchange Diagram. Theorem~\ref{thm:PDSreverse} states that $\cA(\rgt(n,p,p',S,q))$ is close to $G(n,p,S,q\cdot p_e)$. An equivalent way of looking at the result is that the operations of resampling edges and mapping $\cA$ (approximately) commute when applied to $\rgt(n,p,p')$. 
    }
    \label{fig:exchange_diagram}
\end{figure}


\begin{remark} 
In Theorem~\ref{thm:main}, we assumed $p'=\tilde O(1/n)$.
In order to simplify the proof, we only consider $p'=1/(n\log n)=\Tilde{\Theta}(1/n)$, which is the most challenging case. For smaller $p'$, we map from $\rgt(n,p,p',S,q)$ to $G(n,p,S,qp_e)$ by first adding triangles independently via $\Af$ to increase $p'$ up to $\Tilde{\Theta}(1/n)$
and then apply the reverse transition $\Ab$. This is the content of the following corollary.
\end{remark}

\begin{corollary}\label{cor:true-reverse}
For $k=o(n^{1/4}\log^{-17/4}n)$, $p' = O(1/(n\log n))$ and $0<p<q<1$ being constant, there exists an algorithm $\Ar$ (taking a graph and both $p$ and $p'$ as inputs) such that
\[\TV\left(\Ar(\rgt(n,p,p',S,q),G(n,p,S,q')\right)=o_n(1)\,,\]
where we let $\pprime = 1/(n\log n)$ and
\[q' = \B(q+(1-q)\B(1-\B(1-\frac{\pprime-p'}{1-p'}\B)^{n-2}\B)\B)\cdot \E_{G\sim \rgt(n,p,\pprime)}\Pr_{\Ab}\b(e\in \Ab(G+e)\b) = q+o(1)\,. \]
\end{corollary}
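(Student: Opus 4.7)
\textbf{Proof plan for Corollary~\ref{cor:true-reverse}.} The strategy is exactly the one telegraphed in the remark preceding the corollary: boost the triangle density of the input to the well-analyzed value $\pprime=1/(n\log n)$ by applying the forward map $\Af$, then apply the reverse map $\Ab$. Concretely, since $p'=\tilde O(1/n)$ we may assume $p'\le \pprime$ (otherwise $p' = \tilde\Theta(1/n)$ already and Theorem~\ref{thm:PDSreverse} applies directly). Define $\Ar\defeq \Ab\circ\Af_{p''}$, where $\Af_{p''}$ denotes the forward map of Definition~\ref{def:forward} with triangle inclusion probability $p''\defeq (\pprime-p')/(1-p')\in[0,1/n]$. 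Both stages run in polynomial time: $\Af_{p''}$ is immediate, and $\Ab$ is polynomial-time by Lemma~\ref{lem:glauber-triangle-dist}.

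The correctness follows by chaining the two preservation theorems via the triangle inequality and the data-processing inequality. First, applying Theorem~\ref{thm:PDSforward} with $p_1'=p'$, $p_2'=\pprime$ (legal since $k=\tilde O(n^{1/4})$ and $\pprime\le 1/n$) gives
\[
\TV\b(\Af_{p''}(\rgt(n,p,p',S,q)),\,\rgt(n,p,\pprime,S,q_1)\b)=o_n(1),
\quad q_1=q+(1-q)\b(1-(1-p'')^{n-2}\b).
\]
Applying $\Ab$ to both arguments (DPI) and then invoking Theorem~\ref{thm:PDSreverse} at triangle density $\pprime=\tilde\Theta(1/n)$ with planted density $q_1$ produces
\[
\TV\b(\Ar(\rgt(n,p,p',S,q)),\,G(n,p,S,q_1\cdot p_e)\b)=o_n(1),\quad p_e=\E_{G\sim \rgt(n,p,\pprime)}\Pr_{\Ab}(e\in \Ab(G+e)).
\]
This matches the formula in the statement with $q' = q_1\cdot p_e$.

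The only remaining check is the asymptotic identity $q'=q+o(1)$, which is routine. Since $p''\le\pprime=o(1/n)$, we have $(1-p'')^{n-2}=1-o(1)$, hence $q_1=q+o(1)$. For $p_e$, note that $\Pr_{\Ab}(e\in \Ab(G+e))=1-(1-p)\,\mug{G+e}(Y_e=1)$, and by Corollary~\ref{cor:muge-marginal} combined with Lemma~\ref{lem:probability-of-good} (so the 2-star-dense hypothesis holds whp under $\rgt(n,p,\pprime)$), $\mug{G+e}(Y_e=1)=\Theta(n\pprime)=\Theta(1/\log n)=o(1)$, giving $p_e=1-o(1)$. Therefore $q'=q_1 p_e = (q+o(1))(1-o(1))=q+o(1)$. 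No substantial obstacle arises here: the proof is a bookkeeping composition of Theorems~\ref{thm:PDSforward} and~\ref{thm:PDSreverse}, with the only mildly delicate point being ensuring that the regime hypothesis $p_2'\le 1/n$ of Theorem~\ref{thm:PDSforward} and $p'=\tilde\Theta(1/n)$ of Theorem~\ref{thm:PDSreverse} are simultaneously met, which is precisely why the calibration $\pprime=1/(n\log n)$ is chosen.
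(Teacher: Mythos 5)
Your proof is correct and follows exactly the same route as the paper: compose $\Af$ (boosting to triangle density $\pprime$) with $\Ab$, chain Theorems~\ref{thm:PDSforward} and~\ref{thm:PDSreverse} via DPI and the triangle inequality, and read off $q'=q_1\cdot p_e$. The only difference is organizational: the paper delegates the check $p_e=1-O(n\pprime)=1+o_n(1)$ to a separate statement (Lemma~\ref{lem:reverse-q}), whereas you inline that argument using the same ingredients (Corollary~\ref{cor:muge-marginal} plus Lemma~\ref{lem:probability-of-good}), and you also spell out the handling of the degenerate case $p'>\pprime$, which the paper leaves implicit.
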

\begin{proof}
We can first apply $\Af$ as stated in Theorem~\ref{thm:PDSforward} with $p'_1=p'$ and $p'_2=\pprime$, and then apply $\Ab$ stated in Theorem~\ref{thm:PDSreverse}. The resulting edge density in the planted dense subgraph after $\Af$  would be $(q+(1-q)(1-(1-p_{\Delta}')^{n-2})$, where $p_{\Delta}' = \frac{\pprime-p'}{1-p'}$. So after $\Ab$, the density becomes
\[q' = \b(q+(1-q)(1-(1-p_{\Delta}')^{n-2})\b)\cdot \E_{G\sim \rgt(n,p,\pprime)}\Pr_{\Ab}\b(e\in \Ab(G+e)\b)\,.\qedhere\]
\end{proof}

We verify in Appendix~\ref{sec:postponed-proofs} that the resulting edge density $q'=q\cdot p_e$ is still $q+o(1)$.

\begin{lemma}\label{lem:reverse-q}
The parameter $p_e$ in Theorem~\ref{thm:PDSreverse} satisfies \[p_e=1-O(np') = 1+o_n(1)\,.\]
\end{lemma}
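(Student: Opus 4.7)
The plan is to unpack the definition of $\Ab$ and reduce the computation of $p_e$ to estimating a single marginal of the triangle distribution $\mu_{G+e}$, which is already controlled by Corollary~\ref{cor:muge-marginal}. Because $e \in G+e$, the reverse transition (Definition~\ref{def:reverse}) preserves $e$ unless $e$ lies in $E(X)$ for the sampled $X\sim \mu_{G+e}$, in which case $e$ survives only through the $\bern(p)$ resampling. This immediately gives
\[
\Pr_{\Ab}(e\in \Ab(G+e)) \;=\; 1 - (1-p)\,\mu_{G+e}(Y_e=1),
\]
so after averaging over $G\sim \rgt(n,p,p')$ it suffices to show $\E_G\, \mu_{G+e}(Y_e=1) = O(np')$.

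To bound this expectation, I would invoke Corollary~\ref{cor:muge-marginal}, which asserts that $\mu_{G'}(Y_e=1) = \Theta(np')$ whenever $G'$ is $c$-uniformly 2-star dense and $e\in G'$. The proof of Lemma~\ref{lem:probability-of-good} shows that $G \sim G(n,p)$ is $p^2/2$-uniformly 2-star dense with probability $1-e^{-\Omega(n)}$; since adding triangles only enlarges common neighborhoods, the same holds for $G\sim \rgt(n,p,p')$, and by monotonicity of the property under single-edge addition it transfers to $G+e$. On the exponentially small complementary event we use the trivial bound $\mu_{G+e}(Y_e=1)\le 1$, giving $\E_G \,\mu_{G+e}(Y_e=1) = \Theta(np') + e^{-\Omega(n)} = \Theta(np')$.

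Combining the two displays, $p_e = 1 - (1-p)\cdot \Theta(np') = 1 - O(np')$. Under the standing assumption $p' = \tilde O(1/n)$ (and in particular $p'\ll 1/n$ as used throughout Section~\ref{sec:reverse-preserves}), we have $np' = o_n(1)$, hence $p_e = 1 + o_n(1)$ as claimed. There is no substantive obstacle: the only mild subtlety is that Corollary~\ref{cor:muge-marginal} is applied to $G+e$ rather than to $G$, which is justified by the monotonicity of uniform 2-star density under edge addition.
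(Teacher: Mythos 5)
Your proof is correct and takes essentially the same approach as the paper: both reduce $p_e$ to the marginal $\mu_{G+e}(Y_e=1)$, bound it by $\Theta(np')$ via Corollary~\ref{cor:muge-marginal} on the high-probability event that the graph is uniformly 2-star dense (Lemma~\ref{lem:probability-of-good}), and use the trivial bound on the complementary exponentially small event. Your write-up is marginally more explicit about the algebra ($p_e = 1-(1-p)\,\E_G\,\mu_{G+e}(Y_e=1)$) and about why the density property transfers from $G$ to $G+e$, but the substance is identical.
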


In the rest of the section, we will prove Theorem~\ref{thm:PDSreverse}. Recall that in Section~\ref{sec:apply-thm-to-reverse} we proved Theorem~\ref{thm:PDSreverse} given Lemma~\ref{lem:classG} (that the graphs $\cG$ have high probability) and Lemma~\ref{lem:edge_on_other_edge} (that $\Ab$ satisfies perturbation insensitivity). What remains is to prove both lemmas. 
We will prove Lemma~\ref{lem:classG} in 
 Section~\ref{sec:graph-class} via a concentration argument. We start with Lemma~\ref{lem:edge_on_other_edge}.


\begin{mylem}{\ref{lem:edge_on_other_edge}}
    If $G\in \cG_1$,
then
$
\cL_{G-e}(Y_{\sim e}) = 
\cL_{G+e}(Y_{\sim e}|Y_e=0)\approx_{O(\log^{17/2}n/\sqrt{n})} \cL_{G+e}(Y_{\sim e}|Y_e=1)
$.
\end{mylem}

In this section we always focus on $G$ and $G+e$ where $e=(u,v) \notin G$. 
Let $T_e=T_{G+e}\backslash T_G$ denote the set of triangles in $G+e$ containing edge $e$. For an indicator $x$ of triangles, we will always use $x_{e}$ to denote $x_{T_e}$ and $x_{\nine}$ to denote $x_{T_G}$.
Let $W$ denote the set of edges in $G$ that are incident to $e$. 

Fix $G$ and let $X\sim \mu_{G+e} (\spacedot| Y_e=0) $ and $\Xp\sim \mu_{G+ e}(\,\cdot\, |Y_e=1)$, with $Y$ and $\Yp$ the corresponding edge distributions. Lemma~\ref{lem:edge_on_other_edge} is equivalent to $Y_{\sim e}$ and $\Yp_{\sim e}$ being close in distribution, i.e. we want to prove that
\begin{equation}\label{e:pertY}
    \TV(Y_{\sim e},\Yp_{\sim e})=O(\log^{17/2}n/\sqrt{n})
    \,.
\end{equation}

\begin{proof}[Outline of Proof of Lemma~\ref{lem:edge_on_other_edge}]
    We actually do not work with $X^+$ and $Y^+$, instead replacing these by $X^*$ and $Y^*$, which we define later in 
Section~\ref{sec:starQuantities}
and will turn out to be easier to work with.
We will show that
\begin{equation}\label{e:star}
\begin{split}
\TV(Y_{\sim e},\Yp_{\sim e})&=      \Theta(\TV(Y_{\sim e},Y^*_{\sim e})/\log n)\\
 \TV\b(X_{\nine},X^+_{\nine})& =      \Theta(\TV\b(X_{\nine},X^*_{\nine}) /\log n\b)\,.
\end{split}
\end{equation}

Now,
similar to the decomposition \eqref{eq:invariance-decomposition}, we decompose $\TV(Y^*_{\sim e}, Y_{\sim e})$ using Lemma~\ref{lem:TVdecomp} to obtain
\begin{equation}\label{eq:invariance-decomposition-real}
\begin{split}
    \TV(Y_{\sim e},Y^*_{\sim e})
    &\leq \TV\b((Y_{\sim e}, X_{\nine}),(Y^*_{\sim e},X^*_{\nine})\b)
    \\
    &\le \TV\b(X_{\nine},X^*_{\nine}) +\TV\b(Y_{\sim e} ,  \E_{X'\sim X_{\nine}} \law(Y^*_{\sim e}|X^* = X')\b)\,.
\end{split}
\end{equation}
Section~\ref{sec:edge_on_other_triangles} follows the sketch in Section~\ref{sec:non-incident-triangles} to bound 
 $\TV\b(X_{\nine},X^+_{\nine})$ which by \eqref{e:star}
bounds
the first term in \eqref{eq:invariance-decomposition-real} by $O(\log n/\sqrt{n})$. 
Section~\ref{sec:edge-on-other-edge} uses the ideas in
Section~\ref{sec:projection} 
to bound the second term in \eqref{eq:invariance-decomposition-real} by $O(\log^{15/2}n/\sqrt{n})$.

\end{proof}

\subsection{Influence of an Edge on Non-Containing Triangles }\label{sec:edge_on_other_triangles}

We will now bound $\TV\b(X_{\nine},X^+_{\nine})$.
In Section~\ref{sec:edge_on_other_triangles} only, we will overload notation and
take $\Xp\sim \mu_{G+e}$ rather than $\mu_{G+e}(\spacedot|Y_e=1)$. This is justified by the following lemma, which is immediate from the definitions.

\begin{lemma} \label{lem:XplusMixture}
Let $X\sim \mu_G$, $X^{+,1}\sim \mu_{G+e}(\spacedot|Y_e=1)$, and $X^{+,2}\sim\mu_{G+e} $. Then,
\[
\TV\b(X_{\nine},X^{+,1}_{\nine}) =\frac{1}{\Pr(Y_e=1)}
\TV\b(X_{\nine},X^{+,2}_{\nine}) \,.
\]
\end{lemma}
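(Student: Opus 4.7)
The plan is to exhibit $X^{+,2}_{\sim e}$ as a two-component mixture whose components are exactly $\law(X_{\sim e})$ and $\law(X^{+,1}_{\sim e})$, and then apply the elementary identity
\[
\TV\b(P,\alpha Q + (1-\alpha)P\b) = \alpha\, \TV(P,Q)\,, \qquad \alpha\in[0,1]\,,
\]
with $P=\law(X_{\sim e})$, $Q=\law(X^{+,1}_{\sim e})$, and $\alpha=\Pr(Y_e=1)$.

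First I would pin down the key identity $\mu_{G+e}(\,\cdot\,|Y_e=0) = \mu_G$, which follows straight from Defn.~\ref{def:triangle_dist}: conditioning $\mu_{G+e}$ on $Y_e=0$ is the same as setting all coordinates $X_{T_e}=0$, after which the support reduces to configurations $x$ with $E(x)\subset G$ and the weights $(p'/(1-p'))^{|x|} p^{-\e(x)}$ are identical to those of $\mu_G$ (up to a normalization constant). Consequently the law of marginal $X^{+,2}_{\sim e}$ decomposes as
\[
\law(X^{+,2}_{\sim e}) = \Pr(Y_e=1)\cdot \law(X^{+,1}_{\sim e}) + \Pr(Y_e=0)\cdot \law(X_{\sim e})\,,
\]
where on the $Y_e=0$ branch we used that $\mu_{G+e}(\,\cdot\,|Y_e=0)=\mu_G$ and that $X$ itself has no coordinates in $T_e$ (since $e\notin G$), so its projection to $\sim e$ coincides with $X$ in law.

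Plugging this mixture into the elementary TV identity above with $P=\law(X_{\sim e})$, $Q=\law(X^{+,1}_{\sim e})$, $\alpha=\Pr(Y_e=1)$ yields
\[
\TV\b(X_{\sim e}, X^{+,2}_{\sim e}\b) = \Pr(Y_e=1) \cdot \TV\b(X_{\sim e}, X^{+,1}_{\sim e}\b)\,,
\]
and rearranging gives the stated equality. There is no substantive obstacle here: the only thing to be careful about is the identification $\mu_{G+e}(\,\cdot\,|Y_e=0)=\mu_G$, i.e., verifying that the normalization constants cancel and that $X_{\sim e}$ may be identified with $X$ because $e\notin G$ forces $X_{T_e}\equiv 0$.
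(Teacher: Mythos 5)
Your proof is correct and is precisely the argument the paper intends when it calls the lemma ``immediate from the definitions'': decompose $\mu_{G+e}$ as the mixture $\Pr(Y_e=1)\,\mu_{G+e}(\cdot\,|\,Y_e=1)+\Pr(Y_e=0)\,\mu_{G+e}(\cdot\,|\,Y_e=0)$, identify $\mu_{G+e}(\cdot\,|\,Y_e=0)=\mu_G$ (and hence, on the $\sim e$ marginal, with $\law(X_{\sim e})$, using $e\notin G$ so $X_{T_e}\equiv 0$), and apply $\TV(P,\alpha Q+(1-\alpha)P)=\alpha\,\TV(P,Q)$. No gaps.
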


\begin{lemma}\label{lem:influence_edge_triangles}
Let $G$ be a $c$-uniformly 2-star dense graph for a constant $c>0$. Let $e\in {[n]\choose 2}$ be any edge and let $X\sim \mu_{G+e} (\spacedot| Y_e=0)$
and $\Xp\sim \mu_{G+ e}$. Emphasizing that $X_{\nine} = X$, we have
\[\TV\b(X_{\nine},  \Xp_{\nine}\b)=O({\log n}/{\sqrt{n}})\,.\]
\end{lemma}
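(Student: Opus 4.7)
The plan is to bound $\chi^2(X^+_{\sim e}\,\|\,X_{\sim e})$ and conclude via Pinsker's inequality. First, by Lemma~\ref{lem:XplusMixture} combined with Corollary~\ref{cor:muge-marginal}, which in the regime $p'=\tilde\Theta(1/n)$ gives $\mu_{G+e}(Y_e=1)=\Theta(np')=\tilde\Theta(1)$, it suffices (up to a polylogarithmic factor) to establish the bound with $X^+$ replaced by $X^{+,2}\sim \mu_{G+e}$. Since $e\notin G$ we have $X_e=0$ and $X_{\sim e}=X\sim \mu_G$, while $X^{+,2}_{\sim e}$ is a mixture of Gibbs distributions indexed by the configuration $X^{+,2}_e$ on the set $T_e$ of triangles containing $e$. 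This places us in the setting of Lemma~\ref{lem:expression-influence}, which yields
\[
\chi^2(X^{+,2}_{\sim e}\,\|\,X_{\sim e})+1 \le \E_{\Xpe,\Xpet}\min\Bigl\{ \bigl(1+\tfrac{1-p}{p}\sup_{A'}\IM{A'}{e}\bigr)^{|B|}(1/p)^{|A\cap B|},\ (1/p)^{|A|+|B|}\Bigr\},
\]
with $A=E(\Xpet)-e$, $B=E(\Xpe)-e$, and $\Xpet$ an i.i.d.\ copy of $\Xpe$.

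The strategy is to split this expectation along a typical event $\mathcal{T}=\{|\Xpe|,|\Xpet|\le C\log n\text{ and } A\cap B=\emptyset\}$ and its complement. On $\mathcal{T}$ the influence bound of Lemma~\ref{lem:edge_influence} gives $\IM{A'}{e}=\tilde O(|A'|/n)=\tilde O(1/n)$ for $|A'|\le |A|+|B|=\tilde O(1)$, so the first branch of the minimum is at most $\bigl(1+\tilde O(1/n)\bigr)^{\tilde O(1)}\cdot 1 = 1+\tilde O(1/n)$. To verify that $\Pr(\mathcal T)=1-o(1/n)$: since $|T_e|\le n$ and each triangle in $T_e$ has marginal $O(p')$ under arbitrary conditioning by Lemma~\ref{lem:mug_marginally_small}, $|\Xpe|$ is stochastically dominated by a sum with expectation $O(np')=\tilde O(1)$, giving Chernoff-style tails $\Pr(|\Xpe|>C\log n)\le n^{-\omega(1)}$ for $C$ large. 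The event $A\cap B\neq\emptyset$ then fails only with probability $\tilde O(1/n)$: conditioned on $|\Xpe|,|\Xpet|=\tilde O(1)$, the induced edge sets $A,B$ live in a universe of size $\Theta(n)$ (the edges incident to wedge neighbours of $e$), so a pairwise union bound gives an $O(\tilde O(1)^2/n)$ collision probability.

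The main obstacle is controlling the tail contribution from $\mathcal{T}^c$, where the influence bound is useless and we must fall back on the crude estimate $(1/p)^{|A|+|B|}\le (1/p)^{6k}$ whenever $|\Xpe|,|\Xpet|\le k$. The key estimate here is a Poisson-type tail $\Pr(|\Xpe|\ge k)\le (C'np'/k)^{k}$ for $k$ larger than a constant multiple of $np'=\tilde O(1)$, which follows from the uniform marginal smallness of Lemma~\ref{lem:mug_marginally_small} via a standard moment computation (or directly from the concentration result Corollary~\ref{cor:concentration_muG} applied to the Lipschitz function $|\Xpe|$). Multiplying by $(1/p)^{6k}$ and summing over $k\ge C\log n$, the tail contribution is $n^{-\omega(1)}$ provided $C$ is chosen sufficiently large relative to $p$. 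Adding the typical and tail contributions gives $\chi^2(X^{+,2}_{\sim e}\,\|\,X_{\sim e})=\tilde O(1/n)$, and then Pinsker combined with Lemma~\ref{lem:XplusMixture} yields $\TV(X_{\sim e},X^+_{\sim e})=\tilde O(1/\sqrt n)$, as claimed.
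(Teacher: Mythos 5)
Your overall strategy matches the paper's: invoke Lemma~\ref{lem:expression-influence} for the $\chi^2$ bound, split the expectation into a typical regime (where the influence bound from Lemma~\ref{lem:edge_influence} controls the first branch of the minimum) and a tail regime (controlled by the crude $(1/p)^{|A|+|B|}$ bound plus Poisson-type decay of $|\Xpe|$), and finish with Pinsker. One small oddity: your opening invocation of Lemma~\ref{lem:XplusMixture} is not needed --- the statement of Lemma~\ref{lem:influence_edge_triangles} already takes $\Xp\sim\mu_{G+e}$, not $\mu_{G+e}(\,\cdot\,|Y_e=1)$, so there is nothing to reduce.

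The genuine gap is in the piece of $\mathcal T^c$ where $|\Xpe|,|\Xpet|\le C\log n$ but $A\cap B\neq\emptyset$. Your typical event $\mathcal T$ excludes this case, and your tail argument (Poisson tail $\times(1/p)^{6k}$ summed over $k\ge C\log n$) only covers the large-size case; the small-size, nonempty-intersection case is left with just the estimate $\Pr(A\cap B\neq\emptyset)=\tilde O(1/n)$. That probability bound by itself is not enough: on this event the first branch of the minimum equals $(1+\tilde O(1/n))^{\tilde O(1)}(1/p)^{|A\cap B|}$, and $|A\cap B|$ can be as large as $2C\log n$, so the integrand can reach $(1/p)^{O(\log n)}=n^{\Theta(1)}$. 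A crude product of $\tilde O(1/n)$ with $n^{\Theta(1)}$ does not give $\tilde O(1/n)$. What saves the argument is that the probability of $|\Xpe\cap\Xpet|=m$ decays like $(\tilde O(1)/n)^m$, fast enough to beat the growth of $(1/p)^{2m}$, so the weighted sum $\sum_{m\ge 1}\Pr(|\Xpe\cap\Xpet|=m)(1/p)^{2m}$ is $\tilde O(1/(np^2))=\tilde O(1/n)$. This is precisely the computation the paper performs by summing over the intersection size $k$ with the binomial coefficient $\binom{n}{k}$ and extracting the geometric factor $\sum_{k\ge 1}(np^2)^{-k}=O(1/n)$; you need to make this explicit rather than stopping at a single-pair union bound.

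A secondary issue: you cite Corollary~\ref{cor:concentration_muG} as an alternative route to the Poisson tail for $|\Xpe|$, but applying it to the $1$-Lipschitz function $|\Xpe|$ only gives a deviation window of order $\sqrt{n^3p'}$, which is far too weak; the stochastic domination of $|\Xpe|$ by $\bino(n,c_0p')$ (from marginal smallness) is the right tool and gives the sharp tail directly, as your ``standard moment computation'' alternative notes.
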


This rest of this section is structured as follows: First, in Section~\ref{sec:chi-square-influence}, we establish the relationship between $\chi^2(\Xpe\| X_{\nine})$ and the influence of the distribution $\law_G(Y)$, as presented in Lemma~\ref{lem:expression-influence}. Subsequently, in Section~\ref{sec:calculating-chi-square}, we combine Lemma~\ref{lem:expression-influence} with the bound on influence obtained in Lemma~\ref{lem:edge-influence-adjacent} to compute the desired upper bound in Lemma~\ref{lem:influence_edge_triangles}.

\subsubsection{Bounding $\chi^2$-divergence In Terms of Influence}\label{sec:chi-square-influence}

The entirety of Section~\ref{sec:chi-square-influence} is dedicated to showing Lemma~\ref{lem:expression-influence}, which states that the $\chi^2$-divergence between $X_{\nine}$ and $\Xpe$ is bounded in terms of influence.
\expressioninfluence*

We restate the lemma that bounds chi-square divergence for a mixture of Gibbs distributions.

\mixturegibbs*

\begin{proof}
By definition of chi-square divergence,
\begin{align*}
   \chi^2(Q\|P)+1 =\ \E_{X\sim P}\B(\frac{dQ}{dP}(X)\B)^2
   =&\ \E_{X\sim P} \frac{\E_{U}[f_U(X)/Z_U] \E_{U'}[f_{U'}(X)/Z_{U'}]}{ (f(X)/Z)^2}\\
   =&\ \E_{U,U'}\B[\frac{Z^2}{Z_UZ_{U'}} \cdot \E_{X\sim P}\frac{f_U(X)f_{U'}(X)}{(f(X))^2}\B]
   \,.
\end{align*}                               
Notice that \[\frac{Z_U}{Z} = \sum_X\frac{f_U(X)}{f(X)}\frac{f(X)}{Z} = \E_{X\sim P}\rho_U(X)\,.\]
Taking this into the previous expression finishes the proof.
\end{proof}

Applying the above lemma to $X_{\nine}$ and $\Xp_{\nine}$ gives us the following upper bound on their chi-square distance. 
\begin{corollary}\label{cor:chi-square-mixture}
Let $A = E(\Xpet)-e$, and $B = E(\Xpe)-e$. Recall that $Y$ is the indicator of $E(X)$, we have
    \begin{equation}\label{eq:GibbsChi}
    \chi^2\b( \Xp_{\nine}\| X_{\nine}\b)+1\le 
\E_{\Xpe,\Xpet}\frac{\E \B[p^{|Y_A|} \E\b[p^{|Y_B|}|Y_A\b]\B]}{\E \b[p^{|Y_A|}\b] \E \b[p^{|Y_B|}\b]}\,.
\end{equation}
\end{corollary}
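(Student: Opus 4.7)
The plan is to invoke Lemma~\ref{lem:chi-square-gibbs} directly, with $P=\mu_G$ (which equals the law of $X_{\sim e}=X$ since $e\notin G$ forces all triangles in the support of $\mu_G$ to avoid $e$) and $Q=\law(\Xp_{\sim e})$, the marginal of $\mu_{G+e}$ on triangles in $T_G$. Writing this marginal as a mixture indexed by $U=\Xpe$ drawn from the $\mu_{G+e}$-marginal on $T_e$ puts the distribution in exactly the form required by the lemma.

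First I would compute the conditional pmf $\mu_{G+e}(X_{\sim e}=x\mid \Xpe=U)$. From the definition of $\mu_{G+e}$, its unnormalized density is proportional to $\b(\frac{p'}{1-p'}\b)^{|x|}p^{-\e(x\vee U)}$. The key algebraic identity is $\e(x\vee U)=\e(x)+\e(U)-|E(x)\cap E(U)|$. Since $x$ is supported on configurations with $E(x)\subseteq G$ and $e\notin G$, we have $e\notin E(x)$, so $|E(x)\cap E(U)|=|E(x)\cap B|=|Y_B(x)|$ where $B=E(U)-e$. Letting $f(x)=\b(\frac{p'}{1-p'}\b)^{|x|}p^{-\e(x)}$ denote the unnormalized pmf of $\mu_G$, this gives $f_U(x)=f(x)\cdot p^{|Y_B(x)|}$ and hence $\rho_U(x)=f_U(x)/f(x)=p^{|Y_B(x)|}$.

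Next I would plug into Lemma~\ref{lem:chi-square-gibbs} with $U'$ an i.i.d.\ copy of $U$, writing $A=E(U')-e$ and $B=E(U)-e$ to match the notation of the corollary. The numerator becomes $\E_{X\sim\mu_G}[\rho_U(X)\rho_{U'}(X)]=\E[p^{|Y_A|+|Y_B|}]$, which equals $\E[p^{|Y_A|}\E[p^{|Y_B|}\mid Y_A]]$ by the tower property; the denominator factors as $\E[p^{|Y_A|}]\cdot\E[p^{|Y_B|}]$. Substituting and taking $\E_{\Xpe,\Xpet}$ yields the claimed expression.

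The entire argument is bookkeeping, and the only place to slip is in computing $\e(x\vee U)$ and recognizing that the intersection with $E(U)$ collapses to the intersection with $B$ precisely because $e$ never appears in $E(x)$. Strictly speaking the calculation delivers equality, not inequality; the statement as $\le$ is presumably for convenience in the downstream application (e.g., to accommodate the reweighting of Lemma~\ref{lem:XplusMixture} when passing between $\mu_{G+e}$ and $\mu_{G+e}(\,\cdot\,\mid Y_e=1)$), and since we only need the upper bound anyway we simply inherit it.
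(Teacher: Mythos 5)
Your proof is correct and matches the paper's argument essentially step for step: both invoke Lemma~\ref{lem:chi-square-gibbs} after expressing $\law(\Xp_{\sim e})$ as a mixture of Gibbs measures indexed by $\Xpe$, both identify $\rho_U(x)\propto p^{|E(x)\cap E(U)|}=p^{|Y_B(x)|}$ (using $e\notin E(x)$), and both observe that the resulting expression is an equality, so the corollary's $\le$ is simply a slight weakening of what is actually derived.
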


\begin{proof}
Let $P=\cL(X_{\nine}) = \cL(X)=\mu_G$ and $Q=\cL(\Xp_{\nine})$. Define $f(x):=\b(\frac{p'}{1-p'}\b)^{|x|}p^{-\e(x)}$ so that $P(x)=f(x)/Z_G$ by Defn.~\ref{def:triangle_dist} of $\mu_G$.

The distribution of $\Xp_{\nine}$ can be viewed as a mixture of Gibbs distribution given $\Xpe$.
We have
\begin{align*}
    \mu_{G+e}(\Xpne|\Xpe) &\propto  p'^{{|\Xpne|}+|\Xpe|}p^{-\e(\Xpne)-\e(\Xpe)}p^{|E(\Xpne)\cap E(\Xpe)|} 
\\&\propto 
p'^{|\Xpne|}p^{-\e(\Xpne)}p^{|E(\Xpne)\cap E(\Xpe)|}\,.
\end{align*}
We can define 
\[f_{\Xpe}(X) = p'^{n_{X}}p^{-e(X)}p^{|E(X)\cap E(\Xpe)|}=f(X)p^{|E(X)\cap E(\Xpe)|}\] and \[Z_{\Xpe} = \sum_{X\in \{0,1\}^{T_G}}f_{\Xpe}(X)\] so that $$\mu_{G+e}(\Xpne|\Xpe) =f_{\Xpe}(\Xpne)/Z_{\Xpe}\,.$$
So $\mu_{G+e}$ can be viewed as the mixture of Gibbs distributions, i.e., 
\[\mu_{G+e}(X) = \E_{\Xpe}[f_{\Xpe}(\Xpne)/Z_{\Xpe}] \,.\]

Note that $f_{\Xpe}(X)/f(X)=p^{|E(T)\cap E(\Xpe)|}$. We can let $\Xpet $ be an independent copy of $\Xpe$,
and apply Lemma~\ref{lem:chi-square-gibbs} to obtain
\begin{align*}
    \chi^2(Q\| P)+1 &= \E_{\Xpe,\Xpet}\frac{\E_{X\sim P}p^{|E(X)\cap E(\Xpe)|} p^{|E(X)\cap E(\Xpet)|}}{\E_{X\sim P}p^{|E(X)\cap E(\Xpe)|} \E_{X\sim P}p^{|E(X)\cap E(\Xpet)|}}\\
    &=  \E_{\Xpe,\Xpet} \frac{\E \B[p^{|Y_{E(\Xpe)}|} p^{|Y_{E(\Xpet)}|} \B]}{ \E p^{|Y_{E(\Xpe)}|} \E p^{|Y_{E(\Xpet)}|}}
    \,.
\end{align*}
For simplicity of notation, let $A=E(\Xpet)-e$, $B=E(\Xpe)-e$ ($e\notin E(X)$ so $Y_e=0$). The above expression can be written as 
\begin{equation*}
\E_{\Xpe,\Xpet}\frac{\E \B[p^{|Y_A|} \E\b[p^{|Y_B|}|Y_A\b]\B]}{\E \b[p^{|Y_A|}\b] \E \b[p^{|Y_B|}\b]}\,.\qedhere
\end{equation*}
\end{proof}

From Corollary~\ref{cor:chi-square-mixture}, we can derive Lemma~\ref{lem:expression-influence} by showing two upper bounds for the expression within $\E_{\Xpe,\Xpet}$ in \eqref{eq:GibbsChi}.

\begin{lemma}
Let $\Inf^{M}$ denote the marginal influence of distribution $\law_G(Y)$. For any $A,B\subset E(G)$, we have
\[
\frac{\E \B[p^{|Y_A|} \E\b[p^{|Y_B|}|Y_A\b]\B]}{\E \b[p^{|Y_A|}\b] \E \b[p^{|Y_B|}\b]}\le (1/p)^{|A|+|B|}
\]
and
    \[
    \frac{\E \B[p^{|Y_A|} \E\b[p^{|Y_B|}|Y_A\b]\B]}{\E \b[p^{|Y_A|}\b] \E \b[p^{|Y_B|}\b]}\le \B(1+\frac{1-p}{p}\sup_{\substack{A'\subset A\cup B
\\ e\in (A\cup B)\backslash A'}} \IM{A'}{e}\B)^{|B|}(1/p)^{|A\cap B|}\,.
    \]
\end{lemma}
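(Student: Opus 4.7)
The first bound $(1/p)^{|A|+|B|}$ follows immediately: since $p\in(0,1)$ and $|Y_A|+|Y_B|\ge 0$, we get $\E[p^{|Y_A|}\E[p^{|Y_B|}\mid Y_A]] = \E[p^{|Y_A|+|Y_B|}]\le 1$, while $|Y_A|\le |A|$ and $|Y_B|\le |B|$ give $\E[p^{|Y_A|}]\E[p^{|Y_B|}]\ge p^{|A|+|B|}$. The ratio bound is immediate.

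The second bound uses a decompose-and-peel strategy. First, write $|Y_A|+|Y_B| = |Y_A|+|Y_{A\cap B}|+|Y_{B\setminus A}|$ and apply $p^{|Y_{A\cap B}|}\le 1$ in the numerator together with $p^{|Y_{A\cap B}|}\ge p^{|A\cap B|}$ (valid since $|Y_{A\cap B}|\le|A\cap B|$ and $p\le 1$) inside $\E[p^{|Y_B|}]$ in the denominator. This extracts the $(1/p)^{|A\cap B|}$ factor and reduces the task to proving
\[
R \;:=\; \frac{\E[p^{|Y_A|+|Y_{B'}|}]}{\E[p^{|Y_A|}]\,\E[p^{|Y_{B'}|}]} \;\le\; (1+\eta)^{|B|},
\]
where $B':=B\setminus A$ is disjoint from $A$ and $\eta:=\tfrac{1-p}{p}\sup_{A',e}\IM{A'}{e}$.

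The key local estimate exploits $p^{Y_e}=1-(1-p)Y_e$, so $\E[p^{Y_e}\mid Y_{A'}] = 1-(1-p)\Pr(Y_e=1\mid Y_{A'})$. Combined with $\E[p^{Y_e}]\ge p$ and the definition of marginal influence, this yields the pointwise sandwich
\[
(1-\eta)\,\E[p^{Y_e}] \;\le\; \E[p^{Y_e}\mid Y_{A'}] \;\le\; (1+\eta)\,\E[p^{Y_e}]
\]
for any $A'\subset A\cup B$ with $e\in(A\cup B)\setminus A'$. Enumerating $B'=\{e_1,\dots,e_{m'}\}$ with $B'_k:=\{e_1,\dots,e_k\}$ and peeling edges of $B'$ from the outer expectation (at step $k$, conditioning on $Y_A$ and $Y_{B'_{k-1}}$; valid since $A\cap B'=\emptyset$), the upper side yields
\[
\E[p^{|Y_A|+|Y_{B'}|}] \;\le\; \E[p^{|Y_A|}]\cdot\prod_{k=1}^{m'}\E[p^{Y_{e_k}}]\cdot(1+\eta)^{m'}.
\]
A parallel peeling of $\E[p^{|Y_{B'}|}]=\E\bigl[\prod_k p^{Y_{e_k}}\bigr]$ using the lower side gives $\E[p^{|Y_{B'}|}]\ge \prod_k\E[p^{Y_{e_k}}]\cdot (1-\eta)^{m'-1}$. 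Combining and using $m'\le |B|$ yields $R\le (1+\eta)^{m'}/(1-\eta)^{m'-1}\le (1+C\eta)^{|B|}$ for an absolute constant $C$, matching the stated bound up to absorbing $C$ into the prefactor of $\eta$ (immaterial, since $\eta=\tilde O(1/n)$ in the intended application).

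The main obstacle is controlling the denominator $\E[p^{|Y_{B'}|}]$ by $\prod_k\E[p^{Y_{e_k}}]$. A direct FKG positive-association argument would avoid the $(1-\eta)^{m'-1}$ loss, but $\mu_G$ does \emph{not} satisfy the FKG lattice condition: its log-density has Hamiltonian $H(x)=-|x|\log(p'/(1-p'))+e(x)\log p$, which is supermodular (the coverage function $e(x)$ is submodular and $\log p<0$), yielding a submodular log-density. The reverse peeling bypasses this at a cost that is a constant factor in $\eta$, which is absorbed into the definition of $\eta$ without affecting any downstream estimate.
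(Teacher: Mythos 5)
Your proof is correct in the regime where it matters, but it takes a genuinely different route from the paper's. The paper invokes a coupling lemma (their Lemma 6.9): for each fixed $y_A$ it builds a coordinate-by-coordinate coupling $(Y_B,Y_B')$ between $\cL(Y_B\mid Y_A=y_A)$ and $\cL(Y_B)$ whose Hamming distance is stochastically dominated by $\bino(|B\setminus A|,\maxI)+|A\cap B|$, then bounds $\E[p^{|Y_B|}\mid Y_A=y_A]/\E[p^{|Y_B|}]$ via the moment generating function of that dominating distribution. You instead peel coordinates directly, using the identity $p^{Y_e}=1-(1-p)Y_e$ to convert the marginal-influence bound into a two-sided multiplicative sandwich $\,(1\pm\eta)\E[p^{Y_e}]\,$ on each conditional factor, and you pull out the $(1/p)^{|A\cap B|}$ factor before peeling rather than charging it to the coupling. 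Your approach is more elementary (no coupling lemma, no MGF), and your diagnosis of why a naive FKG argument fails is correct and worth having: $\log\mu_G$ is a modular term plus $-\e(x)\log p$ with $\log p<0$ and $\e$ a coverage (hence submodular) function, so $\log\mu_G$ is submodular rather than supermodular. The tradeoff is that you must separately lower-bound the denominator $\E[p^{|Y_{B'}|}]$ by $\prod_k\E[p^{Y_{e_k}}]$, costing a factor $(1-\eta)^{-(m'-1)}$ that the paper's one-shot coupling avoids entirely; consequently you prove $(1+C\eta)^{|B|}(1/p)^{|A\cap B|}$ rather than the stated $(1+\eta)^{|B|}(1/p)^{|A\cap B|}$, and your argument implicitly requires $\eta$ bounded away from $1$ (for the $(1-\eta)$ factors to be useful), whereas the paper's MGF calculation is clean for all $\maxI\in[0,1]$. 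As you observe, in the downstream application $\eta=\tilde O(1/n)$ and only $(1+O(\eta))^{|B|}$ is used, so the constant-factor loss is genuinely immaterial; but if you wanted to present this as a self-contained replacement for the paper's lemma you would need to restate it with $C\eta$ or add the $\eta<1$ hypothesis.
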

\begin{proof}
The first upper bound is straightforward. On the one hand,
$\E \B[p^{|Y_A|} \E\b[p^{|Y_B|}|Y_A\b]\B]\le1 $, and on the other hand, $\E \b[p^{|Y_A|}\b] \E \b[p^{|Y_B|}\b]\ge p^{|A|+|B|}$, so we have 
\[
\frac{\E \B[p^{|Y_A|} \E\b[p^{|Y_B|}|Y_A\b]\B]}{\E \b[p^{|Y_A|}\b] \E \b[p^{|Y_B|}\b]} \le (1/p)^{|A|+|B|}\,.
\]

For the second upper bound, we compare the difference between $\E\b[p^{|Y_B|}|Y_A\b]$ and $\E \b[p^{|Y_B|}\b]$ by constructing a coupling between $Y_B$ and $Y_B|Y_A=y_A$. The reason that influence is useful to us, is that a good coupling can be constructed for a distribution with low influence, as stated in the following lemma.
\begin{lemma}\label{lem:coupling-general}
Let $X$ be a random variable on $\{0,1\}^N$. $A,B\subset [N]$ are disjoint. If for any subset $A'\subset A\cup B$ and $i\in (A\cup B)\backslash A'$, 
\[\IM{A'}{i}\le q,\]
then for any $x_A,x_A'\in \{0,1\}^B$, there exists coupling $(X_B,X'_B)$ between $\law(X_B|X_A=x_A)$ and $\law(X_B|X_A=x_A')$  such that
$|X_B-X'_B|$ is stochastically dominated by $\bino(|B|,q)$.
\end{lemma}

We prove the lemma later in the section.
Now use $\maxI$ to denote $\sup_{\substack{A'\subset A\cup B
\\ e\in (A\cup B)\backslash A'}} \IM{A'}{e}$ and apply Lemma~\ref{lem:coupling-general} on $A$ and $B\backslash A$. 
There exists a coupling $(Y_{B\backslash A},Y_{B\backslash A}')$ between $\cL_G(Y_{B\backslash A}|Y_A=y_A)$ and $\cL_{G}(Y_{B\backslash A})$ such that 
$|Y_{B\backslash A}-Y_{B\backslash A}'|$ is stochastically dominated by $\bino\b(|B\backslash A|,\maxI\b)$. Therefore, we can couple $Y_{A\cap B}$ arbitrarily and obtain a coupling $(Y_B,Y_B')$ between $\cL_G(Y_{B}|Y_A=y_A)$ and $\cL_{G}(Y_{B})$ where $|Y_B-Y_B'|$ is stochastically dominated by $\bino\b(|B\backslash A|,\maxI\b)+|A\cap B|$.

Let $M(\theta) = \E[e^{\theta Z}]$ be the moment generating function (MGF) of $Z\sim \bino(|B|,q^*(|A|+|B|))+|A\cap B|$. By the MGF of binomial distribution, $M(\theta) = (1-\maxI+e^\theta \maxI)^{|B|}e^{\theta |A\cap B|}$.
We have
\[
\E\b[ p^{|Y_B'|-|Y_B|} \b]\le \E\b[ (1/p)^{|Y_B-Y_B'|}\b] \le
M(\log(1/p))
=\B(1+\frac{1-p}{p}\maxI \B)^{|B|}(1/p)^{|A\cap B|}\,.
\]
So for any $y_A$,
\[
\E\b[p^{|Y_B|}|Y_A=y_A\b] = \E\b[p^{|Y_B'|}\b]\le \E\b[p^{Y_B}\b] \B(1+\frac{1-p}{p}\maxI\B)^{|B|}(1/p)^{|A\cap B|}\,.
\]
Hence
\[
\frac{\E \B[p^{|Y_A|} \E\b[p^{|Y_B|}|Y_A\b]\B]}{\E \b[p^{|Y_A|}\b] \E \b[p^{|Y_B|}\b]} \le \B(1+\frac{1-p}{p}\maxI\B)^{|B|}(1/p)^{|A\cap B|}\,. \qedhere
\]
\end{proof}

\begin{proof}[Proof of Lemma~\ref{lem:coupling-general}]
Suppose $B=\{i_1,\cdots, i_k\}$. We couple one coordinate at a time. Since $\IM{A}{i_1}\le q$, 
\[
\b|\Pr(X_{i_1}=1|X_A=x_A)- \Pr(X_{i_1}=1|X_A=x_A')\b|\le q\,.
\]
Let $(X_{i_1},X_{i_1}')$ be the optimal coupling between the two distributions. $\Pr(X_{i_1}\not= X_{i_1}')\le q$. The coupling for other coordinates can be defined similarly one by one. Again from the assumption,
\[\IM{A\cup\{i_1,\cdots,i_{j-1}\}}{i_j} \le q\,.\]
So 
\[
\b|\Pr\b(X_{i_j}=1|X_A=x_A,X_{\{i_1,\cdots,i_{j-1}\}}\b)- \Pr\b(X_{i_j}=1|X_A=x_A',X_{\{i_1,\cdots,i_{j-1}\}}'\b)\b| \le q\,,
\]
so we can set $(X_{i_j}, X'_{i_j})$ conditioned on $X_{\{i_1,\cdots,i_{j-1}\}},X'_{\{i_1,\cdots,i_{j-1}\}}$ to be the optimal coupling between the two distributions and have
\[
\Pr(X_{i_j}\not= X'_{i_j}|X_{\{i_1,\cdots,i_{j-1}\}},X'_{\{i_1,\cdots,i_{j-1}\}})\le q\,.
\]
Equivalently, $|X_{i_j}- X'_{i_j}|$ conditioned on $X_{\{i_1,\cdots,i_{j-1}\}},X'_{\{i_1,\cdots,i_{j-1}\}}$ is stochastically dominated by $\bern(q)$. So $|X_B-X_B'|$ is stochastically dominated by $\bino(|B|,q)$.
\end{proof}

\subsubsection{Plugging Influence Bound into Lemma~\ref{lem:expression-influence} to Obtain Lemma~\ref{lem:influence_edge_triangles}}
\label{sec:calculating-chi-square}

\begin{proof}[Proof of Lemma~\ref{lem:influence_edge_triangles}]
To evaluate the bound in Lemma~\ref{lem:expression-influence}, we use the bound on the marginal influence of $\law_G(Y)$ in Lemma~\ref{lem:edge-influence-adjacent}. Let $q^*=O(1/(n^3p'^2)) = \tilde O(1/n)$ be the factor in the bound in Lemma~\ref{lem:edge-influence-adjacent}, i.e., 
$\IM A{e}\leq q^*|A|/n$ for any $A$ such that $|A|\le cn/4$. It then follows that
for $|A|+|B|\le cn/4$, \[\sup_{\substack{A'\subset A\cup B
\\ e\in (A\cup B)\backslash A'}} \IM{A'}{e}\le q^*(|A|+|B|)\,.\]
We 
now evaluate the bound in Lemma~\ref{lem:expression-influence} in terms of $q^*$. 

Noting that the part within $\E_{\Xpe,\Xpet}$ in the right-hand side is monotone in $|A|$ and $|B|$, we begin by showing an upper bound on their distributions. For any triangle $t$ in $T_e$, $\Pr(t\in \Xpe)\leq c_0\cdot p'$ for some $c_0>0$ since $\mu_{G+e}$ is $O(p')$ marginally small. It follows that the distribution of $|\Xpe|$ (and $|\Xpet|$) is stochastically dominated by $\bino(|T_e|,c_0p')$. As $|T_e|\le n$, they are also stochastically dominated by $\bino(n,c_0p')$. 

Note that $A$ and $B$ do not include edge $e,$ $|A|= 2|\Xpe|$, and $|B|= 2|\Xpet|$, so $|A\cap B| = 2|\Xpe\cap \Xpet|$. By using the first bound in the minimum in the conclusion of Lemma~\ref{lem:expression-influence} when $|A|,|B|$ are not larger than $2\log n$ and using the second bound otherwise, we obtain
\begin{align*}
&\chi^2( \Xp_{\nine}\|X_{\nine}) \\
&\le  \sum_{1\le i,j\le 2\log n}\sum_{k=0}^{ \min\{i,j\}} \Pr\b(|\Xpet|=i,|\Xpe|=j,|\Xpe\cap \Xpet|=k\b)\\
&\quad \cdot\B(\B(1+\frac{1-p}{p}q^*(2i+2j)\B)^{2j}(1/p)^{2k}-1\B) \\
&\quad + 
\sum_{\substack{i\ge 2\log n\text{ or } \\j\ge 2\log n} } \Pr\b(|\Xpet|=i,|\Xpe|=j\b) \B( (1/p)^{2i+2j} -1\B)\\
&\le  \sum_{1\le i,j\le 2\log n} \sum_{k=0}^{ \min\{i,j\}} \binom{n}{k} \binom{n-k}{i-k}\binom{n-i-k}{j-k}(c_0p')^{i+j}\B(\B(1+\frac{1-p}{p}q^*(2i+2j)\B)^{2j}(1/p)^{2k} -1\B) \\
&\quad + \sum_{\substack{i\ge 2\log n\text{ or } \\j\ge 2\log n} } \binom{n}{i}\binom{n}{j} (c_0p')^{i+j}(1/p)^{2i+2j}\,.
\end{align*}

Because $\binom{n}{i}\le n^i$ and $\binom{n}{j}\le n^j$, the second summation is bounded by \[\sum_{\substack{i\ge 2\log n\text{ or } \\j\ge 2\log n} } \b(\frac{c_0np'}{p}\b)^{i+j}\,.\]
Since $np'\ll 1$, this sum is $o(1/n^2)$. 

For the first summation, we can use $\binom{n}{k} \binom{n-k}{i-k}\binom{n-i-k}{j-k}\le n^{i+j-k}$ and separate the $k=0$ summand to bound it by
\begin{equation}\label{e:bigsum}
    \sum_{1\le i,j\le 2\log n} \bigg\{ (c_0np')^{i+j}\B(\B(1+\frac{1-p}{p}q^*(2i+2j)\B)^{2j} -1\B) + \B(1+\frac{1-p}{p}q^*(2i+2j)\B)^{2j} \sum_{k\ge 1}(np^2)^{-k}  \bigg\}\,.
\end{equation}
Since $q^*=\tilde{O}(1/n)$, we have $2j\cdot \frac{1-p}{p}q^*(2i+2j)\ll1$, so
\[
\B(1+\frac{1-p}{p}q^*(2i+2j)\B)^{2j}\le 1+ \frac{8(1-p)}{p}q^* j(i+j)\le 2\,.
\]
Therefore, for large enough $n$, the quantity in \eqref{e:bigsum} can be upper bounded by 
\[
\sum_{i,j\ge 1}\frac{8(1-p)}{p}q^* (c_0np')^{i+j}j(i+j) + \frac{8\log^2 n}{np^2}\,.
\]
As $np'\ll1$, we have $\sum_{i\ge1}(c_0np')^ii=O(np')$ and $\sum_{i\ge1}(c_0np')^ii^2=O(np')$. So \eqref{e:bigsum} is bounded by $O(np'q^*)+O(\log^2 n/n)=O(\log^2 n/n)$. Now that we proved $\chi^2(Q\|P)=O(\log^2 n/n)$, by Pinsker's inequality, 
\[\TV(P,Q)\le \sqrt{2\chi^2(Q\| P)}=O(n^{-1/2}\log n)\,. \]
This finishes the proof for Lemma~\ref{lem:influence_edge_triangles}.
\end{proof}

\subsection{Replacing $X^+$ and $Y^+$ by $X^*$ and $Y^*$}
\label{sec:starQuantities}

As compared to the sketch in Section~\ref{sec:projection}, there are a number of additional steps. 
Instead of working with $X$ and $\Xp$, we replace $\Xp$ by a new random variable $X^*$ constructed as a mixture of the distributions of $X$ and $\Xp$ so that the conditional distribution on $X^*_{e}$ given $X^*_{\nine}$
is a product distribution. As shown in the proof, it will suffice to bound the total variation between $Y_{\sim e}$ and $Y^*_{\sim e}$.
We first define $X^*$.
\paragraph{$X^*$ as a mixture of $\law(X)$ and $\law(\Xp)$ to make $\law(X^*_{e}|X^*_{\nine})$ a product distribution}
The conditioning on edge $e$ introduces dependence between triangles in $T_e$. 
We modify Definition~\ref{def:triangle_dist} to yield a new distribution over sets of triangles in $G+e$. Let
\[g^*_{G+e}(x) \defeq \B(\frac{p'}{1-p'}\B)^{|x|}p^{-|E(x)-e|}\quadand \mu^*_{G+e}(X^*=x) = \frac{g^*_{G+e}(x)}{Z^*(G+e)}\,.\] 
Here $Z^*(G+e) = \sum_{x:E(x)\subset G+e}g^*(x)$ is the normalizing factor. The benefit of defining $\mu^*$ is that now 
\[\mu^*_{G+e}(X_{e}|X_{\nine}) \propto \B(\frac{p'}{1-p'}\B)^{n_{X_{e}}}p^{-|E(X_{e})-E(X_{\nine})-e|}\,.\]
Because triangles in $T_e$ only intersect at $e$, $|E(X_{e})-E(X_{\nine})-e |=\sum_{t\in T_e}X_t |E(t)- E(X_{\nine})|$. So 
\begin{equation}\label{eq:conditional_triangle_dist}
    \mu^*_{G+e}(X_{e}|X_{\nine}) \propto \prod_{t\in T_e}\B(\frac{p'p^{-|E(t)- E(X_{\nine})|}}{1-p'}\B)^{X_t}
\end{equation}
is now a product distribution on triangles in $T_e$.

Note by definition of $g^*$ that 
\[g^*_{G+e}(X) = \begin{cases}
pg_{G+e}(X)&\text{if } e\in E(X)\\
g_{G+e}(X)&\text{otherwise}\,.
\end{cases}\]
Thus, $\mu^*$ re-weights by $p$ those $X$ with $e\in E(X)$, which is the same as $Y_e=1$. It follows
 that $\mu^*_{G+e}$ is equal to
 the mixture
\begin{equation}\label{eq:mustar_decomposition}
   \mu^*_{G+e}(\spacedot)  = \frac{p\mug{G+e}(Y_e=1)\mu_{G+e}(\spacedot|Y_e=1) +\mug{G+e}(Y_e=0)\mu_{G+e}(\spacedot|Y_e=0) }{p\mug{G+e}(Y_e=1)+ \mug{G+e}(Y_e=0)}\,. 
\end{equation}

Let $X^*\sim \mu_{G+e}^*$, and $Y^*$ be the corresponding edge distribution of $X^*$ on $\{0,1\}^{E(G+e)}$. Recalling that $Y^+$ has law $\law_{G+e}(Y|Y_e=1)$ and $\law(Y)=\law_{G+e}(Y|Y_e=0)$, we also have 
\begin{equation}\label{e:Ystar}
    \law(Y^*) = \frac{p\mu_{G+e}(Y_e=1)\law(\Yp) +\mu_{G+e}(Y_e=0)\law(Y) }{p\mu_{G+e}(Y_e=1)+ \mu_{G+e}(Y_e=0)}\,.
\end{equation}

\paragraph{Why we can replace $X^+$ and $Y^+$ with $X^*$ and $Y^*$}
The utility of introducing $X^*$ and $Y^*$ is that they are easier to work with, due the product structure in \eqref{eq:conditional_triangle_dist}, and we now show that we can replace $\Xp$ and $\Yp$ by $X^*$ and $Y^*$ in the arguments that follow, as promised in
\eqref{e:star}.
By Corollary~\ref{cor:muge-marginal}, we have that $\mug{G+e}(Y_e=1)=\tilde{\Theta}(1)$, and it follows from \eqref{e:Ystar} that
\[\begin{split}
\TV(Y_{\sim e},\Yp_{\sim e}) &= \frac{p\mug{G+e}(Y_e=1)+ \mug{G+e}(Y_e=0)}{p\mug{G+e}(Y_e=1)} \TV(Y_{\sim e},Y^*_{\sim e}) \\
&= 
\Theta(\TV(Y_{\sim e},Y^*_{\sim e})/(np')) =\tilde      \Theta(\TV(Y_{\sim e},Y^*_{\sim e}))\,.
\end{split}\]




As far as replacing $X^+$ by $X^*$, the identical argument as above gives from \eqref{eq:mustar_decomposition} that 
\[\TV(X^*_{\nine}, X_{\nine}) = \Theta(np'\cdot \TV(\Xp_{\nine}, X_{\nine})) = \tilde \Theta\b( \TV(\Xp_{\nine}, X_{\nine})\b)
\,.\]

It remains is to bound the second term in \eqref{eq:invariance-decomposition-real} using the ideas from Section~\ref{sec:projection}.

\subsection{Perturbation Insensitivity and Proof of Lemma~\ref{lem:edge_on_other_edge}}\label{sec:edge-on-other-edge}

In this section bound the second term of \eqref{eq:invariance-decomposition-real}. The ideas sketched in Section~\ref{sec:projection} are implemented, transforming the comparison between edge variables back into a comparison between triangle distributions.  
Then we compare the triangle distributions by accounting for one triangle addition at a time, and finally, we bound the total variation induced by a single triangle via concentration of the likelihood ratio.

For the simplicity of discussion, let us create a random variable that follows distribution $\E_{X'\sim X_{\nine}} \law(Y^*_{\sim e}|X^* = X')$. Specifically, let $X''$ be generated from $X_{\nine}$ by channel $\mu^*_{G+e}(\spacedot| X_{\nine})$, and $Y''$ be the corresponding edge distribution, so $\law(Y) =\E_{X'\sim X_{\nine}} \law(Y^*_{\sim e}|X^* = X') $. This way, the second term of \eqref{eq:invariance-decomposition-real} is equal to 
\[
\TV\b(Y_{\sim e} ,  \E_{X'\sim X_{\nine}} \law(Y^*_{\sim e}|X^* = X')\b) = \TV(Y_{\sim e}, Y''_{\sim e})\,.
\]

The goal of this subsection is to bound the latter quantity.
\begin{lemma}\label{lem:second-term}
   $ \TV(Y_{\sim e}, Y''_{\sim e}) =O\B(\frac{\log^3  n}{n^5p'^{9/2}}\B)= O(\log^{15/2}/\sqrt{n})$.
\end{lemma}


\subsubsection{Step 1: Projection of Distributions and Defining $\aux$}
Although we will show that $Y_{\sim e}$ and $Y''_{\sim e}$ are close, $X$ and $X''$ are actually very different distributions, in fact, they have $\tilde{\Omega}(1)$ TV-distance in general.
Indeed, $X_{e}=\vec{0}$, but $X''_{e}|X_{\nine}$ follows \eqref{eq:conditional_triangle_dist}. $X''_{e}$ might bring new edges that makes $Y''$ different from $Y$. 
We will construct an auxiliary triangle distribution $\Tilde{X}$ with $Y''_{\sim e}= \tilde{Y}_{\sim e}$, and by data-processing inequality it suffices to bound $\TV(X,\tilde{X})$
(see Figure~\ref{fig:projection}):

\begin{lemma} If $\tilde X$ is such that $Y''_{\sim e}= \tilde{Y}_{\sim e}$, then 
   $ \TV(Y_{\sim e}, Y''_{\sim e}) = \TV(Y_{\sim e}, \tilde Y_{\sim e}) \leq \TV(X,\tilde X)$.
\end{lemma}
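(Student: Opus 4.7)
The plan is to deduce this lemma directly from the data processing inequality for total variation, so the ``proof'' is essentially a two-line observation once the correct projection map is identified. The substance lies in the construction of $\tilde X$ (which is supplied by hypothesis), not in the argument here.

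First I would rewrite the left equality using the hypothesis: by assumption $Y''_{\sim e} \stackrel{d}{=} \tilde Y_{\sim e}$, so trivially $\TV(Y_{\sim e}, Y''_{\sim e}) = \TV(Y_{\sim e}, \tilde Y_{\sim e})$. This reduces the lemma to the inequality $\TV(Y_{\sim e}, \tilde Y_{\sim e}) \le \TV(X, \tilde X)$.

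Next I would observe that $Y_{\sim e}$ and $\tilde Y_{\sim e}$ are obtained from $X$ and $\tilde X$ via the same deterministic map. Concretely, define $\pi : \{0,1\}^{\binom{[n]}{3}} \to \{0,1\}^{\binom{[n]}{2} \setminus \{e\}}$ by $\pi(x)_f = \mathds{1}\{f \in E(x)\}$ for each $f \neq e$; this is simply ``take the edge set generated by a collection of triangles and discard the coordinate at $e$''. By the definitions of $Y$ and $\tilde Y$ as edge indicator vectors of $X$ and $\tilde X$, we have $Y_{\sim e} = \pi(X)$ and $\tilde Y_{\sim e} = \pi(\tilde X)$ almost surely.

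Finally, since $\pi$ is a deterministic Markov kernel (a special case of a randomized algorithm), the data processing inequality gives
\[\TV(Y_{\sim e}, \tilde Y_{\sim e}) = \TV(\pi(X), \pi(\tilde X)) \le \TV(X, \tilde X)\,,\]
which combined with the previous equality proves the lemma. There is no real obstacle here; the content of Section~\ref{sec:edge-on-other-edge} is the subsequent task of actually exhibiting a $\tilde X$ satisfying the hypothesis (via the $X^{\mathsf{aux}}$ construction) and then bounding $\TV(X, \tilde X)$ by a concentration-of-likelihood-ratio argument.
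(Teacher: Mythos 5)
Your proof is correct and is exactly the paper's intended argument: the first equality is immediate from the hypothesis $Y''_{\sim e}=\tilde Y_{\sim e}$ (equality of laws), and the inequality is the data processing inequality applied to the deterministic projection $\pi$ sending a triangle configuration to its edge indicator vector restricted away from $e$, with $Y_{\sim e}=\pi(X)$ and $\tilde Y_{\sim e}=\pi(\tilde X)$. (The only cosmetic remark is that the natural domain of $\pi$ is $\{0,1\}^{T_G}$ rather than all of $\{0,1\}^{\binom{[n]}{3}}$, since both $X$ and $\tilde X=X\vee\aux$ are supported on triangles of $G$; this changes nothing.)
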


Specifically, $\aux$ will be constructed such that when added to $X$, it results in the same edge projection as $X''$, i.e., 
\begin{equation}\label{eq:aux-edge}
   E(X'')-\{e\} = E(\aux\lor X)\,, 
\end{equation}
We can set $\tilde{X}=\aux\lor X$. Note that $Y''_{\sim e}$ is the indicator for edge set $E(X'')-\{e\}$, this means $Y''_{\sim e}= \tilde{Y}_{\sim e}$.


We now formally define $\aux$ by its distribution conditioned on $X''=X\lor X''_{e}$ (see Figure~\ref{fig:aux}). To satisfy \eqref{eq:aux-edge}, $\aux$ needs to include the set of new edges that are in $E(X'')$ but not in $E(X)$,
$$\enew := E(X'')-E(X)-\{e\}=E(X''_{e})-E(X)-\{e\}\subset W\,.$$ 
Start with $\aux$ being 0. Recall the definition of wedge set in \eqref{eq:def-wedge-set}. For each edge $(u,w)$ (or $(v,w)$) in $\enew$, pick a node $w'$ uniformly at random from $\wed[E(X)](u,w)$ (or $\wed[E(X)](v,w)$) and let $\aux_{(u,w,w')}=1$ (or $\aux_{(v,w,w')}=1$).

\begin{lemma}\label{lem:xaux-edge}
$\enew= E(\aux)-E(X)$.
\end{lemma}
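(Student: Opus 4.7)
The statement is a direct verification from the construction of $\aux$, relying on the defining property of wedge vertices in $E(X)$. I will prove the two inclusions $\enew \subseteq E(\aux) - E(X)$ and $E(\aux) - E(X) \subseteq \enew$ separately.

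For the inclusion $\enew \subseteq E(\aux) - E(X)$: Fix any edge $f\in \enew$. By definition of $\enew$, we have $f\in W$, so $f$ is of the form $(u,w)$ or $(v,w)$ for some $w\notin\{u,v\}$, and $f\notin E(X)$. By construction of $\aux$, a single triangle containing $f$ is inserted into $\aux$ (namely $(u,w,w')$ for some $w'\in\wed[E(X)](u,w)$ in the first case, and analogously in the second), so $f\in E(\aux)$. Combined with $f\notin E(X)$, this gives $f\in E(\aux) - E(X)$.

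For the reverse inclusion $E(\aux) - E(X) \subseteq \enew$: Any edge in $E(\aux)$ must belong to some triangle inserted by the construction. Such a triangle has the form $(u,w,w')$ with $w'\in \wed[E(X)](u,w)$ (the case $(v,w,w')$ is symmetric), coming from some base edge $(u,w)\in\enew$. The three edges of this triangle are $(u,w),\ (u,w')$ and $(w,w')$. By the very definition of $\wed[E(X)](u,w)$ in \eqref{eq:def-wedge-set}, both $(u,w')$ and $(w,w')$ lie in $E(X)$. Hence the only edge of this triangle that can possibly be in $E(\aux)-E(X)$ is the base edge $(u,w)$, which belongs to $\enew$ by assumption. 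Taking the union over all inserted triangles gives $E(\aux) - E(X) \subseteq \enew$.

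The main thing to be careful about is the second inclusion: one might worry that different edges in $\enew$ share endpoints and interact to create unintended new edges (for instance if the wedge vertex $w'$ chosen for the base edge $(u,w_1)$ happens to coincide with another endpoint $w_2$ such that $(u,w_2)\in\enew$). The wedge-vertex condition handles this automatically: every non-base edge of a triangle inserted in $\aux$ lies in $E(X)$ by construction, so no interaction between different choices can produce edges outside $E(X) \cup \enew$. I expect no genuine obstacle; the only subtlety worth flagging (and which will be used in the surrounding argument rather than here) is that the wedge sets $\wed[E(X)](u,w)$ must be nonempty for the construction to be well-defined, which is ensured elsewhere by the $c$-uniformly $2$-star density of $G$ and concentration properties of $X$.
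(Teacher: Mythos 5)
Your proof is correct and follows essentially the same approach as the paper: both inclusions are verified directly from the construction of $\aux$, with the key observation that the two non-base edges $(u,w')$ and $(w,w')$ of each inserted triangle lie in $E(X)$ by the wedge-vertex condition \eqref{eq:def-wedge-set}. The paper phrases the reverse inclusion slightly differently (showing $E(\aux)\subset E(X'')-\{e\}$ first), but the substance is identical.
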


\begin{proof}
Let $(u,w)$ be an edge in $E(X'')-E(X)-\{e\}$, and $w'$ be the pick from $\wed[E(X)](u,w)$ that $\aux_{(u,w,w')}=1$. We have $(u,w)\in E(X'')$. By \eqref{eq:def-wedge-set},   $(u,w'),(w,w')\in E(X)$. So $E(\aux)\subset E(X'')$. 
As $e\notin E(X)$, $u$ or $v$ are not in $\wed[E(X)](u,w)$, we have $E(\aux)\subset E(X'')-\{e\}$, so 
\[E(\aux)-E(X)\subset E(X'')-E(X)-\{e\}\,.\]
Since each edge in $\enew$ is included in $\aux$, we know \[E(\aux)-E(X)\supset \enew\,.\qedhere\]
\end{proof}
Using this lemma we have $E(\tilde{Y}_{\sim e})=E(\aux\lor X)=E(X'')-\{e\}=E(Y''_{\sim e}).$

The remainder of this section shows that $X$ and $\tilde{X}=X\lor \aux$ are close in total variation. 
\begin{lemma}\label{lem:xaux-triangle} $X$ and $\tilde{X}=X\lor \aux$ satisfy the total variation bound
\[
\TV(X,\tilde{X})=O\B(\frac{\log^3  n}{n^5p'^{9/2}}\B)=\tilde{O}\B(\frac{1}{\sqrt{n}}\B)\,.
\]
\end{lemma}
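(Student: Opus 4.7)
The proof will follow the same high-level template as that of Lemma~\ref{lem:forward-influence-on-other-edges} in Section~\ref{sec:forward}: express $X \lor \aux$ as iterated applications of a single-triangle Markov kernel $K$ to $X$, bound $\TV(X, K(X))$ via concentration of the likelihood ratio, and combine with Lemma~\ref{lem:kernel-tv}. The two new ingredients compared to the forward setting are that the base measure is now the dependent distribution $\mug{G}$ (rather than a product of Bernoullis), so concentration must go through Corollary~\ref{cor:concentration_muG} in place of standard McDiarmid, and that $\aux$ is defined in terms of $X''_{e}$ sampled from the $X$-dependent product law \eqref{eq:conditional_triangle_dist}, so the kernel $K$ must itself depend on $X$ via the wedge sets used in the construction of $\aux$.

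First, I would show that $|\aux|$ concentrates. From \eqref{eq:conditional_triangle_dist} the coordinates of $X''_{e}$ conditional on $X_{\sim e}$ are independent Bernoullis with parameters $O(p')$. Since $|T_e| \le n$ and $p' = \tilde\Theta(1/n)$, a Chernoff bound gives $|X''_{e}| \le \log n$ with high probability, and since each triangle in $X''_{e}$ contributes at most two new edges to $\enew$, we also have $|\aux| \le 2\log n$ with high probability. Next I would define the kernel $K$: on input $X$, sample an endpoint of $e$ and a wedge neighbor in $\wed[E(X)]$ according to the marginal law of a single triangle added by $\aux$ given $X$, and set the corresponding triangle indicator. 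Up to a $\tilde O(1/n)$ coupling error from triangle collisions, handled exactly as in \eqref{eq:aux-kernel}, $X \lor \aux$ is distributed as $K^{m}(X)$ where $m$ is a random variable dominated by $2\log n$ w.h.p.\ with expectation $\tilde O(1)$.

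Second, and this is the technical core, I would bound $\TV(X, K(X))$ via concentration of the likelihood ratio. Writing $P = \mug{G}$ and $Q = \law(K(X))$, the likelihood ratio at an output configuration $\tilde x$ has the form $|A(\tilde x)|/\E_{X \sim P}|A(X)|$, where $A(\tilde x)$ enumerates the valid (edge, wedge vertex) choices that $K$ could have used to generate $\tilde x$. Using the $c$-uniform 2-star density of $G$ together with Lemma~\ref{lem:mug_marginally_small}, one can show that $\E|A(X)|$ has the right order so that the target bound reduces to showing that $|A(X)|$ concentrates with deviations of order $\tilde O(\sqrt{n})$. On a high-probability event on which local triangle degrees in $X$ are polylogarithmic, which holds by Lemma~\ref{lem:mug_marginally_small} and a union bound, $|A(X)|$ is $\tilde O(1)$-Lipschitz as a function of the triangle indicators, so Corollary~\ref{cor:concentration_muG} yields deviations $\tilde O(\sqrt{n})$ as required. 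Lemma~\ref{lem:concentration-TV} then gives $\TV(X, K(X)) = \tilde O(1/\sqrt{n})$.

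Finally, combining Lemma~\ref{lem:kernel-tv} with $m \le 2\log n$ yields $\TV(X, K^m(X)) = \tilde O(1/\sqrt{n})$, and absorbing the coupling error, the truncation of $m$, and the low-probability event where the local structure of $X$ fails the Lipschitz-enabling conditions completes the proof. The main obstacle will be the Lipschitz bound on $|A(X)|$: because the wedge sets $\wed[E(X)]$ themselves depend on $X$, a single triangle toggle can affect $|A(X)|$ through several wedge counts simultaneously, and controlling this effect demands a careful conditioning on a typical event of polylogarithmic local triangle degrees in $X$, combined with the small-marginal property of $\mug{G}$ from Lemma~\ref{lem:mug_marginally_small}.
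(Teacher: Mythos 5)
Your proposal correctly identifies the high-level template (decompose $X\lor\aux$ into iterated single-triangle kernels, bound $\TV(X,K(X))$ by concentration of the likelihood ratio, combine via Lemma~\ref{lem:kernel-tv}), and you correctly flag the central obstacle — that the wedge sets $\wed[E(X)]$ depend on $X$ so the kernel is itself data-dependent. However, the execution as written has two genuine gaps.

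First, your appeal to Corollary~\ref{cor:concentration_muG} to get $\tilde O(\sqrt{n})$ deviations for a $\tilde O(1)$-Lipschitz function does not close: applied to the unconditioned distribution $\mu_G$ over $\binom{n}{3}$ triangles, the corollary gives deviations of order $\tilde O(\sqrt{n^3 p'\, L^2})$, and with $p'=1/(n\log n)$ and $L=\tilde O(1)$ this is $\tilde O(n)$, far too large. The paper's proof first conditions on $X_{T_G^0}$ (the triangles containing neither $u$ nor $v$), reducing the effective dimension to $O(n^2)$ so that the same corollary gives $\tilde O(\sqrt{n^2 p'\, L^2})=\tilde O(\sqrt{n})$. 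This conditioning is not a cosmetic detail — it is essential for the bound to work, and it also supplies the "good" event (Lemma~\ref{lem:good_xtg0}) under which the wedge sets $\wed[E(X)](v,w)$ have the required $\tilde\Theta(n)$ size.

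Second, your claimed form of the likelihood ratio, $|A(\tilde x)|/\E_{X\sim P}|A(X)|$, pattern-matches the forward proof where the wedge sets $\bwed[G]$ were deterministic sets of fixed size $cn/3$ and could be pulled out as constants. In the reverse setting the analogue of $|A|$ is the sum $\sum_{t}\tfrac{1}{|V_v(x-t)|}\tfrac{1}{|\wed[E(x-t)](v,w)|}\tfrac{p}{p'}$, where the $x$-dependent weights cannot be factored out. Reducing this to concentration of a pure count requires the intermediate steps the paper carries out in Lemmas~\ref{lem:var-Vv-Wvw} and~\ref{lem:e-Vv} — showing the wedge-set and $V_v$ sizes each concentrate around $\tilde\Theta(n)$ means so that each weight is $\tilde\Theta(1/n^2)$ up to $1+\tilde O(1/\sqrt n)$ relative error — and then separately showing the number of terms $|S(X)|$ is $O(1)$-Lipschitz and concentrates. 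Your sketch collapses these several steps into one and so would not survive scrutiny as written, though the overall roadmap you outline is the right one.
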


\subsubsection{Step 2: Representation of $\aux$}

To prove the lemma, we need to know the distribution of $\aux|X$. Because $\aux$ is defined through $\enew$, we first consider its distribution. By definition, upon conditioning on $X_{\nine}$, $X''_{e}$ has distribution
 \[\mu^*_{G+e}(X_{e}|X_{\nine})\propto \prod_{t\in T_e}\B(\frac{p'p^{-|E(t)- E(X_{\nine})|}}{1-p'}\B)^{X_t}\,.\]
 So for $t=(w,u,v)\in T_e$, 
\[X''_t|X_{\nine}\sim \begin{cases}
\bern(p_0) \text{ if none of $(w,u)$ and $(w,v)$ are in $E(X)$}\\
\bern(p_1) \text{ if one of $(w,u)$ and $(w,v)$ are in $E(X)$}\\
\bern(p_2) \text{ if both $(w,u)$ and $(w,v)$ are in $E(X)$}\,.
\end{cases}\]
Here $p_i= \frac{p'p^{-3+i}}{1-p'+p'p^{-3+i}}$ is $\Theta(p')$, $\forall i=0,1,2$. 
Define
\begin{align*}
    V_v(X) &\defeq \{w:(w,u)\in E(X), (w,v)\in G\backslash E(X)\}\\
V_u(X) &\defeq \{w:(w,v)\in E(X), (w,u)\in G\backslash E(X)\}\\
V_{uv}(X) &\defeq \{w:(w,u), (w,v)\in G\backslash E(X)\}\,.
\end{align*}
So $\enew$ has the following distribution conditioned on $X$: 
\begin{align*}
    &\text{For $w\in V_v(X)$, }Y_{(v,w)}\sim \bern(p_1);\\
    &\text{For $w\in V_u(X)$, }Y_{(v,w)}\sim \bern(p_1);\\
    &\text{For $w\in V_{uv}(X)$, }Y_{(v,w)}=Y_{(u,w)}\sim \bern(p_0)\,.
\end{align*}
Recalling the definition of $\aux$, its conditional distribution on $X$ is therefore generated by
\begin{align*}
    \text{For $w\in V_v(X)$, sample }& w'\sim \unif(\wed[E(X)](v,w)),\  \aux_{(v,w,w')}\sim \bern(p_1);\\
    \text{For $w\in V_u(X)$, sample }& w'\sim \unif(\wed[E(X)](u,w)),\  \aux_{(u,w,w')}\sim \bern(p_1);\\
    \text{For $w\in V_{uv}(X)$, sample }&w'\sim \unif(\wed[E(X)](v,w)),\  \aux_{(v,w,w')}\sim \bern(p_0), \\
    \text{sample }& w''\sim \unif(\wed[E(X)](u,w)),\ \aux_{(u,w,w'')}\sim \bern(p_0) \,.
\end{align*}

\subsubsection{Step 3: Adding one triangle at a time}
\begin{figure}
    \centering
    \tikzset{every picture/.style={line width=0.75pt}} 

\begin{tikzpicture}[x=0.75pt,y=0.75pt,yscale=-1,xscale=1]

\draw [color={rgb, 255:red, 126; green, 211; blue, 33 }  ,draw opacity=1 ]   (62,210) -- (115,112) ;
\draw [color={rgb, 255:red, 126; green, 211; blue, 33 }  ,draw opacity=1 ]   (197,134) -- (115,112) ;
\draw [color={rgb, 255:red, 126; green, 211; blue, 33 }  ,draw opacity=1 ]   (155,213) -- (197,134) ;
\draw [color={rgb, 255:red, 126; green, 211; blue, 33 }  ,draw opacity=1 ]   (241,210) -- (294,112) ;
\draw [color={rgb, 255:red, 126; green, 211; blue, 33 }  ,draw opacity=1 ]   (376,134) -- (294,112) ;
\draw [color={rgb, 255:red, 126; green, 211; blue, 33 }  ,draw opacity=1 ]   (334,213) -- (376,134) ;
\draw [color={rgb, 255:red, 208; green, 2; blue, 27 }  ,draw opacity=1 ]   (296,119) -- (364,138) ;
\draw [color={rgb, 255:red, 208; green, 2; blue, 27 }  ,draw opacity=1 ]   (330,207) -- (364,138) ;
\draw [color={rgb, 255:red, 208; green, 2; blue, 27 }  ,draw opacity=1 ]   (330,207) -- (296,119) ;
\draw  [color={rgb, 255:red, 74; green, 144; blue, 226 }  ,draw opacity=1 ] (211,143) -- (228.4,143) -- (228.4,133) -- (240,153) -- (228.4,173) -- (228.4,163) -- (211,163) -- cycle ;
\draw [color={rgb, 255:red, 208; green, 2; blue, 27 }  ,draw opacity=1 ]   (247,210) -- (296,119) ;

\draw (93,86) node [anchor=north west][inner sep=0.75pt]  [color={rgb, 255:red, 126; green, 211; blue, 33 }  ,opacity=1 ] [align=left] {$\displaystyle E( X)$};
\draw (285,87) node [anchor=north west][inner sep=0.75pt]  [color={rgb, 255:red, 208; green, 2; blue, 27 }  ,opacity=1 ] [align=left] {$\displaystyle E( K_{v}( X))$};
\draw (64,213) node [anchor=north west][inner sep=0.75pt]  [font=\large] [align=left] {$\displaystyle u$};
\draw (243,213) node [anchor=north west][inner sep=0.75pt]  [font=\large] [align=left] {$\displaystyle u$};
\draw (157,216) node [anchor=north west][inner sep=0.75pt]  [font=\large] [align=left] {$\displaystyle v$};
\draw (336,216) node [anchor=north west][inner sep=0.75pt]  [font=\large] [align=left] {$\displaystyle v$};
\draw (117,120) node [anchor=north west][inner sep=0.75pt]  [font=\large] [align=left] {$\displaystyle w$};
\draw (195,117) node [anchor=north west][inner sep=0.75pt]  [font=\large] [align=left] {$\displaystyle w'$};
\draw (307,130) node [anchor=north west][inner sep=0.75pt]  [font=\large] [align=left] {$\displaystyle w$};
\draw (376,116) node [anchor=north west][inner sep=0.75pt]  [font=\large] [align=left] {$\displaystyle w'$};

\end{tikzpicture}
    \caption{Illustration of $K_v$}
    \label{fig:kv}
\end{figure}
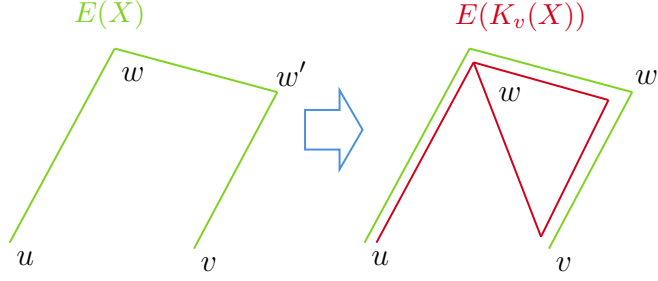

We next aim to describe $\aux$ in terms of single triangle additions. To this end we define a few kernels from $\{0,1\}^{T_G}$ to the same space. First,
\[K_v(X):\text{Pick $w\sim \unif(V_v(X))$, $w'\sim \unif(\wed[E(X)](v,w))$, set $X_{(v,w,w')}=1$}\,,\]
as depicted in Figure~\ref{fig:kv}.
Similarly, 
\[K_u(X):\text{Pick $w\sim \unif(V_u(X))$, $w'\sim \unif(\wed[E(X)](u,w))$, set $X_{(u,w,w')}=1$}\,,\]
and
\begin{align*}
    K_{uv}(X)&:\text{Pick $w\sim \unif(V_{uv}(X))$, $w'\sim \unif(\wed[E(X)](v,w))$, set $X_{(v,w,w')}=1$}\\
    &\;\;\; \text{Pick $w'\sim \unif(\wed[E(X)](u,w))$, set $X_{(u,w,w')}=1$}\,.
\end{align*}
Then 
\[X\lor \aux \disteq (K_{uv})^{m_3}(K_u)^{m_2}(K_v)^{m_1}(X)\,,\]
where $m_1\sim \bino(|V_v(X)|,p_1)$, $m_2\sim \bino(|V_u(X)|,p_1)$ and $m_3\sim \bino(|V_{uv}(X)|,p_0)$. We will show that $K_u(X)$, $K_v(X)$ and $K_{uv}(X)$ are all $\tilde{O}(1/\sqrt{n})$-close to $X$ in total variation distance in Lemmas~\ref{lem:Kvx-vs-x} and \ref{lem:Kuvx-vs-x} just below.

\begin{lemma}\label{lem:Kvx-vs-x} We have the bounds
$\TV(K_u(X), X)=O(\log n/(n^{4}p'^{7/2}))$ and $\TV(K_v(X), X)=O(\log n/(n^{4}p'^{7/2}))$.
\end{lemma}

\begin{lemma}\label{lem:Kuvx-vs-x} We have that
$\TV(K_{uv}(X), X)=O(\log^3 n/(n^6p'^{11/2}))$.
\end{lemma}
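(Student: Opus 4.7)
The plan is to follow the same likelihood-ratio concentration strategy as in Lemma~\ref{lem:forward-influence-on-other-edges} and Lemma~\ref{lem:Kvx-vs-x}, adapted to the fact that $K_{uv}$ adds two correlated triangles sharing a common vertex $w$. Let $P=\mu_G$ and $Q=\mathrm{law}(K_{uv}(X))$. By Lemma~\ref{lem:concentration-TV}, it suffices to show that the Radon-Nikodym derivative $dQ/dP$ equals $1+\tilde O(1/\sqrt n)$ with high probability under $P$.

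To compute $dQ/dP$, I would enumerate preimages: for an output $x'$, define
\[
A(x')=\{(w,w',w''): w\in V_{uv}(x),\, w'\in \wed[E(x)](v,w),\, w''\in \wed[E(x)](u,w),\, x'_{(v,w,w')}=x'_{(u,w,w'')}=1\},
\]
where $x$ is obtained from $x'$ by removing the two triangles $(v,w,w')$ and $(u,w,w'')$. Using $\mu_G(x)\propto (p'/(1-p'))^{|x|}p^{-e(x)}$ and noting that the two added triangles introduce exactly the two new edges $(v,w)$ and $(u,w)$, one obtains
\[
\frac{dQ}{dP}(x') = \frac{p^2(1-p')^2}{p'^2}\sum_{(w,w',w'')\in A(x')}\frac{1}{|V_{uv}(x)|\cdot|\wed[E(x)](v,w)|\cdot|\wed[E(x)](u,w)|}.
\]
On the high-probability event that $|X|=\tilde O(1)$ (from Lemma~\ref{lem:mug_marginally_small} plus Markov), the three denominators differ from their graph-level values by only $\tilde O(1)$, and since $G\in\cG_1$ is $p^2/3$-uniformly 2-star dense, they are all $\Theta(n)$. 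Hence $dQ/dP(x')=(1+o(1))\cdot|A(x')|/\E|A(X)|$, and an expected value computation gives $\E|A(X)|=\Theta(n^3 p'^2)$.

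It remains to prove that $|A(X)|$ concentrates around $\E|A(X)|$ at the right relative scale. Writing $|A(X)|=\sum_{w}\mathbf{1}[w\in V_{uv}(X)]\cdot |X_{\Twed(v,w)}|\cdot|X_{\Twed(u,w)}|$, each factor has mean $\Theta(np')$ and is $\tilde O(1)$ with high probability by the $O(p')$-marginal-smallness of $\mu_G$ (Lemma~\ref{lem:mug_marginally_small}) plus a Chernoff-type union bound over $w$. On this typical event, the function $X\mapsto |A(X)|$ is $\tilde O(1)$-Lipschitz per triangle flip, so Corollary~\ref{cor:concentration_muG} applied to $\mu_G$ gives sharp concentration of $|A(X)|$. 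Combined with Lemma~\ref{lem:concentration-TV}, this yields $\TV(P,Q)=\tilde O(1/\sqrt n)$.

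The main obstacle is the correlation between the two added triangles, which share the vertex $w$ drawn from $V_{uv}(X)$. Unlike the forward map of Lemma~\ref{lem:forward-influence-on-other-edges}, where independent triangle additions reduced the analysis to classical McDiarmid on a product measure, here $|A(X)|$ is a product-of-counts summed over $w$, and its Lipschitz analysis must simultaneously track (i) the direct effect of flipping a triangle on $X_{\Twed(v,w)}$ or $X_{\Twed(u,w)}$, and (ii) the indirect effect on membership in $V_{uv}(X)$ and on the wedge sets themselves. Restricting to the typical event where all $H_w$ are bounded by $\tilde O(1)$ controls both effects uniformly, which is what makes the concentration argument go through.
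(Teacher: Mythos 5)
Your high-level strategy — concentration of the likelihood ratio via Lemma~\ref{lem:concentration-TV}, with the ratio expressed as a sum over preimages $(w,w_1,w_2)$ — matches the paper's proof. However there are two concrete errors in the middle of your argument and one missing step that is load-bearing, so the proposal as written does not close.

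First, the claim that $|X|=\tilde O(1)$ with high probability is false. Under $\mu_G$ each triangle is present with probability $\Theta(p')$ and there are $\Theta(n^3)$ triangles in a dense $G$, so $\E|X|=\Theta(n^3p')=\tilde\Theta(n^2)$ for $p'=\tilde\Theta(1/n)$. Relatedly, the denominators $|V_{uv}(X)|$ and $|\wed[E(X)](v,w)|$, $|\wed[E(X)](u,w)|$ do not differ from their ``graph-level'' values by $\tilde O(1)$: each edge lies in $E(X)$ with probability $\tilde\Theta(1)$, so $|V_{uv}(X)|$ already deviates from $|\wed[G](u,v)|$ by $\Theta(n/\log n)$, and $|\wed[E(X)](v,w)|$ is itself $\tilde\Theta(n)$ rather than $\Theta(n)$ with $\tilde O(1)$ correction. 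The paper instead proves concentration of these three quantities around their means at scale $\tilde O(\sqrt{n})$, which is the right order (Lemmas~\ref{lem:var-Vv-Wvw-prime} and~\ref{lem:e-Vv-prime}).

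Second, and more fundamentally, you apply Corollary~\ref{cor:concentration_muG} directly to $\mu_G$, but this does not give a useful bound. For an $O(1)$-Lipschitz function on all $\binom{n}{3}$ triangle variables, Corollary~\ref{cor:concentration_muG} yields fluctuations $\tilde O(\sqrt{n^3p'})=\tilde O(n)$, which is the same order as $\E|A(X)|=\tilde\Theta(n)$ and as the denominators themselves, so no relative concentration follows. The paper's proof first conditions on $X_{T_G^0}$ (the triangles containing neither $u$ nor $v$), after establishing via Lemma~\ref{lem:good-xtg0-prime} that a good $X_{T_G^0}$ occurs with high probability. Conditionally, the remaining randomness lives on $|T_G^u\cup T_G^v|=O(n^2)$ coordinates, and Corollary~\ref{cor:concentration_muG} then gives fluctuations $\tilde O(\sqrt{n^2p'})=\tilde O(\sqrt n)$, which is $\tilde O(1/\sqrt n)$ relative to the $\tilde\Theta(n)$ means. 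That dimensional reduction is the step that makes the whole argument work, and it is absent from your proposal. Finally, note that the paper also needs Corollary~\ref{cor:small_marginal_concentration_extension} rather than Corollary~\ref{cor:concentration_muG} for $|S(X)|$ itself, since the Lipschitz constant for $|S(X)|$ is only $\tilde O(1)$ on a high-probability set (controlled by $\sup_w|X_{T(u,w)}||X_{T(v,w)}|$), not everywhere — you gesture at this but do not identify the correct tool.
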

We prove Lemma~\ref{lem:Kvx-vs-x} in the next subsection and defer the proof of Lemma~\ref{lem:Kuvx-vs-x} to Appendix~\ref{sec:postponed-proofs}, as the proofs are similar. With these lemmas, we can prove Lemma~\ref{lem:second-term}.

\begin{proof}[Proof of Lemma~\ref{lem:second-term}]
From Lemma~\ref{lem:kernel-tv}, Lemmas~\ref{lem:Kvx-vs-x} and \ref{lem:Kuvx-vs-x} imply that
\[
\TV((K_{uv})^{m_3}(K_u)^{m_2}(K_v)^{m_1}(X), X) = O\B(\frac{(m_1+m_2)\log n}{n^4p'^{7/2}}\B) + O\B(\frac{m_3\log^3  n}{n^6p'^{11/2}}\B)\,.
\]

Note that $|V_v(X)|, |V_u(X)|, |V_{uv}(X)|\le n$ and $p_i=\Theta(p')$, so we have $\E[m_i] =O(np')$. Hence from convexity of total variation,
\[
\TV(X,\tilde{X})=O(np')\cdot O\B(\frac{\log^3  n}{n^6p'^{11/2}}\B) = O\B(\frac{\log^3  n}{n^5p'^{9/2}}\B)\,.
\]
This completes the proof of Lemma~\ref{lem:edge_on_other_edge}. 
\end{proof}
It remains to prove Lemmas~\ref{lem:Kvx-vs-x} and \ref{lem:Kuvx-vs-x}.

\subsection{Proof of Lemma~\ref{lem:Kvx-vs-x} via Likelihood Concentration}

\label{sec:likeconc}

The definitions of $K_u$ and $K_v$ are symmetric with respect to swapping $u$ and $v$, so it suffices to prove the claim for one of them, e.g., $K_v$.

The intuition behind Lemma~\ref{lem:Kvx-vs-x} is that $K_v$ adds a triangle to $X$ on $v,w,w'$ in order to add edge $(v,w)$ without introducing other new edges (see Figure~\ref{fig:kv}). Here $(u,w)$ is already in $E(X)$ and $(v,w)\notin E(X)$ (but is in $G$), and moreover, each of the edges $(v,w')$ and $(w,w')$ are in $E(X)$. Now, if (i) there are $\tilde \Omega(n)$ such $(v,w)$ with triangle $(v,w,w')$ already present in $X$, and (ii) there are $\tilde \Omega(n)$ possible positions to add an additional one, then we expect adding the new triangle to induce a total variation $\tilde{O}(1/\sqrt{n})$\footnote{This is by the heuristic of computing the shift in mean and comparing to the standard deviation, which for Gaussians and other well-behaved distributions is well-known to give the correct total variation.}. Item (i) is shown later in Lemma~\ref{lem:concentration-number-of-terms-Kv} and
Item (ii) is shown shortly in Lemma~\ref{lem:good_xtg0}. The conclusion, the $\tilde{O}(1/\sqrt{n})$ total variation bound, is stated as Lemma~\ref{lem:kvs-vs-x-conditioning}.

\subsubsection{Conditioning on Non-Incident Triangles}

Define a partition of $T_G$ as follows: 
\[T_G^0 \defeq \{t\in T_G:t\text{ does not contain $u$ or $v$}\}\,,\]
\[T_G^v \defeq \{t\in T_G:t\text{ contains  $v$}\}\,,\quadand T_G^u \defeq \{t\in T_G:t\text{ contains  $u$}\}\,.\]

Before bounding the total variation difference, we will condition on triangles in $T_G^0$. This will be helpful as the concentration results we use (Theorem~\ref{thm:small_marginal_concentration}) depend on the dimension of the distribution. By conditioning on $X_{T_G^0}$, the dimension of the distribution changes from $|T_G|$, which typically has size $\Theta(n^3)$, to $|T_G^u \cup T_G^v| $, which has size at most $O(n^2)$.

\begin{restatable}{lemma}{goodxtg}\label{lem:good_xtg0}
Let $G$ be $c$-uniformly 2-star dense. With high probability over the randomness of $X_{T_G^0}$, for any $w\notin\{u,v\}$, there are $\Theta(n^2p') $ different $w'$ such that $(w',v)\in G$ and $(w',w)\in E(X_{T_G^0})$.
\end{restatable}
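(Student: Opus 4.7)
Fix $w\in[n]\setminus\{u,v\}$ and define
\[
N_w \;=\; \bigl|\{w'\in[n]\setminus\{u,v,w\}:(w',v)\in G\text{ and }(w',w)\in E(X_{T_G^0})\}\bigr|.
\]
The plan is to show that $N_w=\tilde\Omega(n)$ with probability $1-n^{-C}$ for a sufficiently large constant $C$, and then to conclude by a union bound over the at most $n$ choices of $w$. The crucial observation is that an edge of the form $(w',w)$ can appear in $E(X_{T_G^0})$ only because some triangle in $T_G^0$ of the form $(w,w',c)$ is selected, so $N_w$ depends on $X$ only through its restriction $X_{T_G^{0,w}}$ to the triangles in $T_G^0$ that contain $w$; this coordinate set has size at most $\binom{n-3}{2}=O(n^2)$, rather than the ambient $\binom{n}{3}=\Theta(n^3)$.

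First I would lower bound the conditional expectation $\E[N_w\mid X_{\binom{[n]}{3}\setminus T_G^{0,w}}]$ pointwise. Set $M_w=\{w'\in[n]\setminus\{u,v,w\}:(w',v),(w',w)\in G\}$; the $c$-uniform $2$-star density of $G$ gives $|M_w|=\Theta(n)$. For each $w'\in M_w$ the $2$-star density also yields $|T^{w',w}_0|=\Theta(n)$, where
\[
T^{w',w}_0 \;=\; \{(w,w',c):c\in[n]\setminus\{u,v,w,w'\},\ (w,c),(w',c)\in G\}\subseteq T_G^{0,w},
\]
and the event $(w',w)\in E(X_{T_G^0})$ is implied by the event that at least one triangle in $T^{w',w}_0$ is selected. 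By the $\Omega(p')$-marginal-largeness of $\mu_G$ (Lemma~\ref{lem:mug_marginally_small}) applied one coordinate at a time in the style of Corollary~\ref{cor:muge-marginal}, this latter probability is at least $1-(1-\Omega(p'))^{|T^{w',w}_0|}=\Omega(np')=\Omega(1/\log n)$ under any conditioning of the coordinates outside $T^{w',w}_0$. Summing over $w'\in M_w$ gives $\E[N_w\mid X_{\binom{[n]}{3}\setminus T_G^{0,w}}]\ge\tilde\Omega(n)$.

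Next I would invoke Corollary~\ref{cor:concentration_muG} with conditioning set $T=\binom{[n]}{3}\setminus T_G^{0,w}$, so that $m=|T_G^{0,w}|=O(n^2)$ free coordinates remain. Flipping a single triangle $(w,a,b)\in T_G^{0,w}$ can only alter the three edges $(w,a),(w,b),(a,b)$ in $E(X_{T_G^0})$, of which at most two are of the form $(w',w)$, so $N_w$ is $2$-Lipschitz as a function of $X_{T_G^{0,w}}$. Corollary~\ref{cor:concentration_muG} therefore gives
\[
\bigl|N_w-\E[N_w\mid X_{\binom{[n]}{3}\setminus T_G^{0,w}}]\bigr|=\tilde O(\sqrt{mp'})=\tilde O(\sqrt{n})
\]
with probability $1-n^{-C}$, for any desired large $C$. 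Combining with the expectation lower bound yields $N_w\ge\tilde\Omega(n)-\tilde O(\sqrt n)=\tilde\Omega(n)$, and a union bound over $w$ completes the argument.

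The main obstacle I anticipate is calibrating the conditioning so that the concentration deviation truly beats the conditional mean. Naively applying Corollary~\ref{cor:concentration_muG} to the unconditioned $\mu_G$ would use $m=\binom{n}{3}=\Theta(n^3)$ free coordinates and give a $\tilde O(n)$ deviation that swamps the $\tilde\Omega(n)$ mean. The passage from all of $T_G$ down to the $O(n^2)$-dimensional slice $T_G^{0,w}$, justified by the fact that $N_w$ depends only on triangles containing $w$, is the mechanism that saves a factor of $\sqrt{n}$ and closes the gap.
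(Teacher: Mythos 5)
Your proof is correct, but it takes a different route from the paper. The paper's proof is considerably more elementary: it uses the $\Omega(p')$-marginal-largeness of $\mu_G$ to argue that the count $N_w$ stochastically dominates $\bino(\Omega(n),\Omega(np'))$, and then simply applies a Chernoff bound. You instead bound the conditional mean and invoke the general concentration machinery of Corollary~\ref{cor:concentration_muG}. Your key observation — that $N_w$ depends only on the $O(n^2)$ triangles in $T_G^{0,w}$ containing $w$, so the conditioning set can be taken to leave only $m=O(n^2)$ free coordinates and the deviation $\tilde O(\sqrt{mp'L^2})=\tilde O(\sqrt n)$ beats the $\tilde\Omega(n)$ mean — is precisely the calibration that makes this route close, and you are right that a naive application with $m=\Theta(n^3)$ would fail. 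One thing your route buys: the expectation lower bound requires only linearity, whereas the paper's domination claim implicitly needs one to first disjointize the overlapping triangle families (triangles of the form $(w,w'_i,w'_j)$ contain both edges $(w'_i,w)$ and $(w'_j,w)$) before processing the $w'$ one at a time; your approach sidesteps that bookkeeping at the cost of invoking the heavier concentration corollary.
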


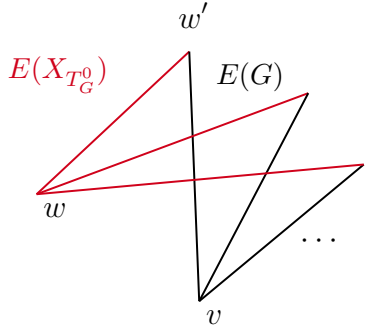
\begin{figure}
    \centering
    \tikzset{every picture/.style={line width=0.75pt}} 

\begin{tikzpicture}[x=0.75pt,y=0.75pt,yscale=-1,xscale=1]

\draw [color={rgb, 255:red, 208; green, 2; blue, 27 }  ,draw opacity=1 ]   (99,158) -- (176,86) ;
\draw    (181,212) -- (176,86) ;
\draw [color={rgb, 255:red, 208; green, 2; blue, 27 }  ,draw opacity=1 ]   (99,158) -- (236,107) ;
\draw    (181,212) -- (236,107) ;
\draw    (181,212) -- (264,143) ;
\draw [color={rgb, 255:red, 208; green, 2; blue, 27 }  ,draw opacity=1 ]   (99,158) -- (264,143) ;

\draw (101,161) node [anchor=north west][inner sep=0.75pt]  [font=\large] [align=left] {$\displaystyle w$};
\draw (183,215) node [anchor=north west][inner sep=0.75pt]  [font=\large] [align=left] {$\displaystyle v$};
\draw (169,60) node [anchor=north west][inner sep=0.75pt]  [font=\large] [align=left] {$\displaystyle w'$};
\draw (83,87) node [anchor=north west][inner sep=0.75pt]  [color={rgb, 255:red, 208; green, 2; blue, 27 }  ,opacity=1 ] [align=left] {$\displaystyle E( X_{T_{G}^{0}})$};
\draw (188,90) node [anchor=north west][inner sep=0.75pt]   [align=left] {$\displaystyle E( G)$};
\draw (230.5,176.5) node [anchor=north west][inner sep=0.75pt]  [font=\large] [align=left] {$\displaystyle \cdots $};

\end{tikzpicture}
    \caption{Lemma~\ref{lem:good_xtg0} states that with high probability, $X_{T_G^0}$ satisfies that for any $w\notin\{u,v\}$, there are $\tilde{\Omega}(n)$ different $w'$ such that $(w',v)\in G$ and $(w',w)\in E(X_{T_G^0})$.}
    \label{fig:xtg0}
\end{figure}

\begin{proof}
Fix $w\notin\{u,v\}$.
By definition of $c$-uniformly 2-star density, there are $\Theta(n)$ different $w'$ that both $(w',w),(w',v)\in G$. Each edge $(w',w)$ is contained in $\Omega(n)$ triangles in $T_G^0$. Since $\mug{G}$ is $\Omega(p')$-marginally large, the number of $w'$ that $(w',w)\in E(X_{T_G^0})$ stochastically dominates $\bino(\Omega(n),\Omega(np'))$. So it is $\Omega (n^2p')$ with high probability by the Chernoff bound. $\mug{G}$ is also $\Omega(p')$-marginally small, so by similar argument, the number of $w'$ that $(w',w)\in E(X_{T_G^0})$ is also $O(n^2p')$ with high probability.
\end{proof}

We will prove the following lemma, which implies Lemma~\ref{lem:Kvx-vs-x}. 

\begin{lemma}\label{lem:kvs-vs-x-conditioning}
For any $X_{T_G^0}$ satisfying the conclusion of Lemma~\ref{lem:good_xtg0}, 
\[\TV\b(K_v(X)|X_{T_G^0}, X|X_{T_G^0}\b)=O\B(\frac{\log n}{n^{4}p'^{7/2}}\B)=\tilde{O}\B(\frac{1}{\sqrt{n}}\B)\,.\] 
\end{lemma}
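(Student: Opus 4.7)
The plan is to invoke Lemma~\ref{lem:concentration-TV} on $P := \cL(X \mid X_{T_G^0})$ and $Q := \cL(K_v(X) \mid X_{T_G^0})$ by showing that the Radon--Nikodym derivative $dQ/dP$ lies in $1 \pm \tilde O(1/\sqrt n)$ with probability $1-o(1)$.

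First I would derive an explicit formula for $dQ/dP$. Because $w \in V_v(X)$ forces $(v,w) \notin E(X)$, the added triangle $t = (v,w,w')$ is never already present in $X$, so for each $x$ with $Q(x) > 0$ the $K_v$-preimage of $x$ is in bijection with
\[
A(x) \,\defeq\, \{\,t = (v,w,w') \in x : w \in V_v(x \setminus t),\ w' \in \wed[E(x \setminus t)](v,w)\,\}.
\]
For $t \in A(x)$, removing $t$ from $x$ decreases $|x|$ by $1$ and removes exactly the edge $(v,w)$, since $(v,w')$ and $(w,w')$ are forced into $E(x\setminus t)$ by membership in $A$; the Gibbs form of $P$ then gives $P(x\setminus t)/P(x) = (1-p')p/p'$, so
\[
\frac{dQ}{dP}(x) \,=\, \frac{(1-p')p}{p'}\sum_{t \in A(x)} \frac{1}{|V_v(x \setminus t)|\cdot|\wed[E(x\setminus t)](v,w)|}.
\]

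Second, I would argue that the denominators in this sum are concentrated at common typical values. Each of $|V_v(X\setminus t)|$ and $|\wed[E(X\setminus t)](v,w)|$ is $O(1)$-Lipschitz in the triangle configuration $X_{T_G^u \cup T_G^v}$, and the hypothesis on $X_{T_G^0}$ (Lemma~\ref{lem:good_xtg0}) combined with Corollary~\ref{cor:concentration_muG} pins each at a value $\bar V, \bar W = \tilde\Theta(n)$ up to additive error $\tilde O(\sqrt n)$, uniformly over the relevant $t$ and $w$. Intersecting with this good event, $dQ/dP(x) = (1+o(1))\cdot\tfrac{(1-p')p}{p'\bar V\bar W}\,|A(x)|$; since $\E_P[dQ/dP]=1$, it suffices to show that $|A(X)|$ concentrates around its mean $\E|A(X)| = \tilde\Theta(n^2 p') = \tilde\Theta(n/\log n)$ with additive deviation $\tilde O(\sqrt n)$, whereupon the relative error in $dQ/dP$ is $\tilde O(1/\sqrt n)$ as required.

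The main obstacle is that the Hamming--Lipschitz constant of $|A|$ is globally $\Theta(n)$: flipping a single triangle $t_0$ can toggle an edge in $E(X\setminus t)$ and thereby change the $A$-membership of $\Theta(n)$ triangles. I would circumvent this by restricting to a good event $\mathcal Y$ on which every edge of $G$ is covered by at most $O(\log n)$ triangles of $X$; this holds with probability $1-o(1)$ by the $O(p')$-marginal smallness of $\mu_G$ (Lemma~\ref{lem:mug_marginally_small}) together with a Chernoff union bound. On $\mathcal Y$ a flip at $t_0$ toggles at most three edges in $E(X\setminus t)$, and each toggled edge changes the $A$-membership of at most $O(\log n)$ triangles, yielding an effective Lipschitz constant $\tilde O(1)$. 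Plugging $L = \tilde O(1)$, $m = O(n^2)$, and $p' = \tilde\Theta(1/n)$ into Corollary~\ref{cor:concentration_muG} (used via the truncation technique of Lemma~\ref{lem:mcdiarmid-extension} to pass from the global Lipschitz bound to the one on $\mathcal Y$) gives $|\,|A(X)| - \E|A(X)|\,| = \tilde O(\sqrt n)$, and Lemma~\ref{lem:concentration-TV} then closes the argument.
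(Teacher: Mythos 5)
Your plan follows the paper's proof of this lemma closely: write $dQ/dP$ as a sum over the preimage set $A(x)$, concentrate the denominators via Corollary~\ref{cor:concentration_muG}, concentrate $|A(X)|$, and invoke Lemma~\ref{lem:concentration-TV}. The one place you diverge is the Lipschitz bound for $|A|$, and there you have diagnosed a nonexistent obstacle. You assert the global Lipschitz constant of $|A|$ is $\Theta(n)$ and then introduce a truncation event $\mathcal Y$ to reduce it to $\tilde O(1)$; in fact $|A(x)|$ is globally $O(1)$-Lipschitz by a structural property already baked into $A$. Since $(w;w')\in A(x)$ requires $(w,v)\notin E(x-t)$ with $t=(v,w,w')$, for each $w$ at most \emph{one} $w'$ can appear in $A(x)$: a second $w'$ would itself give a triangle of $x$ through $(w,v)$, forcing $(w,v)\in E(x-t)$. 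A symmetric argument bounds, for each $w'$, the number of $w$'s whose $(v,w')$-condition can flip. Toggling one triangle changes at most three edges, and each toggled edge thus affects the membership of $O(1)$ pairs, giving $\big||A(x')|-|A(x)|\big|=O(1)$ unconditionally (the paper's proof of Lemma~\ref{lem:concentration-number-of-terms-Kv} carries this out, getting explicit bounds $\le 3$ and $\le 9$). Your $\mathcal Y$-truncation route still produces the required $\tilde O(\sqrt n)$ deviation since the extra $\log n$ is absorbed by the $\tilde O$, so the proposal is not wrong -- it simply reproves a weaker Lipschitz bound by a longer route (and the correct reference there is Corollary~\ref{cor:small_marginal_concentration_extension}, not Lemma~\ref{lem:mcdiarmid-extension}, which is stated for product measures). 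One smaller imprecision: $\E|\wed[E(X)](v,w)|$ depends on $w$, so a single constant $\bar W$ cannot be pulled out of the sum over $t=(v,w,w')\in A(x)$; keep the $w$-dependent mean inside the sum, as the paper does when defining $f$ in the proof of Lemma~\ref{lem:concentration-likelihood}, and argue that the resulting $O(1/n)$-Lipschitz function concentrates, then compare term by term.
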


\begin{proof}[Proof of Lemma~\ref{lem:Kvx-vs-x}]
The $X_{T_G^0}$ satisfying the conclusion of Lemma~\ref{lem:good_xtg0} occur with high probability, hence the total variation bound in Lemma~\ref{lem:kvs-vs-x-conditioning} occurs with high probability, and the result follows by applying Lemma~\ref{lem:bad_event}.
\end{proof}

In the remainder of Section~\ref{sec:likeconc} we prove Lemma~\ref{lem:kvs-vs-x-conditioning}.

\subsubsection{Lemma~\ref{lem:kvs-vs-x-conditioning} via Likelihood Ratio Concentration}


To use Lemma~\ref{lem:concentration-TV} to prove Lemma~\ref{lem:kvs-vs-x-conditioning}, we will show that the likelihood ratio between the two distributions concentrates. We start by determining the likelihood ratio. 

\paragraph{Likelihood Ratio}
 Note that the distribution of $X|X_{T_G^0}$ is just $\mu_G(X|X_{T_G^0})$. We can write the density of both distributions.
\begin{equation}\label{eq:density-kvx}
\begin{split}
   &\Pr(K_v(X)=x|X_{T_G^0})\\&\qquad= \sum_{\substack{t=(v,w,w'):t\in x\\ w\in V_v(x-t),w'\in \wed[E(x-t)](v,w)}} \frac{1}{|V_v(x-t)|}\frac{1}{|\wed[E(x-t)](v,w)|}\mu_G(X=x-t|X_{T_G^0})\,, 
\end{split}
\end{equation}
and 
\[\Pr(X=x|X_{T_G^0}) = \mu_G(X=x|X_{T_G^0})\,.\]
Here $x-t$ stands for the vector that is the same as $x$ except $x_t$ is set to 0.
So the likelihood radio is 
\begin{align}\label{eq:derivative}
 &   L(x) := \frac{\Pr(K_v(X)=x|X_{T_G^0})}{\Pr(X=x|X_{T_G^0})} \nonumber
    \\&\qquad\qquad= \sum_{\substack{t=(v,w,w'):t\in x\\ w\in V_v(x-t),w'\in \wed[E(x-t)](v,w)}} \frac{1}{|V_v(x-t)|}\frac{1}{|\wed[E(x-t)](v,w)|} \frac{\mu_G(X=x-t)}{\mu_G(X=x)}\,.
\end{align}

\begin{lemma}[Concentration of Likelihood Ratio]\label{lem:concentration-likelihood}
For any $X_{T_G^0}$ that satisfies the conclusion of Lemma~\ref{lem:good_xtg0}, we have that
\[\left| L(X)- 1\right|=O\B(\frac{\log n}{n^{4}p'^{7/2}}\B)=\tilde{O}(1/\sqrt{n})\,,\]
with high probability.
\end{lemma}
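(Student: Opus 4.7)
}
The plan is to derive a closed-form expression for $L(x)$, replace its denominators by deterministic typical values, and then apply concentration of measure for $\mu_G$ (Corollary~\ref{cor:concentration_muG}). Using the Gibbs form of $\mu_G$, we have $\mu_G(x-t)/\mu_G(x) = \frac{1-p'}{p'}p^{\Delta\e(t,x)}$, and for every $t=(v,w,w')$ contributing to \eqref{eq:derivative}, the constraints $w\in V_v(x-t)$ and $w'\in \wed[E(x-t)](v,w)$ force both $(v,w')$ and $(w,w')$ to be supported by $x-t$, giving $\Delta\e(t,x)=1$. Therefore
\[
 L(x) = \frac{(1-p')p}{p'}\sum_{t=(v,w,w')\in S_v(x)}\frac{1}{|V_v(x-t)|\,|\wed[E(x-t)](v,w)|},
\]
where $S_v(x)$ collects the triangles of $x$ that could have been produced by $K_v$. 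Since $L$ is the Radon--Nikodym derivative of $\law(K_v(X)\mid X_{T_G^0})$ with respect to $\law(X\mid X_{T_G^0})$ and $K_v$ preserves $X_{T_G^0}$, we have $\E[L(X)\mid X_{T_G^0}]=1$ exactly, so only concentration remains.

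Next I would normalize the denominators. The maps $X\mapsto |V_v(X)|$ and $X\mapsto |\wed[E(X)](v,w)|$ are $1$-Lipschitz in the triangle configuration. Applying Corollary~\ref{cor:concentration_muG} with effective dimension $m=|T_G^u\cup T_G^v|=O(n^2)$ after conditioning on $X_{T_G^0}$ yields, simultaneously over $w$,
\[
 |V_v(X)| = \bar V_v\bigl(1+\Ot(n^{-1/2})\bigr),\qquad |\wed[E(X)](v,w)|=\bar W_w\bigl(1+\Ot(n^{-1/2})\bigr),
\]
for deterministic $\bar V_v,\bar W_w=\Tht(n)$; the lower bound $\bar W_w=\Tht(n)$ uses Lemma~\ref{lem:good_xtg0}, and the analogous bound for $\bar V_v$ follows from the same concentration. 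The difference between using $x$ versus $x-t$ in each denominator is $O(1)=o(n)$ and therefore lower order. Substituting gives $L(X)=(1+\Ot(n^{-1/2}))\tilde L(X)$ with
\[
\tilde L(x)=\frac{(1-p')p}{p'}\sum_{t=(v,w,w')\in S_v(x)}\frac{1}{\bar V_v\bar W_w}.
\]
Each summand has magnitude $\Tht(1/n)$. Because $\mu_G$ is $O(p')$-marginally small (Lemma~\ref{lem:mug_marginally_small}), with high probability a single flip of $X_s$ affects the $S_v$-membership of at most $\Ot(1)$ other triangles: the only possible cascades go through a shared edge of $s$ that is uniquely supported at $s$ in $x$, and the number of triangles of $X$ through any given edge is dominated by $\bino(O(n),\Ot(p'))$, hence at most $\Ot(np')=\Ot(1)$ whp. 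Thus $\tilde L$ has an effective Lipschitz constant $L_{\mathrm{Lip}}=\Ot(1/n)$ on a high-probability set.

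Applying Corollary~\ref{cor:concentration_muG} (or its generalization Lemma~\ref{lem:mcdiarmid-extension}) to $\tilde L$ with $L_{\mathrm{Lip}}=\Ot(1/n)$ then gives
\[
|\tilde L(X)-\E \tilde L(X)| = \Ot\bigl(\sqrt{m\,p'\,L_{\mathrm{Lip}}^{\,2}}\bigr) = \Ot\bigl(\sqrt{n^{2}\cdot (n\log n)^{-1}\cdot n^{-2}}\bigr) = \Ot(n^{-1/2})
\]
with high probability. Combined with $\E L(X)=1$ and the multiplicative approximation from Step~2, this pins down $\E\tilde L(X)=1+\Ot(n^{-1/2})$ and hence $|L(X)-1|=\Ot(n^{-1/2})$ whp. \emph{The main obstacle is the Lipschitz bound}: naively, flipping $s=(v,a,b)$ could alter $S_v$-membership of $\Omega(n)$ triangles of the form $(v,a,*)$ or $(v,b,*)$ through the condition $(v,w)\notin E(x-t)$. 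The resolution is that such cascades activate only when $(v,a)$ or $(v,b)$ is uniquely supported at $s$, and even then only cascaded triangles already present in $x$ contribute; of those there are $\Ot(np')=\Ot(1)$ whp by the marginal smallness of $\mu_G$ and a Chernoff bound, recovering the $\Ot(1/n)$ effective Lipschitz constant that the target $\Ot(1/\sqrt{n})$ precisely requires.
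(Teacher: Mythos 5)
Your proposal is correct and follows essentially the same route as the paper: isolate the constant factor $\mu_G(x-t)/\mu_G(x)$, use Corollary~\ref{cor:concentration_muG} to replace the random denominators $|V_v(X-t)|$ and $|\wed[E(X-t)](v,w)|$ by deterministic $\tilde\Theta(n)$ values, show the resulting deterministic-denominator sum is $\tilde O(1/n)$-Lipschitz and concentrate it, then identify the center using $\E[L(X)\mid X_{T_G^0}]=1$. Two places where the paper's proof is tighter and you should match it: first, the claim that a single triangle flip changes $S_v$-membership of only $\tilde O(1)$ elements is in fact deterministic (no ``whp'' cushion needed) because for each $w$ at most one $w'$ can satisfy the condition $(w,v)\notin E(X-t)$ -- this is exactly Lemma~\ref{lem:concentration-number-of-terms-Kv}, which gives an unconditional $O(1)$-Lipschitz bound on $|S(X)|$ rather than the high-probability set argument you construct; and second, passing from ``$L(X)$ concentrates near $\E\tilde L(X)$ whp'' and ``$\E L(X)=1$'' to ``$\E\tilde L(X)=1+\tilde O(1/\sqrt n)$'' is not automatic -- it requires a uniform bound on the likelihood ratio on the exceptional event (the paper notes $L=\tilde O(n^3)$ always and invokes Lemma~\ref{lem:distance-expectation}), a step you assert but do not justify.
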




With Lemma~\ref{lem:concentration-TV}, Lemma~\ref{lem:concentration-likelihood} immediately implies
Lemma~\ref{lem:kvs-vs-x-conditioning}. 
In the remainder of the section, our focus will be on proving Lemma~\ref{lem:concentration-likelihood}.

\subsubsection{Concentration of Components of Likelihood Ratio}
The proof will follow from three concentration statements for factors in the likelihood ratio~\eqref{eq:derivative}:
\begin{enumerate}
    \item[(a)] $\frac{\mu_G(X=x-t)}{\mu_G(X=x)}$ is constant with respect to $x$,
    \item[(b)] $|V_v(X)|$  and $|\wed[E(X)](v,w)|$ concentrate around their mean,
    \item[(c)] the total number of terms in \eqref{eq:derivative} also concentrates.
\end{enumerate}

\paragraph{(a) $\frac{\mu_G(X=x-t)}{\mu_G(X=x)}$ is constant with respect to $x$}
To start, note that if $t$ is chosen by $K_v$ to be added to $X$, it always increases the number of edges by 1, i.e., $e(x)-e(x-t)=1$ if $t$ satisfies the summation in \eqref{eq:derivative}. So
\[\frac{\mu_G(X=x-t)}{\mu_G(X=x)} = \frac{p'^{|x-t|}p^{-e(x-t)}}{p'^{|x|}p^{-e(x0)}} = \frac{p}{p'}\,,\]
which is constant as claimed.

\paragraph{(b) Concentration of $|V_v(X)|$ and $|\wed[E(X)](v,w)|$}
By Corollary~\ref{cor:concentration_muG}, any Lipschitz function of a marginally small random variable concentrates. From the definitions, $|V_v(X)|$ and $|\wed[E(X)](v,w)|$ are $O(1)$-Lipschitz, yielding the following lemma.

\begin{lemma}\label{lem:var-Vv-Wvw}
Conditioned on any $X_{T_G^0}$ satisfying the condition in Lemma~\ref{lem:good_xtg0},
with high probability, \[\b|\,|V_v(X)|-\E|V_v(X)|\,\b|= O(\sqrt{n^2p'}\log n)\] and \[\b|\,|\wed[E(X)](v,w)|-\E|\wed[E(X)](v,w)|\,\b|= O(\sqrt{n^2p'}\log n)\,.\]
\end{lemma}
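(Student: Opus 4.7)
The plan is to apply Corollary~\ref{cor:concentration_muG} directly to each of the two functions. Conditioning on $X_{T_G^0}$ leaves only the coordinates in $T_G^u \cup T_G^v$ as free, and one checks that $|T_G^u \cup T_G^v| \le 2(n-1)(n-2)/2 + O(n) = O(n^2)$, since a triangle containing $u$ or $v$ is determined by its remaining two vertices. Thus the relevant parameter $m$ in the corollary is $O(n^2)$, and with $p' = 1/(n\log n)$ the bound will become $\tilde{O}(\sqrt{n^2 \cdot p' \cdot L^2}) = \tilde{O}(\sqrt{n/\log n} \cdot L) = \tilde{O}(\sqrt{n} \cdot L)$.

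The key verification is that both $f_1(X) := |V_v(X)|$ and $f_2(X) := |\wed[E(X)](v,w)|$ are $O(1)$-Lipschitz with respect to flipping a single triangle coordinate. Flipping one $X_t$ alters $E(X)$ by at most three edges, namely the three edges of $t$ (and, in fact, only those that are not supported by another triangle currently in $X$). For $f_1$, recall that $w \in V_v(X)$ iff $(w,u) \in E(X)$ and $(w,v) \in G \setminus E(X)$; so membership of a given $w$ can change only if one of the three affected edges is incident to $u$ or $v$, and an edge $e$ changes the membership of at most the endpoint of $e$ other than $u$ or $v$. A single triangle thus modifies $f_1$ by at most a universal constant, giving $L_1 = O(1)$. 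For $f_2$, $w' \in \wed[E(X)](v,w)$ iff $(w',v),(w',w) \in E(X)$, so only edges incident to $v$ or $w$ among the at most three affected edges can matter, again giving $L_2 = O(1)$.

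With Lipschitz constants $O(1)$ and $m = O(n^2)$ free coordinates, Corollary~\ref{cor:concentration_muG} applied to the conditional law $\mu_G(\,\cdot\mid X_{T_G^0})$ (which is itself a Gibbs measure on the free coordinates and therefore fits the hypothesis) yields
\[
\bigl|f_i(X)-\E f_i(X)\bigr| = \tilde{O}\bigl(\sqrt{m p' L_i^2}\bigr) = \tilde{O}(\sqrt{n})
\qquad (i=1,2)
\]
with high probability, for each fixed $w$ in the case of $f_2$. A union bound over the $n-2$ choices of $w\not\in\{u,v\}$ (losing only a factor $n$ in the failure probability, which is at most $1/n^{c}$ for a sufficiently large constant $c$) upgrades the $f_2$ statement to hold simultaneously for all $w$, completing the proof.

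The main (mild) subtlety is checking that the conditional distribution $\mu_G(\,\cdot\mid X_{T_G^0})$ on $X_{T_G^u\cup T_G^v}$ is of the form required by Corollary~\ref{cor:concentration_muG}; this is immediate since the triangle distribution $\mu_G$ is already written as a Gibbs measure and the corollary is stated precisely for such conditionings. Beyond that, everything reduces to the two Lipschitz-constant checks and plugging into the concentration bound, so no further obstacle is anticipated.
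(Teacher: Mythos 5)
Your proposal is correct and follows essentially the same route as the paper's own proof: condition on $X_{T_G^0}$, observe that the remaining distribution is a conditional version of $\mu_G$ on $O(n^2)$ triangle coordinates, check that $|V_v(X)|$ and $|\wed[E(X)](v,w)|$ are $O(1)$-Lipschitz in the triangle coordinates, and then invoke Corollary~\ref{cor:concentration_muG} to get a $\tilde O(\sqrt{n^2 p'}) = \tilde O(\sqrt{n})$ fluctuation. You supply the Lipschitz verification in more detail than the paper (which just asserts it), and you also make explicit the union bound over $w$ (absorbed in the paper by the ``high probability'' convention of allowing arbitrary polynomial factors). As a small bonus, your displayed bound $\tilde O(\sqrt{n})$ is the intended one; the paper's proof text contains a typo writing $\tilde O(n)$ even though the lemma statement and its later use require $\tilde O(\sqrt{n})$.
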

\begin{proof}
$\law(X|X_{T_G^0})$ is a conditional distribution of $\mu_G$ over $O(n^2)$ triangles. So we can apply Corollary~\ref{cor:concentration_muG}. With high probability, any $O(1)$-Lipschitz function is within distance  $O(\sqrt{n^2p'}\log n)$ from its mean with high probability.
\end{proof}

To use the concentration result, we also need to show that the means of $|V_v(X)|$ and $|\wed[E(X)](v,w)|$ are large relative to their variances.
\begin{lemma}\label{lem:e-Vv}
For any $X_{T_G^0}$ satisfying the condition in Lemma~\ref{lem:good_xtg0},
\[\E \b[|V_v(X)|\,\b|X_{T_G^0}\b]=\Theta(n^2p')
= \tilde{\Theta}(n)
\,,\]
 and for any $w\not=u,v$
\[\E \b[|\wed[E(X)](v,w)|\b|X_{T_G^0}\b]=\Theta(n^3p'^2)
=\tilde{\Theta}(n)
\,.\]
\end{lemma}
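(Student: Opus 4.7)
The upper bounds are trivial: $|V_v(X)|\le|\wed[G](u,v)|\le n$ and $|\wed[E(X)](v,w)|\le n$, so both conditional expectations are at most $n$. For the lower bounds the workhorse is a conditional form of Corollary~\ref{cor:muge-marginal}: the $\Theta(p')$-marginal property of Lemma~\ref{lem:mug_marginally_small} depends only on the ratios $\mu_G(X_t=1,X_{\sim t})/\mu_G(X_t=0,X_{\sim t})$, which are preserved under conditioning on $X_{T_G^0}$. Combined with $c$-uniform $2$-star density of $G$ (which guarantees $\Omega(n)$ triangles through every edge), this gives $\Pr(Y_e=1\mid X_{T_G^0})=\Theta(np')=\Theta(1/\log n)$ for every edge $e\in G$ whose containing triangles lie in $T_G^u\cup T_G^v$.

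\textbf{Bounding the first quantity.} Let $W_{uv}=\wed[G](u,v)$, so that $|W_{uv}|\ge c(n-2)$. Since $V_v(X)\subset W_{uv}$,
\[\E[|V_v(X)|\mid X_{T_G^0}]=\sum_{w\in W_{uv}}\Pr\b((w,u)\in E(X),\ (w,v)\notin E(X)\mid X_{T_G^0}\b).\]
The single-edge marginal $\Pr((w,u)\in E(X)\mid X_{T_G^0})$ equals $\Theta(np')$ by the conditional form of Corollary~\ref{cor:muge-marginal}. For the joint $\Pr((w,u),(w,v)\in E(X)\mid X_{T_G^0})$ the key point is that $(u,v)\notin G$, so no single triangle contains both edges; the event thus requires distinct triangles $t_1\in T_{(w,u)}$ and $t_2\in T_{(w,v)}$. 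Applying the conditional $O(p')$-marginal smallness twice yields $\Pr(X_{t_1}=X_{t_2}=1\mid X_{T_G^0})=O(p'^2)$, and a union bound over the $O(n^2)$ such pairs gives $O(n^2p'^2)=O(1/\log^2 n)$. Subtracting, each summand is $\Theta(1/\log n)-O(1/\log^2 n)=\Theta(1/\log n)$, so the full sum is $\tilde\Omega(n)$.

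\textbf{Bounding the second quantity.} Here we crucially use the hypothesis that $X_{T_G^0}$ satisfies the conclusion of Lemma~\ref{lem:good_xtg0}: there are $\tilde\Omega(n)$ vertices $w'$ with $(w',v)\in G$ and $(w',w)\in E(X_{T_G^0})\subset E(X)$. For each such witness $w'$, the membership $(w',w)\in E(X)$ is deterministic given $X_{T_G^0}$, so
\[\E[|\wed[E(X)](v,w)|\mid X_{T_G^0}]\ge\sum_{w'\text{ witness}}\Pr\b((w',v)\in E(X)\mid X_{T_G^0}\b).\]
Each summand is $\Theta(np')=\Theta(1/\log n)$ by the same conditional marginal estimate (triangles containing $(w',v)$ all lie in $T_G^v$, hence are still random given $X_{T_G^0}$, and there are $\Omega(n)$ of them). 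The total is therefore $\tilde\Omega(n)\cdot\Theta(1/\log n)=\tilde\Omega(n)$. No step is individually difficult; the only point requiring care is that all marginal and joint estimates must be taken under the conditional distribution $\mu_G(\cdot\mid X_{T_G^0})$, which is painless because the small/large marginal property of Lemma~\ref{lem:mug_marginally_small} is defined via ratios that are invariant under further conditioning.
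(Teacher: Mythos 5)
Your proof is correct, and for the second bound (on $\E[|\wed[E(X)](v,w)|\mid X_{T_G^0}]$) it follows essentially the same reasoning as the paper: use the $\tilde\Omega(n)$ witnesses $w'$ guaranteed by Lemma~\ref{lem:good_xtg0}, and for each one apply the $\Omega(np')$ conditional marginal estimate to $(v,w')$.

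For the first bound (on $\E[|V_v(X)|\mid X_{T_G^0}]$) your route is genuinely different from the paper's, though both land at $\Theta(np')$ per vertex. The paper factors the event: first $\Pr((w,v)\notin E(X))=\Omega(1)$, then invokes Lemma~\ref{lem:margin-under-conditioning} to get $\Pr((w,u)\in E(X)\mid (w,v)\notin E(X))=\Omega(np')$, and multiplies. You instead write $\Pr((w,u)\in E(X),(w,v)\notin E(X))=\Pr((w,u)\in E(X))-\Pr((w,u),(w,v)\in E(X))$, estimate the first term by $\Theta(np')$ from the conditional form of Corollary~\ref{cor:muge-marginal}, and kill the second term by the observation that $(u,v)\notin G$ forces distinct triangles $t_1\ni(w,u)$ and $t_2\ni(w,v)$, giving $O(n^2p'^2)$ by the $O(p')$-marginal smallness applied twice plus a union bound. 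Your subtraction-and-union-bound argument buys a somewhat more elementary derivation: it does not need Lemma~\ref{lem:margin-under-conditioning} at all, only the basic marginal bound of Lemma~\ref{lem:mug_marginally_small} (and the $\Omega(n)$ triangle count from $2$-star density). It also makes the conditioning on $X_{T_G^0}$ fully explicit throughout, which the paper's proof leaves partly implicit; your remark that the small/large marginal property is defined via ratios invariant under further conditioning is exactly the right justification. One small note on hypotheses: the paper invokes Lemma~\ref{lem:margin-under-conditioning} with a conditioning set (the triangles through $(w,v)$, plus implicitly $T_G^0$) whose size exceeds the lemma's stated $cn/4$ bound; the argument still works there because those triangles are disjoint from the ones through $(w,u)$, but your approach sidesteps that technicality entirely.
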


\begin{proof}
Recall that $V_v(X)$ consists of vertices $w$ that $(w,u)\in E(X), (w,v)\notin E(X)$. Fix any $w\in W_{u,v}(G)$. From Lemma~\ref{cor:muge-marginal},
\[\Pr((w,v)\notin E(X)) = \Omega(1-np') = \Theta(1)\,.\]
By Lemma~\ref{lem:margin-under-conditioning}, 
\[\Pr((u,w)\in E(X)|(w,v)\notin E(X)) = \Theta(np')\,.\]
So $\Pr(w\in V_v(X))=\Theta(np').$ We have $\E[|V_v(X)|] = |W_{u,v}(G)|\cdot \Theta(np') = 
\Theta(n^2p')$. 

Fix any $w\not=u,v$. Consider a vertex $w'$ such that $(w,w')\in E(X_{T_G^0}), (v,w')\in G$, since $G$ is $c$-uniformly 2-star dense, $(v,w')$ is included in $\Theta(n)$ triangles, each having probability $\Theta(p')$ of being in $X$. So
\[\Pr((v,w')\in E(X)|X_{T_G^0})=\Theta(np')\,.\]
From the assumption on $X_{T_G^0}$, there are $\Theta(n^2p')$ such $w'$, so 
\[\E \b[|\wed[E(X)](v,w)|\b|X_{T_G^0}\b] = \Theta(n^2p')\cdot \Theta(np') = \Theta(n^3p'^2) \,.\qedhere\]
\end{proof}

Note that $\wed[E(X)](v,w)\ge |\wed[E(X_{T_G^0})](v,w)|=\Theta(n)$, so $\E \b[|\wed[E(X)](v,w)|\b|X_{T_G^0}\b]=\Theta(n)$. By Lemma~\ref{lem:var-Vv-Wvw} and Lemma~\ref{lem:e-Vv}, for any $v$ and $w$, conditioned on any $X_{T_G^0}$,
\[\B|\frac{|V_v(X)|-\E |V_v(X)|}{\E |V_v(X)|}\B|=O\B(\frac{\log n}{np'^{1/2}}\B)\text{ and }\B|\frac{|\wed[E(X)](v,w)|-\E |\wed[E(X)](v,w)|}{\E |\wed[E(X)](v,w)|}\B|=O\B(\frac{\log n}{n^2p'^{3/2}}\B)\]
holds with high probability.

Note $|V_v(X-t)|$ and $|V_v(X)|$ differ by at most one, and the same holds for $|W_{v,w}(X-t)|$ and $|\wed[E(X)](v,w)|$. 
So with high probability, each term in \eqref{eq:derivative} concentrates,
\begin{align}\label{eq:each-term}
\begin{split}
    &\B|\frac{1}{|V_v(X-t)|}\frac{1}{|W_{v,w}(X-t)|} \frac{\mu_G(X-t)}{\mu_G(X)} - \frac{1}{\E |V_v(X)|}\frac{1}{\E |\wed[E(X)](v,w)|} \frac{p}{p'}\B|\\
    &\qquad =  O\B(\frac{\log n}{n^2p'^{3/2}}\B)\cdot \frac{1}{\E |V_v(X)|}\frac{1}{\E |\wed[E(X)](v,w)|} \frac{p}{p'}=O(\frac{\log n}{n^7p'^{11/2}})=\tilde{O}(n^{-3/2})\,.\\
\end{split}
\end{align}

\paragraph{(c) Concentration of the Number of Terms in \eqref{eq:derivative}.}
It remains to show that the number of terms
also concentrates. Let $S$ be the set of ordered vertices
\[S(X)= \{(w,w'):t=(v,w,w'),t\in X, w\in V_v(X-t),w'\in W_{v,w}(X-t)\}\,,
\]
as illustrated in Figure~\ref{fig:sx}
Each triangle $(v,w,w')$ in the summation of \eqref{eq:derivative} corresponds a pair of vertices $(w,w')$ in $S(X)$.

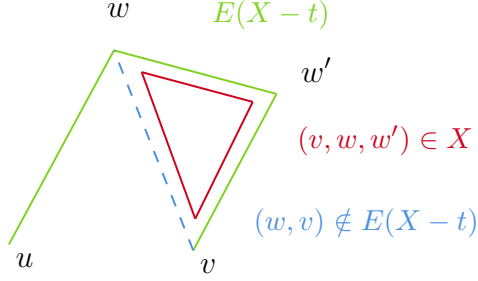
\begin{figure}
    \centering
    \tikzset{every picture/.style={line width=0.75pt}} 

\begin{tikzpicture}[x=0.75pt,y=0.75pt,yscale=-1,xscale=1]

\draw [color={rgb, 255:red, 126; green, 211; blue, 33 }  ,draw opacity=1 ]   (101,200) -- (154,102) ;
\draw [color={rgb, 255:red, 126; green, 211; blue, 33 }  ,draw opacity=1 ]   (236,124) -- (154,102) ;
\draw [color={rgb, 255:red, 126; green, 211; blue, 33 }  ,draw opacity=1 ]   (194,203) -- (236,124) ;
\draw [color={rgb, 255:red, 208; green, 2; blue, 27 }  ,draw opacity=1 ]   (168,113) -- (224,128) ;
\draw [color={rgb, 255:red, 208; green, 2; blue, 27 }  ,draw opacity=1 ]   (195,187) -- (224,128) ;
\draw [color={rgb, 255:red, 208; green, 2; blue, 27 }  ,draw opacity=1 ]   (195,187) -- (168,113) ;
\draw [color={rgb, 255:red, 74; green, 144; blue, 226 }  ,draw opacity=1 ] [dash pattern={on 4.5pt off 4.5pt}]  (194,203) -- (181.76,172.09) -- (154,102) ;

\draw (245,138) node [anchor=north west][inner sep=0.75pt]  [color={rgb, 255:red, 208; green, 2; blue, 27 }  ,opacity=1 ] [align=left] {$\displaystyle ( v,w,w') \in X$};
\draw (103,203) node [anchor=north west][inner sep=0.75pt]  [font=\large] [align=left] {$\displaystyle u$};
\draw (196,206) node [anchor=north west][inner sep=0.75pt]  [font=\large] [align=left] {$\displaystyle v$};
\draw (149,77) node [anchor=north west][inner sep=0.75pt]  [font=\large] [align=left] {$\displaystyle w$};
\draw (247,104) node [anchor=north west][inner sep=0.75pt]  [font=\large] [align=left] {$\displaystyle w'$};
\draw (202,75) node [anchor=north west][inner sep=0.75pt]  [color={rgb, 255:red, 126; green, 211; blue, 33 }  ,opacity=1 ] [align=left] {$\displaystyle E( X-t)$};
\draw (223,180) node [anchor=north west][inner sep=0.75pt]  [color={rgb, 255:red, 74; green, 144; blue, 226 }  ,opacity=1 ] [align=left] {$\displaystyle ( w,v) \notin E( X-t)$};

\end{tikzpicture}
    \caption{Definition of $S(X)$. A pair $w,w'$ is in $S(X)$ if $t=(v,w,w')\in X, (w,u), (w,w'), (v,w')\in E(X-t), (w,v)\notin E(X-t)$.}
    \label{fig:sx}
\end{figure}

\begin{lemma}\label{lem:concentration-number-of-terms-Kv}
$|S(X)|$ is $O(1)$-Lipschitz, and
under $\law(X|X_{T_G^0})$, 
\[\E\b[|S(X)|\,\b|X_{T_G^0}\b]=\Theta(n^5p'^4)=\tilde{\Theta}(n)\,.\]
\end{lemma}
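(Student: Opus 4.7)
The plan is to handle the Lipschitz bound and the expectation estimate separately.

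For the Lipschitz bound, I would write $|S(X)| = \sum_{t = (v,w,w') \in X \cap T_G^v} N_t(X)$, where $N_t(X) \in \{0,1,2\}$ counts how many of the ordered pairs $(w,w'), (w',w)$ satisfy the membership conditions of $S$. Flipping a single triangle $t^* \in T_G^u\cup T_G^v$ then acts on this sum in only two ways: (i) if $t^*\in T_G^v$, the term $N_{t^*}\in\{0,1,2\}$ is directly added or removed; and (ii) $t^*$ toggles at most three edges of $E(X-t)$, each of which modifies $N_t(X)$ only when $t$ shares a non-$v$ endpoint with the toggled edge. A short case analysis reduces the total effect to a constant plus the ``triangle degrees'' of the two $v$-incident edges of $t^*$ under $X$.

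For the expectation, I would invoke linearity over the $\Theta(n^2)$ ordered pairs $(w,w')$ with $\{v,w,w'\}\in T_G^v$. For each pair, after conditioning on $t=(v,w,w')\in X$, the probability factors into a $\tilde\Theta(1/n)$ factor from the marginal bounds on $\mu_G$ (Lemma~\ref{lem:mug_marginally_small}) and a factor from the four edge conditions $(w,u)\in E(X-t)$, $(w,v)\notin E(X-t)$, $(w',v)\in E(X-t)$, $(w',w)\in E(X-t)$. Each edge probability is bounded away from $0$ and $1$ (with at most polylog losses, since $np'=1/\log n$) using marginal smallness/largeness of $\mu_G$ together with the coverage guarantee of Lemma~\ref{lem:good_xtg0}, which ensures that ``witness'' triangles exist on a $1-o(1)$-probability event of $X_{T_G^0}$. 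Summing over pairs then yields $\E[|S(X)|\mid X_{T_G^0}] = \tilde\Theta(n)$.

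The hard step is the Lipschitz part: the naive worst-case constant is not literally $O(1)$, because a flip can touch many pairs through a shared edge whose number of containing triangles in $X$ is not deterministically bounded. The remedy is to restrict to the high-probability event $\mathcal{Y}$ on which every edge lies in at most $\tilde O(1)$ triangles of $X$ --- a Chernoff bound given the $O(p')$-marginal smallness of $\mu_G$ and $p'=1/(n\log n)$ --- giving an $\tilde O(1)$ effective Lipschitz constant on $\mathcal{Y}$. The final concentration is then deduced either through the refined McDiarmid inequality (Lemma~\ref{lem:mcdiarmid-extension}) on $\mathcal{Y}$, or by a truncation-and-rerunning argument that allows Corollary~\ref{cor:concentration_muG} to be applied to a modified, genuinely $\tilde O(1)$-Lipschitz proxy of $|S(X)|$ agreeing with it on $\mathcal{Y}$.
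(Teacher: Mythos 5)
The gap is in the Lipschitz part. You assert that the worst-case Lipschitz constant for $|S(X)|$ is not literally $O(1)$, and propose to circumvent this by restricting to a high-probability event on which each edge lies in $\tilde O(1)$ triangles of $X$. But the lemma as stated claims unconditional $O(1)$-Lipschitzness, and the paper in fact proves exactly that by exploiting a combinatorial rigidity you missed: \emph{for each $w$, there is at most one $w'$ such that $(w;w')\in S(X)$}. Indeed, if both $(w;w_1), (w;w_2)\in S(X)$ with $w_1\neq w_2$, then both triangles $(v,w,w_1)$ and $(v,w,w_2)$ lie in $X$, so $(w,v)\in E(X-t_i)$ for each $i$ — contradicting the membership requirement $(w,v)\notin E(X-t)$. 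A symmetric argument shows that for a toggled edge of the form $(v,w')$, at most one $w$ can be affected. Once you have this, flipping a single triangle $t_0$ changes $E(X\pm t_0 - t)$ by at most $3$ edges, and each such edge change kills or creates at most a constant number of pairs, giving $|S(X)\,\triangle\, S(X\vee e_{t_0})|\le 12$ deterministically. Without this observation your ``triangle degree'' heuristic gives only an $\tilde O(1)$-Lipschitz bound on a good event, which would prove a weaker statement (and indeed is precisely the device the paper resorts to for the analogous but genuinely harder Lemma~\ref{lem:concentration-number-of-terms-Kuv} for $K_{uv}$, via Corollary~\ref{cor:small_marginal_concentration_extension}). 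For Lemma~\ref{lem:concentration-number-of-terms-Kv} the cleaner $O(1)$ bound is available and the truncation machinery is unnecessary.

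Also, a minor point on your decomposition: you allow $N_t(X)\in\{0,1,2\}$, but the conditions $(w,v)\notin E(X-t)$ for $(w;w')$ and $(v,w)\in E(X-t)$ for $(w';w)$ are mutually exclusive, so in fact $N_t(X)\in\{0,1\}$. Your expectation argument is essentially right: linearity over $\tilde\Theta(n^2)$ candidate pairs (those with $(w,u),(w,v)\in G$ and $(w,w')\in E(X_{T_G^0})$, whose count is controlled by uniform 2-star density and the conclusion of Lemma~\ref{lem:good_xtg0}), each contributing probability $\tilde\Theta(1/n)$ from the $\Theta(p')$ cost of having $t\in X$ times $\tilde\Theta(1)$ for the four edge conditions. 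This part matches the paper.
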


\begin{proof}
From the definition of $V_v$ and $W_{v,w}$, $S$ can be alternatively written as
\[S(X)= \{(w;w'):t=(v,w,w'),t\in X, (w,u), (w,w'), (v,w')\in E(X-t), (w,v)\notin E(X-t) \}.\]
Suppose $X$ and $X'$ differ at one triangle, $X'=X\vee e_{t_0}$, we want to show $S(X')$ and $S(X)$ differ by $O(1)$ elements.

First consider $S(X)\backslash S(X')$. Suppose $(w;w')\in S(X)$ but $(w;w')\notin S(X+t_0)$. Then we must have $(w,v)\notin E(X-t)$ but $(w,v)\in E(X+t_0-t)$. 
Since  $E(X+t_0-t)$ and $E(X-t)$ differ by at most 3 edges, we have $|\{w:\exists w', (w;w')\in S(X)\backslash S(X')\}|\le 3$. Notice that each for each $w$, at most one $w'$ satisfy $(w;w')\in S(X)$, otherwise $(w,v)\in E(X-t)$. So $|S(X)\backslash S(X')|\le 3$.

Then consider $S(X')\backslash S(X)$. If $(w;w')\in S(X')\backslash S(X)$, either $(w,u)\in E(X+t_0-t)-E(X-t)$, $(w,w')\in E(X+t_0-t)-E(X-t)$ or $(v,w')\in E(X+t_0-t)-E(X-t)$. Let \[S_1=\{(w;w'): (w;w')\in S(X'), (w,u)\in E(X+t_0-t)-E(X-t)\}\,,\]
\[S_2=\{(w;w'): (w;w')\in S(X'), (w,w')\in E(X+t_0-t)-E(X-t)\}\,,\]
and
\[S_3=\{(w;w'): (w;w')\in S(X'), (v,w')\in E(X+t_0-t)-E(X-t)\}\,.\]
Then 
\[S(X')\backslash S(X)\subset S_1\cup S_2\cup S_3\,.\]
Still note that for each $w$, at most one $w'$ satisfies $(w;w')\in S(X')$, and $E(X+t_0-t)-E(X-t)$ has at most 3 edges, we have $|S_1|\le 3$, $|S_2|\le 3$. For $S_3$, notice that each $w'$ has at most one $w$ such that $(w;w')\in S(X')$ and $(v,w')\in E(X+t_0-t)-E(X-t)$. Otherwise by definition of $S(X')$, at least two triangles in $X$ contains edge $(v,w')$. So $(v,w')$ would belong to $E(X-t)$ for any $t$, contradicting that $(v,w')\in E(X+t_0-t)-E(X-t)$. Therefore, $|S_3|\le 3$ also holds.
We have $|S(X')\backslash S(X)|\le 9$.


Next we show that $\E |S(X)|=\Theta(n^5p'^4)$. It is sufficient to prove there are $\Theta(n^3p')$ pairs $(w;w')$ that $(w;w')$ is in $S(X)$ with probability $\Theta(n^2p'^3)$. Since $G$ is a $c$-uniformly 2-star dense graph, there are $\Theta(n)$ vertices $w$ such that $(w,u),(w,v)\in G$. From the assumption that $|\wed[E(X_{T_G^0})](v,w)|=\Theta (n^2p')$, there are $ \Theta(n^3p')$ pairs $(w;w')$ that $(w,u),(w,v)\in G$ and $(w,w')\in E(X_{T_G^0})$.

Next we show that for such $(w;w')$, it is in $S(X)$ with probability $\Theta(n^2p'^3)$.
Consider the event that $(w;w')\in S(X)$, i.e., 
\[t=(v,w,w'),t\in X, (w,u), (w,w'), (v,w')\in E(X-t), (w,v)\notin E(X-t)\,.\]
Since $E(X-t)\supset E(X_{T_G^0})$, $(w,w')\in E(X-t)$ always holds.
First, $t\in X$ with probability $\Theta(p')$, next consider the event conditioning on this. $(w,v)$ are included in at most $n-2$ triangles, each being included with probability $\Theta(p')$. $(w,v)\notin E(X-t)$ with probability at least
\[(1-\Theta(p'))^{n-2}=\Theta (1)\,.\]
 Graph $G$ is $c$-uniformly 2-star dense, so there are $\Theta(n)$ triangles contains $(w,u)$ and $(v,w')$ and do not contain $(w,v)$. By Lemma~\ref{lem:mug_marginally_small}, conditioning on the all the events above, each triangle is included in $X$ with probability $\Theta(p')$, so $(w,u),(v,w')\in E(X-t)$ with probability $\Theta(n^2p'^2)$.
Therefore, the event $(w;w')\in S(X)$ happens with probability $\Theta(n^2p'^3).$
\end{proof}

\subsubsection{Proof of Concentration of the Likelihood Ratio}

\begin{proof}[Proof of Lemma~\ref{lem:concentration-likelihood}]
Because $S(X)$ is Lipschitz by Lemma~\ref{lem:concentration-number-of-terms-Kv},   Corollary~\ref{cor:concentration_muG} applies to yield $\b||S(X)|-\E S(X)\b|=\tilde{O}(\sqrt{n})$ with high probability. So $S(X)=\tilde{O}(n)$ with high probability.

Now for any $w$, 
\[\frac{1}{\E |V_v(X)|}\frac{1}{\E |\wed[E(X)](v,w)|} \frac{p}{p'}=\Theta\B(\frac{1}{n^5p'^4}\B)\,,\]
so again because $S(X)$ is Lipschitz we have that the following function is $O(1/(n^5p'^4))$-Lipschitz, as each term in the right hand side is $O(1/(n^5p'^4))$: 
\[f(x) \defeq \sum_{(w;w')\in S(x)}\frac{1}{\E |V_v(X)|}\frac{1}{\E |\wed[E(X)](v,w)|} \frac{p}{p'}\,.\]
Again by Corollary~\ref{cor:concentration_muG}, $f(X)$ concentrates around its mean with distance at most $O(\log n/(n^4p'^{7/2}))$ with high probability. 

Now combining this with \eqref{eq:each-term}, we get for any fixed $X_{T_G^0}$ that satisfies the conclusion of Lemma~\ref{lem:good_xtg0}, the likelihood ratio 
$$
L(x)=\frac{\Pr(K_v(X)=x|X_{T_G^0})}{\Pr(X=x|X_{T_G^0})}
$$
concentrates as
\[\left|L(X)
- \E f(X) \right|=O\B( \frac{\log n}{n^4p'^{7/2}}\B)+ O\B(|S(X)|\frac{\log n}{n^{7}p'^{11/2}}\B)\,.\]
This is $O(\log n/(n^{4}p'^{7/2})) = \tilde{O}(1/\sqrt{n})$ with high probability since $|S(X)|=\Theta(n^5p'^4)$ with high probability. 

We can use Lemma~\ref{lem:distance-expectation} (which states that concentration implies concentration about the mean for bounded random variables), as it is not hard to check that the likelihood ratio is always bounded by $\tilde{O}(n^3)$. Hence, with high probability,
\[\left| L(X)- \E L(X) \right|=O\B(\frac{\log n}{n^{4}p'^{7/2}}\B)=\tilde{O}(1/\sqrt{n})\,.\]
This completes the proof of Lemma~\ref{lem:concentration-likelihood}, using that $\E  L(X) = 1$. 
\end{proof}

\subsection{The class of graphs $\cG$ has high-probability}\label{sec:graph-class}

The goal of this section is to show Lemma~\ref{lem:classG}, i.e., 
\[\Pr(\rgt_i\in \cG)= 1-o(1)\,.\]
Recall the definition of $\cG$ in Section~\ref{sec:apply-thm-to-reverse}, $\cG = \cG_1\cap \cG_2$. Here $\cG_1$ is the set of graphs that are $p^2/3$-uniformly 2-star dense and $\cG_2$ is the set of graphs that satisfy \[\b|\mug{G+e}(Y_e=1)-p_e^+\b| = 
C_p n^{-5/2}p'^{-2}\sqrt{\log n}\,,
\quad \text{where}\quad 
p_e^+ = \E_{G'\sim \rgt(n,p,p')}[\mug{G'+e}(Y_e=1)]\,,
\]
where $C_p$ is a large enough constant depending on $p$.
We can prove that $\cG_2$ is a high-probability event under $\rgt(n,p,p')$ through concentration.
\begin{restatable}{lemma}{concentration}\label{lem:concentration_pe}
If $G\sim \rgt(n,p,p')$, then $\Pr(G\in \cG_2) = 1-o(1)$.
\end{restatable}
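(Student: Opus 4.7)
The plan is to fix an edge $e=(u,v)$, regard $f(G):=\mu_{G+e}(Y_e=1)$ as a deterministic function of the $\binom{n}{2}+\binom{n}{3}$ independent Bernoullis $Z_{e'}\sim\bern(p)$ and $T_t\sim\bern(p')$ that generate $G\sim\rgt(n,p,p')$, and apply the variance-aware McDiarmid bound of Lemma~\ref{lem:mcdiarmid-extension}. The target fluctuation $C_p n^{-5/2}p'^{-2}\sqrt{\log n}=\tilde O(n^{-1/2})$ matches the heuristic $p'\sqrt{|\wed[G](u,v)|}\sim \tilde O(1/\sqrt n)$ predicted by $\mu_{G+e}(Y_e=1)\approx p'|\wed[G](u,v)|$. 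Once a single-$e$ tail bound of this size with failure probability $o(1/n^3)$ is in hand, a union bound over the $\binom{n}{2}$ choices of $e$ will give the lemma.

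I would take the good set to be $\cY=\{G\in\cG_1\}$. By Lemma~\ref{lem:probability-of-good}, $\Pr(G\notin\cY)\le e^{-\Omega(n)}$, and the property survives any single-coordinate flip (after slightly shrinking the density constant), so on $\cY$ the influence bounds of Lemma~\ref{lem:edge-influence-adjacent} and Corollary~\ref{cor:influence_on_pe} apply to $\law_{G+e}(Y)$. Flipping an edge variable $Z_{e'}$ changes $f(G)$ by at most $\tilde O(1/n)$ in general, and by $\tilde O(1/n^2)$ when $e'$ shares no vertex with $e$. Flipping a triangle variable $T_t$ alters at most three edges of $G+e$ (the edge $e$ itself, if among them, contributes nothing since $e$ is forced in $G+e$); summing the per-edge bounds and classifying by $|t\cap\{u,v\}|$ gives $c_t=\tilde O(1/n)$ when $|t\cap\{u,v\}|\ge 1$, and $c_t=\tilde O(1/n^2)$ when $t\cap\{u,v\}=\emptyset$.

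The variance proxy $\sigma^2=\sum_i p_i(1-p_i)c_i^2$ then decomposes into an edge contribution $O(n)\cdot\tilde O(1/n^2)+O(n^2)\cdot\tilde O(1/n^4)=\tilde O(1/n)$, and a triangle contribution
\[
O(n)\cdot p'\cdot\tilde O(1/n^2)+O(n^2)\cdot p'\cdot\tilde O(1/n^2)+O(n^3)\cdot p'\cdot\tilde O(1/n^4)=\tilde O(p')=\tilde O(1/n),
\]
where the dominant $\tilde O(p')$ term comes from the $O(n^2)$ triangles meeting exactly one of $\{u,v\}$. With $\max_ic_i=\tilde O(1/n)$ and $p\sum_ic_i=e^{-\Omega(n)}\ll t$, Lemma~\ref{lem:mcdiarmid-extension} applied at $t=C_p n^{-1/2}\sqrt{\log n}$ for a sufficiently large constant $C_p$ yields
\[
\Pr\b(|f(G)-\E f(G)|>t\b)\ \le\ e^{-\Omega(n)}+2\exp\b(-\Omega(\log n)\b)\ =\ o(1/n^3),
\]
after which the union bound over $e$ closes the argument.

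The hard part is the triangle step: a single triangle flip moves three edges simultaneously, so if one naively used only the coarse $\tilde O(1/n)$ per-edge bound for every edge, the resulting $\sigma^2$ would be $\tilde O(n)$, off by a factor of $n^2$. Two refinements working in tandem save the day: (a) the sharper $\tilde O(1/n^2)$ non-adjacent influence bound from Lemma~\ref{lem:edge-influence-adjacent}, which tames the $O(n^3)$ triangles disjoint from $\{u,v\}$; and (b) the $\bern(p')$ sparsity of triangle variables, which provides the $p'$ factor in $p_i(1-p_i)$ that absorbs one power of $n$ for the $O(n^2)$ triangles meeting $\{u,v\}$ in exactly one vertex. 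I would also need to verify that the density constant in $\cG_1$ is chosen large enough so that resampling any single coordinate keeps $G$ in the (shrunken) good set, so that the bounded-differences hypothesis of Lemma~\ref{lem:mcdiarmid-extension} holds pointwise on $\cY$.
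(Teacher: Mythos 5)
Your proposal is correct and follows essentially the same route as the paper's proof: view $\mu_{G+e}(Y_e=1)$ as a function of the independent $\bern(p)$ edge variables and $\bern(p')$ triangle variables, apply the extended McDiarmid bound (Lemma~\ref{lem:mcdiarmid-extension}) on the $\cG_1$ good set, and obtain the per-coordinate Lipschitz constants from the influence bounds of Lemma~\ref{lem:edge-influence-adjacent}, classified by adjacency to $e$, with the $\bern(p')$ sparsity of triangle coordinates supplying the crucial extra factor of $p'$ in the variance proxy. The only cosmetic differences are that the paper keeps $\sigma^2 = O(n^{-5}p'^{-4})$ explicit rather than simplifying with $p'=\tilde\Theta(1/n)$, splits triangles two ways (shares a vertex with $e$ or not) rather than your three-way classification by $|t\cap\{u,v\}|$, and handles your "shrunken good set" worry by showing the Lipschitz inequality directly on pairs $(x_1,y_1),(x_2,y_2)\in\cY$ via the fact that $\rgt(x_1\vee x_2,y_1\vee y_2)$ is still uniformly 2-star dense; you arrive at the same $t = \Theta(n^{-5/2}p'^{-2}\sqrt{\log n})$ threshold and the same union bound over $\binom{n}{2}$ edges.
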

Let us first show why  Lemma~\ref{lem:classG} holds given Lemma~\ref{lem:concentration_pe}.

\begin{proof}[Proof of Lemma~\ref{lem:classG}]First, $\RGTi{0}$ is $p^2/2$-uniformly 2-star dense by Lemma~\ref{lem:probability-of-good} with probability $1-e^{-\Omega(n)}$. For any $\RGTi{i}$, $i\in [K]$, and any $j,k\in [n]$, the number of vertices $l$ such that $(j,l)$ or $(k,l)$ are different from $\RGTi{0}$ is at most $K$. So when $K=\tilde{O}(n^{1/4})$, $\RGTi{i}$ is $p^2/3$-uniformly 2-star dense if $\RGTi{0}$ is $p^2/2$-uniformly 2-star dense. Thus, $\RGTi{i}\in \cG_1$ for all $i\in [K]$ with high probability. 

Lemma~\ref{lem:concentration_pe} shows that $\RGTi{0}\in \cG_2$ with high probability. If $\RGTi{i}\in \cG_1$ for any $i$, then by Corollary~\ref{cor:influence_on_pe}, 
\[\b|\mug{\RGTi{i}+e}(Y_e=1)-\mug{\RGTi{0}+e}(Y_e=1)\b|=\tilde{O}(i/n)\,.
\]
Therefore, with high probability,
\[\b|\mug{\RGTi{i}+e}(Y_e=1)-\E_{G'\sim \rgt(n,p,p')}[\mug{G'+e}(Y_e=1)]\b| = \tilde{O}(1/\sqrt{n})+\tilde{O}(k^2/n) = \tilde{O}(1/\sqrt{n})\,.\qedhere
\]
\end{proof}

Next we prove Lemma~\ref{lem:concentration_pe}. The idea is to use Mcdiarmid's inequality and the fact that the marginal probability of $\law(Y)$ is a Lipschitz function due to bounded influence.



\begin{proof}[Proof of Lemma~\ref{lem:concentration_pe}]
Since $\rgt(n,p,p')$ is generated by adding triangles to random graphs, we can view it as a function over the original random graph and the triangles added. Let $x\sim \bern(p)^{\otimes \binom{[n]}{2}}$ be the indicator of edges in $G(n,p)$ and $y\sim \bern(p')^{\otimes \binom{[n]}{3}}$ be the indicator of whether a triangle is added to the graph. 
We use $\rgt(x,y)$ to denote the graph generated by choice $x$ and $y$. Then $\mug{G+e}(Y_e=1)$ can be viewed as a function of $(x,y)$, 
\[f(x,y)\defeq \mug{\rgt(x,y)+e}(Y_e=1)\,.\]

By Lemma~\ref{lem:edge_influence}, $\mug{G+e}(Y_e=1)$ is Lipschitz as a function of a $c$-uniformly 2-star dense graph $G$. Indeed, for any $e'\notin G$,
\begin{align*}
    \mug{G+e+e'}(Y_e=1)
    &= \mug{G+e+e'}(Y_{e'}=1) \mug{G+e+e'}(Y_e=1|Y_{e'}=1)\\
    &\qquad + \mug{G+e+e'}(Y_{e'}=0) \mug{G+e+e'}(Y_e=1|Y_{e'}=0)
    \,.
\end{align*}
As $\mug{G+e+e'}(Y_e=1|Y_{e'}=0)=\mug{G+e}(Y_e=1)$, we have
\begin{align*}
    |\mug{G+e+e'}(Y_e=1)- \mug{G+e}(Y_e=1)|
    &\le 
    |\mug{G+e+e'}(Y_e=1|Y_{e'}=1)-\mug{G+e+e'}(Y_e=1|Y_{e'}=0)|\\&\le \IM{e'}{e}.
\end{align*}
So for any $c$-uniformly 2-star dense graph $G$, and edge $e'$ incident to $e$,
\[|\mug{G+e+e'}(Y_e=1)- \mug{G+e}(Y_e=1)|=O(1/(n^3p'^2))=\tilde{O}(1/n)\,,\]
and for other edge $e'$ not incident to $e$,
\[|\mug{G+e+e'}(Y_e=1)- \mug{G+e}(Y_e=1)|=O(1/(n^3p'))=\tilde{O}(1/n^2)\,.\]
This implies that if $\rgt(x,y)$ is a $c$-uniformly 2-star dense graph, we have
\[\B|\frac{\partial f}{\partial x_{e'}}(x,y)\B|=
\begin{cases}
\tilde{O}(1/n) & \text{if } e'\cap e\neq\varnothing\\
\tilde{O}(1/n^2) & \text{otherwise}
\end{cases}
\,,
\]
and
\[\B|\frac{\partial f}{\partial y_{t}}(x,y)\B|=
\begin{cases}
\tilde{O}(1/n)& \text{if } t\cap e\neq \varnothing\\
\tilde{O}(1/n^2)& \text{otherwise}
\end{cases}
\,.
\]
Denote the quantities on the right hand side by $c_{e'}$ and $c_t$.
To use Lemma~\ref{lem:mcdiarmid-extension}, we need to check that $f$ is Lipschitz. For any $x_1,x_2\in \{0,1\}^{\binom{[n]}{2}}$ and $y_1,y_2\in \{0,1\}^{\binom{[n]}{3}}$ such that 
$\rgt(x_1, y_1)$ and $\rgt(x_2,y_2)$ are $c$-uniformly 2-star dense graphs, 
\begin{align*}
    &|f(x_1,y_1)-f(x_2,y_2)|\\
    &\le |f(x_1,y_1)-f(x_1\vee x_2,y_1\vee y_2)|+ |f(x_2,y_2)-f(x_1\vee x_2,y_1\vee y_2)|\\
    &\le  \sum_{e':(x_{1})_{e'}\not=(x_1\vee x_2)_{e'}}c_{e'} + \sum_{t:(y_{1})_{t}\not=(y_1\vee y_2)_{t}}c_{t} +\sum_{e':(x_{2})_{e'}\not=(x_1\vee x_2)_{e'}}c_{e'} + \sum_{t:(y_{2})_{t}\not=(y_1\vee y_2)_{t}}c_t\\
    &= \sum_{e':(x_{1})_{e'}\not=( x_2)_{e'}}c_{e'} + \sum_{t:(y_{1})_{t}\not=( y_2)_{t}}c_{t}\,.
\end{align*}
Here we used that $\rgt(x_1\vee x_2,y_1\vee y_2)=\rgt(x_1,y_1)\cup \rgt(x_2,y_2)$ is $c$-uniformly 2-star dense.

We can set $\mathcal{Y}$ in Lemma~\ref{lem:mcdiarmid-extension} to be the set of $x,y$ such that $\rgt(x,y)$ is $c$-uniformly 2-star dense.  
The probability of $G\sim \rgt(n,p,p')$ being not $c$-uniformly 2-star dense is bounded by $e^{-\Omega(n)}$ by Lemma~\ref{lem:probability-of-good}. So for $t>0$, we have
\begin{equation}\label{eq:concentration}
\Pr_G\B(\b|\mug{G+e}(Y_e=1)-\E_{G'\sim \rgt(n,p,p')}\mug{G'+e}(Y_e=1)\b| >t\B)\le e^{-\Omega(n)}+ \exp\B(\frac{-(t-e^{-\Omega(n)})^2}{\sigma^2+\tilde{O}(t/n)}\B)
\,.
\end{equation}
Here 
\begin{align*}
\sigma^2 &=   p(1-p)\B(\sum_{e'\in \binom{[n]}{2}:\text{$e'$ shares node with } e} O\B(\frac1{n^6p'^4}\B)+\sum_{e'\in \binom{[n]}{2}:\text{$e'$ does not node with } e} O\B(\frac1{n^6p'^2}\B)\B)
\\
&\quad 
+p'(1-p')\B( \sum_{t\in \binom{[n]}{3}:\text{$t$ shares nodes with } e} \B(\frac1{n^6p'^4}\B) + \sum_{t\in \binom{[n]}{3}:\text{$t$ does not node with } e} O\B(\frac1{n^6p'^2}\B) \B)
\\
&=
p(1-p)\B(n\cdot  O\B(\frac1{n^6p'^4}\B)+ n^2\cdot  O\B(\frac1{n^6p'^2}\B)\B) + p'(1-p') \B(n^2\cdot O\B(\frac1{n^6p'^4}\B) + n^3\cdot O\B(\frac1{n^6p'^2}\B) \B)\\
&=  O(n^{-5}p'^{-4})
\,.
\end{align*}
We can choose $t=\Theta( n^{-5/2}p'^{-2}\sqrt{\log n})$ so that the right hand side of \eqref{eq:concentration} is $1/n^c$ for a large enough constant $c$.
\end{proof}

\section{Low Influences for Marginally Small Distributions}\label{sec:small-marginal}

The distribution $\mu_G$ studied in the paper, as proven in Lemma~\ref{lem:mug_marginally_small}, is marginally small (Defn.~\ref{def:marg-small}).
In this section, 
we carry out a general investigation of  marginally small distributions and show, under an assumption on the underlying graphical model, that they have low influences. 
The influence bound on distribution $\law_G(Y)$ is then derived as a result of low influence in $\mu_G$.

We start by formally defining influence. 
\begin{definition}[Influence]\label{def:influence}
For a binary distribution over $\{0,1\}^\cX$, the \emph{influence} of $S\subset \cX$ on $S'\subset \cX$ is defined by $$\I{S}{S'}=\sup_{\substack{x_S^a,x_S^b\in \{0,1\}^S \\ \omega\in \{0,1\}^A,A\subset \cX-S-S'}}\TV(P_{x_{S'}|x_S^a,\omega}, P_{x_{S'}|x_S^b,\omega} )\,,$$ where $\omega$ represents an arbitrary fixing (``pinning") of variables in $A$. 

For $x,y\in \cX$, use $\I{x}{y}$ to denote $\I{\{x\}}{\{y\}}$. 
\end{definition}
\begin{remark}Definition~\ref{def:influence} is different from the marginal influence defined in Definition~\ref{def:marginal-influence} in the sense that it allows arbitrary pinning of the variables.
\end{remark}

\begin{remark}\label{rem:influence-spectual-independence}
The notion of influence plays a prominent role in the line of work using spectral independence to bound the mixing time of Markov chains, examples include \cite{anari2021spectral,chen2021rapid}.
The influence used in these works are usually the marginal influence, but a separate influence defined for each possible pinning. 
We instead maximize over all possible pinnings, which turns out to be more convenient. 
Another notion of influence appears in Dobrushin uniqueness conditions \cite{dobrushin1970prescribing}, where the pinning is over $\cX-S-S'$, not over an arbitrary subset as in Defn.~\ref{def:influence}.
\end{remark}


\subsection{Low Influences for Marginally Small Distributions}

\begin{theorem}\label{thm:small_marginal_low_influence}
    Let $P$ be a distribution over $\{0,1\}^N$, and $G$ be the graphical model of $P$ with maximum degree $\Delta$. Suppose that $P$ is $q$-marginally small
    where $q<\frac{1}{2\Delta}$ and let $$C=\sup_{k}\sup_{i,j:d(i,j)=k}|N_{k-1}(i)\cap N_1(j)|\,.$$
    Then for any $S\subset [N]$ and $i \in [N]$ such that $d(S,i)\ge d$, we can bound the influence of $S$ on $i$ as
    \begin{equation*}
        \I{S}{i}\le |S|\B(\frac{2Cq}{1-2q\Delta}\B)^d\,.
    \end{equation*}
    In particular, if $C=O(1)$ and $1-2q\Delta=\Omega(1)$, then $\I{j}{i} = O(q^{d(i,j)})$.
\end{theorem}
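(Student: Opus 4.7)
The plan is to first reduce the set-influence statement to a single-vertex bound via sub-additivity, and then prove that bound via a distance recursion exploiting the local Markov structure and marginal smallness.

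\textbf{Step 1 (Sub-additivity).} Enumerating $S = \{j_1, \ldots, j_s\}$ and flipping one pinning bit at a time, the triangle inequality for total variation yields $\I{S}{i} \le \sum_{l=1}^s \I{j_l}{i}$, where each summand takes the supremum over pinnings (now including the other coordinates of $S$). Since every $j \in S$ satisfies $d(j, i) \ge d$, it suffices to show $\I{j}{i} \le r^d$ for every single $j$ with $d(j, i) \ge d$, where $r := \frac{2Cq}{1-2q\Delta}$.

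\textbf{Step 2 (Recursion setup).} Define $a_d := \sup \I{j}{i}$ over all $j, i, \omega$ with $d(j, i) \ge d$. Trivially $a_0 \le 1$. The heart of the proof is the recursion $a_d \le r \cdot a_{d-1}$ for $d \ge 1$. The case $d = 1$ is immediate: marginal smallness puts both $\Pr(x_i = 1 | x_j^a, \omega)$ and $\Pr(x_i = 1 | x_j^b, \omega)$ in $[0,q]$, giving $\I{j}{i} \le q \le r$ under the theorem's hypotheses (noting $C \ge 1$ whenever the graph contains a distance-$2$ pair).

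\textbf{Step 3 (Decomposition for $d \ge 2$).} Fix $j, i$ at distance $d \ge 2$ and a pinning $\omega$. Let $T = N(i)$ restricted to the unpinned coordinates. Since $d(j,i) \ge 2$ we have $j \notin T$, and $T$ separates $i$ from $j$ in the graphical model. By the Markov property, $\Pr(x_i = 1 \mid y_T, x_j^a, \omega) = \Pr(x_i = 1 \mid y_T, \omega) =: g(y_T) \in [0,q]$. Writing
\[
\Pr(x_i = 1 \mid x_j^a, \omega) - \Pr(x_i = 1 \mid x_j^b, \omega) = \sum_{y_T} g(y_T)\,[P(y_T) - Q(y_T)],
\]
with $P, Q$ the laws of $y_T$ conditional on $x_j^a$ and $x_j^b$ respectively, and using that the differences sum to zero (so any constant may be subtracted from $g$), we bound the absolute value by $2q \cdot \TV(P, Q)$. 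Next, the chain rule for TV gives $\TV(P, Q) \le \sum_{v \in T} \I{j}{v}$, where each influence is with respect to the augmented pinning $\omega \cup y_{T_{<v}}$. Partition $T = T_{\text{close}} \cup T_{\text{far}}$ with $T_{\text{close}} = \{v \in T : d(v,j) = d-1\}$; the triangle inequality gives $d(v, j) \ge d-1$ for all $v \in N(i)$, so $T_{\text{far}}$ has $d(v,j) \ge d$. The definition of $C$ (applied by symmetry of distance) yields $|T_{\text{close}}| \le C$, while $|T_{\text{far}}| \le \Delta$. Hence
\[
\I{j}{i} \le 2q\bigl(C\, a_{d-1} + \Delta\, a_d\bigr).
\]
Taking the supremum over $(j, i, \omega)$ with $d(j, i) \ge d$ and using $1 - 2q\Delta > 0$ yields $a_d \le r\, a_{d-1}$, and iterating from $a_0 \le 1$ gives $a_d \le r^d$; combining with Step 1 finishes the theorem.

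\textbf{Main obstacle.} The trickiest point is the self-referential nature of the recursion in Step 3: the ``far'' neighbors of $i$ lie at distance $\ge d$ from $j$, so their influence is controlled by the very quantity $a_d$ one is trying to bound. The hypothesis $q < \tfrac{1}{2\Delta}$ is precisely what allows this inequality to be solved; without it, no decay can be extracted. A secondary source of care is the bookkeeping of pinnings through the chain rule for TV, and the application of $C$ via the symmetry of the distance function, which is what turns the naive $\Delta\, a_{d-1}$ bound (with no decay) into the $C\, a_{d-1} + \Delta\, a_d$ bound that does decay.
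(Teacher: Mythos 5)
Your proof is correct, but it takes a genuinely different route from the paper's. The paper conditions on the Markov blanket $N_1(S)$ of the \emph{source} set $S$, tracks an auxiliary parameter $k = |S \cap N_d(i)|$ through the recursion $I_d(k)$, and then closes the recursion by comparing to a branching process with branching factor $2\Delta$ and probability $q$. You instead reduce to a single source via sub-additivity (which is the paper's Lemma~\ref{lem:influence_partition}, so no new work there) and then condition on the Markov blanket $N(i)$ of the \emph{target}. This yields a self-referential but purely scalar linear recursion $a_d \le 2q(C\,a_{d-1} + \Delta\,a_d)$, which you solve algebraically using $1-2q\Delta > 0$ rather than via any branching-process comparison. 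The application of $C$ is handled cleanly by symmetry of the distance function: $T_{\mathrm{close}} \subseteq N_1(i) \cap N_{d-1}(j) $, and the definition of $C$ with $(i,j)$ swapped gives $|T_{\mathrm{close}}| \le C$. The constant $2q$ (from bounding $|\sum g(y)(P(y)-Q(y))|$ by $\|g\|_\infty \cdot \|P-Q\|_1$ without centering $g$) happens to coincide with the $2q$ the paper gets from the sum of two independent binomials, so the final bound is identical. Your route is arguably cleaner: it avoids the $k$ parameter entirely and replaces the branching-process comparison with one line of algebra. The only shared caveat — which both proofs quietly assume — is that $q \le r$ requires $C \ge (1-2q\Delta)/2$, which holds because $C \ge 1$ whenever the model has diameter at least $2$; otherwise the statement is degenerate. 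One small slip in exposition: Step~3 derives the recursion only for $d \ge 2$, so the iteration should be started from your separately-established $a_1 \le q \le r$ rather than from $a_0 \le 1$; the conclusion is unaffected.
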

\begin{proof}
Noticing that all properties of $P$ are maintained under arbitrary conditioning, it is without loss of generality to prove \[\IM{S}{i}\le |S|\B(\frac{2Cq}{1-2q\Delta}\B)^d\,.\]

Let us fix vertex $i$, consider the possible influence on $i$ by a set $S$ that $d(S,i)\ge d$ and also $|S\cap N_d(i)|\le k$.
Define 
\begin{equation*}
        I_d(k) = \sup_{d(S,i)\ge d, |S\cap N_d(i)|\le k}\IM{S}{i}\,.
\end{equation*}
We will show that $I_d(k)\le k\B(\frac{2Cq}{1-2q\Delta}\B)^d$ by induction on $d$.

First, note that $P$ is $q$-marginally bounded, by definition of the influence, $\I{\sim i}{i}\le q$. So $I_1(k)\le q\le \frac{2kCq}{1-2q\Delta}$ for arbitrary $k$.

Now assume $I_{d-1}(k)\le k\B(\frac{2Cq}{1-2q\Delta}\B)^{d-1}$. Suppose the supremum in $I_d(k)$ is given by set $S$ and two marginal conditions $x_S^a$ and $x_S^b$, so that
\[I_d(k) = \TV(P_{x_i|x_S}(\ \cdot \ |x_S^a), P_{x_i|x_S}(\ \cdot \ |x_S^b))\,.\]
Let $X^a\sim P_{x_{N_1(S)}|x_S}(\ \cdot \ |x_S^a)$ and $X^b\sim P_{x_{N_1(S)}|x_S}(\ \cdot \ |x_S^a)$ be independent of each other. Note that $G$ is the graphical model of $P$, so $x_i$ and $x_S$ are independent of each other conditioned on $x_{N_1(S)}$. 
This means 
\[P_{x_i|x_S}(\ \cdot \ |x_S^a) = \E \b[ P_{x_i|x_S,x_{N_1(S)}}(\ \cdot \ |x_S^a,X^a)\b] = \E \b[P_{x_i|x_{N_1(S)}}(\ \cdot\ |X^a)\b]\,.\]
Similarly, $P_{x_i|x_S}(\ \cdot \ |x_S^b)= \E \b[P_{x_i|x_{N_1(S)}}(\ \cdot\ |X^b)\b]$.
Then 
by convexity of total variation (Lemma~\ref{lem:coupling_TV}), we have
\[\TV(P_{x_i|x_S}(\ \cdot \ |x_S^a), P_{x_i|x_S}(\ \cdot \ |x_S^b)) \le \E \b[ \TV\b(P_{x_i|x_{N_1(S)}}(\ \cdot\ |X^a), P_{x_i|x_{N_1(S)}}(\ \cdot\ |X^b)\b) \b]\,.\]
By the induction assumption,
\begin{align*}
    &\E \b[ \TV\b(P_{x_i|x_{N_1(S)}}(\ \cdot\ |X^a), P_{x_i|x_{N_1(S)}}(\ \cdot\ |X^b)\b) \b]\\
    &\le \sum_{j\ge 1}\Pr_{X^a,X^b}\B(\b\|X^a_{N_{d-1}(i)}  - X^b_{N_{d-1}(i)}\b\|_1 = j\B)I_{d-1}(j)\\
    &\quad + \sum_{j\ge 1}\Pr_{X^a,X^b}\B(X^a_{N_{d-1}(i)} = X^b_{N_{d-1}(i)}\,,\, \b\|X^a_{N_{d}(i)} - X^b_{N_{d}(i)}\b\|_1=j\B) I_{d}(j)\\
    &\le \sum_{j\ge 1} \Pr_{X^a,X^b}\B(\b\|X^a_{N_{d-1}(i)}\b\|_1  + \b\|X^b_{N_{d-1}(i)}\b\|_1 = j\B)\cdot j\cdot \B(\frac{2Cq}{1-2q\Delta}\B)^{d-1} \\
    &\quad+ \sum_{j\ge 1}\Pr_{X^a,X^b}\B( \b\|X^a_{N_{d}(i)}\b\|_1+ \b\|X^b_{N_{d}(i)}\b\|_1=j\B) I_{d}(j)\,.
\end{align*}
Because $d(S,i)=d$, $|N_1(S)\cap N_{d-1}(i)|\le |S\cap N_{d}(i)|C\le kC$, and $P$ is $q$-marginally small, the distribution of $\b\|X^a_{N_{d-1}(i)}\b\|_1  + \b\|X^b_{N_{d-1}(i)}\b\|_1$ is stochastically dominated by $\bino(2kC,q)$. Similarly, $|N_1(S)|\le k\Delta$, so $\b\|X^a_{N_{d}(i)}\b\|_1+ \b\|X^b_{N_{d}(i)}\b\|_1\le \|X^a\|_1+\|X^b\|_1$ is stochastically dominated by $\bino(2k\Delta,q)$. We have
\begin{align*}
    I_{d}(k)&\le \B(\frac{2Cq}{1-2q\Delta}\B)^{d-1} \sum_{j=0}^{2ka} \bino(2kC,q,j)\cdot j + \sum_{j=0}^{2k\Delta}\bino(2k\Delta,q,j)I_{d}(j)\\
    &= 2kCq\B(\frac{2Cq}{1-2q\Delta}\B)^{d-1}+ \sum_{j=0}^{2k\Delta}\bino(2k\Delta,q,j)I_{d}(j)\,.
\end{align*}
For simplicity of notation, let $\Tilde{I}_d(k) = \frac{(1-2q\Delta)^{d-1}}{(2Cq)^d}I_d(k)$. We get a recursion \[\Tilde{I}_d(k)\le k+ \sum_{j=0}^{2k\Delta}\bino(2k\Delta,q,j)\tilde{I}_{d}(j)\,.\]

We compare this recursion with percolation on a tree with branching factor $2\Delta$ and branching probability $q$. Let $m_k$ be the total number of nodes in $k$ such independent trees. We have
\[\E m_k = k+ \sum_{j=0}^{2k\Delta}\bino(2k\Delta,q,j)\E m_j\,.\]
By comparing both recursions, we can couple the recursion of $\Tilde{I}_d(k)$ with the branching process and conclude that $\Tilde{I}_d(k)\le \E m_k$. By linearity of expectation, $ \E m_k= \frac{k}{1-2q\Delta}$. So from the definition of $\Tilde{I}_d(k)$, we have
\[I_d(k) \le k\B(\frac{2Cq}{1-2q\Delta}\B)^d.\qedhere\]
\end{proof}

\begin{restatable}{corollary}{influencemuG}\label{cor:influence_muG}
Let $\mu_G$ be the distribution in Definition~\ref{def:triangle_dist}.
For any two triangles $t$ and $t'$ in graph $G$ such that $d(t,t')\ge k$, 
\[\I{t}{t'}=O(p'^k)\,.\]
\end{restatable}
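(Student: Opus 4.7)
The plan is to apply Theorem~\ref{thm:small_marginal_low_influence} directly to the distribution $\mu_G$, viewed as a distribution over the graphical model whose vertices are the triangles of $G$ and whose edges connect pairs of triangles sharing a common edge of $G$.

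First, I would verify the hypotheses of Theorem~\ref{thm:small_marginal_low_influence}. By Lemma~\ref{lem:mug_marginally_small}, $\mu_G$ is $q$-marginally small for some $q = c_0 p'$, where $c_0 = c_0(p)$ is a constant depending only on $p$. In the triangle graphical model, every triangle has at most $3(n-3)$ neighbors (three edges, each contained in at most $n-3$ other triangles), so $\Delta \le 3n$. Since the paper operates in the regime $p' = \tilde O(1/n)$, we have $q\Delta = O(np')=o(1)$, so both $q < \tfrac{1}{2\Delta}$ and $1-2q\Delta = \Omega(1)$ for $n$ sufficiently large.

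Next, I would bound the combinatorial constant
\[
C \;=\; \sup_{k}\;\sup_{i,j:\,d(i,j)=k}\;\bigl|N_{k-1}(i)\cap N_1(j)\bigr|\,.
\]
Any triangle in $N_1(j)$ shares one of the three edges of $j$, and all triangles containing a common edge form a clique in the graphical model (pairwise distance $1$). Using the fact that a shortest path from $i$ to $j$ of length $k$ passes through at most a bounded number of such cliques adjacent to $j$, I would argue that $C$ is bounded by an absolute constant (e.g., at $k=2$ this already gives $|N_1(i)\cap N_1(j)|\le 9$ since such a common neighbor is determined by a pair of edges, one from $i$ and one from $j$, sharing a vertex).

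With these ingredients in place, applying Theorem~\ref{thm:small_marginal_low_influence} to $S=\{t\}$ and $d=d(t,t')\ge k$ yields
\[
\I{t}{t'} \;\le\; |S|\left(\frac{2Cq}{1-2q\Delta}\right)^{d} \;\le\; \left(\frac{2C c_0 p'}{1-o(1)}\right)^{k} \;=\; O(p'^{k})\,,
\]
as desired.

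The main obstacle I expect is rigorously bounding $C$ by an absolute constant: in principle, many triangles sharing a common edge in $G$ could all lie at the same graphical distance from $t$, inflating $C$ beyond what a simple combinatorial argument gives. I would handle this either by tightening the count via the shortest-path structure or, since the corollary is used downstream only for relatively small $k$ (inside the proof of Lemma~\ref{lem:edge-influence-adjacent}), by absorbing this dependence into the hidden constant in the $O(\cdot)$.
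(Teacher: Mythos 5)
Your proposal is correct and takes essentially the same approach as the paper: both apply Theorem~\ref{thm:small_marginal_low_influence} to the triangle graphical model of $\mu_G$, invoking Lemma~\ref{lem:mug_marginally_small} for $O(p')$-marginal smallness, bounding the degree $\Delta = O(n)$ (so $q\Delta = O(np') = o(1)$), and bounding $C = O(1)$. Your stated worry about $C$ is unfounded --- since two triangle-variables are adjacent in the graphical model iff they share an edge, the model has diameter $3$, and for $d(i,j)\in\{2,3\}$ any common neighbor in $N_{k-1}(i)\cap N_1(j)$ must contain an edge of $j$ and a vertex of $i$, giving at most $3\times 3 = 9$ possibilities; this is precisely the short case analysis the paper carries out, so there is no dependence on $n$ to worry about absorbing.
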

\begin{proof}
We only need to verify that $\mu_G$ satisfies all the properties in Theorem~\ref{thm:small_marginal_low_influence}
In the graphical model of $\mu_G$, any two triangles are connected if and only if they share an edge. So the maximum degree of the graphical model of $\mu_G$ is at most $3(n-2)$. From Lemma~\ref{lem:mug_marginally_small}, $\mu_G$ is $O(p')$-marginally small. Because $p'\ll 1/n$, we have that $p'<1/{2n}$.

By induction one can verify that two triangles have distance $k$ if and only if they share $3-k$ common vertices. So for two triangles $t$, $t'$ that $d(t,t')=2$, $t$ and $t'$ share 1 vertex. $N_{1}(t)\cap N_1(t')$ are the vertices that share 2 vertices with both $t$ and $t'$. Therefore, $N_{1}(t)\cap N_1(t')=2$.

Suppose two triangles $t$, $t'$ satisfy $d(t,t')=3$, i.e., $t$ and $t'$ do not share vertex. $N_{2}(t)\cap N_1(t')$ are the triangles that share 1 vertex with $t$ and 2 vertices with $t'$. There are 3 of them in total.

We have \[\sup_k\sum_{t,t':d(t,t')=k}|N_{k-1}(t)\cap N_1(t')|\le 3\,.\qedhere\]
\end{proof}

\subsection{Influences in $\law_G(Y)$}

In this section, we will apply the conclusions from the previous section about $\mu_G$ to bound the influence of  $\law_G(Y)$.
We start with a technical lemma on the marginals of $\mu_G$ and $\law_G(Y)$.

\begin{lemma}\label{lem:margin-under-conditioning}
Suppose $G$ is a $c$-uniformly 2-star dense graph. Let $A\subset E(G)$ be an edge set that $|A|\le cn/4$ and $T\subset T_G$ be a triangle set that $|T|\le cn/4$. Let $y_A\in \{0,1\}^A$, $x_T\in \{0,1\}^T$ be arbitrary consistent configurations on $A$ and $T$. For any edge $e\notin A$,
\[
\mu_G(Y_e=1|Y_A=y_A,X_T=x_T)=\Omega(np')\,,
\]
If further $T_e\cap T = \emptyset$ or $x_{T_e\cap T} = \vec{0}$, then 
\[
\mu_G(Y_e=1|Y_A=y_A,X_T=x_T)=\Theta(np')\,.
\]
For any triangle $t\notin T$ such that  $(y_A)_{E^*}=\vec{1}$ where $E^*=E(t)\cap A$, we have
\[
\mu_G(X_t=1|Y_A=y_A,X_T=x_T)=O\B(\frac{1}{n^3p'^2}\B)
\,.
\]
\end{lemma}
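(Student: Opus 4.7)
My plan is to exploit a single decomposition that handles both parts. For any triangle $t$, the event $\{X_t=1\}$ automatically forces $Y_{e'}=1$ for every $e'\in E(t)$; so whenever $(y_A)_{E(t)\cap A}=\vec{1}$, the event $\{Y_A=y_A\}$ splits as a \emph{free} part $\{Y_{A\setminus E(t)}=y_{A\setminus E(t)}\}$ (depending only on $X_{\sim t}$) and an \emph{automatic} part $\{Y_{E(t)\cap A}=\vec{1}\}$ that is redundant on $\{X_t=1\}$. Combined with the pinning $X_T=x_T$ (using $t\notin T$), this reduces everything to events on $X_{\sim t}$, where the pointwise two-sided bound $p'\leq \mu_G(X_t=1\mid X_{\sim t})\leq O(p')$ from Lemma~\ref{lem:mug_marginally_small} applies directly.

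\textbf{Lower bound (first part).} I will identify a subset $T_e^{\mathrm{free}}\subset T_e$ of triangles $t=(u,v,w)$ that are (i) not in $T$, and (ii) such that every edge of $t$ lying in $A$ has $y$-value $1$ (the case where some triangle in $T_e$ is pinned to $1$ by $x_T$ makes the bound trivial). By $c$-uniform 2-star density there are at least $c(n-2)$ admissible $w$, while the number forbidden by (i) or (ii) is at most $|T|+2|A|\leq 3cn/4$, so $|T_e^{\mathrm{free}}|=\Omega(n)$. Enumerating $T_e^{\mathrm{free}}=\{t_1,\ldots,t_k\}$ and telescoping,
\[
\mu_G(Y_e=1\mid Y_A=y_A,X_T=x_T)\ \geq\ 1-\prod_{i=1}^{k}\bigl(1-\mu_G(X_{t_i}=1\mid X_{t_1}=\cdots=X_{t_{i-1}}=0,Y_A=y_A,X_T=x_T)\bigr).
\]
For each factor, let $E'_i$ denote the conditioning event and $E''_i$ the weaker event obtained by replacing $Y_A=y_A$ with $Y_{A\setminus E(t_i)}=y_{A\setminus E(t_i)}$. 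The overview decomposition gives $\mu_G(X_{t_i}=1,E'_i)=\mu_G(X_{t_i}=1,E''_i)$ while $E'_i\subset E''_i$; since $E''_i$ is a function of $X_{\sim t_i}$, Lemma~\ref{lem:mug_marginally_small} yields $\mu_G(X_{t_i}=1\mid E'_i)\geq \mu_G(X_{t_i}=1\mid E''_i)\geq p'$. Hence $\mu_G(Y_e=1\mid\cdot)\geq 1-(1-p')^{\Omega(n)}=\Omega(np')$, using $np'=O(1)$.

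\textbf{Upper bound (second part).} Applying the overview decomposition to $t$ with $E_1=\{Y_{A\setminus E^*}=y_{A\setminus E^*},X_T=x_T\}$, the hypotheses $(y_A)_{E^*}=\vec{1}$ and $t\notin T$ yield
\[
\mu_G(X_t=1\mid Y_A=y_A,X_T=x_T)=\frac{\mu_G(X_t=1,E_1)}{\mu_G(E_1)\cdot \mu_G(Y_{E^*}=\vec{1}\mid E_1)}.
\]
The numerator is at most $O(p')\mu_G(E_1)$ by the marginal-smallness direction of Lemma~\ref{lem:mug_marginally_small} (since $E_1$ depends only on $X_{\sim t}$). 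For the remaining factor, I will telescope over the at-most-three edges of $E^*$ and apply the first part at each step; the successive conditioning sets stay within $A$ (each newly pinned edge already belongs to $E^*\subset A$), so the hypothesis $|A|\leq cn/4$ remains in force. This yields $\mu_G(Y_{E^*}=\vec{1}\mid E_1)\geq \Omega((np')^{|E^*|})$, and combining,
\[
\mu_G(X_t=1\mid Y_A=y_A,X_T=x_T)\ \leq\ \frac{O(p')}{\Omega((np')^{|E^*|})}\ =\ O\!\left(\frac{1}{n^{|E^*|}p'^{\,|E^*|-1}}\right).
\]
Since $|E^*|\in\{0,1,2,3\}$ and $np'=O(1)$, the worst case $|E^*|=3$ gives the claimed bound $O(1/(n^3 p'^2))$, and smaller $|E^*|$ give strictly better bounds.

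\textbf{Main obstacle.} The crux is that $\{Y_A=y_A\}$ is not a pinning of the underlying triangle variables $X$: for edges $e'$ with $y_{e'}=1$ it is a disjunction over which triangle through $e'$ is present, so the pointwise marginal bounds on $\mu_G$ cannot be invoked directly. The ``automatic $+$ free'' decomposition is precisely what converts this disjunctive event, once an indicator $\{X_t=1\}$ has been singled out, into a pure $X_{\sim t}$-event to which Lemma~\ref{lem:mug_marginally_small} applies.
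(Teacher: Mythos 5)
Your proposal is correct and follows essentially the same strategy as the paper: lower-bound the denominator $\mu_G(Y_{E^*}=\vec{1}\mid \cdot)$ by telescoping over the $\le 3$ edges of $E^*$ and invoking the first part, and upper-bound the numerator by the marginal smallness of $\mu_G$ once the conditioning has been reduced to an $X_{\sim t}$-event; the lower bound on $\mu_G(Y_e=1|\cdot)$ is likewise obtained by summing over $\Omega(n)$ admissible triangles through $e$, each contributing $\Omega(p')$. The only cosmetic difference is in the first part: the paper simply restricts to triangles in $T_e$ that avoid $A$ altogether (so the conditioning is immediately a function of $X_{\sim t_i}$ and no decomposition is needed there), while you allow triangles touching $A$ at $y$-value-$1$ edges and then invoke the same ``automatic $+$ free'' factorization you use for the second part; both sets have size $\Omega(n)$ and yield the same $\Omega(np')$ bound.
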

\begin{proof}
For the first statement, consider all triangles in $G$ that contains $e$, denoted by $T_e$. If $T_e\cap T =\ne \emptyset$ and $x_{T_e\cap T} \ne \vec{0}$, then one of the triangles in $T$ that contain $e$ is already in $x_T$, which means
\[
\mu_G(Y_e=1|Y_A=y_A,X_T=x_T)=1\,.
\]
Now we only consider the case when $T_e\cap T = \emptyset$ or $x_{T_e\cap T} = \vec{0}$. 
As $G$ is $c$-uniformly 2-star dense, there are at least $c(n-2)$ such triangles. Let $T^*\subset T_e$ be the set of triangles that does not contain any edge from $A$ or any triangles in $T$. We have $|T^*|\ge c(n-2)-cn/4-cn/4=\Theta(n)$. Suppose $T^*=\{t_1,\cdots,t_k\}$, $k=|T^*|$. We have
\begin{align*}
    \mu_G(Y_e=1|Y_A=y_A,X_T=x_T) &=  \sum_{i=1}^k\mu_G(X_{t_i}=1|Y_A=y_A,X_T=x_T,X_{t_j}=0,\forall j<i)\\
    &=\Theta(kp')=\Theta(np')\,.
\end{align*}
The first equality is because $\mu_G$ is $\Omega(p')$-marginally large, and the conditioning is an event on triangles other than $X_{t_i}$.

Now let's look at the second statement.
\[
\mu_G(X_t=1|Y_A=y_A,X_T=x_T) = \frac{\mu_G(X_t=1| Y_{A\backslash E^*}=(y_A)_{A\backslash E^*} ,X_T=x_T)}{\mu_G(Y_{E^*}=\vec{1} |Y_{A\backslash E^*}=(y_A)_{A\backslash E^*} ,X_T=x_T)}
\,.
\]
Let $E^*=\{e_1,\cdots, e_k\}$, $k\le 3$.
From the first statement, the denominator can be bounded as follows,
\begin{align*}
& \mu_G(Y_{E^*}=\vec{1} |Y_{A\backslash E^*}=(y_A)_{A\backslash E^*} ,X_T=x_T)\\
&=  \prod_{i=1}^k  \mu_G(Y_{e}=1 |Y_{A\backslash E^*}=(y_A)_{A\backslash E^*} ,X_T=x_T,Y_j=1,\forall j<k)\\
&=  \Omega\b((np')^k\b)=\Omega(n^3p'^3)\,.
\end{align*}
Since $\mu_G$ is $O(p')$-marginally small, the nominator is bounded by $O(p')$. We have 
\[\mu_G(X_t=1|Y_A=y_A,X_T=x_T)=O(1/(n^3p'^2))\,.\qedhere\]
\end{proof}

\lowInfLemma*

\begin{proof}
    The proof idea is to consider edge variable $Y$ as function of the triangle variable $X$, and use Lemma~\ref{lem:influence_partition} to bound influence between one pair edges as the sum of influences between many pairs of triangles.

Let $T_A$ be the set of triangles that contain edges in $A$, $T_e$ be the set of triangles that contain edges in $e$. Let $T_o=T_A\cap T_e$, Since any triangle in $T_o$ have at least one edge in $A$, $|T_o|\le |A|$. 


Suppose $X\sim\mu_G$, $Y$ be the corresponding edge distribution,  $\law(Y)= \mug{G}$. Suppose $\IM{A}{e}$ is given by conditioning $y_A^a$ and $y_A^b$, i.e.,
\[\IM{A}{e} =\TV \b(\mug{G}(Y_e|Y_A=y_A^a), \mug{G}(Y_e|Y_A=y_A^b ) \b)\,.\]
From Lemma~\ref{lem:bad_event},
\begin{align*}
    \IM{A}{e}&\le  2\b(\mu_{G}(X_{T_o}\not= \vec{0}|Y_A=y_A^a)+ \mu_{G}(X_{T_o}\not= \vec{0}|Y_A=y_A^b)\b)\\
    &\qquad + \TV \b( \mug{G}(Y_e|Y_A=y_A^a,  X_{T_o}= \vec{0}), \mug{G}(Y_e|Y_A=y_A^b ,  X_{T_o}= \vec{0}) \b)\,.
\end{align*}
From Lemma~\ref{lem:margin-under-conditioning}, we can bound the first two terms.
\[\mu_{G}(X_{T_o}\not= \vec{0}|Y_A=y_A^a)\le \sum_{t\in T_o}\mu_G(X_t=1|Y_A=y_A^a)=O(|A|/(n^3p'^2))\,,\]
and similarly, $\mu_{G}(X_{T_o}\not= \vec{0}|Y_A=y_A^b)=O(|A|/(n^3p'^2))$.

Let $X^a\sim X|Y_A=y_A^a,X_{T_o}= \vec{0}$, $X^b\sim X|Y_A=y_A^b,X_{T_o}= \vec{0}$ be independent, $T_C=\{t|t\in T_A,X^a_t=1\ \text{or}\  X^b_t=1\}$ we can use Lemma~\ref{lem:coupling_TV}, 
\begin{align*}
    \IM{A}{e}&\le \E_{X_{T_A}^a, X_{T_A}^b}\TV(\mug{G}(Y_e|X_{T_A}^a,  X_{T_o}= \vec{0}), \mug{G}(Y_e|X_{T_A}^b,  X_{T_o}= \vec{0}))+O(|A|/(n^3p'^2))\\
    &\le\E_{X_{T_A}^a, X_{T_A}^b} \TV(\mu_{G}(X_{T_e}|X_{T_A}^a,  X_{T_o}= \vec{0}), \mug{G}(X_{T_e}|X_{T_A}^b,  X_{T_o}= \vec{0}))+O(|A|/(n^3p'^2))\\
    &\le \E_{X_{T_A}^a, X_{T_A}^b}\I{\{t|X^a_t\not= X^b_t\}}{T_e}+O(|A|/(n^3p'^2))\\
    & \le \E_{X_{T_A}^a, X_{T_A}^b} \I{T_C}{T_e}+O(|A|/(n^3p'^2)) \,.
\end{align*}
By Lemma~\ref{lem:influence_partition}, 
$\I{T_C}{T_e}\le \sum_{t\in T_e}\I{T_C}{t}$.

Now we need to bound $\I{T_C}{t}$ for $t\in T_e$.
From Corollary~\ref{cor:influence_muG}, if $T_C$ and $t$ are adjacent then $\I{T_C}{t}= O(p')$, otherwise, $\I{T_C}{t}=O( |T_C|p'^2)$. So 
\[\sum_{t\in T_e}\I{T_C}{t}=O( |N_1(T_C)\cap T_e|p' + |T_e||T_C|p'^2)\,.\]
By the definition of $X^a,X^b$ and $T_C$, $T_C\subset T_A\backslash T_e$. So for any $t\in T_C$, $t$ share at most 1 vertex with $e$.  There are at most 2 triangles that contains edge $e$ and is adjacent to $t$. Therefore, $|N_1(T_C)\cap T_e|\le 2|T_C|$.
We have\[\sum_{t\in T_e}\I{T_C}{t}=O(|T_C|p') \,.\]
Here we used that $|T_e|\le n-2$, $np'=o(1)$. 

Now we have $\IM{A}{e}=O\B(p'\E_{X_{T_A}^a, X_{T_A}^b}|T_C|+|A|/(n^3p'^2)\B)$, and it remains to bound $\E_{X_{T_A}^a, X_{T_A}^b}|T_C|$. By linearity of expectation, 
\[\E_{X_{T_A}^a, X_{T_A}^b}|T_C|\le \sum_{t\in T_A}\b(\mu_G(X_t^a=1|Y_A=y_A^a,X_{T_o}=\vec{0})+\mu_G(X_t^b=1|Y_A=y_A^a,X_{T_o}=\vec{0})\b)\,.\]
By Lemma~\ref{lem:margin-under-conditioning}, each term on the right hand side is $O(1/n^3p'^2)$.
So $$\E_{X_{T_A}^a, X_{T_A}^b}|T_C|=O\B(\frac{|T_A|}{n^3p'^2}\B)=O\B(\frac{|A|}{n^2p'^2}\B)$$ and hence $\IM{A}{e}=O(|A|/(n^3p'^2))$.

 Let $T_e$ and $T_{e'}$ be the set of triangles that contains $e$ and $e'$ respectively. Note that $e$ and $e'$ do not share common nodes, so $T_e$ and $T_{e'}$ are disjoint. Similar to the previous argument, suppose 
 \[\IM{e'}{e} = \TV \b( \mug{G}(Y_e|Y_{e'}=1), \mug{G}(Y_e|Y_{e'}=0 ) \b)\,.\]

Let $X^a\sim X|Y_{e'}=1$, $X^b\sim X|Y_{e'}=0$ be independent, $T_C=\{t|t\in T_{e'},X^a_t=1\}$, we have
\begin{align*}
    \IM{e'}{e}& \le \E_{X^a} \I{T_C}{T_e} \,.
\end{align*}
Again by Lemma~\ref{lem:influence_partition},
$\I{T_C}{T_e}\le \sum_{t_c\in T_C, t_e\in T_e}\I{t_c}{t_e}$. Now let's bound $\I{t_c}{t_e}$ for possible pairs of $t_e$ and $t_c$. From Corollary~\ref{cor:influence_muG}, this is bounded by $p'^{d(t_c,t_e)}$.

Let $T_o$ be the set of triangles
\[T_{o}=\{t:t\in T_{e'},d(t,T_e)=1\}\,.\]
This is equivalent to triangles that contains $e'$ and share a node with $e$. There are at most 2 such triangles. For $t_c\in T_o$, there is at most 1 triangle in $T_e$ with distance 1, so
\[\sum_{t_e\in T_e} \I{t_c}{t_e}\le O(p'+np'^2)=O(p')\,.\]
For $t_c\in T_{e'}\backslash T_o$, there is at most 1 triangle in $T_e$ with distance 2, so
\[\sum_{t_e\in T_e} \I{t_c}{t_e}\le O(p'^2+np'^3)=O(p'^2)\,.\]
Combining the two cases, we get
\[\I{T_C}{T_e}= O\b(\ind{T_C\cap T_o\not=\emptyset}p' + |T_C|p'^2\b)\,.\]

Now we have $\IM{e'}{e}=O\b(p'^2\E |T_C|+p'\Pr(T_C\cap T_o\not=\emptyset)\b)$, where the randomness is from $X^a$. From Lemma~\ref{lem:margin-under-conditioning}, we have
\[
\E|T_C| = \sum_{t\in T_{e'}}\mu_G (X_t=1|Y_{e'}=1)=O(1/(n^2p'^2))\,,
\]
and
\[
\Pr(T_C\cap T_o\not=\emptyset)\le \sum_{t\in T_o}\mu_G (X_t=1|Y_{e'}=1)=O(1/(n^3p'^2))\,.
\]
Combining the above equations, we have $\IM{e'}{e}=O(1/(n^3p'))$.
\end{proof}

\influencepe*
\begin{proof}
If $e'\in G$, this is trivial. So assume $e'\not\in G$.
Because
\[\mug{G+e'}(Y_e=1)  = \mu_{G+e'}(Y_{e'}=1) \mug{G+e'}(Y_e=1|Y_{e'}=1) + \mu_{G+e'}(Y_{e'}=0) \mug{G+e'}(Y_e=1|Y_{e'}=0)\,,\]
and $\mug{G}(Y_e=1) = \mug{G+e'}(Y_e=1|Y_{e'}=0)$,
we have 
\[\left|
\mug{G}(Y_e=1)-\mug{G+e'}(Y_e=1)\right| \le |\mug{G+e'}(Y_e=1|Y_{e'}=1)-\mug{G+e'}(Y_e=1|Y_{e'}=0)|\le \IM{e'}{e}\,.\qedhere\]
\end{proof}

\section{Concentration and Mixing for Marginally Small Distributions}\label{sec:small-marginal-concentration}

In this section we continue our study of marginally small distributions, and show concentration of Lipschitz functions and fast mixing of Glauber dynamics.

\subsection{Glauber Dynamics and Path Coupling}

\label{sec:glauber}
Glauber Dynamics is a simple and commonly used discrete-time Markov chain to sample from distributions with local structures. Consider a distribution $\mu$ over $\Omega = \cX^{[N]}$. Starting with arbitrary initial configuration $X\in \Omega$, at each time step, the Glauber dynamics sample a uniformly random location $i\in [N]$ and resamples $X_i$ according to the conditional distribution given other locations, $\mu(x_i|x_{\sim i} = X_{\sim i})$. Let $\gl(X)$ denote the variable generated by applying one step of Glauber dynamics on $X$.

When applying Glauber dynamics, the crucial property we care about is the mixing time, i.e., the time it takes for the dynamics to converge to the stationary distribution. There are many approaches to prove the mixing speed of Glauber dynamics, in this paper we will be using the path coupling technique introduced in \cite{bubley1997path}. 
\begin{definition}[Weighted Hamming Distance]\label{def:path-distance}
A path distance over $\Omega$ is a weighted hamming distance where 
$d(X,Y)=\sum_{i:X_i\not=Y_i} c_i$.
\end{definition}

\begin{lemma}[Path Coupling, \cite{bubley1997path}]\label{lem:path-coupling}
Let $d(X,Y)$ denote the path distance between a pair of configurations $X,Y\in \Omega$. If for any pair of $X,Y$ that differs at one coordinate, there is a coupling $(\gl(X),\gl(Y))$ such that 
\[\E d(\gl(X),\gl(Y)) \le (1-\alpha)d(X,Y)\,,\]
then the coupling can be extended to any pair of configurations that same inequality holds.
\end{lemma}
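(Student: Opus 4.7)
The plan is to prove this by exhibiting a coupling of $(\gl(X),\gl(Y))$ for arbitrary $X,Y\in\Omega$ built by stitching together the assumed one-step couplings along a shortest path between $X$ and $Y$, and then applying the triangle inequality for the weighted Hamming distance.

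First I would fix arbitrary $X,Y\in\Omega$ and construct an explicit path $X=Z_0,Z_1,\dots,Z_m=Y$ such that consecutive $Z_{j-1}$ and $Z_j$ differ in exactly one coordinate and $d(X,Y)=\sum_{j=1}^m d(Z_{j-1},Z_j)$. This is possible because $d$ is a weighted Hamming distance as in Definition~\ref{def:path-distance}: one enumerates the coordinates on which $X$ and $Y$ disagree and flips them one at a time, incurring exactly the contributing weight $c_i$ at each step.

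Next I would invoke the hypothesis on each adjacent pair $(Z_{j-1},Z_j)$ to obtain a coupling $\pi_j$ of $(\gl(Z_{j-1}),\gl(Z_j))$ satisfying $\E_{\pi_j} d(\gl(Z_{j-1}),\gl(Z_j))\le (1-\alpha)\,d(Z_{j-1},Z_j)$. The key technical step is then to glue these $m$ pairwise couplings into a single joint coupling $\pi$ of $(\gl(Z_0),\gl(Z_1),\dots,\gl(Z_m))$ whose projection onto the pair $(j{-}1,j)$ equals $\pi_j$; this is the standard ``gluing lemma'' for couplings, proved inductively by conditioning: sample $\gl(Z_0),\gl(Z_1)$ from $\pi_1$, then for $j\ge 2$ sample $\gl(Z_j)$ from the conditional of $\pi_j$ given the already-sampled $\gl(Z_{j-1})$. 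I would then take the marginal of $\pi$ on coordinates $(0,m)$ as the desired coupling of $(\gl(X),\gl(Y))$.

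Finally, combining the triangle inequality for $d$ with linearity of expectation yields
\[
\E\, d(\gl(X),\gl(Y)) \;\le\; \sum_{j=1}^m \E_{\pi_j} d(\gl(Z_{j-1}),\gl(Z_j)) \;\le\; (1-\alpha)\sum_{j=1}^m d(Z_{j-1},Z_j) \;=\; (1-\alpha)\,d(X,Y),
\]
which is the claim. The main (and only nontrivial) obstacle is the gluing step; everything else is a direct consequence of the hypothesis together with the fact that weighted Hamming distance is additive along single-coordinate-flip paths. Since the gluing lemma is standard and requires only that consecutive couplings agree on the shared marginal $\law(\gl(Z_j))$ (which they do by definition), the argument goes through cleanly.
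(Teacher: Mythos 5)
The paper does not prove this lemma itself; it cites it to Bubley and Dyer \cite{bubley1997path}, where exactly this argument appears. Your proof is the standard one and it is correct: decomposing $X\to Y$ along single-coordinate flips (which is where the weighted-Hamming structure of $d$ is essential, so the intermediate distances add up to $d(X,Y)$), gluing the assumed one-step couplings sequentially (the conditional construction is sound because the shared marginal $\law(\gl(Z_j))$ matches between consecutive $\pi_j$ and $\pi_{j+1}$), and closing with the triangle inequality for $d$ plus linearity of expectation. Nothing is missing; the only small unspoken step is that the weighted Hamming metric satisfies the triangle inequality, which is immediate.
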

We say the Glauber dynamics satisfies path coupling with constant $\alpha$ with respect to distance $d$ if the above holds. Path coupling imply mixing of Glauber dynamics.

\begin{lemma}\label{lem:path-glauber}
If the Glauber dynamics of $\mu$ satisfies path coupling with constant $\alpha$ with respect to distance $d$ defined in Definition~\ref{def:path-distance}, then for any starting configuration $X_0$, we have
\[\TV(\gl^t(X_0),\mu)\le \delta\]
when $t\ge \log_{1-\alpha}(\epsilon\min_i c_i/(\sum_ic_i))$.
\end{lemma}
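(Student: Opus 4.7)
The plan is to execute the standard path-coupling-to-mixing argument, with some care about the weighted Hamming metric. First, by Lemma~\ref{lem:path-coupling} the single-step contraction extends to arbitrary pairs of configurations: for any $X,Y\in\Omega$ there is a one-step coupling $(\gl(X),\gl(Y))$ satisfying $\E\,d(\gl(X),\gl(Y))\le (1-\alpha)d(X,Y)$. Iterating this coupling (composing the couplings step by step and taking the tower property of expectation) yields $\E\,d(\gl^t(X),\gl^t(Y))\le (1-\alpha)^t d(X,Y)$ for all $t\ge 0$.

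Next I would invoke the usual coupling-to-TV bridge. Let $Y_0\sim \mu$ (so that $\gl^t(Y_0)\sim\mu$ by stationarity) and couple $\gl^t(X_0)$ with $\gl^t(Y_0)$ via the iterated coupling above. By the coupling inequality,
\[
\TV(\gl^t(X_0),\mu)\le \Pr\b(\gl^t(X_0)\ne \gl^t(Y_0)\b).
\]
Because $d$ is a weighted Hamming distance with per-coordinate weights $c_i$, any two distinct configurations satisfy $d(X,Y)\ge \min_i c_i$, so $\mathbf 1\{X\ne Y\}\le d(X,Y)/\min_i c_i$. Taking expectations,
\[
\Pr\b(\gl^t(X_0)\ne \gl^t(Y_0)\b)\le \frac{\E\,d(\gl^t(X_0),\gl^t(Y_0))}{\min_i c_i}\le \frac{(1-\alpha)^t\, d(X_0,Y_0)}{\min_i c_i}\le \frac{(1-\alpha)^t \sum_i c_i}{\min_i c_i},
\]
using that $d(X_0,Y_0)\le\sum_i c_i$ (the diameter of $\Omega$ under $d$).

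Finally I would solve for $t$: the right-hand side is at most $\epsilon$ as soon as $(1-\alpha)^t\le \epsilon\,\min_i c_i/\sum_i c_i$, i.e.\ $t\ge \log_{1-\alpha}\b(\epsilon\min_i c_i/\sum_i c_i\b)$, matching the stated bound (interpreting the $\delta$ in the statement as the same error parameter $\epsilon$). There is no real obstacle here; the only subtlety is making sure the iterated coupling is constructed so that the contraction compounds multiplicatively, which is immediate from the Markov property by conditioning on $(\gl^{t-1}(X_0),\gl^{t-1}(Y_0))$ and applying the one-step bound.
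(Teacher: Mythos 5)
Your proof is correct and follows the same route as the paper: iterate the path-coupling contraction $t$ times against a stationary copy $Y_0\sim\mu$, pass from $\Pr(X_t\ne Y_t)$ to $\E\,d(X_t,Y_t)/\min_i c_i$, bound $d(X_0,Y_0)\le\sum_i c_i$, and solve for $t$. You also correctly identify the $\delta$ vs.\ $\epsilon$ typo in the lemma statement.
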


\begin{proof}
Now imagine $X_0$ is the initial configuration chosen by the algorithm and $Y_0\sim \mu$. Let $X_t = \gl^t(X_0)$ and $Y_t = \gl^t(Y_0)$ denote the random variables we get after applying $t$ steps of Glauber dynamics on $X_0$ and $Y_0$.  Using the Lemma above for $t$ times, we get after $t$ time steps, 
\[\E d(X_t,Y_t)\le (1-\alpha)^t\sum_ic_i\,.\]
Since the distance $d(X,Y)$ is at least $\min_i c_i$ when $X\not=Y$, we get an upper bound on the total variation distance between the law of $X_t$ and $\mu$:
\[\TV(X_t,\mu)\le \Pr(X_t\not=Y_t)\le \frac{\E  d(X_t,Y_t)}{\min_i c_i}\le (1-\alpha)^t\frac{\sum_ic_i}{\min_i c_i}\,.\]
It follows that to generate a sample that is $\epsilon$ close to $\mu$ in total variation, we can start with arbitrary initial configuration and run Glauber dynamics for $\log_{1-\alpha}(\epsilon\min_i c_i/(\sum_ic_i))$ time-steps.
\end{proof}

\subsection{Concentration from Mixing}\label{sec:conc-mix}
In this section we show that if a function can be chosen to represent the distance in path coupling, the function must concentrates.

\begin{lemma}[Concentration From Path Coupling]\label{lem:pathcoupling_concentration}
Let $\mu$ be a distribution over $\Omega=\{0,1\}^N$. Let $\gl$ be the Glauber dynamics of $\mu$. Suppose function  $f:\{0,1\}^N\rightarrow \mathbb{R}$ satisfies 
\[\sup_{x_1,\cdot,x_N,x_i'}|f(x_1,\cdots, x_i,\cdots,x_N)-f(x_1,\cdots, x_i',\cdots,x_N)|\le L_i\]
Let $D$ be the path distance on $\Omega$ defined by 
\[D(x,x') = \sum_{i:x_i\not=x_i'}L_i\,.\]
Suppose that $D$ satisfies path coupling with constant $\alpha$, i.e., suppose that for $x,x'$ that differ at precisely one coordinate, there exists a coupling between $\gl (x)$ and $\gl (x')$ such that\[\E D(\gl (x), \gl (x'))\le (1-\alpha)D(x,x')\,.\]
Then for any $\delta>0$, $t>0$,
\[\mu(f-\E_\mu(f)>t)\le \exp \B( \frac{-t^2/2}{4T\|L\|_2^2/N+\|L\|_{\infty}t/3} \B)+\delta\,,\]
where $T = \log_{1-\alpha}\frac{\delta\inf_i L_i}{\|L\|_1}$ is the mixing time of $\gl$. Further, if $\mu $ is $q$-marginally small, 
\[\mu(f-\E_\mu(f)>t)\le \exp \B( \frac{-t^2/2}{8Tq\|L\|_2^2/N+\|L\|_{\infty}t/3} \B)+\delta\,.\]
\end{lemma}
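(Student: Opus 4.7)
The plan is to prove Lemma~\ref{lem:pathcoupling_concentration} via a Doob martingale on the Glauber trajectory combined with Freedman's Bernstein-type inequality, using path coupling to certify Lipschitz regularity of the ``future expectations.'' Fix any deterministic initial state $X_0\in \{0,1\}^N$ and run the Glauber chain for exactly $T=\log_{1-\alpha}\tfrac{\delta\inf_i L_i}{\|L\|_1}$ steps, producing $X_0,X_1,\dots,X_T$. By Lemma~\ref{lem:path-glauber} we have $\TV(\law(X_T),\mu)\le \delta$, so for any threshold $s$,
\[
\mu(f>s)\le \Pr(f(X_T)>s)+\delta.
\]
Moreover, coupling $X_T$ with a chain $Y_T$ started from $Y_0\sim\mu$ and applying the path-coupling contraction $T$ times yields $\E D(X_T,Y_T)\le (1-\alpha)^T\|L\|_1=\delta\inf_i L_i$, and since $f$ is $D$-Lipschitz this gives the bias bound $|\E f(X_T)-\E_\mu f|\le \delta \inf_i L_i$, which I will absorb into the constants. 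Thus it suffices to concentrate $f(X_T)$ around its own mean.

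I would then define the Doob martingale $M_k=\E[f(X_T)\mid \mathcal{F}_k]$ with $\mathcal{F}_k=\sigma(X_0,\dots,X_k)$, so $M_0=\E f(X_T)$ (as $X_0$ is deterministic) and $M_T=f(X_T)$. By the Markov property $M_k=g_k(X_k)$ where $g_k(y)=\E[f(X_T)\mid X_k=y]$. The key observation, which is where path coupling enters, is that $g_k$ inherits the Lipschitz constants of $f$: for any $y,y'$ differing only in coordinate $i$, path coupling provides a coupling with $\E D(X_T,X_T')\le (1-\alpha)^{T-k}L_i\le L_i$, so $|g_k(y)-g_k(y')|\le \E|f(X_T)-f(X_T')|\le L_i$.

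Next I would bound the martingale increments and their conditional variances. Given $\mathcal{F}_{k-1}$, one step picks $i_k\sim \unif([N])$ and resamples $X_k[i_k]\sim\bern(p_{i_k})$, where $p_i=\mu(x_i=1\mid x_{\sim i}=X_{k-1,\sim i})$. Writing $A(i)=g_k(X_{k-1}^{(i\to 0)})$, $B(i)=g_k(X_{k-1}^{(i\to 1)})$, and $C(i)=(1-p_i)A(i)+p_iB(i)$, one has $M_{k-1}=\tfrac1N\sum_i C(i)$ and $M_k=g_k(X_k)$. The variance decomposes as
\[
\E[(M_k-M_{k-1})^2\mid\mathcal{F}_{k-1}]
=\tfrac1N\sum_i p_i(1-p_i)(B(i)-A(i))^2+\tfrac1{2N^2}\sum_{i,j}(C(i)-C(j))^2.
\]
Using $|B(i)-A(i)|\le L_i$ and $|C(i)-C(j)|\le L_i+L_j$ (which follow from the Lipschitz property of $g_k$), both terms are bounded by $O(\|L\|_2^2/N)$, giving an almost-sure bound $\E[(M_k-M_{k-1})^2\mid\mathcal{F}_{k-1}]\le 4\|L\|_2^2/N$. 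The same Lipschitz bound yields $|M_k-M_{k-1}|\le \|L\|_\infty+\|L\|_1/(2N)=O(\|L\|_\infty)$. Under marginal smallness, $p_i(1-p_i)\le q$ and (case-by-case) $|C(i)-g_k(X_{k-1})|\le q L_i$, which replaces the $4$ by $8q$ throughout.

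Summing over $k=1,\dots,T$ gives $\sum_k \E[(M_k-M_{k-1})^2\mid\mathcal{F}_{k-1}]\le 4T\|L\|_2^2/N$ almost surely. Freedman's martingale Bernstein inequality then yields
\[
\Pr\bigl(M_T-M_0>t\bigr)\le \exp\!\B(\frac{-t^2/2}{4T\|L\|_2^2/N+\|L\|_\infty t/3}\B),
\]
and combining with the TV bound and the absorbable bias $\delta\inf_i L_i$ produces the stated inequality; the marginal-small version is identical with the improved variance constant. The main obstacle I expect is the variance bound: the Doob increments mix two sources of randomness (which coordinate is resampled, and its resampled value), and getting a state-independent bound of $O(\|L\|_2^2/N)$ requires the Lipschitz control on $g_k$ supplied by path coupling, plus careful handling of the $C(i)$ term to avoid a loss of $\|L\|_1/N$ that would not match the claimed denominator.
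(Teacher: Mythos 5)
Your proposal follows essentially the same route as the paper: a Doob martingale $g_k=\E[f(X_T)\mid X_k]$ on a $T$-step Glauber trajectory, path coupling to control regularity, a Bernstein-type martingale inequality (your Freedman is the paper's Lemma~\ref{lem:mcdiarmid}), and the final $\delta$ from $\TV(\law(X_T),\mu)\le\delta$. The one stylistic difference is in how path coupling is invoked: the paper bounds the increment $Y_t=g_t-g_{t-1}$ directly via the inequality $Y_t\le D(X_t,X_{t-1})+\E_{X'_t|X_{t-1}}D(X'_t,X_{t-1})$ (using DPI for the path metric), whereas you first prove that $g_k$ is itself $D$-Lipschitz with the original constants $L_i$ and then expand the conditional variance by the law of total variance over the choice of coordinate; both give the same $O(\|L\|_2^2/N)$ conditional variance bound and $O(\|L\|_\infty)$ increment bound. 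Your version is a clean repackaging, and it makes the source of regularity a bit more visible. You are also more careful than the paper about the bias between $\E f(X_T)$ and $\E_\mu f$, which the paper silently absorbs.

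Two details in your plan need attention. First, the marginally-small improvement: you claim $|C(i)-g_k(X_{k-1})|\le qL_i$ pointwise, but this fails when $X_{k-1,i}=1$, since then $|C(i)-g_k(X_{k-1})|=(1-p_i)|B(i)-A(i)|$, and $1-p_i$ is near $1$. The paper sidesteps this by starting the chain at $X_0=\vec 0$ and observing that the marginal $\Pr(X_{t,i}=1)$ stays $\le q$ along the trajectory, so that the coordinate is actually modified with probability at most $2q/N$; note this is an \emph{averaged} statement rather than an almost-sure conditional bound, and to make the McDiarmid/Freedman hypothesis literally hold a.s.\ you would want to condition on the (high-probability) event that the trajectory stays marginally small, or restate the variance bound accordingly. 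Your ``fix any deterministic initial state'' loses this leverage, so you should start from $\vec 0$ as the paper does. Second, in your variance expansion the cross term contributes $\|L\|_1^2/N^2$, which you correctly flag; it is harmless because $\|L\|_1^2/N\le\|L\|_2^2$ by Cauchy--Schwarz, so it folds into the $4\|L\|_2^2/N$ budget.
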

\begin{proof}
Consider running Glauber dynamics with initial condition  $X_0=\vec{0}$. Let $X_t=\gl^t(X_0)$, where $t=0,1,\cdots, T$.
Define a sequence of martingale 
\[g_t=\E [f(X_T)|X_t]\,.\]
The martingale difference  $Y_t=g_t-g_{t-1}$ can be written as
\[Y_t = f\b(\gl ^{T-t}(X_t)\b)-\E_{X_t'|X_{t-1}}f\b(\gl^{T-t}(X_t')\b)\,.\]

From the definition of $D$, for any $x,x'\in \Omega$, $D(x,x')\ge |f(x)-f(x')|$, hence
\begin{align*}
    Y_t&\le \E_{X_t'|X_{t-1}} D\b( \gl ^{T-t}(X_t),  \gl ^{T-t}(X_t')\b)\\
    &\le  \E_{X_t'|X_{t-1}} D(X_t,X_t')\\
    &\le D(X_t,X_{t-1})+\E_{X_t'|X_{t-1}} D(X_t', X_{t-1}) \,.
\end{align*}
The second inequality is by the data processing inequality and the third inequality is by the triangle inequality.
Since $X_t$ and $X_{t-1}$ differ in at most one coordinate,
 $Y_t\le 2\|L\|_{\infty}$ almost surely. Also, by the upper bound above, 
\[\E[Y_t^2|X_{t-1}]\le \E\b[ D(X_t,X_{t-1})^2\b] +3\b(\E[ D(X_t,X_{t-1})|X_{t-1}]\b)^2\le 4\E[ D(X_t,X_{t-1})^2 |X_{t-1}]\,.\]
The Glauber dynamics chooses a location $i\in[N]$ uniformly at random, when $i$ is chosen, $D(X_t,X_{t-1})\le L_i$. So 
\[\E[Y_t^2|X_{t-1}]\le4\sum_iL_i^2/N=4 \|L\|_2^2/N\,.\]
When $\mu$ is $q$-marginally small, the probability that we change coordinate $i$ in the Glauber dynamics is at most
\[\frac{\Pr((X_{t-1})_i=0)q+\Pr((X_{t-1})_i=1)}{N}\,.\]
Note that $\Pr((X_{t-1})_i=1)\le q$ is maintained when we run Glauber dynamics. So the above quantity is bounded by $2q/N$.
We have 
\[\E [Y_t^2|X_{t-1}]\le 4\sum_i 2qL_i^2/N = 8q\|L\|_2^2/N\,.\]
Therefore, we can apply Mcdiarmid's inequality (Lemma~\ref{lem:mcdiarmid}), yielding
\[\Pr(g_T-\E g_T>t)\le \exp\B(\frac{-t^2/2}{8qT\|L\|_2^2/N+\|L\|_\infty t/3}\B)\,.\]
By a standard path coupling argument, Lemma~\ref{lem:path-glauber},
$\TV(\law(X_T), \mu)\le \delta$. Since $g_T=f(X_T)$ by definition, we have
\[\mu(f-\mu(f)>t)\le \exp\B(\frac{-t^2/2}{8qT\|L\|_2^2/N+\|L\|_\infty t/3}\B)+\delta\,.\qedhere\]
\end{proof}

It turns out that we can apply path coupling to any $q$-marginally small distribution as long as $q$ is much less than the inverse degree of its graphical model.

\begin{theorem}\label{thm:small_marginal_concentration}
Let $P$ be a distribution over $\{0,1\}^N$ and let $G$ be the graphical model of $P$ with maximum degree $\Delta$. If $P$ is $q $-marginally small for $q\ll 1/\Delta$, then:
\begin{enumerate}
    \item If $X\sim P$, then
for any $f$ that is $L$-Lipschitz on $\{0,1\}^N$, $|f(X)-\E f(X)|= O (\sqrt{NqL^2}\log N)$ with high probability.
\item 
The Glauber dynamics for $P$ mixes in $O(N\log N)$ time.
\end{enumerate}
\end{theorem}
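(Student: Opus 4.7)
The plan is to establish path coupling for the Glauber dynamics with respect to the unit-weight Hamming distance, with contraction rate $\alpha = \Omega(1/N)$. Both claims will follow: part 2 immediately via Lemma~\ref{lem:path-glauber}, and part 1 via Lemma~\ref{lem:pathcoupling_concentration} applied with $L_i = L$ for every $i$.

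For the path-coupling step, take $x,x' \in \{0,1\}^N$ differing at exactly one coordinate $i$, draw the same random coordinate $j \in [N]$ for both chains, and couple the resamplings optimally. There are three cases. If $j = i$ (probability $1/N$), then $x_{\sim i} = x'_{\sim i}$ so the two conditional laws are identical, identity coupling coalesces the chains, and the distance drops from $1$ to $0$. If $j \neq i$ and $j$ is not a neighbor of $i$ in the graphical model $G$, then by the Markov property the conditionals at $j$ agree, and the distance stays at $1$. If $j$ is one of the at most $\Delta$ neighbors of $i$, then $P(X_j = 1 \mid X_{\sim j} = x_{\sim j})$ and $P(X_j = 1 \mid X_{\sim j} = x'_{\sim j})$ both lie in $[0,q]$ by the $q$-marginal-smallness hypothesis, so their TV is at most $q$, and the optimal coupling makes the distance rise to $2$ with probability at most $q$. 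Averaging,
\[
\E[D_{\mathrm{new}}] \le 1 - \frac{1}{N} + \frac{\Delta q}{N} = 1 - \frac{1 - \Delta q}{N}\,,
\]
and since $q \ll 1/\Delta$ we obtain $\alpha := (1-\Delta q)/N = \Omega(1/N)$, as desired.

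For part 2, Lemma~\ref{lem:path-glauber} with unit weights ($\min c_i = 1$, $\sum c_i = N$) yields mixing in $\log_{1-\alpha}(\epsilon/N) = O(N \log(N/\epsilon))$ steps, which is $O(N \log N)$ at any inverse-polynomial $\epsilon$. For part 1, applying Lemma~\ref{lem:pathcoupling_concentration} with $L_i = L$ uniformly gives $\|L\|_\infty = L$ and $\|L\|_2^2 = NL^2$. Plugging in $T = O(N \log N)$ and $\delta = n^{-c}$ into the $q$-marginally-small form of the lemma yields, for any $t > 0$,
\[
\Pr\b(f(X) - \E f(X) > t\b) \le \exp\B(\frac{-t^2/2}{O(NqL^2 \log N) + O(Lt)}\B) + n^{-c}\,.
\]
Taking $t = \Theta(L\sqrt{Nq \log^2 N})$ makes the exponential tail $n^{-\Omega(c)}$ (the Lipschitz term $Lt$ is dominated by the variance whenever the bound is nontrivial), and applying the same argument to $-f$ gives $|f(X) - \E f(X)| = \tilde O(\sqrt{NqL^2})$ with high probability.

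The main point to get right is the path-coupling contraction. A tempting alternative would be to invoke the exponentially decaying influence bound from Theorem~\ref{thm:small_marginal_low_influence}, but since the number of vertices within graph distance $d$ of $i$ can grow like $\Delta^d$ while the per-vertex influence only decays like $(Cq)^d$ with $C$ potentially as large as $\Delta$, summing influences over all $j$ that way would force the strictly stronger assumption $q \ll 1/\Delta^2$. Working with the two direct observations that (a) influence is exactly zero between non-neighbors in $G$, and (b) single-edge influence between neighbors is bounded by $q$ because both conditional distributions are $q$-bounded Bernoullis, avoids this loss and is what makes the hypothesis $q \ll 1/\Delta$ suffice.
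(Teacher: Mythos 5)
Your proof is correct and follows essentially the same approach as the paper: establish path coupling for the Glauber dynamics at rate $\alpha = \Omega(1/N)$ via the $q$-marginal-smallness bound in the neighbor case, then feed the resulting $O(N\log N)$ mixing time into Lemma~\ref{lem:pathcoupling_concentration}. The one small difference is that you use the optimal (maximal) coupling at a neighbor $j$, bounding the disagreement probability by $\TV(\bern(p_1),\bern(p_2)) = |p_1-p_2| \le q$, whereas the paper couples arbitrarily and uses a union bound to get $2q$; both give $\alpha = \Omega(1/N)$, so this is a cosmetic tightening.
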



\begin{proof}
We can use Lemma~\ref{lem:pathcoupling_concentration} by defining $D(x,x')=L$ for adjacent configurations. 
In the following, we show that $D$ satisfies the path coupling condition with constant $\alpha=\Omega(1/N)$. 

Suppose $x$ and $x'$ differ only at position $i$, $x_i=1$, $x'_i=0$.
Consider the following coupling between $\gl (x)$ and $\gl (x')$. The Glauber dynamics chooses the same location in $x$ and $x'$, say $j\in [N]$. If $j=i$ or $j$ is not adjacent to $i$ in $G$, since $x_j|x_{\sim i}$ and $x_j|x'_{\sim i}$ have the same distribution, we can couple them by the same choice of $x_j$. If $j$ is a neighbor of $i$, we couple them arbitrarily. 

When the Glauber dynamics chooses $i$, the new distance is 0. When Glauber dynamics chooses a neighbor $j$ of $i$, the new distance is either $L$ or $2L$ depending on whether $\gl (x)$ and $\gl (x')$ chooses the same value on $j$. Since the distribution is $q$-small, they both choose $x_j=0$ with at least $1-2q$ probability, from the union bound. When Glauber dynamics chooses any other coordinate, the new distance is still $L$. Therefore, under this coupling,
\begin{align*}
\E D(\gl (x),\gl (x'))&\le \frac{N-\Delta-1}{N}\cdot L+ \frac{\Delta}{N}\cdot(1-2q)\cdot L +\frac{\Delta}{N}\cdot(2q)\cdot (2L)\\
&\le \B(1-\frac{1-6q\Delta}{N}\B)L \\
&= \B(1-\frac{1-6q\Delta}{N}\B)D(x,x')\,.
\end{align*}
Hence, $D$ satisfies the contraction required by path coupling with constant $(1-6q\Delta)/{N}=\Omega(1/N)$. 

Now we can apply Lemma~\ref{lem:pathcoupling_concentration}. For any constant $c$ and $\delta=N^{-c}$, the mixing time 
$T$ is $\log_{(1-6q\Delta)/N} (N^{-c-1})=O(N\log N)$. Choosing $t = \sqrt{cqTL^2\log N}=O(\sqrt{NqL^2\log ^2 N})$, we have 
\[\Pr(f(X)-\E f(X)>t)\le \exp\B(\frac{cqTL^2\log N/2}{8qTL^2+L\sqrt{cqTL^2\log N}/3}\B)+\delta \le 2N^{-c/32}\,.\]
By considering $-f(X)$, the same bound applies to $\Pr(f(X)-\E f(X)<-t)$.
\end{proof}

We have the following corollary as an immediate result, noting that $\mu_G$ is $O(p')-$marginally small under arbitrary conditioning.
\concentrationmuG*

A similar result can be obtained for functions that are not everywhere Lipschitz, but are when restricted to a high probability set.
\begin{corollary}\label{cor:small_marginal_concentration_extension}
Under the assumptions of Theorem~\ref{thm:small_marginal_concentration}, if there exists a $\cY\subset \{0,1\}^N$ such that $f$ restricted to $\cY$
 is $L$-Lipschitz, $X$ is in $\cY$ with high probability,
 and $1/q$ is polynomially bounded by $N$, we still have $|f(X)-\E[f(X)]|= O(\sqrt{NqL^2}\log N) $ with high probability.
\end{corollary}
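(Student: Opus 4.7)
The approach is to reduce to Theorem~\ref{thm:small_marginal_concentration} by extending $f|_\cY$ to a globally $L$-Lipschitz function $\tilde f$ on $\{0,1\}^N$, applying the theorem to $\tilde f$, and then transferring the conclusion back to $f$ using that $\{X \in \cY\}$ is a high-probability event.

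First I would define the McShane--Whitney Lipschitz extension
\[
\tilde f(x) \defeq \inf_{y \in \cY}\bigl\{\, f(y) + L \cdot d_H(x,y) \,\bigr\},
\]
where $d_H$ denotes Hamming distance on $\{0,1\}^N$. A standard check (using the $L$-Lipschitz property of $f$ on $\cY$) shows that $\tilde f$ is $L$-Lipschitz on all of $\{0,1\}^N$ and coincides with $f$ on $\cY$; in particular $\tilde f$ is bounded, with $\|\tilde f\|_\infty \le \sup_{y \in \cY}|f(y)| + LN$. Applying Theorem~\ref{thm:small_marginal_concentration} to $\tilde f$ then yields that with high probability
\[
|\tilde f(X) - \E \tilde f(X)| = \tilde O(\sqrt{NqL^2}).
\]

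To transfer this bound to $f$, I would use the triangle inequality
\[
|f(X) - \E f(X)| \le |f(X) - \tilde f(X)| + |\tilde f(X) - \E \tilde f(X)| + |\E \tilde f(X) - \E f(X)|.
\]
On the high-probability event $\{X \in \cY\}$ the first term vanishes. For the last term, since $\tilde f = f$ on $\cY$,
\[
|\E \tilde f(X) - \E f(X)| = \bigl|\E\bigl[(\tilde f - f)(X)\, \mathbf{1}_{X \notin \cY}\bigr]\bigr| \le (\|\tilde f\|_\infty + \|f\|_\infty) \cdot \Pr(X \notin \cY).
\]
Under the (implicit in the applications) assumption that $|f|$ is polynomially bounded in $N$, and using that $\Pr(X \notin \cY) \le N^{-c}$ for arbitrarily large $c$, this bound is $N^{-\Omega(1)}$. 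The hypothesis that $1/q$ is polynomially bounded guarantees $\sqrt{NqL^2}$ is at least inverse-polynomial in $N$, so choosing $c$ large enough makes the expectation gap dominated by $\tilde O(\sqrt{NqL^2})$, which combined via the triangle inequality yields the desired bound.

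The main technical point will be the expectation comparison $|\E \tilde f(X) - \E f(X)|$: the high-probability coincidence of $f$ and $\tilde f$ on $\cY$ does not by itself control expectations. This is precisely what the polynomial lower bound on $q$ is used for, enabling us to absorb any polynomially small tail contribution into the concentration scale. No new combinatorial arguments are required beyond the existence of a Lipschitz extension and the already-proved theorem.
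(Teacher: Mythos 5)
Your proof is correct and follows essentially the same route as the paper: both construct the McShane--Whitney Lipschitz extension $\bar f(x) = \inf_{y\in\cY}\{f(y) + L\,d_H(x,y)\}$, apply Theorem~\ref{thm:small_marginal_concentration} to the extension, and transfer the conclusion back to $f$ on the high-probability event $\{X\in\cY\}$ while showing the expectation gap $|\E f(X)-\E \bar f(X)|$ is negligible. The one place where you go slightly further than the paper is in flagging that controlling this expectation gap actually requires an implicit bound on $|f|$ outside $\cY$ (the paper's inequality $|\E f(X)-\E\bar f(X)|\le L/N^{c-1}$ tacitly assumes something like $\|f-\bar f\|_\infty = O(LN)$, which is not forced by the hypothesis that $f$ is $L$-Lipschitz only on $\cY$); you are right that this holds for the polynomially bounded $f$'s that actually appear in the applications, and your explicit statement of the assumption is a fair improvement in rigor rather than a different argument.
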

\begin{proof}
By assumption,
 $\Pr(X\in \cY)\ge 1-1/N^c$ for some sufficiently large constant $c$. 
We take $$\bar f(x) = \inf_{y\in \cY}f(y)+d_H(x,y)\cdot L\,,$$
where $d_H(x,y)=|\{i:x_i\neq y_i\}|$ is the Hamming distance.
Note that (i) if $x\in \cY$, then $f(x) = \bar f(x)$, and (ii)
 $\bar f$ is everywhere $L$-Lipschitz. 
Thus, we can apply Theorem~\ref{thm:small_marginal_concentration} to $\bar f$. Since $\E f(X)$ and $\E \bar f(X)$ differ by at most 
$\sup_x |f(x)-\E f(X)|/N^c\le L/N^{c-1}$, we have that when $X\in \cY$ and $|\bar f(X)-\E \bar f(X)|=O(\sqrt{NqL^2}\log N)$, which happens with high probability, 
\begin{align*}
    |f(X)-\E f(X)|= |\bar f(X)-\E f(X)|\le |\bar f(X)-\E \bar f(X)|+L/N^{c-1} = O(\sqrt{NqL^2}\log N)+ L/N^{c-1} \,.
\end{align*}
This last quantity is $O(\sqrt{NqL^2}\log N)$ whenever $c$ is a large enough constant.
\end{proof}

\glaubermix*
\begin{proof}
The graphical model of $\mu_G$ has $\binom{n}{3}$ variables. Two triangles are connected if and only if they share an edge, so each variable has degree at most $3(n-2)$. Now, $\mu_G$ is $O(p')$-marginally small by Lemma~\ref{lem:mug_marginally_small}, so
Theorem~\ref{thm:small_marginal_concentration} implies that the Glauber dynamics mixes in $O(n^3\log n)$ time.
\end{proof}

\bibliography{ref}
\bibliographystyle{alpha}

\appendix

\section{Relation Between Models}
We will analyze the parameter regimes where each of the three models, {\ER}, RIG and RGT are close to each other.
The RGT can be thought of as partway between RIG and {\ER}.

\subsection{RGT vs \ER}

RGT is close to {\ER} with equal edge density when $p' \ll n^{-3/2}$.

\begin{theorem}\label{thm:er-rgt}
If $p$ is a positive constant, then
\begin{itemize}
    \item When $p'\gg n^{-3/2}$, $$d_{TV}\left(\rgt(n,p,p'),G\left(n,p+(1-p)\left(1-(1-p')^{n-2}\right)\right)\right)=1-o_n(1)$$
    and there is an efficient algorithm to distinguish the two distributions with vanishing error.
    \item When $p'\ll n^{-3/2}$, $$d_{TV}\left(\rgt(n,p,p'),G\left(n,p+(1-p)\left(1-(1-p')^{n-2}\right)\right)\right)=o_n(1)\,.$$
\end{itemize}
\end{theorem}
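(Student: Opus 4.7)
The plan is to use the \emph{signed triangle count}
\[
W(G)\;=\;\sum_{\{i,j,k\}\in\binom{[n]}{3}}\bigl(\1\{ij\in G\}-p^*\bigr)\bigl(\1\{jk\in G\}-p^*\bigr)\bigl(\1\{ik\in G\}-p^*\bigr),
\]
where $p^*=p+(1-p)(1-(1-p')^{n-2})$, as the distinguishing statistic when $p'\gg n^{-3/2}$, and a truncated second-moment argument in the opposite regime. Morally, $W$ captures the third-order correlation along triangle-shaped triples and is precisely the fingerprint left by the $\rgt$ mechanism: adding a triangle correlates its three edges while preserving all single-edge marginals at $p^*$.

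For the easy direction $p'\gg n^{-3/2}$, under $G(n,p^*)$ the three centred edge indicators in each summand are independent with mean $0$, and for any two distinct triples the symmetric difference of edge sets is non-empty, so all cross-covariances vanish; this gives $\E_{G(n,p^*)}[W]=0$ and $\var_{G(n,p^*)}(W)=\binom{n}{3}(p^*(1-p^*))^3=\Theta(n^3)$. Under $\rgt(n,p,p')$, conditioning on whether the triangle $\{i,j,k\}$ itself was added gives $\Pr(\text{all three edges in }G)=p'+(1-p')(1-q)^3$ with $q=(1-p)(1-p')^{n-3}$, and an analogous inclusion-exclusion on the $2n-5$ triangles touching a pair of adjacent edges yields $\Pr(\text{two adjacent edges both present})=(p^*)^2+(1-p^*)^2 p'/(1-p')$. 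Expanding the cube, the $(p^*)^3$-terms cancel, leaving at leading order in $p'$ a per-triple expectation of $p'(1-p)^3$, so $\E_{\rgt}[W]=\Theta(n^3 p')$. A diagonal-dominated variance calculation --- using that the centred indicators of two vertex-disjoint edges are \emph{exactly} independent in $\rgt$, since no triangle contains two disjoint edges --- gives $\var_{\rgt}(W)=\Theta(n^3)$. Chebyshev applied to a threshold test on $W$ therefore yields vanishing Type~I and Type~II errors whenever $n^3 p'\gg n^{3/2}$, i.e.\ $p'\gg n^{-3/2}$, certifying $\TV\to 1$.

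For the indistinguishability direction $p'\ll n^{-3/2}$, writing $\rgt$ as a mixture over the iid triangle indicator $X\in\{0,1\}^{\binom{n}{3}}$ and applying the Ingster-Suslina formula gives
\[
\chi^2(\rgt\,\|\,G(n,p^*))+1\;=\;f_0^{\binom{n}{2}}\;\E_{X,X'}\!\bigl[(f_2/f_0)^{|E(X)\cap E(X')|}(f_1/f_0)^{|E(X)\triangle E(X')|}\bigr],
\]
with $f_0=p^2/p^*+(1-p)^2/(1-p^*)=1+\Theta(n^2 p'^2)$, $f_1=p/p^*$, $f_2=1/p^*$. The prefactor $f_0^{\binom{n}{2}}=\exp(\Theta(n^4 p'^2))$ diverges already for $p'\gg n^{-2}$, so the plain $\chi^2$ is useless. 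My plan is to truncate to the event
\[
\mathcal{G}=\bigl\{|W(G)|\le n^{3/2}\log n\bigr\}\cap\Bigl\{\bigl||E(G)|-\tbinom{n}{2}p^*\bigr|\le n\log n\Bigr\},
\]
which has mass $1-o(1)$ under each measure by the variance bounds above together with a Chernoff bound for the edge count, to bound the truncated $\chi^2$ directly, and to combine via the standard truncation bound of Lemma~\ref{lem:bad_event}. The main obstacle will be bounding the contribution to the truncated $\chi^2$ from pairs $(X,X')$ with abnormally large $|E(X)\cap E(X')|$, which are exactly the ones driving the divergence of the untruncated chi-square: one must show that conditional on any such pair the induced $G$ sits outside $\mathcal{G}$ with overwhelming probability --- essentially because such overlap configurations are themselves signed-triangle-detectable. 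The key structural observation is that the \emph{same} functional $W$ that drives the positive test in the upper direction controls the truncation event here, so the two halves of the argument turn out to be dual.
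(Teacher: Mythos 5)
The distinguishability half of your proposal ($p'\gg n^{-3/2}$) coincides with the paper's proof: both use the signed triangle count, compute $\E W = \Theta(n^3 p')$ under $\rgt$ versus $\E W = 0$, $\var W = \Theta(n^3)$ under {\ER}, and apply Chebyshev. Your per-triple computations match the paper's Lemma~\ref{lem:rgt-signed-triangle} to leading order, so this part is fine.

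The indistinguishability half ($p'\ll n^{-3/2}$) is where you part ways with the paper, and where you have a genuine gap. You correctly observe that the plain Ingster--Suslina $\chi^2$ blows up as soon as $p'\gg n^{-2}$ because of the prefactor $f_0^{\binom{n}{2}}=e^{\Theta(n^4p'^2)}$, and you propose rescuing it with a \emph{posterior truncation}, conditioning both measures on the event $\mathcal G=\{|W(G)|\le n^{3/2}\log n\}\cap\{\text{edge count concentrated}\}$. But you stop at precisely the step that would make this a proof: bounding the truncated $\chi^2$, i.e.\ controlling $\E_P\bigl[\1_{\mathcal G}\tfrac{dP_X}{dP}\tfrac{dP_{X'}}{dP}\bigr]$ for pairs $(X,X')$ with large edge overlap. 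You state this as ``the main obstacle'' and give only the heuristic that bad pairs should drive $G$ out of $\mathcal G$. The mechanics of that step are not routine: after the change of measure, for each fixed pair $(X,X')$ the expectation becomes $\rho(X,X')\cdot\tilde P_{X,X'}(\mathcal G)$ where $\tilde P_{X,X'}$ forces all edges of $E(X)\cup E(X')$ to be present and samples the rest i.i.d.\ at a tilted density; one then has to show $\tilde P_{X,X'}(\mathcal G)$ decays fast enough to beat $\rho(X,X')$ on the bad pairs. That is a nontrivial, new argument, not something that follows from the material you've set up. Also note that truncating the \emph{observable} $G$ rather than the latent $X$ is nonstandard and makes the Ingster--Suslina representation considerably messier, since $\1_{\mathcal G}$ does not factor across edges.

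The paper sidesteps the $\chi^2$ machinery entirely. It writes $\rgt(n,p,p')$ as $G(n,p)$ followed by a Poissonized number $M\sim\poi(\Theta(n^3p'))$ of uniformly placed triangle insertions, invokes a black-box result (Lemma~3.1 of \cite{brennan2020phase}) that a \emph{single} random triangle insertion into {\ER} moves the distribution by only $O(n^{-3/2})$ in total variation, and then concludes by data processing, convexity, and a comparison of binomial edge densities (Lemma~\ref{lem:distance-binomial}). This gives $\TV = O(n^{3/2}p')=o(1)$ directly, with no truncation. That route is both shorter and avoids the prefactor explosion that forces you to truncate in the first place. If you want to pursue your approach as an independent exercise, the missing lemma to prove is a tail bound on $W$ under the tilted measures $\tilde P_{X,X'}$, and you should check whether the large-overlap regime really is captured by $W$ or whether (as your own formula suggests, since $(f_2/f_0)^{|E(X)\cap E(X')|}$ is the dangerous factor) you also need a statistic sensitive to clusters of edges sharing endpoints.
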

The lower bound on total variation is established through statistical testing using a signed version of triangle count. This test has also been employed to distinguish between \ER\ and random intersection graphs \cite{brennan2020phase}, as well as between \ER\ and random geometric graphs \cite{bubeck2016testing}.

The ordinary triangle count with adjacency matrix $A$ is given by $$\sum_{\{i,j,k\}\subset[n]}A_{ij}A_{jk}A_{ik}\,.$$ Let $q=p+(1-p)\left(1-(1-p')^{n-2}\right)=1-(1-p)(1-p')^{n-2}$ be the edge density. The \emph{signed triangle count} is given by $$\tau(A)=\sum_{\{i,j,k\}\subset[n]}(A_{ij}-q)(A_{jk}-q)(A_{ik}-q)$$ It is not hard to show that for {\ER}, 
\begin{lemma}\label{lem:er-signed-triangle}
For $A$ being the adjacency matrix of an \ER random graph $G(n,q)$,
\begin{align*}
    \E [\tau(A)]=0\quadand
    \var[\tau(A)]=\binom{n}{3}q^3(1-q)^3\,,
\end{align*}
\end{lemma}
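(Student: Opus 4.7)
The plan is to exploit the independence of edges in $G(n,q)$ together with the fact that $\E[A_{ij}-q] = 0$ and $\var(A_{ij}) = q(1-q)$.

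For the expectation, I would fix any triple $\{i,j,k\}$ and observe that the three edges $A_{ij}, A_{jk}, A_{ik}$ are mutually independent Bernoulli$(q)$ variables. Hence
\[
\E\b[(A_{ij}-q)(A_{jk}-q)(A_{ik}-q)\b] = \E[A_{ij}-q]\,\E[A_{jk}-q]\,\E[A_{ik}-q] = 0\,,
\]
and summing over all triples gives $\E[\tau(A)]=0$.

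For the variance, since $\E[\tau(A)] = 0$ it suffices to compute $\E[\tau(A)^2]$. Expanding,
\[
\E[\tau(A)^2] = \sum_{T_1,T_2 \in \binom{[n]}{3}} \E\B[\prod_{e\in T_1}(A_e-q)\prod_{e'\in T_2}(A_{e'}-q)\B]\,.
\]
The key step is a case analysis over how $T_1$ and $T_2$ share edges. By independence of distinct edges, each summand factors over edges, and any edge that appears with odd multiplicity (i.e., in exactly one of $T_1, T_2$) contributes a factor $\E[A_e-q]=0$. Thus any nonzero contribution requires every edge in the symmetric difference of the edge sets of $T_1$ and $T_2$ to be empty; equivalently, $T_1$ and $T_2$ must have identical edge sets, which forces $T_1=T_2$. (Triples sharing exactly one edge contribute a single squared factor but leave four other edges each appearing once, so that case also vanishes.)

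For the diagonal terms $T_1=T_2$, the three edges are independent and each contributes $\E[(A_e-q)^2] = q(1-q)$, yielding $q^3(1-q)^3$ per triple. Summing over the $\binom{n}{3}$ triples gives
\[
\var[\tau(A)] = \E[\tau(A)^2] = \binom{n}{3}q^3(1-q)^3\,.
\]
The only subtlety is being careful with the case analysis of how two triples can overlap (sharing 0, 1, or all 3 edges—note two distinct triples cannot share 2 edges without coinciding); all non-diagonal cases vanish by the odd-multiplicity argument, so this step is straightforward rather than an obstacle.
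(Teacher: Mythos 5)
Your proof is correct and is exactly the standard second-moment calculation. The paper itself does not spell this out—it remarks that ``the calculation is straightforward and can be found in Section 3 of [Bubeck et al.\ 2016]''—so there is no competing argument in the paper to compare against; your write-up is a faithful reconstruction of the omitted calculation. The only thing I would tighten is the phrasing ``every edge in the symmetric difference ... to be empty'': what you mean (and what the parenthetical makes clear) is that the symmetric difference of the two edge sets must itself be empty, since any edge lying in exactly one of $T_1,T_2$ forces the product to vanish; as you correctly observe, two distinct triples can share at most one edge, leaving four edges of multiplicity one, so all off-diagonal terms die and only the $\binom{n}{3}$ diagonal terms $q^3(1-q)^3$ survive.
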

The calculation is straightforward and can be found in Section 3 of \cite{bubeck2016testing}.
For RGT, we will prove
\begin{lemma}\label{lem:rgt-signed-triangle}
For $\rgt(n,p,p')$, 
\begin{align*}
    \E [\tau(A)]=\binom{n}{3}p'(1-q)^3+O(1)\quadand
    \var[\tau(A)]=O(n^3)\,.
\end{align*}
\end{lemma}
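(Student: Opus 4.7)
My plan is to compute $\E[\tau(A)]$ triple-by-triple. Fix $\Delta=\{i,j,k\}$ and let $X_\Delta=(A_{ij}-q)(A_{jk}-q)(A_{ik}-q)$. Condition on the indicator $T_{ijk}$ of whether that triangle is added: when $T_{ijk}=1$ (probability $p'$), the three edges are forced to $1$ and $X_\Delta=(1-q)^3$; when $T_{ijk}=0$, the three edges are conditionally independent, since the only triangle simultaneously affecting two of them must contain all of $\{i,j,k\}$ and is therefore $T_{ijk}$ itself, and each edge then has conditional marginal $q':=1-(1-p)(1-p')^{n-3}$ with $|q'-q|=O(p')$, giving $\E[X_\Delta\mid T_{ijk}=0]=(q'-q)^3=O(p'^3)$. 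Summing, $\E[X_\Delta]=p'(1-q)^3+O(p'^3)$ and $\E[\tau]=\binom{n}{3}p'(1-q)^3+O(1)$ using $p'=\tilde O(1/n)$.

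For $\var[\tau]=\sum_{\Delta,\Delta'}\cov(X_\Delta,X_{\Delta'})$ I split by $|\Delta\cap\Delta'|\in\{0,1,2,3\}$. When $|\Delta\cap\Delta'|=0$ the six edges are pairwise vertex-disjoint, hence independent in RGT (no triangle contains both), so the covariance vanishes. When $|\Delta\cap\Delta'|=3$, $\var(X_\Delta)\leq\|X_\Delta\|_\infty^2\leq 1$ contributes $O(n^3)$ in total. For the off-diagonal cases $|\Delta\cap\Delta'|\in\{1,2\}$ my goal is the uniform bound $|\cov(X_\Delta,X_{\Delta'})|=O(p'^2)$, which yields $O(n^5 p'^2)=O(n^3)$ and $O(n^4 p'^2)=O(n^2)$ respectively.

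To prove this, $|\E[X_\Delta]\E[X_{\Delta'}]|=\E[X_\Delta]^2=O(p'^2)$ is immediate from the expectation bound; it remains to show $|\E[X_\Delta X_{\Delta'}]|=O(p'^2)$. I will expand each factor via the identity $1-A_e=((1-p)+\tilde E_e)\prod_{\alpha\ni e}((1-p')+\tilde T_\alpha)$, where $\tilde E_e:=E_e-p$ and $\tilde T_\alpha:=T_\alpha-p'$ are independent centered Bernoullis; then $A_e-q$ has no constant term (the piece $1-q$ cancels) and is linear in each $\tilde E_e$ and each $\tilde T_\alpha$. For a monomial in the expansion of $\prod_e(A_e-q)^{m_e}$ (with $m_e\in\{1,2\}$) to have nonzero expectation, every appearing variable must have degree $\geq 2$; each $\tilde T_\alpha$ with positive degree contributes $O(p')$, while $\tilde E_{ij}^2$ contributes $O(1)$. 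In case $|\Delta\cap\Delta'|=1$, no $\tilde E_e$ can appear (degree $\leq 1$ is forced to $0$), so every factor must pick at least one $\tilde T_\alpha$; each $\tilde T_\alpha$ appears in at most three of the six factors (only the two main triangles $\{i,j,k\}$ and $\{i,l,m\}$ achieve this maximum), so at least two distinct $\tilde T_\alpha$'s are needed, yielding $O(p'^2)$. Case $|\Delta\cap\Delta'|=2$ admits the extra option of using $\tilde E_{ij}^2$ from the squared shared factor, but two $\tilde T_\alpha$'s are still needed to cover the remaining four factors, again giving $O(p'^2)$.

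The main obstacle is getting the covering argument right in case $|\Delta\cap\Delta'|=1$: the six factors must be covered by $\tilde T_\alpha$'s of multiplicity at least two, and since each $\tilde T_\alpha$ covers at most three of the six factors, exactly two triangles are required --- matching the $O(p'^2)$ bound and placing the total contribution precisely at the $O(n^3)$ budget. The coverage count requires checking that (i) any used $\alpha$ with coverage one contributes $\tilde T_\alpha$ at degree $1$, killing the expectation; (ii) only the two main triangles $\{i,j,k\},\{i,l,m\}$ have coverage three; and (iii) the four ``cross'' triangles $\{i,j,l\},\{i,j,m\},\{i,k,l\},\{i,k,m\}$ have coverage two and all other triangles have coverage at most one. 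With this accounting established, summing gives $\var[\tau(A)]=O(n^3)$ as claimed.
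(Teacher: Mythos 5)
Your proof is correct, and it takes a genuinely different route from the paper's for the variance bound. The expectation computation (conditioning on $T_{ijk}$; when $T_{ijk}=0$ the three edges become independent because any triangle hitting two of them would contain all three vertices) is essentially identical to the paper's. For the covariance, however, the paper conditions on the bounded set of ``dangerous'' triangles containing at least two of the relevant edges --- once those are fixed, the edges become conditionally independent and the expectation factorizes, and one then shows each case is $O(p'^2)$. You instead expand each centered edge variable $A_e-q$ as a polynomial in the centered Bernoullis $\tilde E_e,\tilde T_\alpha$, observe that the constant term cancels and the rest is multilinear, and use the fact that a monomial has nonzero expectation only if every appearing variable has degree at least two; the covering argument (each triangle covers at most three of the six factors, so at least two distinct $\tilde T_\alpha$'s are required) then yields $O(p'^2)$. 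Your coverage analysis is accurate: for $|\Delta\cap\Delta'|=1$ the only triangles with multiplicity three are the two main ones, the four cross triangles have multiplicity two, everything else has multiplicity at most one and thus cannot contribute, and in fact both main triangles are forced (since only $\{i,j,k\}$ can give a multiplicity-$\geq 2$ cover of the factor $(j,k)$, and similarly for $(l,m)$). The one point you gloss over slightly is the $|\Delta\cap\Delta'|=2$ case, where the shared edge $(i,j)$ admits $O(n)$ triangles $\{i,j,w\}$ with $w\notin\{k,l\}$ that can reach degree two by appearing in both copies of that factor; each such extra triangle costs an additional factor $O(p')$, so the total contribution from that family is $O(np'^3)=O(p'^2)$ since $np'=O(1)$, and the argument survives --- but since this case contributes only $O(n^4p'^2)=O(n^2)$ to the variance, it is not the bottleneck anyway. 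Both approaches work; yours is more combinatorial and perhaps easier to make fully rigorous, while the paper's conditioning device is lighter on notation and matches the probabilistic machinery used elsewhere.
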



\begin{proof}
Let $\tau_{ijk} = (A_{ij}-q)(A_{jk}-q)(A_{ik}-q)$. Let $T$ be the set of triangles add in the construction of $\rgt(n,p,p')$. Each triple is in $T$ independently with probability $p'$. Let $X_{ijk}$ to be the indicator of whether the triple is included in $T$. Note that Given  $X_{ijk}$, edges $(i,j)$, $(j,k)$ and $(k,i)$ become independent since all other triangles do not contain two of them. We have
\begin{align*} 
    \E \tau_{ijk} &= \Pr(X_{ijk}=1)(1-q)^3 + \Pr(X_{ijk}=0)\b(\E[A_{ij}-q|X_{ijk}=0]\b)^3\\
    &= p'(1-q)^3+(1-p')\b(p+(1-p)(1-(1-p')^{n-3})\b)^3\\
    &=p'\b[(1-q)^3-p'^2(1-p)^3(1-p')^{3n-8}\b]
\end{align*}
Since $p'\le 1/n$, we have $\E \tau_{ijk} = p'(1-q)^3+O(1/n^3)$. So $\E \tau(A) = \binom{n}{3}p'(1-q)^3+O(1)$.

Now let's bound the variance. One can attempt to calculate the exact value, but here we only upper bound it by $O(n^3)$ for simplicity. Note that if two triples $i,j,k$ and $i',j',k'$ do not overlap, $\tau_{ijk}$ and $\tau_{i',j',k'}$ are independent.
\begin{align*}
    \var[\tau(A)] =&\  \sum_{\{i,j,k\}\subset [n]}\sum_{\{i',j',k'\}\subset [n]}\cov[\tau_{ijk},\tau_{i'j'k'}]\\
    =& \ \binom{n}{3}\var[\tau_{123}] + \binom{n}{4}\binom{4}{2}\cov[\tau_{123},\tau_{124}] + \binom{n}{5}\binom{5}{2}\binom{3}{1}\cov[\tau_{123},\tau_{145}]
\end{align*}
The first term is trivially bounded by $O(n^3)$ since $\var[\tau_{123}]$ is at most constant. It remains to bound the second and third term.

Let $T_0$ be a set of triangles of constant size that contains $i,j$, we have
\begin{equation}\label{eq:aij}
    \E[(A_{ij}-q)^2| X_{T_0}=\Vec{0}]=O(1)\text{ and }\E[q-A_{ij}|X_{T_0}=\Vec{0}]=O(p')
\end{equation}
The first equation is trivial as all the variables are bounded by constant, the second equation is equal to $(1-p)(1-p')^{n-2-|T_0|}-(1-p)(1-p')^{n-2}=O(|T_0|p')=O(p')$.

For the second term, $\cov[\tau_{123},\tau_{124}]$ Let $E_1$ be the set of edges included in triangle $(1,2,3)$ or triangle $(1,2,4)$, let $T_1$ be the set of all triangles that contains at least 2 edges from $E_1$, $|T_1|=4$.
Now $\E[\tau_{123}\tau_{124}] = \E_{X_{T_1}} [\tau_{123}\tau_{124}|X_{T_1}]$. Notice that given $X_{T_1}$, all the edges becomes independent. So 
\begin{align*}
    \E[\tau_{123}\tau_{124}] =&\ \E_{X_{T_1}} [\tau_{123}\tau_{124}|X_{T_1}]\\
    =&\ \E_{X_{T_1}}\B[ \E\b[ \b(\ind{(1,2)\in E(X_{T_1})}(1-q)^2+ \ind{(1,2)\notin E(X_{T_1})}(A_{12}-q)^2\b)|X_{T_1}\b]\\ &\ \prod_{e\in E(X_{T_1})\backslash \{1,2\}}(1-q)\prod_{e\in E_1\backslash E(X_{T_1})\backslash \{1,2\}}\E[(A_e-q)|X_{T_1}] \B]\\
    =&\ O(1)\cdot \sum_{X_{T_1}}\Pr(X_{T_1}) \prod_{e\in E(X_{T_1})\backslash \{1,2\}}(1-q)\prod_{e\in E_1\backslash E(X_{T_1})\backslash \{1,2\}}\E[(A_e-q)|X_{T_1}]
\end{align*}
The third equality is because the term on edge $(1,2)$ is always bounded by $O(1)$.
When $|X_{T_1}|\ge 2$, $\Pr(X_{T_1})\le p'^2$, the term inside summation is $O(p')$. When $|X_{T_1}|\le 1$, there are at least two edges in $E_1\backslash E(X_{T_1})\backslash \{1,2\}$, so by \eqref{eq:aij}, the term inside summation is still bounded by $O(p')$. There are constant number of terms, so we have $\E[\tau_{123}\tau_{124}]=O(p'^2)$.

Similar analysis holds for $\E[\tau_{123}\tau_{145}]$. Let $E_2$ be the set of edges included in one of the triangles and  $T_2$ be the set of triangles that contains at least two edges in $E_2$. $|T_2|=6$.
\begin{align*}
    \E[\tau_{123}\tau_{145}] =&\ \E_{X_{T_2}} [\tau_{123}\tau_{145}|X_{T_2}]\\
    =&\ \E_{X_{T_2}}\B[ \prod_{e\in E(X_{T_2}) }(1-q) \prod_{e\in E_2 \backslash E(X_{T_2})} \E[(A_e-q)|X_{T_2}] \B]\\
    =&\ O(1)\cdot \sum_{X_{T_2}}\Pr(X_{T_2}) \prod_{e\in E(X_{T_2})}(1-q)\prod_{e\in E_2\backslash E(X_{T_2})}\E[(A_e-q)|X_{T_2}]
\end{align*}
When $|X_{T_2}|\ge 2$, $\Pr(X_{T_2})\le p'^2$, the term inside summation is $O(p')$. When $|X_{T_2}|\le 1$, there are at least three edges in $E_2\backslash E(X_{T_2})$, so by \eqref{eq:aij}, the term inside summation is bounded by $O(p'^3)$. There are constant number of terms, so we have $\E[\tau_{123}\tau_{145}]=O(p'^2)$.

Therefore, $\var[\tau(A)]=O(n^3+n^4p'^2+n^5p'^2)$. Since $p'=O(1/n)$, we have $\var[\tau(A)]=O(n^3)$.


\end{proof}
So we can construct a statistical test between
\[H_0\sim G(n,p)\quad \text{and}\quad H_1\sim G(n,p,p')\]
 as follows: Calculate $\tau(A)$, if it's larger than $\binom{n}{3}p'(1-q)^3/2$, output alternative hypothesis, otherwise output null hypothesis. By Lemma~\ref{lem:er-signed-triangle} and Lemma~\ref{lem:rgt-signed-triangle}, when $p'\gg n^{-3/2},$ the test success with probability $1-o_n(1)$ by Chebyshev Inequality. So the two problem have total variation distance  $1-o_n(1)$.

Now we prove the upper bound assuming $p'\ll n^{-3/2}$.
The main lemma is given in \cite{brennan2020phase} that bounds the deviation we have from {\ER} when one triangle is added at a uniformly random place.
\begin{lemma}[Lemma 3.1, \cite{brennan2020phase}]
Let $G(n,t,p)$ denote the graph generated by an {\ER} graph $G(n,p)$ with a clique of size $t$ planted at a uniformly random place. When $t$ and $p$ are constants, 
$$\TV\B(G(n,t,p),G\b(n,p+(1-p)\textstyle{\binom{t}{2}\binom{n}{2}}^{-1}\b)\B)=O_n(n^{-3/2})\,.$$
\end{lemma}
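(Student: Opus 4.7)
The plan is to bound $\TV(G(n,t,p), G(n,q))$ via the $\chi^2$-divergence, using $\TV \le \sqrt{\chi^2/2}$, with the target $\chi^2 = O(n^{-3})$. Applying the Ingster--Suslina second moment method with $S, S'$ i.i.d.\ uniform size-$t$ subsets of $[n]$ and $P_S$ the law of $G(n,p)$ with a clique planted on $S$:
\[\chi^2\b(G(n,t,p)\,\|\,G(n,q)\b) + 1 = \mathbb{E}_{S,S'}\B[\mathbb{E}_{G\sim G(n,q)}\tfrac{dP_S}{dP_q}(G)\cdot\tfrac{dP_{S'}}{dP_q}(G)\B].\]
The inner expectation factorizes over the $\binom{n}{2}$ edges, and each per-edge factor depends only on whether the edge lies in $\binom{S}{2}\cap\binom{S'}{2}$ (factor $1/q$), in the symmetric difference (factor $p/q$), or in neither (factor $1 + \delta^2/(p(1-p)) + O(\delta^3)$), where $\delta := q - p = (1-p)\binom{t}{2}/\binom{n}{2} = \Theta(n^{-2})$.

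Letting $k = |S\cap S'|$ and using $\Pr(|S\cap S'| = j) = \Theta(n^{-j})$ for constant $t$ and $j \ge 1$, the full expression factors as $A \cdot \mathbb{E}_k[R^{\binom{k}{2}}]$, where $A = (p/q)^{2\binom{t}{2}}(1 + \eta)^{\binom{n}{2} - 2\binom{t}{2}}$ is the $k \le 1$ baseline and $R = (1 + \eta)\,q/p^2$ is the per shared-edge-pair correction, with $\eta := \delta^2/(p(1-p))$. A Taylor expansion in $\delta$ gives
\[\ln A = -2\binom{t}{2}\tfrac{\delta}{p} + \binom{n}{2}\eta + O(n^{-3}) = -(1-p)\tfrac{\binom{t}{2}^2}{\binom{n}{2}p} + O(n^{-3}),\]
so $A = 1 - (1-p)\binom{t}{2}^2/(\binom{n}{2}p) + O(n^{-3})$. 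Meanwhile, using $\Pr(k=2) = \binom{t}{2}^2/\binom{n}{2} + O(n^{-3})$ and $R^{\binom{k}{2}} = p^{-\binom{k}{2}}(1 + O(n^{-2}))$, one has $\mathbb{E}_k[R^{\binom{k}{2}}] = 1 + \Pr(k=2)(1/p - 1) + O(n^{-3}) = 1 + (1-p)\binom{t}{2}^2/(\binom{n}{2}p) + O(n^{-3})$. The two $\Theta(n^{-2})$ contributions cancel when $A$ and $\mathbb{E}_k[R^{\binom{k}{2}}]$ are multiplied, leaving $\chi^2 = O(n^{-3})$ and hence $\TV = O(n^{-3/2})$.

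The main obstacle is executing this cancellation rigorously with all error terms controlled: one must Taylor-expand each per-edge factor to second order in $\delta = \Theta(n^{-2})$, compute $\Pr(|S\cap S'|=j)$ with the right leading constants for $j = 2, 3$, and then verify that the $O(n^{-2})$ terms in $\ln A$ and in $\mathbb{E}_k[R^{\binom{k}{2}}]-1$ have equal magnitude but opposite sign. Conceptually, the cancellation reflects the fact that the mean edge densities of $G(n,t,p)$ and $G(n,q)$ are matched exactly by definition of $q$. Contributions from $k \ge 3$ are automatically $O(n^{-3})$ and harmless. An alternative approach (closer to \cite{brennan2020phase}) is to first couple the total edge counts of the two models---whose laws share mean $\binom{n}{2}q$ and differ in variance only by $O(1)$, giving edge-count TV of $O(n^{-3/2})$ via a local CLT---and then show the conditional-on-edge-count TV is of the same order.
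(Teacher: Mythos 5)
Note first that the paper cites this as Lemma 3.1 from \cite{brennan2020phase} without supplying a proof, so there is no internal argument to compare against; what follows is an assessment of your proof on its own terms.

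Your $\chi^2$ second-moment argument is correct, and the bookkeeping goes through. Writing $m = |\binom{S}{2}\cap\binom{S'}{2}| = \binom{k}{2}$ with $k = |S\cap S'|$, the per-edge factors are $1/q$, $p/q$, and $p^2/q + (1-p)^2/(1-q) = 1 + \delta^2/(p(1-p)) + O(\delta^3)$, so the second moment is exactly $A\cdot \E_k R^{\binom{k}{2}}$ with your $A$ and $R$. The Taylor expansion $\ln A = -2\binom{t}{2}\delta/p + \binom{n}{2}\eta + O(n^{-4}) = -(1-p)\binom{t}{2}^2/(p\binom{n}{2}) + O(n^{-4})$ is right, and $\Pr(k=2)(R-1) = (1-p)\binom{t}{2}^2/(p\binom{n}{2}) + O(n^{-3})$ because $R-1 = (1-p)/p + O(n^{-2})$ and $\Pr(k=2)=\binom{t}{2}^2/\binom{n}{2}(1+O(1/n))$; the $k\ge 3$ terms contribute $O(n^{-3})$ since $\Pr(k=j)=\Theta(n^{-j})$ and $R^{\binom{j}{2}}=\Theta(1)$ for each of the finitely many $j\le t$. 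The two $\Theta(n^{-2})$ terms thus cancel in the product, leaving $\chi^2 = O(n^{-3})$ and $\TV \le \tfrac12\sqrt{\chi^2} = O(n^{-3/2})$. The cancellation is, as you say, the algebraic shadow of the fact that $q$ was chosen to match the mean edge density; this is what makes the naive $O(n^{-1})$ bound (from the first-order term alone) improve to $O(n^{-3/2})$. Your alternative sketch via edge-count coupling is also in the right spirit, though note that because the means are matched and the variances differ only by $O(1)$ against a standard deviation of $\Theta(n)$, the edge-count marginals alone are $O(n^{-2})$-close; the $n^{-3/2}$ comes from the conditional structure given the edge count, which is exactly what the $\chi^2$ computation handles cleanly and the coupling approach would have to address separately.
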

We can use data processing inequality to show that adding $i$ cliques instead of one leads to a TV distance of $O_n(in^{-3/2})$ when compared with {\ER} with same edge density. 
\begin{corollary}\label{cor:TV-add-clique}
Let $G(n,i,t,p)$ denote the graph generated by an {\ER} graph $G(n,p)$ with $i$ cliques of size $t$ planted at a random place. When $t$ and $p$ are constants, 
$$\TV\b(G(n,i,t,p),G(n,q)\b)=O_n(in^{-3/2})$$
where $q=p+(1-p)\b[1-\b(1-\binom{t}{2}\binom{n}{2}^{-1}\b)^{i}\b]$.
\end{corollary}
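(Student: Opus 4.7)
The plan is a standard hybrid / telescoping argument based on adding one clique at a time, combined with the base lemma (Lemma 3.1 of \cite{brennan2020phase}) and the data processing inequality.

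Let $R$ denote the Markov kernel on graphs that adds a single clique of size $t$ at a uniformly random location, and let $\alpha = \binom{t}{2}\binom{n}{2}^{-1}$. Define the intermediate edge density after $j$ clique insertions by
\[
q_j \;=\; p + (1-p)\b[1-(1-\alpha)^j\b]\,,
\]
so that $q_0 = p$, $q_i = q$, and $1-q_j = (1-q_{j-1})(1-\alpha)$, which gives the one-step recursion $q_j = q_{j-1} + (1-q_{j-1})\alpha$. By definition, $G(n,i,t,p) \disteq R^i(G(n,p))$.

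The main step is to show, for each $1\le j\le i$, the one-step bound
\[
\TV\b(R^{j}(G(n,p)),\, G(n,q_j)\b)
\;\le\; \TV\b(R^{j-1}(G(n,p)),\, G(n,q_{j-1})\b) + O(n^{-3/2})\,.
\]
I would derive this via the triangle inequality with the intermediate distribution $R(G(n,q_{j-1}))$:
\[
\TV\b(R^{j}(G(n,p)), G(n,q_j)\b)
\le \TV\b(R^{j}(G(n,p)), R(G(n,q_{j-1}))\b) + \TV\b(R(G(n,q_{j-1})), G(n,q_j)\b)\,.
\]
For the first term, the data processing inequality applied to the kernel $R$ yields the upper bound $\TV(R^{j-1}(G(n,p)), G(n,q_{j-1}))$. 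For the second term, the one-step recursion $q_j = q_{j-1} + (1-q_{j-1})\alpha$ is exactly the edge-density shift appearing in the base lemma, so Lemma 3.1 of \cite{brennan2020phase} applied at base density $q_{j-1}$ gives a bound of $O(n^{-3/2})$.

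Iterating this inequality $i$ times starting from the trivial bound $\TV(R^0(G(n,p)), G(n,q_0))=0$ yields $\TV(R^i(G(n,p)), G(n,q_i)) = O(in^{-3/2})$, which is the claim. The only minor subtlety is checking that the base lemma's hidden constant can be taken uniformly in $j$: this holds because the base lemma requires only that the edge density be a constant bounded away from $0$ and $1$, and since $\alpha = \Theta(n^{-2})$, each $q_j$ stays in a fixed compact subinterval of $(0,1)$ so long as $i\alpha$ is bounded away from $1-p$ (certainly true for any $i$ up to, say, $n$). No step is substantively hard; the only thing to be careful about is invoking data processing inequality with the correct kernel so that the constants in the base lemma do not depend on the iteration index.
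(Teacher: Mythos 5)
Your proposal is correct and matches the approach the paper indicates: a hybrid/telescoping argument that adds one clique at a time, bounding each step by the data processing inequality plus Lemma 3.1 of \cite{brennan2020phase}, with the recursion $1-q_j = (1-q_{j-1})(1-\alpha)$ ensuring the base lemma's density shift is exactly reproduced. Your remark about uniformity of the hidden constant is a fine detail the paper glosses over, and your justification (each $q_j$ stays in a fixed compact subinterval of $(0,1)$ because $i\alpha = O(i/n^2)$ is small in the relevant regime) is the right one.
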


Intuitively, when we add $o(n^{3/2})$ number of triangles, the resulting graph is still close to \ER. This corresponds to adding each triangle with probability $o(n^{-3/2})$. Before formally proving this, let's introduce a technical lemma.
\begin{lemma}[Corollary 5.1 in \cite{kim2018total}]\label{lem:distance-binomial}
For a positive integer $N$ and $0<p<q<1$, 
\[\TV(\bino(N,p),\bino(N,q))\le \gamma+3\gamma^2\,.\]
Here $\gamma = (q-p)\sqrt{\frac{N}{p(1-p)}}$.
\end{lemma}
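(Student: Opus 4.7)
The plan is to prove Lemma~\ref{lem:distance-binomial} via a direct chi-squared computation combined with the standard $\TV$-vs-$\chi^2$ inequality. I would set $P=\bino(N,p)$, $Q=\bino(N,q)$, and consider the likelihood ratio $L_k = Q(k)/P(k) = (q/p)^k ((1-q)/(1-p))^{N-k}$. The second moment of $L$ under $P$ admits a closed-form evaluation via the binomial theorem:
\[
\E_P[L^2] \;=\; \sum_{k=0}^N \binom{N}{k}\Big(\tfrac{q^2}{p}\Big)^k\Big(\tfrac{(1-q)^2}{1-p}\Big)^{N-k} \;=\; \Big(\tfrac{q^2}{p}+\tfrac{(1-q)^2}{1-p}\Big)^N.
\]
A short algebraic manipulation yields $\tfrac{q^2}{p}+\tfrac{(1-q)^2}{1-p}=1+\tfrac{(q-p)^2}{p(1-p)}$, hence
\[
\chi^2(Q\|P) \;=\; \B(1+\tfrac{\gamma^2}{N}\B)^N-1 \;\le\; e^{\gamma^2}-1
\]
using the standard inequality $(1+x)^N\le e^{Nx}$.

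Next I would invoke the relation $\TV(P,Q)\le \tfrac12\sqrt{\chi^2(Q\|P)}$, which follows from Jensen's inequality: $2\TV(P,Q)=\E_P|L-1|\le \sqrt{\E_P(L-1)^2}=\sqrt{\chi^2(Q\|P)}$. Combining the two estimates gives $\TV(P,Q)\le \tfrac12\sqrt{e^{\gamma^2}-1}$.

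Finally I would split into two cases. When $\gamma\le 1$, convexity of $e^x$ on $[0,1]$ yields $e^{\gamma^2}-1\le (e-1)\gamma^2<2\gamma^2$, hence $\TV(P,Q)\le \gamma/\sqrt{2}\le \gamma+3\gamma^2$. When $\gamma>1$, the bound is trivial because $\gamma+3\gamma^2>4>1\ge \TV(P,Q)$. This completes the proof.

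The argument is entirely standard: the only step requiring any care is the algebraic simplification showing that the chi-squared divergence reduces to $(1+\gamma^2/N)^N-1$, and the main conceptual point is that the Cauchy--Schwarz step gives $\TV\lesssim \gamma$ (with constants better than $\gamma+3\gamma^2$), while the explicit $3\gamma^2$ correction in the statement is what covers the intermediate regime smoothly. There is no real obstacle here, which is why this is presented as a cited lemma; a self-contained proof is simply the two-line chi-squared calculation followed by the case split.
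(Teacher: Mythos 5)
The paper does not prove this lemma; it simply cites it as Corollary~5.1 of \cite{kim2018total}, so there is no in-paper proof to compare against. Your self-contained argument is correct: the identity $\frac{q^2}{p}+\frac{(1-q)^2}{1-p} = 1 + \frac{(q-p)^2}{p(1-p)}$ checks out, giving $\chi^2(Q\|P)=(1+\gamma^2/N)^N-1\le e^{\gamma^2}-1$; the Cauchy--Schwarz step $2\TV=\E_P|L-1|\le\sqrt{\chi^2}$ is standard; and the two-case split (using $e^x-1\le(e-1)x$ for $x\in[0,1]$ in the small-$\gamma$ case and $\TV\le 1$ in the large-$\gamma$ case) cleanly yields a bound that is in fact slightly stronger than $\gamma+3\gamma^2$ in both regimes. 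This is a perfectly valid independent derivation of the cited estimate; whether it coincides with the route taken in \cite{kim2018total} cannot be determined from the present paper, which invokes the result as a black box.
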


To simulate the process of adding each triangle independently with probability $p'$, we can add $M\sim \poi(\binom{n}{3}\log (\frac{1}{1-p'}))$ number of triangles independently at uniformly random location. By Poission splitting, each triangle is added $\poi (\log (\frac{1}{1-p'}))$ times, which is equivalent to adding it with probability $p'$. By Corollary~\ref{cor:TV-add-clique} and convexity of total variation,
\[\TV\left(\rgt(n,p,p'), \E_M G\left(n,q(M) \right) \right) = O(n^{-3/2}\E_M M) = O(n^{3/2}p')=o(1)\,,\]
where $q(M) = 1-(1-p)\B(1-(n-2)\binom{n}{3}^{-1}\B)^M  $ is the edge density after adding $M$ triangles to $G(n,p)$.

By Lemma~\ref{lem:distance-binomial} and convexity of total variation, 
\[\TV\b(G(n,q), \E_M G\left(n,q(M) \right) \b) \le \E_M[\gamma(M)]=O(n\E_M [|q-q_M|])\,,\]
where $$\gamma(M) = |q-q(M)|\sqrt{\frac{\binom{n}{2}}{\min \{q(1-q),q(M)(1-q(M))\}}}=O(n|q-q(M)|)\,.$$
Now
notice that $\binom{n}{3}\log (\frac{1}{1-p'})=O(n^3p')$. By Chebyshev's inequality, 
\[\E M-\sqrt{n^3p'\log n }\le M\le \E M +\sqrt{n^3p'\log n}\]
with probability $O(\log ^{-1}n)$. For any $M$ within the above range,
\begin{align*}
    |q-q(M)| =&\  \B|(1-p)(1-p')^{n-2} - (1-p)\B(1-(n-2)\binom{n}{3}^{-1}\B)^{M}\B|
\\=&\ (1-p)\B|(n-2)p'-(n-2)\log (\frac{1}{1-p'}) +O(n^2p'^2)+O(\sqrt{\log n n^{-1}p'})\B| = o(n^{-1})\,.
\end{align*}
Here we used that $(1-a)^b=1-ab+O(a^2b^2)$ when $ab\ll 1$.
So we have 
\[\TV\b(G(n,q), \E_M G\left(n,q(M) \right) \b) \le \E_M[\gamma(M)]=o(1)+\log ^{-1}n=o(1)\,.\]
Combining this with the fact that $\rgt(n,p,p')$ is close to $\E_M G\left(n,q(M) \right)$, we have 
\[\TV(G(n,q), \rgt(n,p,p')=o(1)\]
for $p'\ll n^{3/2}$.

\subsection{RGT vs RIG}

We now consider
random intersection graphs. Using Corollary~\ref{cor:TV-add-clique}, we can prove that RIG is close to RGT when $d\gg n^{2.5}$, and is close to {\ER} when $d\gg n^3$.

\rigrgt*

\begin{theorem} [\cite{brennan2020phase}, Theorem 3.1]
When $d\gg n^{3}$, we have $$d_{TV}(\rig(d,\delta),G(n,1-(1-\delta^2)^d))=o(1).$$
\end{theorem}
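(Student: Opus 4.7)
The plan is to chain the two theorems established earlier in the paper---Theorem~\ref{thm:rig-rgt} (RIG $\approx$ RGT for $d \gg n^{2.5}$) and Theorem~\ref{thm:er-rgt} (RGT $\approx$ \ER{} for $p' \ll n^{-3/2}$)---via the triangle inequality for total variation. The key observation is that when $d \gg n^3$ and the RIG has constant edge density (so $\delta = \Theta(1/\sqrt d)$), the induced RGT triangle parameter is precisely in the regime where the RGT already looks Erd\H{o}s--R\'enyi.

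First, I would match parameters. Set $p = 1 - e^{-d\delta^2 + (n-2)d\delta^3(1-\delta)^{n-3}}$ and $p' = 1 - e^{-d\delta^3(1-\delta)^{n-3}}$, the values prescribed by Theorem~\ref{thm:rig-rgt}. Since $d \gg n^3$ certainly implies $d \gg n^{2.5}$, that theorem yields $\TV(\rig(d,\delta), \rgt(n,p,p')) = o(1)$. Next, I would check the hypothesis of Theorem~\ref{thm:er-rgt}: with $\delta = \Theta(1/\sqrt d)$ and $d \gg n^3$ (so $(1-\delta)^{n-3} = 1-o(1)$), we get $p' = d\delta^3(1+o(1)) = \Theta(1/\sqrt d)$, and the condition $p' \ll n^{-3/2}$ is \emph{exactly} $d \gg n^3$. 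Applying Theorem~\ref{thm:er-rgt} gives $\TV(\rgt(n,p,p'), G(n,q_{\rgt})) = o(1)$, where $q_{\rgt} = p + (1-p)(1-(1-p')^{n-2})$.

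The remaining step is to compare $G(n,q_{\rgt})$ with $G(n,q_{\rig})$, where $q_{\rig} = 1-(1-\delta^2)^d$. A clean algebraic cancellation gives
\[
(1-p)(1-p')^{n-2} = e^{-d\delta^2 + (n-2)d\delta^3(1-\delta)^{n-3}} \cdot e^{-(n-2)d\delta^3(1-\delta)^{n-3}} = e^{-d\delta^2},
\]
so $q_{\rgt} = 1 - e^{-d\delta^2}$. On the other hand, $q_{\rig} = 1 - e^{d\log(1-\delta^2)} = 1 - e^{-d\delta^2}(1 + O(d\delta^4))$, and with $\delta = \Theta(1/\sqrt d)$ we have $d\delta^4 = O(1/d)$. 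Hence $|q_{\rgt} - q_{\rig}| = O(1/d)$. Since the two \ER{} models are product measures on $\binom{n}{2}$ Bernoulli edges, subadditivity of TV (or Pinsker applied to the KL-per-coordinate) yields
\[
\TV\bigl(G(n,q_{\rgt}), G(n,q_{\rig})\bigr) \le \binom{n}{2}|q_{\rgt}-q_{\rig}| = O(n^2/d) = o(1),
\]
the last equality using $d \gg n^3 \gg n^2$.

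Combining via the triangle inequality,
\[
\TV\bigl(\rig(d,\delta), G(n,1-(1-\delta^2)^d)\bigr) \le \TV(\rig,\rgt) + \TV(\rgt, G(n,q_{\rgt})) + \TV(G(n,q_{\rgt}), G(n,q_{\rig})) = o(1).
\]
The only non-routine part is the parameter-matching calculation in Step~3: verifying that $q_{\rgt}$ and $q_{\rig}$ agree up to $O(1/d)$ so that the final \ER{} target matches the marginal edge density of the RIG. This is the main bookkeeping obstacle, but it comes out cleanly because the two exponents in $(1-p)(1-p')^{n-2}$ are designed to cancel the ``triangle correction'' term $(n-2)d\delta^3(1-\delta)^{n-3}$ introduced by Theorem~\ref{thm:rig-rgt}.
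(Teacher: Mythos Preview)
The paper does not actually prove this statement---it is cited verbatim as Theorem~3.1 of \cite{brennan2020phase}. What the paper does provide, immediately after the citation, is a sketch of the \emph{idea} behind the original proof: view $\rig$ as adding a Poisson number of $k$-cliques for each $k\ge 2$, then use Corollary~\ref{cor:TV-add-clique} to show that when $d\gg n^3$ all the $k\ge 3$ clique insertions together perturb total variation by $o(1)$. That direct Poisson-approximation argument is how the cited reference proves it, and the paper reuses the same machinery (with triangles separated out) to prove its own Theorem~\ref{thm:rig-rgt}.

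Your route is genuinely different and correct: you treat the cited result as a corollary of the paper's own Theorems~\ref{thm:rig-rgt} and~\ref{thm:er-rgt}, chained via the triangle inequality, with a clean parameter calculation showing that the RGT edge density $1-(1-p)(1-p')^{n-2}$ collapses exactly to $1-e^{-d\delta^2}$ and matches $1-(1-\delta^2)^d$ up to $O(1/d)$. This is elegant because it demonstrates that the $d\gg n^3$ threshold for $\rig\approx G(n,p)$ is precisely the intersection of the paper's two thresholds ($d\gg n^{2.5}$ for $\rig\approx\rgt$, and $p'\ll n^{-3/2}$ for $\rgt\approx G(n,p)$). The trade-off is that your argument is logically downstream of Theorem~\ref{thm:rig-rgt}, whose proof already imports the Poisson-clique machinery from \cite{brennan2020phase}; so while you avoid citing the external theorem directly, the underlying analytic work is the same, just factored differently.
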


The idea is to provide another viewpoint of generating $\rig$. We define an $n \times d$ matrix $M$ such that $M_{ij} = 1$ if $S_i$ contains $j$, and $M_{ij} = 0$ otherwise. An edge $i,j$ is added to the graph if they appear on the same column.

Equivalently, we can add $d$ cliques defined by the columns of $M$. The size of each clique is determined by a multinomial distribution with size $k$ having probability $\binom{n}{k}\delta^k(1-\delta)^{n-k}$. It can be shown that this distribution is closely approximated by adding a Poisson number of cliques of each size $k$. Therefore, we can view RIG as adding $\poi(\Theta(dn^2\delta^2))$ edges (2-cliques), $\poi(\Theta(dn^3\delta^3))$ triangles (3-cliques), $\poi(\Theta(dn^4\delta^4))$ 4-cliques, and so on.

Formally, we can define the following equivalent generation of RIG. Let $p_j:=\Pr(|S_i|=j) = \binom{n}{j}\delta^j(1-\delta)^{n-j}$.
\begin{enumerate}
    \item Sample $(M_0,M_1,\cdots,M_n)\sim \multino(d,p_0)$.
    \item For all $2\le j\le n$, independently uniformly sample a random subset of size $j$ from $[n]$ $M_j$ times and plant a $j$-clique at those $M_j$ places.
\end{enumerate}
We can use a series of Poission distribution to simulate the distribution of $M_2,\cdots, M_n$. Define $\rigp(n,d,\delta)$ to be the model generated by the following process.
\begin{enumerate}
    \item Sample $X\sim \poi(d(1-p_0-p_1))$. \item Sample $M_2,\cdots, M_n\sim \multino(X,\gamma p_2,\cdots, 
    \gamma p_n)$ where $\gamma=(1-p_0-p_1)^{-1}$.
    \item For all $2\le j\le n$, independently uniformly sample a random subset of size $j$ from $[n]$ $M_j$ times and plant a $j$-clique at those $M_j$ places.
\end{enumerate}
Note that by Poission splitting, the process of generating $M_j$ is equivalent to independent sampling $M_j\sim \poi(dp_j)$ for $2\le j\le n$. It is proven in \cite{brennan2020phase} that $\rigp$ is close to $\rig$ in total variation distance.
\begin{lemma}[\cite{brennan2020phase}, Proposition 3.1]\label{lem:rig-rigp}
For $d\gg n^2$ and $\delta=\Theta(1/\sqrt{d})$, it holds that 
\[\TV(\rig(n,d,\delta), \rigp(n,d,\delta))=o_n(1)\,.\]
\end{lemma}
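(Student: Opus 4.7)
The plan is to strip away the common downstream structure of $\rig$ and $\rigp$ using the data processing inequality, collapsing the problem to a one-dimensional Poisson-vs-binomial total variation bound. First I would observe that both models share the same second stage: given $(M_2,\ldots,M_n)$, both construct the graph by independently sampling $M_j$ uniformly random $j$-subsets of $[n]$ and planting a clique on each, for every $j\ge 2$. Hence by DPI,
\[
\TV(\rig(n,d,\delta),\rigp(n,d,\delta))\le \TV\bigl((M_2,\ldots,M_n)_\rig,(M_2,\ldots,M_n)_\rigp\bigr)\,.
\]

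Next I would note that in both models $(M_2,\ldots,M_n)$ is generated by first sampling a total count $N$ and then drawing $\multino(N;\gamma p_2,\ldots,\gamma p_n)$ with $\gamma=(1-p_0-p_1)^{-1}$. Under $\rig$, grouping $(M_0,M_1)$ in the multinomial shows that the marginal on the total count is $N=M_2+\cdots+M_n\sim\bino(d,q)$ with $q:=1-p_0-p_1$, while under $\rigp$ we have $N\sim\poi(dq)$ by definition. A second application of DPI to the kernel $N\mapsto\multino(N;\gamma p_2,\ldots,\gamma p_n)$ reduces the problem to
\[
\TV\bigl(\bino(d,q),\poi(dq)\bigr)\,.
\]
For the parameter asymptotics, write $p_0+p_1=(1-\delta)^{n-1}(1+(n-1)\delta)$ and Taylor-expand in the regime $n\delta\to 0$, which holds since $\delta=\Theta(1/\sqrt d)$ and $d\gg n^2$. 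This yields $q=\Theta(n^2\delta^2)=\Theta(n^2/d)$ and in particular $\lambda:=dq=\Theta(n^2)\to\infty$. I would then invoke the Stein--Chen bound
\[
\TV\bigl(\bino(d,q),\poi(dq)\bigr)\le \min\{1,1/(dq)\}\cdot dq^2 \le q = \Theta(n^2/d),
\]
which is $o_n(1)$ whenever $d\gg n^2$, completing the proof.

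The hard part is picking a sharp enough Poisson-approximation bound. The classical Le Cam coupling bound $\TV\le dq^2=\Theta(n^4/d)$ would require the much stronger hypothesis $d\gg n^4$ to be useful, and is not enough for the stated regime. The quantitative improvement to $\TV\le q$, which comes from Stein--Chen (or equivalently Kerstan's theorem) and is valid once $\lambda\ge 1$, is precisely the input that unlocks $d\gg n^2$; apart from this, everything else is routine (two invocations of DPI, one Taylor expansion, and one standard TV inequality).
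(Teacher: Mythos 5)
The paper does not actually supply its own proof of this lemma; it is stated as an external citation to \cite{brennan2020phase}, Proposition~3.1, so there is no in-paper argument to compare against. That said, your blind proof is correct and self-contained, so let me just verify the key steps. The two reductions by DPI are sound: (a) in both models the graph is the same randomized function of $(M_2,\dots,M_n)$ (sizes $0$ and $1$ contribute no edges), and (b) in both models the conditional law of $(M_2,\dots,M_n)$ given the total $N$ is $\multino(N;\gamma p_2,\dots,\gamma p_n)$ — in $\rig$ by the multinomial aggregation/conditioning property, in $\rigp$ by construction — so the problem collapses to comparing the marginals of $N$. The marginal of $N$ is $\bino(d,q)$ under $\rig$ (aggregating $\multino(d,\vec p)$) and $\poi(dq)$ under $\rigp$, with $q=1-p_0-p_1$. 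Your Taylor expansion giving $q=\Theta(n^2\delta^2)=\Theta(n^2/d)$ in the regime $n\delta\to 0$ is right, and the Barbour--Hall/Stein--Chen bound $\TV(\bino(d,q),\poi(dq))\le\min(1,1/(dq))\,dq^2\le q$ is exactly the quantitative input needed, since the cruder $dq^2=\Theta(n^4/d)$ is indeed too weak for $d\gg n^2$. Your observation about which Poisson-approximation bound is strong enough is the genuinely important point, and you got it right.
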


To compare it with $\rig$, we separate the process of adding 3-cliques and define $\rigpf(n,d,\delta)$ by the distribution of $\rigp$ conditioned on $M_3=0$.
Similar to the argument of Theorem 3.1 in \cite{brennan2020phase}, we can prove it is close to \ER\ random graph when $d\gg n^{2.5}$.
\begin{lemma}\label{lem:er-rigpf}
For $d\gg n^{2.5}$ and constant edge density, i.e., $\delta=\Theta(1/\sqrt{d})$, it holds that 
\[
\TV(\rigpf(n,d,\delta), G(n, 1-e^{-d\delta^2+(n-2)d\delta^3(1-\delta)^{n-3}})) = o_n(1)\,.
\]
\end{lemma}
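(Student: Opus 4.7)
The plan is to decompose the construction of $\rigpf(n,d,\delta)$ by clique size and then apply Corollary~\ref{cor:TV-add-clique} successively to each stage. By Poisson splitting, $\rigpf$ can be generated equivalently by independently sampling $M_k\sim\poi(dp_k)$ for each $k\in\{2,4,5,\ldots,n\}$ (the $k=3$ term is absent by the conditioning on $M_3=0$) and planting $M_k$ uniformly random $k$-cliques. I will add the cliques in order of increasing size, bounding the TV incurred at each stage.

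First I would handle the 2-cliques: placing $M_2\sim\poi(dp_2)$ edges uniformly at random with replacement makes each of the $\binom{n}{2}$ positions an independent $\poi(d\delta^2(1-\delta)^{n-2})$ count, so the 2-clique stage produces \emph{exactly} $G(n,1-e^{-d\delta^2(1-\delta)^{n-2}})$. For each $k\ge 4$, Corollary~\ref{cor:TV-add-clique} conditioned on $M_k$ bounds the TV of adding $M_k$ random $k$-cliques on top of the current \ER\ baseline by $O(M_k\,n^{-3/2})$, where the target \ER\ density absorbs the added cliques. Taking expectations and summing via the triangle inequality along the chain of stages gives total TV cost $\sum_{k\ge 4}\E M_k\cdot O(n^{-3/2})$. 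Using $\delta=\Theta(1/\sqrt d)$ gives $\E M_k=dp_k=O(n^k/(k!\,d^{k/2-1}))$, so $\E M_4=O(n^4/d)$ and the successive ratios $\E M_{k+1}/\E M_k=O(n/\sqrt d)=o(1)$ under $d\gg n^{5/2}$; the resulting geometric series is dominated by its leading term, giving a total of $O(n^{5/2}/d)=o(1)$.

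It remains to verify that the cumulative \ER\ density after all stages matches the statement. Since each edge $(i,j)$ is covered by an independent $\poi(dp_k\cdot k(k-1)/(n(n-1)))$ number of $k$-cliques, its total coverage is $\poi(\lambda)$ with
\[\lambda\,=\,\sum_{k\ne 3}dp_k\cdot\frac{k(k-1)}{n(n-1)}\,=\,d\delta^2\sum_{j\ge 0,\,j\ne 1}\binom{n-2}{j}\delta^j(1-\delta)^{n-2-j}\,=\,d\delta^2-(n-2)d\delta^3(1-\delta)^{n-3},\]
using the identity $k(k-1)\binom{n}{k}=n(n-1)\binom{n-2}{k-2}$ and setting $j=k-2$; then $1-e^{-\lambda}$ is exactly the target density.

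The main obstacle is that Corollary~\ref{cor:TV-add-clique} is stated for a fixed constant clique size $t$, with an implicit $t$-dependence in its constant. Since $\E M_k$ decreases super-polynomially in $k$ for $k\ge 4$ under $d\gg n^{5/2}$, I would truncate at a large constant $K$: the tail $\sum_{k>K}\E M_k=o(1/n)$, so by Markov's inequality those stages plant no cliques with probability $1-o(1)$ and hence contribute only $o(1)$ to the TV (via Lemma~\ref{lem:bad_event}), while the finitely many remaining applications of the corollary carry only $k$-dependent constants that are now uniformly bounded. A secondary subtlety is that the base graph at each intermediate stage is itself a random \ER\ graph with cumulative random density; this is resolved by conditioning on all Poisson counts and iterating the corollary stage by stage.
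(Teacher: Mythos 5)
Your overall route is the same as the paper's: Poisson decomposition by clique size, the 2‑clique stage producing an exact \ER\ graph, Corollary~\ref{cor:TV-add-clique} for the $k\ge 4$ stages (with $\E\|\vec M\|_1\cdot n^{-3/2}=O(n^{5/2}/d)$ as the resulting bound), and truncation at a constant $K$ to control the size‑dependence of the corollary's constant. Your computation of $\lambda = d\delta^2-(n-2)d\delta^3(1-\delta)^{n-3}$ via the identity $k(k-1)\binom{n}{k}=n(n-1)\binom{n-2}{k-2}$ is correct and matches the target density.

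However, there is a genuine gap at the end. What your staged argument plus convexity actually delivers is
\[
\TV\B(\rigpf(n,d,\delta),\ \E_{\vec M}\,G\b(n, q(\vec M)\b)\B)=o(1)\,,
\]
a bound to a \emph{mixture} of \ER\ graphs with random edge density $q(\vec M)$ (a deterministic function of the Poisson counts $\vec M$). To reach the lemma you must still show that this mixture is TV-close to the single distribution $G(n,1-e^{-\lambda})$. Your closing paragraph verifies that $\E[q(\vec M)]=1-e^{-\lambda}$ (equivalently, that the marginal rate at each edge is $\poi(\lambda)$), but matching the mean density does not, by itself, control the TV between the mixture and the fixed \ER\ graph. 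The missing step is a concentration argument: one needs $|q(\vec M)-(1-e^{-\lambda})|=o(1/n)$ with probability $1-o(1)$ (Chebyshev on the Poisson $M_j$'s suffices) combined with a quantitative bound like Lemma~\ref{lem:distance-binomial}, which converts a density discrepancy $\epsilon$ into a TV bound $O(n\epsilon)$ between the corresponding \ER\ graphs. Without that step, the per-edge Poisson reasoning computes the correct marginal but does not establish the claimed TV bound, since a mixture of product measures with close marginals need not be close in TV to the product measure with the averaged marginal.
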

The proof is deferred to Appendix~\ref{sec:postponed-proofs} as it is similar to the proof of Theorem~\ref{thm:er-rgt}.

Notice that to transform from $\rigpf$ to $\rigp$, we only need to sample $M_3\sim \poi(dp_3)$ and add a uniformly random triangle to $\rigpf$ independently for $M_3$ times. By Poission splitting, if we let $X_{ijk}$ be the number of times that triple $i,j,k$ is sampled, we have $X_{ijk}\sim \poi(\frac{dp_3}{\binom{n}{3}})=\poi(d\delta^3(1-\delta)^{n-3})$. This means each triple is added to $\rigpf$ independently with probability $1-e^{-d\delta^3(1-\delta)^{n-3}}$. Combine this observation with Lemma~\ref{lem:er-rigpf}, we immediately have the following corollary. 
\begin{corollary}\label{cor:rigp-rgt}
For $d\gg n^{2.5}$ and constant edge density, i.e., $\delta=\Theta(1/\sqrt{d})$, we have
\[\TV(\rigp(n,d,\delta), \rgt(n,1-e^{-d\delta^2+(n-2)d\delta^3(1-\delta)^{n-3}}, 1-e^{-d\delta^3(1-\delta)^{n-3}}))
\]
\end{corollary}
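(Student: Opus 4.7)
The plan is to express the passage from $\rigpf$ to $\rigp$ as exactly the forward triangle-addition channel $\Af$ of Definition~\ref{def:forward}, and then combine this identification with Lemma~\ref{lem:er-rigpf} via the data processing inequality. All the heavy lifting is already in Lemma~\ref{lem:er-rigpf}, so what remains is essentially a bookkeeping argument.

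First I would unpack how $\rigp$ is obtained from $\rigpf$: by the definition of $\rigp$ one samples $M_3 \sim \poi(dp_3)$ with $p_3 = \binom{n}{3}\delta^3(1-\delta)^{n-3}$ and plants $M_3$ uniformly random triangles. By Poisson splitting, this is equivalent to independently drawing, for each triple $\{i,j,k\} \in \binom{[n]}{3}$, a $\poi(dp_3/\binom{n}{3}) = \poi(d\delta^3(1-\delta)^{n-3})$ count of triangle copies and planting a triangle whenever the count is at least one (multiple copies on the same triple collapse to the same three edges). Consequently, each triple receives a triangle independently with probability
\[
p' \;:=\; 1 - e^{-d\delta^3(1-\delta)^{n-3}},
\]
and this is precisely the channel $\Af$ of Definition~\ref{def:forward} with parameter $p'$ applied to $\rigpf(n,d,\delta)$.

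Writing $\tilde p := 1 - e^{-d\delta^2 + (n-2)d\delta^3(1-\delta)^{n-3}}$, Lemma~\ref{lem:er-rigpf} gives $\TV\bigl(\rigpf(n,d,\delta), G(n,\tilde p)\bigr) = o_n(1)$ under $d \gg n^{2.5}$ and $\delta = \Theta(1/\sqrt{d})$. Since $\Af$ is a Markov kernel, the data processing inequality yields
\[
\TV\bigl(\Af(\rigpf(n,d,\delta)),\, \Af(G(n,\tilde p))\bigr) \;\leq\; \TV\bigl(\rigpf(n,d,\delta), G(n,\tilde p)\bigr) \;=\; o_n(1).
\]
By Definition~\ref{def:rgt}, $\Af(G(n,\tilde p)) = \rgt(n,\tilde p, p')$, and by the paragraph above, $\Af(\rigpf(n,d,\delta)) = \rigp(n,d,\delta)$ in distribution. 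Chaining these identifications with the displayed bound gives the corollary.

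There is essentially no nontrivial obstacle; the hard part is entirely contained in Lemma~\ref{lem:er-rigpf}. The one sanity check I would perform is that the edge density is internally consistent: the probability that an arbitrary edge is present in $\Af(G(n,\tilde p))$ equals $1 - (1-\tilde p)(1-p')^{n-2} = 1 - e^{-d\delta^2}$, matching the marginal edge probability of $\rig(d,\delta)$ (and hence of $\rigp(n,d,\delta)$). This confirms that the extra additive term $(n-2)d\delta^3(1-\delta)^{n-3}$ inside the exponent of $\tilde p$ is exactly what is needed to counteract the edge mass later reintroduced by the independent triangle additions.
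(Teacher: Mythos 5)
Your proof is correct and follows the same route as the paper: identify $\rigp = \Af(\rigpf)$ with $p' = 1-e^{-d\delta^3(1-\delta)^{n-3}}$ via Poisson splitting, then chain with Lemma~\ref{lem:er-rigpf} and the data processing inequality. The paper leaves the DPI step implicit (``we immediately have the following corollary''), whereas you spell it out and add a useful edge-density consistency check, but the argument is identical in substance.
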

Combining this and Lemma~\ref{lem:rig-rigp}, we have proved Theorem~\ref{thm:rig-rgt}.

\section{Deferred Proofs}\label{sec:postponed-proofs}



\subsection{Proof of Lemma~\ref{lem:Kuvx-vs-x} on the distance between $K_{uv}(X)$ and $X$}
\begin{proof}
Recall that we  define a partition of $T_G$ as follows. 
\[T_{G0} \defeq \{t\in T_G:t\text{ do not contain $u$ or $v$}\}\]
\[T_{Gv} \defeq \{t\in T_G:t\text{ contains  $v$}\}\]
\[T_{Gu} \defeq \{t\in T_G:t\text{ contains  $u$}\}\]
Recall the following lemma.

\goodxtg*

The same holds by exchanging $u$ and $v$, so we have the the following by combining both conditions.
\begin{lemma}\label{lem:good-xtg0-prime}With high probability over the randomness of $X_{T_{G0}}$, for any $w\not=u,v$, there are $\Theta(n^2p')$ different $w'$ that $(w',v)\in G$ and $(w',w)\in E(X_{T_{G0}})$, and $\tilde{\Omega}(n)$ different $w'$ that $(w',u)\in G$ and $(w',w)\in E(X_{T_{G0}})$
\end{lemma}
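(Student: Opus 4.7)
The plan is simply to invoke Lemma~\ref{lem:good_xtg0} twice and combine via a union bound. Lemma~\ref{lem:good_xtg0} gives exactly the first of the two stated conditions: with high probability over $X_{T_{G0}}$, every $w\notin\{u,v\}$ admits $\tilde\Omega(n)$ vertices $w'$ with $(w',v)\in G$ and $(w',w)\in E(X_{T_{G0}})$. The statement of Lemma~\ref{lem:good_xtg0} is symmetric in $u$ and $v$ (indeed, the proof uses only that $G$ is $c$-uniformly $2$-star dense and that $\mu_G$ is $\Omega(p')$-marginally large, neither of which distinguishes $u$ from $v$), so exactly the same argument, applied verbatim with the roles of $u$ and $v$ exchanged, yields the second condition with high probability.

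Concretely, fix $w\notin\{u,v\}$. Since $G$ is $c$-uniformly $2$-star dense, there are $\Omega(n)$ vertices $w'$ with $(w',w),(w',u)\in G$. For each such $w'$, the edge $(w',w)$ lies in $\Omega(n)$ triangles of $T_{G0}$, and by $\Omega(p')$-marginal largeness of $\mu_G$ (Lemma~\ref{lem:mug_marginally_small}) the indicator that $(w',w)\in E(X_{T_{G0}})$ dominates a $\bern(\Omega(np'))$ variable even after arbitrary conditioning on the other $w'$'s. The resulting count therefore stochastically dominates $\bino(\Omega(n),\Omega(np'))$, which is $\tilde\Omega(n)$ with probability $1-n^{-\omega(1)}$ by a Chernoff bound (recall $p'=\Tilde\Theta(1/n)$, so the mean is $\tilde\Theta(n)$).

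Taking a union bound over the two directions ($u$ and $v$) and over the $n-2$ choices of $w$ loses only a polynomial factor and preserves the ``high probability'' conclusion. The only minor subtlety is that strictly speaking the two versions are not independent, but we do not need independence: each event holds with high probability on its own, so their intersection does too.

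No step here is a genuine obstacle; the content is entirely contained in the already-proved Lemma~\ref{lem:good_xtg0}, and the new lemma is just its symmetrization. The lemma is stated separately only because the subsequent use (in bounding $\TV(K_{uv}(X),X)$) requires \emph{both} conditions to hold for the same realization of $X_{T_{G0}}$.
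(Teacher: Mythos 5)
Your proposal is correct and matches the paper's proof: the paper itself states ``The same holds by exchanging $u$ and $v$, so we have the following by combining both conditions,'' which is precisely your symmetrization-plus-union-bound argument, and your concrete sketch mirrors the proof of Lemma~\ref{lem:good_xtg0} verbatim.
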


We will prove the following claim.
For any $X_{T_{G0}}$ satisfying the condition in Lemma~\ref{lem:good-xtg0-prime}, 
\begin{equation}\label{eq:kuvx-vs-x-conditioning}
 \TV(K_{uv}(X)|X_{T_{G0}}, X|X_{T_{G0}})=O\B(\frac{\log^3 n}{n^6p'^{11/2}}\B)=\tilde{O}\B(\frac{1}{\sqrt{n}}\B)\,.   
\end{equation}This would then imply Lemma~\ref{lem:Kuvx-vs-x} as the condition in Lemma~\ref{lem:good-xtg0-prime} happens with high probability.

To use Lemma~\ref{lem:concentration-TV}, we will show that the likelihood ratio between the two distributions concentrates. Note that the distribution of $X|X_{T_{G0}}$ is just $\mu_G(X|X_{T_{G0}})$. We can write the density of both distributions as
\begin{align*}
    &\Pr(K_{uv}(X)=x|X_{T_{G0}})\\
    &\quad = \sum_{\substack{t_1=(v,w,w_1),t_2=(u,w,w_2):t_1,t_2\in x\\ (w,w_1),(v,w_1),(w,w_2),(u,w_2)\in E(\xm )\\ (w,u),(w,v)\notin E(\xm )}} \frac{1}{|V_{uv}(\xm )|\cdot |\wed[E(\xm)](v,w)|\cdot |\wed[E(\xm)](u,w)|}\mu_G(\xm|X_{T_{G0}})\,,
\end{align*}
where $\xm$ stands for $x-t_1-t_2$,
and 
\[\Pr(X=x|X_{T_{G0}}) = \mu_G(X=x|X_{T_{G0}})\,.\]
So the derivative is 
\begin{equation}\label{eq:derivative-prime}
\begin{split}
       & \frac{\Pr(K_{uv}(X)=x|X_{T_{G0}})}{\Pr(X=x|X_{T_{G0}})}\\
      &\quad = \sum_{\substack{t_1=(v,w,w_1),t_2=(u,w,w_2):t_1,t_2\in x\\ (w,w_1),(v,w_1),(w,w_2),(u,w_2)\in E(\xm )\\ (w,u),(w,v)\notin E(\xm )}} \frac{1}{|V_{uv}(\xm )|\cdot |\wed[E(\xm)](v,w)|\cdot |\wed[E(\xm)](u,w)|} \frac{\mu_G(\xm)}{\mu_G(x)}
\end{split}
\end{equation}
Similarly to the proof of Lemma~\ref{lem:Kvx-vs-x}, we will show each term on the right hand side is close to constant with high probability, and the number of terms also concentrates.

Note that if $t$ is chosen in $K_{uv}$ to be added to $X$, it always increases the number of edges by 2, i.e., $e(x)-e(x-t_1-t_2)=2$. So
\[\frac{\mu_G(x-t_1-t_2)}{\mu_G(x)} = \frac{p'^{|x-t_1-t_2|}p^{-e(x-t_1-t_2)}}{p'^{|x|}p^{-e(x)}} = \frac{p^2}{p'^2}\,.\]

From definition, $|V_{uv}(X)|$, $|\wed[E(X)](v,w)|$ and $|\wed[E(X)](u,w)|$ are $O(1)$-Lipschtz, so we have the following lemma
\begin{lemma}\label{lem:var-Vv-Wvw-prime}
With high probability,
\[
\b|\,|\wed[E(X)](u,w)|-\E |\wed[E(X)](u,w)|\, \b|= O(\sqrt{n^2p'}\log n),\]
\[\b||\wed[E(X)](v,w)|-\E |\wed[E(X)](v,w)|\,\b|= O(\sqrt{n^2p'}\log n)\,,
\]
and
 \[\b|\,|V_{uv}(X)|-\E |V_{uv}(X)| \,\b|O(\sqrt{n^2p'}\log n)\,.\] 
\end{lemma}
\begin{proof}
$\law(X|X_{T_{G0}})$ is a distribution over $O(n^2)$ triangles that is $\Theta(p')$-marginally small. So we can apply Corollary~\ref{cor:concentration_muG}. With high probability, any $O(1)$ Lipschtz function is within distance  $O(\sqrt{n^2p'}\log n)$ from its mean with high probability.
\end{proof}

\begin{lemma}\label{lem:e-Vv-prime}
For any $X_{T_{G0}}$ satisfying condition in Lemma~\ref{lem:good_xtg0},
\[\E [V_{uv}(X)|X_{T_{G0}}]=\Theta(n)\,,\]
 and for any $w\not=u,v$
\[\E [\wed[E(X)](u,w)|X_{T_{G0}}]=\Theta(n^3p'^2)\text{ and }\E [\wed[E(X)](v,w)|X_{T_{G0}}]=\Theta(n^3p'^2)\,.\]
\end{lemma}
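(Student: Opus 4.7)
The plan is to establish both bounds in parallel with the previously proved Lemma~\ref{lem:e-Vv}. In both cases the upper bounds are trivial: $|V_{uv}(X)| \leq |\wed[G](u,v)| \leq n$ and $|\wed[E(X)](u,w)| \leq |\wed[G](u,w)| \leq n$. So the real work is to produce matching lower bounds. The one nontrivial technical point is that $\mu_G(\,\cdot\, \mid X_{T_{G0}})$ is itself a Gibbs measure on $\{0,1\}^{T_G \setminus T_{G0}}$ of exactly the same functional form as $\mu_G$, up to an additive constant in the edge exponent; hence the computation in Lemma~\ref{lem:mug_marginally_small} goes through verbatim and the conditional distribution is still $O(p')$-marginally small and $\Omega(p')$-marginally large on the remaining triangles.

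For $\E[|V_{uv}(X)| \mid X_{T_{G0}}] = \Omega(n)$, I would observe that $V_{uv}(X)$ contains every $w \in \wed[G](u,v)$ for which neither $(w,u)$ nor $(w,v)$ lies in $E(X)$. Since $G \in \cG_1$ is $c$-uniformly 2-star dense we have $|\wed[G](u,v)| = \Omega(n)$. For a fixed such $w$, each of the edges $(w,u),(w,v)$ is covered only by triangles containing $u$ or $v$ (hence lying in $T_G \setminus T_{G0}$), and there are at most $n-2$ triangles through each such edge. Applying the union bound with the conditional $O(p')$-marginal smallness gives
\[\Pr\b((w,u) \in E(X) \mid X_{T_{G0}}\b) + \Pr\b((w,v) \in E(X) \mid X_{T_{G0}}\b) = O(np') = o(1)\,,\]
so $\Pr((w,u),(w,v) \notin E(X) \mid X_{T_{G0}}) = \Omega(1)$ and summing over $w \in \wed[G](u,v)$ yields the claim.

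For $\E[|\wed[E(X)](u,w)| \mid X_{T_{G0}}] = \tilde\Omega(n)$ (and the symmetric statement for $v$), I would mirror the argument used for $\wed[E(X)](v,w)$ in the proof of Lemma~\ref{lem:e-Vv}. By the hypothesis from Lemma~\ref{lem:good-xtg0-prime}, there are $\tilde\Omega(n)$ vertices $w'$ with $(w',u) \in G$ and $(w',w) \in E(X_{T_{G0}})$; for such $w'$ the edge $(w,w')$ automatically lies in $E(X)$, so it suffices to show $\Pr((u,w') \in E(X) \mid X_{T_{G0}}) = \Omega(np')$. Since $(u,w') \in G$ and $G$ is $c$-uniformly 2-star dense, there are $\Omega(n)$ triangles $(u,w',z)$, all in $T_{Gu}$. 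Using the telescoping identity from Corollary~\ref{cor:muge-marginal} together with the conditional $\Omega(p')$-marginal largeness, the probability that at least one of these triangles is selected is $\Omega(np')$. Multiplying by the $\tilde\Omega(n)$ available $w'$ gives $\tilde\Omega(n \cdot np') = \tilde\Omega(n)$ in the regime $p' = \tilde\Theta(1/n)$, as required.

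The main (mild) obstacle is justifying that conditioning on $X_{T_{G0}}$ does not destroy the marginal smallness/largeness properties of $\mu_G$; this is immediate from inspecting the conditional density, but one has to be explicit about it because $|T_{G0}| = \Theta(n^3)$, so Lemma~\ref{lem:margin-under-conditioning} is not directly applicable and the ratio argument from Lemma~\ref{lem:mug_marginally_small} has to be re-run on $T_G \setminus T_{G0}$.
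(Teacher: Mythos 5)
Your proof is correct and takes essentially the same route as the paper's: uniform 2-star density yields $\Omega(n)$ candidate vertices $w$ (resp.\ $\tilde\Omega(n)$ candidate $w'$ via Lemma~\ref{lem:good-xtg0-prime}), and the $O(p')$/$\Omega(p')$ marginal bounds from Lemma~\ref{lem:mug_marginally_small} control each per-candidate probability. One small remark: the ``obstacle'' you flag at the end is not really an obstacle --- Definition~\ref{def:marg-small} already quantifies over \emph{all} conditionings $x_{\sim i}$, so the bound $\mu_G(X_t=1\mid X_{T_{G0}})\le O(p')$ (and the analogous lower bound) follows by averaging $\mu_G(X_t=1\mid X_{\sim t})$ over the remaining coordinates given $X_{T_{G0}}$, without any need to re-derive the ratio estimate on the sub-family $T_G\setminus T_{G0}$.
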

\begin{proof}
Recall that $V_{uv}(X)$ consists of vertices $w$ that $(w,u),(w,v)\notin E(X)$. Fix any $w\in W_{u,v}(G)$. 
Both $(w,u)$ and $(w,v)$ are included in at most $n-2$ triangles in $G$, each being one on $X$ with probability $O(p')$ as shown in Lemma~\ref{lem:mug_marginally_small}. So
\[\Pr[(w,u),(w,v)\notin E(X)|X_{T_{G0}}]\ge 1-2(n-2)O(p')=\Omega(1)\,,\]
as $p'\ll 1/n$. Because $|W_{u,v}(G)|\ge cn$, we have
\[\E[\wed[E(X)](u,w)|X_{T_{G0}}] = \sum_{w\in W_{u,v}(G)}\Pr((w,u),(w,v)\notin E(X)|X_{T_{G0}}) =\Omega(n)\,. \]


Fix any $w\not=u,v$. Consider a vertex $w'$ such that $(w,w')\in E(X_{T_{G0}}), (v,w')\in G$, since $G$ is $c$-uniformly 2-star dense, $(v,w')$ is included in $\Theta(n)$ triangles, each having probability $\Theta(p')$ of being in $X$. So
\[\Pr[(v,w')\in E(X)|X_{T_{G0}}]=\Theta(np')\,.\]
From the assumption on $X_{T_{G0}}$, there are $\Theta(n^2p')$ such $w'$, so 
\[\E [\wed[E(X)](v,w)|X_{T_{G0}}] = \Theta(n^2p')\cdot \Theta(np') = \Theta(n^3p'^2)\,.\]
The same holds for $\E [\wed[E(X)](u,w)|X_{T_{G0}}]$
\end{proof}

Note that $\wed[E(X)](v,w)\ge |\wed[E(X_{T_{G0}})](v,w)|=\Theta(n)$, so $\E [\wed[E(X)](v,w)|X_{T_{G0}}]=\Theta(n)$. By Lemma~\ref{lem:var-Vv-Wvw-prime} and Lemma~\ref{lem:e-Vv-prime}, for any $v$ and $w$, conditioned on any $X_{T_{G0}}$,
\[\B|\frac{|V_{uv}(X)|-\E |V_{uv}(X)|}{\E |V_{uv}(X)|}\B|=O(\sqrt{p'}\log n)\,,\qquad \B|\frac{|\wed[E(X)](v,w)|-\E |\wed[E(X)](v,w)|}{\E |\wed[E(X)](v,w)|}\B|=O\B(\frac{\log n}{n^2p'^{3/2}}\B)\,,\]
\[\text{and} \qquad \B|\frac{|\wed[E(X)](u,w)|-\E |\wed[E(X)](u,w)|}{\E |\wed[E(X)](u,w)|}\B|=O\B(\frac{\log n}{n^2p'^{3/2}}\B)\]
holds with high probability.

As $|V_{uv}(\Xm)| = |V_{uv}(X-t_1-t_2)|$ and $|V_{uv}(X)|$ differ by at most 2, so are $|\wed[E(\Xm)](v,w)|$ and $|\wed[E(X)](u,w)|$. So with high probability, each term in \eqref{eq:derivative-prime} concentrates,
\begin{align}\label{eq:each-term-prime}
\begin{split}
&\frac{1}{|V_{uv}(\Xm )|\cdot |\wed[E(\Xm)](v,w)|\cdot |\wed[E(\Xm)](u,w)|} \frac{\mu_G(\Xm)}{\mu_G(X)}
\\ &\qquad = 
\frac{1}{\E|V_{uv}(\Xm )|\cdot |\wed[E(\Xm)](v,w)| \cdot \E |\wed[E(\Xm)](u,w)|}\frac{p^2}{p'^2}\B(1+O\B(\frac{\log n}{n^2p'^{3/2}}\B)\B)\,.\\
\end{split}
\end{align}

It remains to show that the number of terms
also concentrates. Let $S$ be a set of ordered vertices
\begin{align*}
    &S(X)= \{(w;w_1;w_2):t_1=(v,w,w_1)\in X,t_2=(u,w,w_2)\in X,\\ &\qquad \qquad(w,w_1),(v,w_1),(u,w_2),(w,w_2)\in E(X-t_1-t_2), (u,w),(v,w)\notin E(X-t_1-t_2)\}\,.
\end{align*}

\begin{lemma}\label{lem:concentration-number-of-terms-Kuv}
$|S(X)|$ is $O(\log^2 n)$-Lipschitz with high probability.
under $\law(X|X_{T_{g0}})$, 
\[\E[|S(X)||X_{T_{G0}}]=\Theta(n^7p'^6)=\tilde{\Theta}(n)\,,\]
and with high probability,
\[\b||S(X)|- \E[|S(X)||X_{T_{G0}}] \b|=O(n\sqrt{p'}\log ^3 n)=\tilde O(\sqrt{n})\,.\]
\end{lemma}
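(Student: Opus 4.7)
The plan is to adapt the proof of Lemma~\ref{lem:concentration-number-of-terms-Kv} to the triple-indexed set $S(X)$. The key structural observation, analogous to that proof, is a \emph{per-$w$ uniqueness}: for each fixed $w \notin \{u,v\}$, there is at most one pair $(w_1,w_2)$ with $(w;w_1;w_2)\in S(X)$. Indeed, the conditions $(u,w),(v,w)\notin E(X-t_1-t_2)$ force $t_1=(v,w,w_1)$ and $t_2=(u,w,w_2)$ to be the \emph{only} triangles of $X$ containing $(v,w)$ and $(u,w)$ respectively, which pins down $w_1,w_2$. This immediately bounds $|S(X)|\le n-2$ and will be crucial for both Lipschitzness and the expectation bound.

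For Lipschitzness, I would suppose $X' = X\vee e_{t_0}$ and bound $|S(X)\triangle S(X')|$ by a constant. For $S(X)\setminus S(X')$, the only way a triple $(w;w_1;w_2)$ can be lost is if $t_0$ makes $(u,w)$ or $(v,w)$ appear in $E(X'-t_1-t_2)$; since $t_0$ has three vertices, this restricts $w$ to $O(1)$ values, and per-$w$ uniqueness gives $O(1)$ triples. For $S(X')\setminus S(X)$ I would split into the case $t_0\in\{t_1,t_2\}$ (only $O(1)$ triples since $t_0$ is fixed) and the case where $t_0$ supplies one of the required edges $(w,w_1),(v,w_1),(u,w_2),(w,w_2)$ in $E(X'-t_1-t_2)$; in each subcase an edge of $t_0$ must coincide with one of these positions, which pins $w$ up to $O(1)$ choices, and per-$w$ uniqueness does the rest. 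This yields an $O(1)$-Lipschitz bound on a set where the relevant edge and triangle counts are not too large, which holds with high probability.

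For the expectation, I would show that for a constant fraction of vertices $w$ (namely those with $(u,w),(v,w)\in G$ and with the properties guaranteed by Lemma~\ref{lem:good-xtg0-prime}), the event ``there exist $w_1,w_2$ with $(w;w_1;w_2)\in S(X)$'' has probability $\tilde\Theta(1)$ under $\law(X|X_{T_{G0}})$. The main ingredients: (i) at least one triangle $(v,w,w_1)$ with $w_1\in W_{v,w}(G)$ lies in $X$ with probability $\tilde\Theta(np')=\tilde\Theta(1/\log n)$ (Lemma~\ref{lem:margin-under-conditioning} and a union/independence argument), and similarly for $(u,w,w_2)$; (ii) conditioned on this, no \emph{other} triangle in $X$ contains $(u,w)$ or $(v,w)$ with probability $\Theta(1)$, since there are $O(n)$ such triangles each with marginal $O(p')$; (iii) the auxiliary edge conditions $(w,w_1),(v,w_1),(u,w_2),(w,w_2)\in E(X-t_1-t_2)$ hold with probability $\tilde\Theta(1)$ using the assumption on $X_{T_{G0}}$ (which already supplies $(w,w_1)$ and $(w,w_2)$ for a suitable choice of $w_1,w_2$, and Lemma~\ref{lem:margin-under-conditioning} handles $(v,w_1),(u,w_2)$ via adjacent triangles). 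Summing over the $\tilde\Theta(n)$ good values of $w$ and using per-$w$ uniqueness to avoid overcounting gives $\E[|S(X)|\,|X_{T_{G0}}]=\tilde\Theta(n)$.

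Finally, concentration of $|S(X)|$ around its conditional mean follows from Corollary~\ref{cor:small_marginal_concentration_extension} applied to $f=|S(X)|$: the ambient distribution $\law(X|X_{T_{G0}})$ lives on $N=|T_{Gu}\cup T_{Gv}|=O(n^2)$ coordinates, is $O(p')$-marginally small with $p'=\tilde\Theta(1/n)$, and $f$ is $O(1)$-Lipschitz on a high-probability set, giving deviation $\tilde O(\sqrt{Np'}\,)=\tilde O(\sqrt n)$. The main obstacle I anticipate is the Lipschitz case analysis: the combinatorial bookkeeping for $S(X')\setminus S(X)$ must carefully use per-$w$ uniqueness to avoid a naive $\Omega(n)$ bound that would arise from, e.g., freely choosing $w_2$ once the new edge supplied by $t_0$ determines only $w$ and $w_1$.
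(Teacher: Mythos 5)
Your proposal is correct and follows the same three-part structure as the paper (Lipschitzness of $|S(X)|$, a two-sided bound on $\E[|S(X)|\,|X_{T_{G0}}]$, then concentration via Corollary~\ref{cor:small_marginal_concentration_extension} over the $O(n^2)$-coordinate, $O(p')$-marginally-small distribution $\law(X|X_{T_{G0}})$). Where you genuinely improve on the paper is in the Lipschitz step: your per-$w$ uniqueness observation --- that the requirements $t_1\in X$, $(v,w)\notin E(X-t_1-t_2)$ force $t_1$ to be the \emph{unique} triangle of $X$ containing $(v,w)$, and likewise for $t_2$ and $(u,w)$, so $|S_w(X)|\le 1$ --- is a sharper invariant than the paper's bound $|S_w(X)|\le |X_{T(u,w)}||X_{T(v,w)}|$, and it yields $O(1)$-Lipschitzness \emph{unconditionally} rather than $O(\log^2 n)$-Lipschitzness on a high-probability set. (The paper uses exactly this uniqueness in the simpler $K_v$ analogue, Lemma~\ref{lem:concentration-number-of-terms-Kv}, but reverts to the weaker product bound for $K_{uv}$.) Your case analysis for $S(X')\setminus S(X)$ is also organized correctly: the added triangle $t_0$ can supply at most one of the required edges $(w,w_1),(v,w_1),(u,w_2),(w,w_2)$, which pins $w$ to $O(1)$ choices, and per-$w$ uniqueness then determines the rest of the triple; this is precisely the place where a naive count would blow up, and you flagged it. Your expectation argument sums over the $\tilde\Theta(n)$ good vertices $w$ (each contributing $\tilde\Theta(1)$) rather than over the $\tilde\Theta(n^3)$ triples (each contributing $\tilde\Theta(1/n^2)$) as the paper does; the two organizations are equivalent, and your upper bound $|S(X)|\le n-2$ from per-$w$ uniqueness gives the matching $O(n)$ side of $\tilde\Theta(n)$ for free. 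Two small bookkeeping points to tighten when writing this out: (i) in step (i) of your expectation estimate, the choice of $w_1$ should range over the $\tilde\Omega(n)$ vertices supplied by Lemma~\ref{lem:good-xtg0-prime} with $(w,w_1)\in E(X_{T_{G0}})$, not merely $w_1\in W_{v,w}(G)$, so that the condition $(w,w_1)\in E(X-t_1-t_2)$ is automatic; (ii) when invoking Lemma~\ref{lem:margin-under-conditioning} for $(v,w_1),(u,w_2)\in E(X-t_1-t_2)$, one must note that removing $t_1,t_2$ eliminates only $O(1)$ of the $\Omega(n)$ triangles that could supply these edges, so the $\Omega(np')$ bound survives. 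Neither is a gap in the approach, just a detail to state.
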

\begin{proof}
We first prove that $|S(X)|$ is Lipschitz.
Suppose $X$ and $X'$ differ at one triangle, $X'=X\vee e_{t_0}$.
Let $T(u,w)$ denote the set of triangles that contains $u,w$, and similarly $T(v,w)$ for triangles that contain $v,w$. 
Let $S_w(X):=\{(w;w_1;w_2):(w;w_1;w_2)\in S(X)\}$. Since $t_1$ and $t_2$ have to be in $X$, we have $|S_w(X)|\le |X_{T(u,w)}||X_{T(v,w)}|$.
We will show $S(X')$ and $S(X)$ differ by at most $O(\sup_w |X_{T(u,w)}||X_{T(v,w)}|)$ elements, and $|X_{T(u,w)}||X_{T(v,w)}| = \tilde O(1)$ with high probability.

First consider $S(X)\backslash S(X')$. Suppose $(w;w_1;w_2)\in S(X)$ but $(w;w_1;w_2)\notin S(X+t_0)$. Then we must have either  $(w,v)\in E(X+t_0-t_1-t_2)-E(X-t_1-t_2)$ or  $(w,u)\in E(X+t_0-t_1-t_2)-E(X-t_1-t_2)$. With loss of generality, assume it is $(w,v)\in E(X+t_0-t_1-t_2)-E(X-t_1-t_2)$.
Since  $E(X+t_0-t_1-t_2)$ and $E(X-t_1-t_2)$ differ by at most 3 edges, we have $|\{w:\exists w', (w;w_1;w_2)\in S(X)\backslash S(X')\}|\le 3$. Notice that each for each $w$, at most $|S_w(X)|$ of pairs $w_1,w_2$ satisfy $(w;w_1;w_2)\in S(X)$, otherwise $(w,v)\in E(X-t_1-t_2)$. So $|S(X)\backslash S(X')|\le 3\sup_w |X_{T(u,w)}||X_{T(v,w)}|$.

Then consider $S(X')\backslash S(X)$. If $(w;w_1;w_2)\in S(X')\backslash S(X)$, either $(w_1,v)\in E(X+t_0-t_1-t_2)-E(X-t_1-t_2)$, $(w,w_1)\in E(X+t_0-t_1-t_2)-E(X-t_1-t_2)$, $(w_2,u)\in E(X+t_0-t_1-t_2)-E(X-t_1-t_2)$, or
$(w,w_2)\in E(X+t_0-t_1-t_2)-E(X-t_1-t_2)$. Let \[S_1=\{(w;w_1;w_2): (w;w_1;w_2)\in S(X'), (w_1,v)\in E(X+t_0-t_1-t_2)-E(X-t_1-t_2)\}\,,\]
\[S_2=\{(w;w_1;w_2): (w;w_1;w_2)\in S(X'), (w,w_1)\in E(X+t_0-t_1-t_2)-E(X-t_1-t_2)\}\,,\]
\[S_3=\{(w;w_1;w_2): (w;w_1;w_2)\in S(X'), (w_2,u)\in E(X+t_0-t_1-t_2)-E(X-t_1-t_2)\}\,,\]
and
\[S_4=\{(w;w_1;w_2): (w;w_1;w_2)\in S(X'), (w,w_2)\in E(X+t_0-t_1-t_2)-E(X-t_1-t_2)\}\,.\]
Then 
\[S(X')\backslash S(X)\subset S_1\cup S_2\cup S_3\cup S_4\,.\]
Still note that for each $|S_w|\le |X_{T(u,w)}||X_{T(v,w)}|$, and $E(X+t_0-t_1-t_2)-E(X-t_1-t_2)$ has at most 3 edges, we have $|S_2|,|S_4|\le 3\sup_w|X_{T(u,w)}||X_{T(v,w)}|$. For $S_1$ and $S_3$, notice that each $w_1$ has at most 2 different $w$ such that $\exists w_2,(w;w_1;w_2)\in S(X')$ and $(v,w_1)\in E(X+t_0-t_1-t_2)-E(X-t_1-t_2)$. Otherwise by definition of $S(X')$, at least 3 triangles in $X'$ contains edge $(v,w_1)$, so $(v,w_1)$ would belong to $E(X-t_1-t_2)$ for any $t_1,t_2$, contradicting that $(v,w_1)\in E(X+t_0-t_1-t_2)-E(X-t_1-t_2)$. Therefore, $|S_2|\le 6\sup_w|X_{T(u,w)}||X_{T(v,w)}|$, similarly, $|S_4|\le 6\sup_w|X_{T(u,w)}||X_{T(v,w)}|$.
We have $|S(X')\backslash S(X)|\le 18\sup_w|X_{T(u,w)}||X_{T(v,w)}|$.

We have proven $|S(X)|$ changes by at most $18\sup_w|X_{T(u,w)}||X_{T(v,w)}|$ when changing one triangle. Notice that by Lemma~\ref{lem:mug_marginally_small}, $|X_{T(u,w)}|$ and $|X_{T(v,w)}|$ are stochastically dominated by $\bino(n-2,O(p'))$, so both quantities are $O(\log n)$ with high probability. This means $|S(X)|$ is $O(\log^2 n)$-Lipschitz with high probability. From Corollary~\ref{cor:small_marginal_concentration_extension}, there are $O(n^2)$ triangles in $\law(X|X_{T_{G0}})$, the distribution is $O(p')$ marginally small, so $|S(X)|$ concentrates around its mean with distance $O(n\sqrt{p'}\log ^3 n)$ with high probability.

Next we show that $\E |S(X)| = \Theta(n^7p'^6)=\tilde{\Theta}(n)$. It is sufficient to prove there are $\Theta(n^5p'^2)$ pairs $(w;w_1;w_2)$ that $(w;w_1;w_2)$ is in $S(X)$ with probability $\tilde{\Theta}(1/n^2)$. Since $G$ is a $c$-uniformly 2-star dense graph, there are $\Omega(n)$ vertices $w$ such that $(w,u),(w,v)\in G$. From the assumption that for any $w$, $|\wed[E(X_{T_{G0}})](v,w)|=\Theta(n^2p')$ and $|\wed[E(X_{T_{G0}})](u,w)|=\Theta(n^2p')$, there are $ \Theta(n^5p'^2)$ pairs $(w;w_1;w_2)$ that $(w,u),(w,v)\in G$ and $(w,w_1),(w,w_2)\in E(X_{T_{G0}})$.

Next we show that for such $(w;w_1;w_2)$, it is in $S(X)$ with probability $\tilde{\Omega}(1/n^2)$.
Consider the event that $(w;w_1;w_2)\in S(X)$, i.e., 
\begin{align*}
   &t_1=(v,w,w_1),t_2=(u,w,w_2),t_1\in X,t_2\in X, \\&(u,w_2), (w,w_2), (v,w_1),(w,w_1)\in E(X-t), (w,u),(w,v)\notin E(X-t)\,. 
\end{align*}
Since $E(X-t_1-t_2)\supset E(X_{T_{G0}})$, $(w,w_1),(w,w_2)\in E(X-t)$ always holds.
First, $t_1,t_2\in X$ with probability $\Theta(p'^2)$, next consider the event conditioning on this. 
$(w,v)$ and $(w,u)$ are included in at most $n-3$ triangles, each being included with probability $\Theta(p')$. So $(w,v),(w,u)\notin E(X-t)$ with probability at least
\[\Pr((w,v),(w,u)\notin E(X-t)|t_1,t_2\in X)\ge (1-\Theta(p'))^{2n-6}=\Theta (1).\]
 Graph $G$ is $c$-uniformly 2-star dense, so there are $\Theta(n)$ triangles contains $(u,w_2)$ and $(v,w_1)$ and do not contain $(w,v)$ or $w_u$. By Lemma~\ref{lem:mug_marginally_small}, conditioning on the all the events above, each triangle is included in $X$ with probability $\Theta(p')$, so $(u,w_2),(v,w_1)\in E(X-t)$ with probability $\Theta(n^2p'^2)$.
Therefore, the event $(w;w_1;w_2)\in S(X)$ happens with probability $\Theta(n^2p'^4).$
\end{proof}

As for any $w$, 
\[\frac{1}{\E |V_{uv}(X)|\cdot \E |\wed[E(X)](u,w)|\cdot \E |\wed[E(X)](v,w)|} \frac{p^2}{p'^2}=O\B(\frac{1}{n^7p'^6}\B)\,.\]
Lemma~\ref{lem:concentration-number-of-terms-Kuv} shows the following function is $ O(\log^2 n/(n^7p'^6))$-Lipschitz with high probability. 
\[f(X) \defeq \sum_{(w;w_1;w_2)\in S(X)}\frac{1}{\E |V_{uv}(X)|\cdot \E |\wed[E(X)](u,w)|\cdot \E |\wed[E(X)](v,w)|} \frac{p^2}{p'^2}\,.\]
Again by Corollary~\ref{cor:small_marginal_concentration_extension}, $f(X)$ concentrates around its mean with distance at most $O(\log^3 n/(n^6p'^{11/2})))$ with high probability. 

Now combine this with \eqref{eq:each-term-prime}, we get for any $X_{T_{G0}}$ satisfying condition in Lemma~\ref{lem:good-xtg0-prime}, the likelihood ratio
\[L(x) := \frac{\Pr(K_{uv}(X)=x|X_{T_{G0}})}{\Pr(X=x|X_{T_{G0}})}\]
concentrates as
\[\left|L(X) - \E f(X) \right|=O\B(\frac{\log^3 n}{n^6p'^{11/2}}\B)+O\B(|S(X)|\frac{\log n}{n^{9}p'^{15/2}}\B)\,,\]
which is $O\B(\frac{\log^3 n}{n^6p'^{11/2}}\B)$ with high probability since $|S(X)|=\Theta(n^7p'^6)$ with high probability. 

We can use Lemma~\ref{lem:distance-expectation} (which states that concentration implies concentration about the mean for bounded random variables), as it is not hard to check that the likelihood ratio is always bounded by $\tilde{O}(n^3)$. Hence, with high probability,
\[\left| L(X)- \E L(X) \right|=O\B(\frac{\log^3 n}{n^6p'^{11/2}}\B)\,.\]
This completes the proof of \eqref{eq:kuvx-vs-x-conditioning}, using that $\E  L(X) = 1$ and Lemma~\ref{lem:concentration-TV}. \eqref{eq:kuvx-vs-x-conditioning} then implies Lemma~\ref{lem:Kuvx-vs-x} as stated before. 


\end{proof}

\subsection{Proof of Lemma~\ref{lem:er-rigpf}}
Recall that $\rigpf(n,d,\delta)$ is generated by sampling $M_j\sim \poi(dp_j)$ for $j=2,4,5,\cdots, n$ where $p_j = \binom{n}{j}\delta^j(1-\delta)^{n-j}$. And then add a $j$-clique uniformly for $M_j$ times. Denote $(M_4,\cdots M_12)$ by $\vec{M}$. The event that $M_j>0$ for $j>12$ happens with probability $O(dn^{13}\delta^{13})\ll 1$, so we can only consider the distribution conditioned on this event.
By Poisson splitting, the addition of 2-cliques is equivalent to the following. For  each $(i,j)\in \binom{[n]}{2}$, add edge $(i,j)$ with probability $1-e^{-dp_2/\binom{n}{2}} = 1-e^{-d\delta^2(1-\delta)^{n-2}}$. This results in a \ER\ graph, $G(n,1-e^{-d\delta^2(1-\delta)^{n-2}})$. 

By Corollary~\ref{cor:TV-add-clique}, and convexity of total variation distance, 
\begin{equation}\label{eq:rigpf-exer}
  \TV\B(\rigpf(n,d,\delta), \E_{\vec{M}}G(n,q(\vec{M})\B)= O(n^{-3/2}\E_{\vec{M}} \|\vec{M}\|_1)  \,.
\end{equation}
where \[1-q(\vec{M}) = e^{-d\delta^2(1-\delta)^{n-2}}\prod_{j=4}^{12}\B(1-\binom{n-2}{j-2}\binom{n}{j}\B)^{M_j} =  e^{-d\delta^2(1-\delta)^{n-2}}\prod_{j=4}^{12}\B(1-\frac{j(j-1)}{n(n-1)}\B)^{M_j}\]
is the edge density of $\rigpf$ conditioned on $\vec{M}$.
The right hand side of \eqref{eq:rigpf-exer} is $\E_{\vec{M}} \|\vec{M}\|_1 \le  \sum_{j\ge 4}dp_j=O(dn^4\delta^4)\ll 1$ for $d\gg n^{5/2}$ and $\delta=\Theta (\sqrt{1/d})$.

Now it remains to show that $\E_{\vec{M}}G(n,q(\vec{M})$ is close to \ER.
Let $q = 1-e^{-d\delta^2+(n-2)d\delta^3(1-\delta)^{n-3}}$, by Lemma~\ref{lem:distance-binomial}, 
\[\TV(G(n,q), \E_{\vec{M}}G(n,q(\vec{M})) = O(n\E_{\vec{M}}|q-q(\vec{M})|)\,.\]
 Since $M_j$ follows Poisson distribution, we have that with probability $O(1/\log n)$, for any $4\le j\le 12$,
\[\E M_j-\sqrt{dp_j\log n}\le M_j\le \E M_j+\sqrt{dp_j\log n}\,.\]
For any $\vec{M}$ within the above range, 
\begin{align*}
    |q-q(\vec{M})| =&\  \B|e^{-d\delta^2+(n-2)d\delta^3(1-\delta)^{n-3}} - e^{-d\delta^2(1-\delta)^{n-2}}\prod_{j=4}^{12}\B(1-\frac{j(j-1)}{n(n-1)}\B)^{M_j}\B|
\\=&\ e^{-d\delta^2(1-\delta)^{n-2}} \B|\prod_{j=4}^{n}e^{- \binom{n-2}{j-2}d\delta^{j}(1-\delta)^{n-j}}\prod_{j=4}^{12}\B(1-\frac{j(j-1)}{n(n-1)}\B)^{\E M_j}+O(\sqrt{dp_4\log n/n^2})\B|\\ =&\  o(n^{-1})\,.
\end{align*}
Combining this with \eqref{eq:rigpf-exer}, we have 
\[\TV(G(n,q),\rigpf(n,d,\delta))=o(1)\,.\]

\subsection{Proof of Lemma~\ref{lem:reverse-q}}

Consider $\E_{G\sim \rgt(n,p,p')}\Pr_{\Ab}[e\in \Ab(G+e)]$. Fix any edge  $e$ inside $S$.
The probability that  reverse transition $\Ab$ resamples $e$ is bounded by $O(np')$ by Corollary~\ref{cor:muge-marginal} when $G+e$ is $c$-uniformly 2-star dense. And by Lemma~\ref{lem:probability-of-good}, $G\sim \rgt(n,p,p')$ is $p^2/2$ uniformly 2-star dense with high probability. Overall, $e$ gets resampled with probability $O(np')$, which means $p_e = 1-O(np')$.

\section{Technical Lemmas}\label{sec:technical-lemmas}
\regularsubgraph*
\begin{proof}
We will use a random process to generate $\bwed[G](u,w)$ for all $w$ and argue the desired properties is satisfied with non-zero probability. Consider the following process.
\begin{enumerate}
    \item Start with $\bwed[G](u,w)=\emptyset$ for any $w\not\in \{u,v\}$.
    \item Loop through all pairs of different vertices $w_1,w_2\notin \{u,v\} $ and do the following. \\ If $w_1\in \wed[G](u,w_2)$ but $w_2\notin \wed[G](u,w_1)$,  put $w_1$ in $\bwed[G](u,w_2)$. \\ If $w_2\in \wed[G](u,w_1)$ but $w_1\notin \wed[G](u,w_2)$,  put $w_2$ in $\bwed[G](u,w_2)$.\\ If $w_1\in \wed[G](u,w_2)$ and $w_2\in \wed[G](u,w_1)$, with $1/2$ probability put $w_1$ in $\bwed[G](u,w_2)$  and  $1/2$ probability put $w_2$ in $\bwed[G](u,w_1)$.
    \item If there exists $w$ with $\bwed[G](u,w)<cn/3$, declare failure of the process. Otherwise, for all $w\in \{u,v\}$, delete an arbitrary set of vertices in $\bwed[G](u,w)$ so that it has size $cn/3$ after deletion.
\end{enumerate}
It is clear that the second condition of the lemma is always satisfied through the process, and if the algorithm does not fail in step 3, the first condition is also satisfied. It remains to prove that the probability of failure is less than 1.

Consider a fixed vertex $w\notin \{u,v\}$ and the set $\bwed[G](u,w)$ after step 2 of the algorithm. Since $G$ is $c$-uniformly 2-star dense, we know $\wed[G](u,w)\ge c(n-2)$. For each $w'\in \wed[G](u,w)$, it is included in $\bwed[G](u,w)$ in step 2 either with probability 1 or with probability 1/2, independently. Therefore, the size of $\bwed[G](u,w)$ stochastically dominates $\bino(c(n-2),1/2)$. And by Chernoff bound, this is smaller than $cn/3$ with probability $e^{-\Omega(n)}$. So by union bouding over all $w$, the probability of failure of the algorithm is bounded by $ne^{-\Omega(n)}$, which is smaller than 1 for sufficiently large $n$.
\end{proof}

\begin{lemma}\label{lem:binom-2-sample-with-replacement}
Sampling with replacement for $\bino(n,p)$ times and sampling without replacement for $\bino(n,p)$ times have total variation distance at most $p(1-p)+np^2$.
\end{lemma}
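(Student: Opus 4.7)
The plan is to use a coupling argument. Let $M \sim \bino(n,p)$ and couple the two processes to share the same sample size $M$. Conditional on $M = m$, the with-replacement sample $(W_1,\dots,W_m)$ consists of $m$ i.i.d.\ uniform draws from $[n]$, while the without-replacement sample $(U_1,\dots,U_m)$ is uniform over ordered tuples of $m$ distinct elements of $[n]$. The key observation is that the conditional law of $(W_1,\dots,W_m)$ \emph{given that all coordinates are distinct} coincides with the law of $(U_1,\dots,U_m)$, since both are uniform over injections from $[m]$ to $[n]$. This means we can couple the two samples to agree whenever the with-replacement sample has no collisions, and the total variation distance is bounded by the probability of a collision.

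Next I would bound the collision probability via a union bound: given $M = m$,
\[
\Pr(\exists\, i < j : W_i = W_j \mid M = m) \le \binom{m}{2} \cdot \frac{1}{n} \le \frac{m^2}{2n}.
\]
Taking expectation over $M \sim \bino(n,p)$ using Lemma~\ref{lem:coupling_TV} (conditioning on $M$ and using convexity of TV) yields
\[
\TV \le \frac{\E[M^2]}{2n} = \frac{np(1-p) + n^2p^2}{2n} = \frac{p(1-p)}{2} + \frac{np^2}{2},
\]
which is bounded by $p(1-p) + np^2$ as claimed.

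There is no real obstacle here; the argument is essentially a birthday-problem calculation combined with the elementary observation that conditioning a with-replacement sample on distinctness yields the without-replacement law. I would present it in two short paragraphs: one setting up the coupling and the distinctness-equivalence, and one computing $\E[M^2]$ to finish.
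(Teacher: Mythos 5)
Your proof is correct and follows essentially the same approach as the paper: couple the two processes via a common $M\sim\bino(n,p)$, observe that conditioning the with-replacement sample on distinctness yields the without-replacement law, bound the TV by the collision probability (your union bound gives $m^2/(2n)$; the paper uses a product-expansion bound of $m^2/n$), and finish by taking the expectation of $M^2$ over the binomial.
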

\begin{proof}
For sampling with replacement $k$ times, the probability of having a collision is 
\[1-\frac{n-1}{n}\cdot \frac{n-2}{n}\cdots \frac{n-k+1}{n}\le \frac{\sum_{j=1}^kj}{n}\le \frac{k^2}{n}\,.\]
Conditioned on not colliding, sampling with replacement has the same distribution as sampling without replacement, so the total variation distance is bounded by $\E[k^2/n]$ where $k\sim \bino(n,p)$. The lemma follows from that the second moment of $\bino(n,p)$ is $np(1-p)+n^2p^2$.
\end{proof}

\begin{lemma}\label{lem:influence_partition}
Consider a distribution on $\{0,1\}^\cX$.
Suppose $S$ and $S'$ can be partitioned as $S = S_1\cup S_2$ and $S'=S_1'\cup S_2'$. Then 
\begin{enumerate}
    \item[(i)] $\I{S}{S'}\le \I{S_1}{S'}+\I{S_2}{S'}$,
    \item[(ii)] $\I{S}{S'}\le \I{S}{S_1'}+\I{S}{S_2'}$,
    \item[(iii)] $ \I{S}{S'}\le \sum_{x\in S, x\in S'}\I{x}{x'}$.
\end{enumerate}
\end{lemma}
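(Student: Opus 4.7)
The proof plan is to verify each of the three subadditivity bounds directly from the definition of influence, using the triangle inequality for total variation plus the chain-rule decomposition of TV distance, and then to bootstrap (iii) from (i) and (ii).

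For part (i), I would fix an arbitrary choice of configurations $x_S^a, x_S^b \in \{0,1\}^S$ and pinning $\omega \in \{0,1\}^A$ with $A \subset \cX - S - S'$ that realizes (up to $\varepsilon$) the supremum in $\I{S}{S'}$. The key trick is to introduce the hybrid configuration $x_S^c$ defined by $(x_S^c)_{S_1} = (x_S^a)_{S_1}$ and $(x_S^c)_{S_2} = (x_S^b)_{S_2}$, and apply the triangle inequality:
\[
\TV\bigl(P_{x_{S'}|x_S^a,\omega}, P_{x_{S'}|x_S^b,\omega}\bigr) \le \TV\bigl(P_{x_{S'}|x_S^a,\omega}, P_{x_{S'}|x_S^c,\omega}\bigr) + \TV\bigl(P_{x_{S'}|x_S^c,\omega}, P_{x_{S'}|x_S^b,\omega}\bigr).
\]
The first TV term compares two conditional distributions that differ only in the $S_2$-portion of the conditioning while agreeing on $S_1$ and $\omega$; treating $(x_{S_1}^a,\omega)$ as a pinning on $S_1 \cup A \subset \cX - S_2 - S'$, this is bounded by $\I{S_2}{S'}$. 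The second TV term is symmetrically bounded by $\I{S_1}{S'}$.

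For part (ii), rather than hybridizing the conditioning, I would apply the chain-rule decomposition of TV on the output coordinates. Writing $x_{S'} = (x_{S_1'}, x_{S_2'})$ and using the standard inequality
\[
\TV(P_{X,Y}, Q_{X,Y}) \le \TV(P_X, Q_X) + \E_{x \sim P_X}\TV(P_{Y|X=x}, Q_{Y|X=x})
\]
with $X = x_{S_1'}$ and $Y = x_{S_2'}$, the first piece is directly $\le \I{S}{S_1'}$. For the second piece, the conditioning on $x_{S_1'}$ can be folded into the pinning: since $S_1' \subset \cX - S - S_2'$, the extended pinning $(\omega, x_{S_1'})$ is a legitimate pinning for the definition of $\I{S}{S_2'}$, giving a bound by $\I{S}{S_2'}$.

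For part (iii), I would iterate: applying (i) once for each element of $S$ yields $\I{S}{S'} \le \sum_{x \in S} \I{x}{S'}$, and then applying (ii) to each term, once per element of $S'$, gives $\I{x}{S'} \le \sum_{x' \in S'} \I{x}{x'}$. Composing these proves (iii).

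The only subtle point is the bookkeeping in (ii) around what pinnings are permitted: one must be careful that $S_1' \cap S = \emptyset$ (which holds since we implicitly take $S, S'$ disjoint, as otherwise the influence definition degenerates) so that folding the conditioning on $x_{S_1'}$ into $\omega$ is legal. All three parts are essentially bookkeeping around the definition; there is no serious analytical obstacle.
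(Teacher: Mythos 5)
Your proposal is correct and follows essentially the same route as the paper. Part (i) is the identical hybrid argument with the intermediate configuration $x_S^c$, and part (iii) is the identical iterated composition of (i) and (ii). The only cosmetic difference is in part (ii): the paper constructs an explicit sequential coupling (first couple $x_{S_1'}^a,x_{S_1'}^b$ optimally, then on the event they agree couple $x_{S_2'}^a,x_{S_2'}^b$ optimally given the common $x_{S_1'}$-value), whereas you invoke the chain-rule decomposition of total variation; both rest on the same key observation that the conditioning on $x_{S_1'}$ can be absorbed into the pinning $\omega$ since $S_1'\subset \cX - S - S_2'$, which makes the per-conditioning TV term uniformly bounded by $\I{S}{S_2'}$.
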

\begin{proof}
Suppose 
\[\I{S}{S'} = \TV(P_{x_{S'}|x_S^a,\omega}, P_{x_{S'}|x_S^b,\omega})\,.\]
For the first statement, we can use the triangle inequality to upper bound the last display by
\[\TV(P_{x_{S'}|x_{S_1}^a,x_{S_2}^a,\omega}, P_{x_{S'}|x_{S_1}^a,x_{S_2}^b,\omega})+ \TV(P_{x_{S'}|x_{S_1}^a,x_{S_2}^b,\omega}, P_{x_{S'}|x_{S_1}^b,x_{S_2}^b,\omega})\le \I{S_2}{S'}+\I{S_1}{S'}.\]

For the second statement, we will design a coupling $(x_{S'}^a,x_{S'}^b)$ between $P_{x_{S'}|x_S^a,\omega}$ and $P_{x_{S'}|x_S^b,\omega}$ so that $\Pr[x_{S'}^a\not= x_{S'}^b]$ is bounded by $\I{S}{S_1'}+\I{S}{S_2'}$. First let $(x_{S_1'}^a,x_{S_1'}^b)$ be an optimal coupling between $P_{x_{S_1'}|x_S^a,\omega}$ and $P_{x_{S_1'}|x_S^b,\omega}$, so that 
\[\Pr[x_{S_1'}^a\not = x_{S_1'}^b]\le \I{S}{S_1}\,.\]
When $x_{S_1'}^a = x_{S_1'}^b = x_{S_1'}$, let the conditional distribution of $(x_{S_2'}^a,x_{S_2'}^b)$ be the optimal coupling between $P_{x_{S_2'}|x_{S_1'},x_S^a,\omega}$ and $P_{x_{S_2'}|x_{S_1'},x_S^b,\omega}$, otherwise, couple $(x_{S_2'}^a,x_{S_2'}^b)$ arbitrarily, so \[\Pr[x_{S_2'}^a\not= x_{S_2'}^b|x_{S_1'}^a = x_{S_1'}^b]\le \I{S}{S_2}\,.\]
Therefore, we have
\begin{align*}
    \Pr[x_{S'}^a\not= x_{S'}^b]
    = \Pr[x_{S_1'}^a\not = x_{S_1'}^b] + \Pr[x_{S_2'}^a\not= x_{S_2'}^b|x_{S_1'}^a = x_{S_1'}^b]\le \I{S}{S_1}+\I{S}{S_2}\,.
\end{align*}
The proof for $\IM{S}{S'}\le \IM{S}{S_1'}+\IM{S}{S_2'}$ is identical, except $\omega$ is fixed to be empty. Finally, item (iv) follows by repeatedly applying (i) and (ii).
\end{proof}

\end{document}